\def\C{{\mathbb C}}
\def\Z{{\mathbb Z}}
\def\R{{\mathbb R}}
\def\e{{\varepsilon}}
\def\g{{\gamma}}
\def\a{{\alpha}}
\def\b{{\beta}}
\def\d{{\delta}}
\def\what{\widehat}
\def\wt{\widetilde}
\def\jq{\langle q \rangle}
\def\js{\langle s \rangle}
\def\Fhat{\what{\mathcal{F}}}
\def\Ftil{\wt{\mathcal{F}}}
\def\sign{ \mbox{sign} }
\def\K{{\mathcal{K}}}
\def\supp{\,\mbox{supp}\,}
\def\id{\,\mbox{id} \,}
\def\jt{\langle t \rangle}
\def\js{\langle s \rangle}
\def\jx{\langle x \rangle}
\def\jy{\langle y \rangle}
\def\jk{\langle k \rangle}
\def\jell{\langle \ell \rangle}
\def\jm{\langle m \rangle}
\def\hat{\widehat}
\def\bar{\overline}
\def\pv{\mathrm{p.v.}}
\def\tofill{\vskip30pt $\cdots$ To fill in $\cdots$ \vskip30pt}
\numberwithin{equation}{section}
\title[Bilinear estimates with potentials and quadratic NLS in $3$d]{
Bilinear estimates in the presence of a large potential
and a critical NLS in 3d 
}
\author{Fabio Pusateri}
\address{University of Toronto
}
\email{fabiop@math.toronto.edu}
\author{Avy Soffer}
\address{Rutgers University
}
\email{soffer@math.rugers.edu}
\newtheorem{theorem}{Theorem}[section]
\newtheorem{lemma}[theorem]{Lemma}
\newtheorem{proposition}[theorem]{Proposition}
\newtheorem{definition}[theorem]{Definition}
\newtheorem{remark}[theorem]{Remark}
\numberwithin{equation}{section}
\begin{document}

{\normalsize

\begin{abstract}
We propose an approach to nonlinear evolution equations with
large and decaying external potentials 
that addresses 
the question of controlling globally-in-time 
the nonlinear interactions of localized waves in this setting. 
This problem arises 
when studying localized perturbations around (possibly non-decaying) special solutions of evolution PDEs,
and trying to control
the projection onto the continuous spectrum of the nonlinear radiative interactions.

One of our main tools is the Fourier transform adapted to the Schr\"odinger operator $H=-\Delta+V$, 
which we employ at a nonlinear level.
%
As a first step we analyze the spatial integral of the product of three 
generalized eigenfunctions of $H$, 
and determine the precise structure of its singularities. 
This leads to study bilinear operators with certain singular kernels, for which
we derive product estimates of Coifman-Meyer type.
This analysis can then be combined with multilinear harmonic analysis tools and 
the study of oscillations 
to obtain (distorted Fourier space analogues of) weighted estimates for dispersive and wave equations.

%
%

As a first application we consider 
the nonlinear Schr\"odinger equation in $3$d in the presence of large decaying 
potential with no bound states, and with a $u^2$ non-linearity.
The main difficulty is that a quadratic nonlinearity in $3$d is critical with respect to the Strauss exponent;
moreover, this nonlinearity has non-trivial fully coherent interactions even when $V=0$.
We prove quantitative global-in-time bounds and scattering for small solutions.




\end{abstract}
}

\maketitle

\setcounter{tocdepth}{1}

\begin{quote}
\tableofcontents
\end{quote}



\section{Introduction}
This work is motivated by 
questions on the long-time stability 
of large, and possibly non-localized, special solutions of nonlinear evolution equations.
We propose a systematic approach to the study of equations of the form
\begin{align}\label{introeq}
i\partial_t u + L(D) u  + V(x) u = \mathcal{N}(u), \qquad u(t=0)=u_0,
\end{align}
where $u: (t,x) \in \R\times \R^d \rightarrow \C$, $d>1$,
$L(D)=L(-i\nabla_x)$ is a real-valued dispersion relation,
$\mathcal{N}$ is a nonlinear term in $(u,\bar{u})$,
and $V$ is a large real-valued decaying 
potential.
The initial data is assumed to be sufficiently regular, small and localized,
and we are interested in the global existence and quantitative estimates of solutions.
Typical examples are nonlinear Schr\"odinger equations ($L(D)=-|D|^2=\Delta$),
nonlinear Klein-Gordon ($L=\sqrt{m^2+|D|^2}$) and wave equations ($L=|D|$).

The methods we develop are intended to be most 
relevant for the stability of special solutions in the following contexts: 

\smallskip
\setlength{\leftmargini}{2em}
\begin{itemize}
\item[(1)] localized special solutions (solitons and solitary waves) 
for equations with nonlinearities of low degree of homogeneity,
e.g. quadratic nonlinearities in dimensions $2$ and $3$, 
such as those appearing in water waves and plasma models;

\smallskip
\item[(2)] non-localized solutions (e.g. topological solitons) for equations with 
nonlinearities of higher degree; 
examples here include field theories in dimensions $2$ or higher,
such as Ginzburg-Landau-type theories and their vortices solutions,
and generalizations of $\phi^4$-type field theories with `domain walls' solutions 
(see \cite{bookManSut}). 


\end{itemize}



\subsubsection*{Linearized dynamics}
To explain the relevance of \eqref{introeq} consider a special solution of a nonlinear evolution equation, 
such as, for the sake of concreteness, a stationary soliton, $Q=Q(x)$.
The basic strategy to investigate its stability is to look at solutions of the full nonlinear problem
in the form $Q(x)+v(t,x)$, where $v$ is small (and localized) at the initial time $t=0$.
Disregarding for the sake of exposition the issue of modulating $Q$ by the symmetries of the equation 
(such as translations and phase rotations), 
one can immediately see that understanding the stability for $Q$ amounts to studying the long-time behavior of $v$.

The equation for the evolution of the perturbation $v$ presents two 
fundamental difficulties:
%

\smallskip
\setlength{\leftmargini}{2em}
\begin{itemize}

\item[(A)] 
The linear part of the equation involves an added {\it effective potential} coming from $Q$;
we refer to this as the {\it perturbed linear operator},
as opposed to the ``unperturbed'' or ``flat'' operator, which is one with no potential term.
A basic example of such an operator is the operator\footnote{In
many cases the operators obtained upon linearization are more complicated than $L_V$ and
are not necessarily scalar or self-adjoint.} 
$L_V := L(D) + V$ from \eqref{introeq}.

\smallskip
\item[(B)] 
The equation for the perturbation will typically contain {\it `pure' quadratic nonlinear terms}
without additional localization, in both scenarios (1) and (2) above.

\end{itemize}

In many relevant applications the potential part in the perturbed operator 
is smooth and decaying, so we assume this to be the case 
in our discussion. 
Then, it is well established that
many quantitative properties of linear homogeneous solutions, 
such as for example time-decay and Strichartz estimates for solutions of $i\partial_tu + Lu = 0$,
still hold 
for solutions of $i\partial_tu + L_Vu=0$, when one projects onto 
the continuous spectrum of $L_V$.
However, applications of classical nonlinear tools such 
as commuting vectorfields and normal forms transformations are almost 
immediately ruled out by the presence of the inhomogeneous potential term;
%
%
Furthermore, the perturbed operator may have eigenvalues below the continuous spectrum, or resonances at the edge.
This is an important aspect to consider, 
but it is not the main focus of the present paper.

The second main difficulty 
is the presence of {\it quadratic nonlinear terms} in the equation for $v$.
Quadratic nonlinearities are the most difficult to control due to the slow decay.
Moreover, in both cases (1) and (2) one does not expect 
localized coefficients in front of these quadratic terms 
- unlike in the case of solitons for models 
with nonlinearities which are at least cubic -
that can be leveraged through improved local decay.
As a result, tools from the linear theory of perturbed operators 
and energy methods are ineffective to treat these equations. 

In the unperturbed case, $V=0$, quadratic models like \eqref{introeq} present similar issues.
Starting with seminal works by various authors in the `80s, including \cite{C,K,shatahKGE,KPV},
many techniques have been developed for the study of these classes of weakly nonlinear equations.
Roughly speaking, when dispersive effects are weak,
one needs a refined analysis of the nonlinear interactions;
this can be done in various ways, including normal form analysis, 
commuting vectorfields methods, harmonic analysis tools and more.
%
%
%
However, these techniques are hard to adapt to the perturbed case $V\neq 0$,
and, to our knowledge, 
alternative systematic and robust approaches 
have not been developed so far.
In fact, very little is known
about the long-time behavior of nonlinear equations with external potentials and low power nonlinearities
(e.g. at or below the Strauss exponent),
especially in comparison to the very rich theory for higher power nonlinearities, or the unperturbed cases.
Recently, there have been some results in this direction
in particular cases, such as the case of one spatial dimension \cite{N,DelortNLSV,GPR,ChPu},
the case of small potentials \cite{Leger1,Leger2}, and the case of non-resonant nonlinearities \cite{GHW};
see 
below for more on \cite{Leger1} and \cite{GHW}.

\subsubsection*{Nonlinear evolution with a potential}
The aim of this paper is to initiate a refined study of nonlinear interactions in the perturbed setting,
for models like \eqref{introeq}.
In particular, the method developed here addresses the combination of (A) and (B)
in the simplest case of a potential such that $L_V$ has no eigenvalues or resonances.
The understanding is that the analysis can be used in the cases (1) and (2) above,
when one restricts the nonlinear interactions to the continuous spectrum of the relevant operator.
The additional interplay with the discrete modes, 
if any are present, needs to be dealt with separately.




One of the main difficulties in treating problems like \eqref{introeq}
is to understand how two (or more) localized waves interact
in the presence of an external potential.
As we will see more in details below, the potential, although smooth and localized, 
has a {\it ``delocalizing''} effect:
waves moving under its influence have a higher degree of uncertainty compared to `flat' waves
and therefore are harder to control precisely.
Moreover, the potential {\it ``decorrelates''} frequencies: 
the sum of the frequencies of two interacting waves is not the frequency of the product, 
as it is in the unperturbed case.
The method that we are proposing here deals with these issues. 

More precisely, Sections \ref{SecLin}--\ref{secBE} and Section \ref{SecOther}
contain results that are generally applicable to Schr\"odinger operators
$H=-\Delta + V$, for a real, regular and decaying potential $V$.
In essence, we show how to use the Fourier transform adapted to $H$ 
- the so-called {\it distorted/perturbed Fourier transform} - 
to write in a fairly explicit way the product of two functions in distorted frequency space.
The key is to identify the singular structure of the product, 
which then permits a precise analysis of nonlinear oscillations;
see Section \ref{secidea} for a more detailed explanation.
These results can in principle be applied as a black-box, or with some modifications, 
to tackle the nonlinear analysis on the continuous spectrum for several
problems in the categories (1) and (2) above.

In Sections \ref{secdkL2} and \ref{ssecmu23Est} we give a first application of our general approach 
and study in detail the quadratic nonlinear Schr\"odinger equation 
\begin{align}
\label{NLSV0}
i\partial_t u  + (-\Delta + V)u = u^2, \qquad u(t=0) = u_0, \qquad x\in\R^3.
\end{align}
This is a prototypical model for a nonlinear equation with a potential;
we have chosen the Schr\"odinger equation
for the simplicity of its dispersion relation, 
but our analysis could extend to the Klein-Gordon equation,
or the wave equation under some additional null form assumptions. 
Note that a quadratic nonlinearity in $3$d is critical with respect to the Strauss exponent.
The choice of the nonlinearity $u^2$, as opposed to $\bar{u}^2$ or $|u|^2$ is relevant.
The $\bar{u}^2$ nonlinearity was treated in \cite{GHW}, and we can easily included it in our analysis.
The important difference 
is that $\bar{u}^2$ admits a global normal form transformation which effectively
makes the equation sub-critical relative to the Strauss exponent. 
In particular, one does not need a precise analysis of spatially localized interactions.
For the $|u|^2$ nonlinearity the existence of global solutions  is still open even for $V=0$.
For $V=0$ the other nonlinearities $au^2 + b\bar{u}^2$ have been treated in \cite{HNNLS3d} and \cite{GMS1}.


\subsubsection*{Distorted Fourier Transform (dFT) and Nonlinear Spectral Distribution (NSD)}\label{ssecdFTintro}
Our general set-up is based on the use of the distorted Fourier Transform (dFT)
adapted to the Schr\"odinger operator $H=-\Delta+V$.
For the moment, it suffices to say that under suitable decay and generic spectral assumptions on $V$,
the familiar formulas relating the Fourier transform and its inverse (in dimension $d=3$) 
hold if one replaces (up to constants) $e^{ik\cdot x}$ by the generalized eigenfunctions $\psi(k,x)$,
which solve $H \psi(x,k) = |k|^2 \psi(x,k)$, for all $k \in \R^3\smallsetminus\{0\}$.
That is, for any $g\in L^2$, there exists a unitary operator $\Ftil$ defined by
\begin{align}\label{dFT}
\begin{split}
& \wt{\mathcal{F}}g(k) := \widetilde{g}(k) = \frac{1}{(2\pi)^{3/2}}
  \int_{\R^3} \overline{\psi(x,k)} g(x)\, \mathrm{d}x, \qquad 
  \\ & \mbox{with} 
  \qquad \Ftil^{-1}g (x) := \frac{1}{(2\pi)^{3/2}}\int_{\R^3} {\psi(x,k)} g(k) \,\mathrm{d}k,
\end{split}
\end{align}
that diagonalizes the Schr\"odinger operator: $\Ftil H = |k|^2 \Ftil$.
See Theorem \ref{theodFT}.

For a solution $u$ of \eqref{NLSV0} - with the obvious modifications in the case of other
dispersion relations or other nonlinearities - 
we look at the `profile' or `interaction variable' 
\begin{align}\label{introf}
f(t,x) := \big( e^{-it(-\Delta + V )} u(t,\cdot) \big) (x), 
  \qquad \wt{f}(t,k) = e^{-it|k|^2} \widetilde{u}(t,k),
\end{align}
which satisfies the equation $\widetilde{f}(t,k) = \widetilde{u_0}(k) - i \mathcal{D}(t)(f,f)$ where
\begin{align}\label{introD}
\begin{split}
& \mathcal{D}(t)(f,f) := \int_0^t \iint_{\R^3 \times \R^3} 
	e^{is (-|k|^2 + |\ell|^2 + |m|^2 )} \widetilde{f}(s,\ell) \widetilde{f}(s,m)
  \, \mu(k,\ell,m) \, \mathrm{d}\ell \mathrm{d}m\,\mathrm{d}s,
\end{split}
\end{align}
with
\begin{align}\label{intromu}
\mu(k,\ell,m) := \frac{1}{{(2\pi)}^{9/2}} \int_{\R^3} \overline{\psi(x,k) }\psi(x,\ell) \psi(x,m) \, \mathrm{d}x.
\end{align}
\eqref{introD} is Duhamel's formula for \eqref{NLSV0} in distorted Fourier space.
The distribution $\mu$ characterizes the interaction between the generalized eigenfunctions,
and we call it the ``{\it Nonlinear Spectral Distribution}'' (NSD). 


Note that in the unperturbed case $V=0$ the NSD is just a delta function $\delta(k-\ell-m)$.
In contrast with this, in equations \eqref{introD}-\eqref{intromu} 
all frequencies interact with each other without any a priori constraint.
Looking at \eqref{introD} we see that the set where the integral has no oscillations in time $s$ 
is always larger than in the case $V=0$;
this implies that time averaging and 
the standard theory of normal forms transformations 
are less efficient. 
At the same time, as we shall see, $\mu(k,\ell,m)$ is singular on a much larger set compared to $\delta(k-\ell-m)$;
for example, it is singular when $|k-\ell|=|m|$ (see \eqref{i16}) or when $|k|=|\ell|+|m|$.
Then, even when the oscillatory exponential factor in \eqref{introD} is non-stationary
one cannot directly obtain cancellations;
these are only possible if non-stationarity holds in the directions where $\mu$ is regular.
The singular behavior of $\mu$ is another manifestation of both the `uncertainty'
and the `loss of invariance properties'
caused by the external potential, and leads to the ineffectiveness of a direct 
application of methods based on vectorfields.

One of our main ideas is to study precisely the structure of $\mu$ and its singularities as a distribution of $\R^9$.
After identifying all the singularities, we are naturally led to look at bilinear operators 
with kernels that are singular on certain annuli in frequency space.
For all the relevant operators that appear we prove suitable bilinear estimates of H\"older (Coifman-Meyer) type.
See for example \eqref{i17}-\eqref{i18} for an informal statement of this type.
In the specific case of the NLS equation \eqref{NLSV0} we can then proceed 
to analyze in detail the integral \eqref{introD}. 
We point out that the general analysis of the NSD can be used for other equations as a black-box.
The ideas for the analysis of the nonlinear model \eqref{NLSV0} are also quite general,
and in particular the integration by parts using {\it ``good vectorfields''}
that are tangential to the singularities of $\mu$;
see \eqref{i41}-\eqref{i50} and the last part of \ref{ssecidea1} and for more on this.

Overall, the present approach can be seen as an extension of the analysis 
put forward by \cite{GNTGP,GMS1,GMS2} for the case $V=0$,
where one uses the regular Fourier transform (in which case, recall, the NSD is a delta)
and analyzes the corresponding oscillatory integral \eqref{introD}.
In recent years, some deep advancements have been made 
starting from these types of basic ideas, and the resulting methods
have been
quite successful in the study of global regularity and asymptotics for small solutions of
dispersive and wave equations.
See for example  \cite{IP1,IoPu2,GIP,DIPP} and references therein
where the authors study the stability of `trivial' equilibria 
(e.g. a flat and still sea in the context of the free-boundary Euler equations, 
or a neutral plasma in the context of the Euler-Maxwell system).
Our long-term hope is that, following the approach in this paper,
parallel developments can be made in the context of nonlinear 
equations with potentials, leading to advances in the study of 
the long-time dynamics around non-trivial equilibria.


\subsubsection*{More background and related works}\label{introrefs}



The first question one asks when studying equations with potentials such as \eqref{introeq},
is how much of the linear theory for solutions of $i\partial_t u + L(D) u=0$
can be carried onto perturbed/inhomogeneous linear solutions.
As an example, classical results for linear Schr\"odinger operators \cite{JSS,GolSch},
guarantee that pointwise decay estimates like
\begin{align*}
{\| e^{itH}P_c f\|}_{L^1\mapsto L^\infty} \lesssim |t|^{-d/2}, \qquad x\in \R^d,
\end{align*}
hold under mild assumption on the decay of $V$ (here $P_c$ is the projection onto the continuous spectrum). 
Generalizations to other dispersive and wave equations are also known.
For more on dispersive estimates, see the survey of Schlag \cite{Sch1},
the book \cite{KKbook} 
and references therein.
Strichartz estimates can also be derived for solutions of the perturbed linear problem under fairly general
assumptions on $V$.

For nonlinear problems such as \eqref{introeq},
the first attempt is to use linear estimates (and energy estimates) to control the flow for long times.
This is in parallel to the classical strategy that one would use without the potential;
see the seminal work of Strauss \cite{Strauss0} and \cite{StraussCBMS} and references therein.
Even in the absence of discrete spectrum this approach works only when the spatial dimension
and homogeneity of the nonlinearity are high enough, so that dispersive effects are sufficiently strong.
%
If the perturbed operator has discrete spectrum the situation is more delicate.
The main issue is the presence of linear and nonlinear bound states (time-periodic localized structures),
and their interaction with the radiative/dispersive part of the solution.
Among the many important works 
in this direction, we mention
Soffer-Weinstein \cite{SofWeinA,SofWein2}
Tsai-Yau \cite{TsaiYau}, Gustafson-Nakanishi-Tsai \cite{GNT04},
Bambusi-Cuccagna \cite{Bambusi-Cuccagna}, Kirr-Zarnescu \cite{KirZar}.
For more general overviews we refer the reader to the surveys \cite{Sof06,WeinSur} and reference therein.






%



The works cited above are characterized by strong dispersive effects due to the combination 
of the large spatial dimension and/or a high power (or highly localized) nonlinearity.
The situation is quite different, and much less is known,
when one cannot get global existence by means of dispersive estimates or energy methods,
even if the perturbed operator has only continuous spectrum. 

An interesting work in this direction is the paper 
of Germain-Hani-Walsh \cite{GHW} who treated an NLS equation like \eqref{NLSV0} 
with a large, generic, and decaying potential, and with a $\bar{u}^2$ nonlinearity.
In \cite{GHW} the authors use the dFT 
and lay some groundwork for understanding the NSD \eqref{intromu}.
In particular, they establish Coifman-Meyer type bilinear estimates for operators whose kernel 
is given by $\mu$ times a Coifman-Meyer-type symbol, 
as well as estimate for their bilinear commutator with $\partial_k$.
Since the $\bar{u}^2$ nonlinearity leads to 
a factor of $-(|k|^2+|\ell|^2+|m|^2)$ in the exponential phase in \eqref{introD},
in \cite{GHW} the authors can directly exploit time oscillations 
and do not need to analyze $\mu$ and its structure and regularity in further details,
nor need to exploit oscillations in distorted frequency space.
In general one does not expect a typical scenario to be as favorable
and, indeed, this is not the case for \eqref{NLSV0}.
More recently, L\'eger \cite{Leger1} was able to treat \eqref{introeq} 
under the assumption that the potential, which can also be mildly time-dependent, is small. 
The smallness of $V$ permits a more `perturbative' approach using the regular Fourier transform
but the problem is still hard due to the presence of coherent resonant interactions (which are absent for $\bar{u}^2$).
%
%
We also mention 
the recent work of Kenig and Mendelson \cite{KenMen} where the authors 
obtain a soliton stability result with high probability for the focusing energy critical $3$d wave
using a randomization procedure through distorted Fourier projections.

Finally, we point out that in the one dimensional case the distorted Fourier transform 
has already been used fairly effectively.
Indeed, in $1$d the generalized eigenfunctions of $H$ satisfy ODEs and are given as solutions 
of much simpler Volterra-type integral equations, as opposed to solutions of the 
Lippmann-Schwinger integral equation \eqref{i0};
in particular, their difference with the standard exponentials decays to zero at infinity as fast as the potential,
and the structure of the NSD is 
more explicit.
For the $1$d cubic NLS this approach has been used by the author with Germain and Rousset \cite{GPR} 
and by the author and Chen \cite{ChPu}.
See also the related earlier works by Naumkin \cite{N} and Delort \cite{DelortNLSV} on the same problem,
of Cuccagna-Georgiev-Visciglia \cite{CGV} in the subcritical case;
other related works in $1$d include 
\cite{DoKri} on wave equations,
\cite{LS,LLS} on certain Klein-Gordon equations with non-constant coefficients,
and \cite{KowMarMun} on the stability of the $\phi^4$ kink.



%
%
%
%
%
%


\subsubsection*{Main result for quadratic NLS} 
We consider the equation 
\begin{align}
\label{NLSV}
i\partial_t u  + (-\Delta + V)u = u^2
\end{align}
with an initial data $u(t=0) = u_0$.
Under suitable assumptions on $V$ and $u_0$, 
our main result is the global-in-time existence of solutions, 
together with quantitative pointwise decay estimates and 
global bounds on certain weighted-type norms of the solution.
More precisely, let
\begin{align}\label{Nparam}
N = 2000, \quad N_1 = 200, 
\end{align}
and assume that 
\begin{align}\label{assV2}
V\in H^N, 
  \qquad \int_{\R^3} (1+|x|)^{N_1+10} |\nabla_x^{\alpha} V(x) | \, \mathrm{d}x < \infty, \qquad 0\leq |\a| \leq N_1+10,
\end{align}
and that
\begin{align}\label{assV1}
\mbox{$H=-\Delta+V$ has no eigenvalues or resonances. 
}
\end{align}
This is our main result for \eqref{NLSV}.

\begin{theorem}\label{maintheo}
Consider \eqref{NLSV} under the assumption \eqref{Nparam}-\eqref{assV2}.
Consider an initial data $u_0$ satisfying  
\begin{align}\label{data0}
{\| u_0 \|}_{H^{N}} + {\| \nabla_k \widetilde{u_0} \|}_{L^2} + {\| \nabla_k^2 \widetilde{u_0} \|}_{L^2} 
  \leq \e_0,
\end{align}
where $\widetilde{g} = \widetilde{\mathcal{F}}(g)$ 
denotes the distorted Fourier transform of $g$ as defined in Theorem \ref{theodFT}.

Then, there exists $\bar{\e}$ small enough such that, for all $\e_0 \leq \bar{\e}$,
the equation \eqref{NLSV} 
admits a unique global solution $u\in C(\R; H^{N}(\R^3))$ with $u(t=0)=u_0$, satisfying
\begin{align}\label{maintheoconc}
{\|u(t)\|}_{H^{N}} + \jt^{1+\alpha}{\|u(t)\|}_{L^\infty} \lesssim \e_0
\end{align}
for some $\alpha > 0$.
\end{theorem}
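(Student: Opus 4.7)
The plan is to run a bootstrap argument in distorted Fourier space on the profile $f(t)=e^{-itH}u(t)$, whose distorted Fourier transform satisfies $\widetilde{f}(t,k)=\widetilde{u_0}(k)-i\mathcal{D}(t)(f,f)$ with $\mathcal{D}$ and the NSD $\mu$ as in \eqref{introD}--\eqref{intromu}. I would work with a bootstrap space involving, for small $0<\beta\ll\alpha$ to be fixed, the four norms
\begin{align*}
{\|u(t)\|}_{H^N},\quad \langle t\rangle^{1+\alpha}{\|u(t)\|}_{L^\infty},\quad {\|\nabla_k\widetilde{f}(t)\|}_{L^2},\quad \langle t\rangle^{-\beta}{\|\nabla_k^2\widetilde{f}(t)\|}_{L^2},
\end{align*}
all bounded by $\e_1\sim\e_0$, and show that each can be improved to $\lesssim\e_0+\e_1^2$ on the existence interval, closing the argument for $\e_0$ small enough.

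The energy bound on ${\|u\|}_{H^N}$ is handled by noting that $H^N$ is equivalent to the corresponding $|k|^{2N}$-weighted $L^2$ norm in the distorted side (by the diagonalization $\Ftil H=|k|^2\Ftil$), and then invoking the Coifman--Meyer-type bilinear estimates of Section \ref{secBE} for operators whose kernel is $\mu$ times a Mikhlin-type symbol; combined with the pointwise decay and Sobolev embedding this gives a slow growth ${\|u(t)\|}_{H^N}\lesssim\e_0\langle t\rangle^{C\e_1}$ that can be absorbed into the bootstrap with $\beta$ adjusted. For the pointwise decay I would use the distorted-Fourier analog of the standard $\langle t\rangle^{-3/2}$ dispersive estimate: from $u(t,x)=(2\pi)^{-3/2}\int e^{it|k|^2}\psi(x,k)\widetilde{f}(t,k)\,\mathrm{d}k$, a stationary phase argument (after suitable dyadic frequency localization and using the known pointwise bounds on $\psi$) gives ${\|u(t)\|}_{L^\infty}\lesssim \langle t\rangle^{-3/2}\bigl({\|\widetilde{f}\|}_{L^\infty}+{\|\nabla_k\widetilde{f}\|}_{L^2}^{1/2}{\|\nabla_k^2\widetilde{f}\|}_{L^2}^{1/2}\bigr)$, with ${\|\widetilde{f}(t)\|}_{L^\infty}$ obtained directly from the integral equation for $\widetilde{f}$.

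The bulk of the work is the control of the weighted norms. Applying $\nabla_k$ to the Duhamel formula, the derivative falls either on the NSD $\mu(k,\ell,m)$ or on the phase $e^{is(-|k|^2+|\ell|^2+|m|^2)}$. For the $\mu$-derivatives I would exploit the structural decomposition of $\mu$ established in the earlier sections, writing it as the flat piece $\delta(k-\ell-m)$ plus kernels singular on the hypersurfaces $|k-\ell|=|m|$, $|k|=|\ell|+|m|$, etc., plus a regular remainder, and then differentiating using ``good vector fields'' tangential to these hypersurfaces rather than the standard Cartesian $\nabla_k$; this keeps the resulting bilinear operators inside the class for which the Coifman--Meyer-type estimates of Section \ref{secBE} apply. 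The contributions from $\widetilde{u_0}$ and from these $\mu$-derivative pieces are then bounded by $\e_0+\e_1^2\int_0^t\langle s\rangle^{-2-2\alpha}\,\mathrm{d}s\lesssim\e_0+\e_1^2$.

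The main obstacle is the derivative-on-phase term, which produces a spurious factor $-2isk$ that must be traded for frequency-space structure. I would split the integration region into a space-time non-resonant part and a coherent part near the space-time resonant set $\{-|k|^2+|\ell|^2+|m|^2=0,\ k=\ell+m\}=\{\ell\cdot m=0\}$ (the set that remains even when $V=0$). On the non-resonant region one integrates by parts in $s$ via $e^{is\Phi}=(i\Phi)^{-1}\partial_s e^{is\Phi}$, absorbing $s$ using $|\Phi|^{-1}\lesssim(|k|+|\ell|+|m|)^{-2}$ and producing boundary terms and cubic-in-$f$ terms integrable in time thanks to the bootstrapped ${\|u\|}_{L^\infty}$-decay. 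On the coherent region the normal form fails, and one instead writes $k=\tfrac12(\nabla_\ell+\nabla_m)\Phi$ on the resonant set and integrates by parts in $\ell$ and $m$ along directions tangent to the space-time resonant set, recovering an $s^{-1}$ factor from non-stationary phase in the transverse variable; this is the deployment of the ``good vector fields'' advertised in the introduction. The delicate point, and the step I expect to be hardest, is that the singular hypersurfaces of $\mu$ and the space-time resonant set of $\Phi$ are both codimension one and may intersect transversally or tangentially, so the good vector fields must be chosen to be simultaneously compatible with both; this requires a careful orchestration of the NSD decomposition with the phase geometry. The second-derivative bound is then closed by iterating the same decomposition, which produces the extra $\langle t\rangle^\beta$ allowed by the bootstrap.
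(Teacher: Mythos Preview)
Your overall strategy---bootstrap on $\|u\|_{H^N}$, $\|\partial_k\widetilde f\|_{L^2}$, $\|\partial_k^2\widetilde f\|_{L^2}$, with the NSD decomposition and integration by parts via good vector fields---is the right one and matches the paper's architecture. But there is a genuine quantitative gap in your setup that would prevent the bootstrap from closing.

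\textbf{The growth rate of the second weighted norm.} You propose $\langle t\rangle^{-\beta}\|\nabla_k^2\widetilde f(t)\|_{L^2}\leq\e_1$ with $0<\beta\ll\alpha$. This is too tight. After you integrate by parts in $s$ on the ``non-resonant'' piece of $\partial_k^2\mathcal{D}$, the boundary term has the schematic form
\[
t^2\iint e^{it\Phi}\,\frac{|\ell|^2}{\Phi}\,\widetilde f(t,\ell+k)\widetilde f(t,m)\,\nu_1(\ell,m)\,\mathrm d\ell\,\mathrm d m,
\]
and the bilinear H\"older estimate for this (an $L^3\times L^{6-}$ bound) gives $t^2\cdot\e t^{-1/2}\cdot\e t^{-1+}\approx\e^2 t^{1/2+}$. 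This $t^{1/2}$ is not an artifact: it is forced by the fully coherent space-time resonance $\{\ell\cdot m=0\}$ that is already present when $V=0$, and it cannot be removed by further normal forms or by the good-direction argument. The paper therefore runs the bootstrap with $\|\partial_k^2\widetilde f(t)\|_{L^2}\lesssim\e\langle t\rangle^{1/2+\delta}$; the pointwise decay is then recovered only via the interpolation $\|\widetilde f\|_{L^\infty}^2\lesssim\|\partial_k\widetilde f\|_{L^2}\|\partial_k^2\widetilde f\|_{L^2}$ inside the $t^{-3/2}$ dispersive estimate, yielding $\alpha=1/4-\delta/2$. With your $\beta\ll\alpha$ the second weighted norm cannot be improved.

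\textbf{Secondary points.} Once the $L^\infty$ decay is integrable in time, the energy estimate gives a \emph{uniform} bound on $\|u\|_{H^N}$, not $\langle t\rangle^{C\e_1}$ growth; the paper does not bootstrap the $L^\infty$ norm separately but derives it from the weights. Also, your description of the coherent-region step (``write $k=\tfrac12(\nabla_\ell+\nabla_m)\Phi$ on the resonant set'') is not the mechanism that works here. After the change of variables that puts the kernel into the form $\nu_1(\ell,m)$, the singularity lies on $\{|\ell|=|m|\}$, and the good vector field is $\mathbf X=\partial_{|\ell|}+\partial_{|m|}$, which is tangent to it. The crucial algebraic identity is
\[
|\ell|\,\mathbf X\Phi-\Phi=|\ell|^2+2|\ell||m|-|m|^2,
\]
so that near $|\ell|\approx|m|$ one has either $|\mathbf X\Phi|\gtrsim|\ell|$ (integrate by parts in $\mathbf X$) or $|\Phi|\gtrsim|\ell|^2$ (integrate by parts in $s$). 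This simultaneous control---not a generic transverse-direction stationary phase---is what allows the singular kernel and the phase to be handled together.
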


Here is a few comments about the statement

\begin{itemize}

\item[$-$] {\it High regularity}: 
We consider very smooth solutions in a high Sobolev space $H^N$ (and therefore require 
the same amount of regularity for the potential).
Although \eqref{NLSV} is a semilinear equation we find it useful to control a large number derivatives
in many parts of our analysis;
for example, when we perform various expansions that lose derivatives
or want to deal with non-standard symbols of multilinear operators that have some losses 
at high frequencies.
Thanks to the $H^N$ bound we can think of frequencies as being effectively bounded from above 
by a small power of time, and thus having minimal impact on all our estimates for the evolution.
These smoothness and decay hypotheses are clearly not optimal, and the values of $N$ and $N_1$ 
can certainly be improved. 

\smallskip
\item[$-$] {\it Distorted weighted norm}: The last two norms in \eqref{data0} are distorted Fourier analogues
of the more standard $L^2(\jx^4 \mathrm{d}x)$ norms which are found in the literature on this
and similar types of problems.

\smallskip
\item[$-$] {\it Decay and scattering}:
Part of the conclusion of our nonlinear analysis, see the bootstrap Proposition \ref{proBoot},
is that we can control, up to some small power of time, distorted Fourier analogues
of weighted norms along the evolution. In particular we can prove that,
for $f=e^{-itH}u$, $\partial_k \wt{f}(t)$ is uniformly bounded in $L^2_k$,
and $\partial_k^2 \wt{f}(t)$ grows in $L^2_k$ at most like $\jt^{1/2+\delta}$ for $\delta>0$ small.
These bounds, combined with standard linear decay estimates, interpolation, 
and the boundedness of wave operators,
then imply that $u(t)$ decays pointwise at an integrable rate of $\jt^{-1-\alpha}$ for some $\alpha>0$.
In particular the solution {\it scatters} to a (perturbed) linear solution as $|t|\rightarrow \infty$.
\end{itemize}

\medskip
\subsection*{Acknowledgments}
F.P. is supported in part by a startup grant from the University of Toronto, 
a Connaught Fund New Researcher grant, and NSERC grant RGPIN-2018-06487.

\noindent
A.S. is supported by  in part by NSF grant DMS-160074.

\medskip
\section{Main ideas and strategy}\label{secidea}
In this section we first give a summary of our strategy pointing out some important elements in our proofs.
Then we introduce the necessary notation, 
define the functional space in which we will work to prove global existence for \eqref{NLSV},
and state the main bootstrap proposition which will imply Theorem \ref{maintheo}.

\subsection{Main steps}\label{ssecidea1}

\subsubsection*{Step 1: Distorted Fourier Transform and the Nonlinear Spectral Distribution} 
Under our assumptions on the potential $V$ we can define,
for $k \in \R^3\smallsetminus\{0\}$, a family of generalized eigenfunctions 
associated to $H=-\Delta + V$ as the unique solutions of the problem
\begin{align*}
(-\Delta + V ) \psi(x,k) = |k|^2 \psi(x,k), \qquad k \in \R^3\smallsetminus\{0\},
\end{align*}
with the asymptotic condition $v(x,k):=\psi(x,k) - e^{ix\cdot k} = O(|x|^{-1})$
and verifying the Sommerfeld radiation condition $r (\partial_r - i|k|) v(x,k) \rightarrow 0$, 
for $r=|x| \rightarrow \infty$.
These satisfy the integral equation
\begin{align}
\label{i0}
\psi(x,k) = e^{ix\cdot k} - \frac{1}{4\pi} \int_{\R^3} \frac{e^{i|k||x-y|}}{|x-y|} V(y) \psi(y,k) \, \mathrm{d}y.
\end{align}
The family $\{\psi(\cdot,k)\}$ forms a basis for the absolutely continuous spectrum of $H$
and thanks to classical results (see Theorem \ref{theodFT})
the familiar formulas relating the Fourier transform and its inverse in dimension $d=3$ hold if one replaces
(up to constants) $e^{ik\cdot x}$ by $\psi(k,x)$.


Recall that, given $u$ solution of \eqref{NLSV},
one has the distorted Duhamel's formula for the profile 
\eqref{introf}-\eqref{introD} with the nonlinear spectral distribution (NSD) defined by \eqref{intromu}. 
%
%

\subsubsection*{Step 2: Expansion of the generalized eigenfunctions}
To understand the global-in-time properties of solutions through \eqref{introD},
one needs a very precise understanding of the NSD, 
and the ability to exploit generalized frequencies oscillations.
We begin by separating the flat and the potential contributions to the generalized eigenfunction by setting
\begin{align}\label{i4}
\begin{split}
\psi(x,k) & = e^{ix\cdot k} - e^{i|k||x|} \frac{1}{4\pi|x|} \psi_1(x,k),
\\
\psi_1(x,k) & := \int_{\R^3} e^{i|k| [ |x-y| - |x| ]} \frac{|x|}{|x-y|} V(y) \psi(y,k) \, \mathrm{d}y,
\end{split}
\end{align}
and then expanding $\psi_1$ in negative powers of $|x|$:
\begin{align}\label{i5}
\psi_1(x,k) = \sum_{j=0}^{n} g_j(\omega,k) \, r^{-j} \jk^j  + R(x,k), \qquad r := |x|, \, \omega := \frac{x}{|x|},
\end{align}
where $R$ is a sufficiently regular remainder that decays faster than $r^{-n}$ with $n$ large enough,
and the coefficients $g_{j}(\omega,k)$ 
belong to a suitably defined symbol class whose prototypical element has the form 
\begin{align}\label{i6}
g(\omega,k) = \int_{\R^3} e^{-i|k| \omega\cdot y} f(y) \, \mathrm{d}y, 
\end{align}
for a fast decaying $f$.
In particular, these are smooth functions of $\omega$ with some singularity in $k$.
For full details on the expansion \eqref{i5} see Subsection \ref{ssecpsiexp}, 
and in particular the statement of Lemma \ref{lemmapsi1}.
Notice that  $\omega$-derivatives of \eqref{i6} grow with $|k|$;
this causes some technical difficulties in dealing with high frequencies.
We resolve this by restricting the nonlinear analysis for the evolution 
to frequencies
$|k| \leq \jt^{\delta}$
for a small $\delta>0$, and treating high frequencies $|k| \geq \jt^\delta$ by leveraging the high $H^{N}$
smoothness of solutions.
We do not discuss the estimate for high frequencies in this explanation, but refer the reader to \ref{secHF}.

\subsubsection*{Step 3: Asymptotics for the NSD}
The next step consists of plugging-in the (linear) expansions \eqref{i4}-\eqref{i5} 
into the expression \eqref{intromu} for $\mu$ to obtain an expansion of the NSD.
We see that for large $|x|$
\begin{align}
\label{i10}
\begin{split}
& \overline{\psi(x,k) }\psi(x,\ell) \psi(x,m) = e^{ix\cdot (-k+\ell+m)} 
  - \frac{1}{4\pi|x|} e^{i|m||x|}  e^{ix\cdot(-k+\ell)} g_0(\omega,m) 
  \\ & - \frac{1}{4\pi|x|} e^{i|\ell||x|}  e^{ix\cdot(-k+m)} g_0(\omega,\ell)
  - \frac{1}{4\pi|x|} e^{-i|k||x|} e^{ix\cdot(\ell+m)} \bar{g_0(\omega,k)} + O(|x|^{-2}),
\end{split}
\end{align}
so that (up to irrelevant constants)
\begin{align}\label{i11}
\begin{split}
\mu(k,\ell,m) \approx \delta(k-\ell-m) + \int_{\R^3} \frac{1}{|x|} e^{i|m||x|}  e^{ix\cdot(-k+\ell)} g_0(\omega,m)  
  \, \mathrm{d}x \\ + \,\, \mbox{``similar or better terms''}.
\end{split}
\end{align}
This leads us to study the behavior of oscillatory integrals of the form
\begin{align}\label{i12}
\nu(p,q) := \int_{\R^3} \frac{1}{|x|} e^{i|p||x|}  e^{ix\cdot q} g_0(\omega,p)  \, \mathrm{d}x.
\end{align}
In Proposition \ref{Propnu+} we give an expansion for this integral and, in particular, establish that
\begin{align}\label{i15}
\nu(p,q) \approx \frac{1}{|q|} \Big( \delta(|p|-|q|) + \pv \frac{1}{|p|-|q|} \Big) + \mbox{``better terms''}
\end{align}
up to some coefficient involving $g_0(\pm q/|q|,p)$. 
This gives an explicit expression for the leading order in the expansion of $\mu$:
we can essentially think that
\begin{align}\label{i16}
\mu(k,\ell,m) \approx \frac{1}{|\ell-k|} \pv \frac{1}{|\ell-k|-|m|} + \mbox{``similar or better terms''}.
\end{align}

\subsubsection*{Step 4: Multiplier estimates}
Next, we are going to establish some multiplier estimates for the terms in the expansion of $\mu$.
The typical statement will be an H\"older/Coifman-Meyer type estimate 
for the multiplier appearing on the right-hand side of \eqref{i16}.
More precisely, if we define non-standard pseudo-product operators of the form
\begin{align}\label{i17}
B(g,h)(k) = \iint_{\R^3\times\R^3} \wt{g}(\ell) \wt{h}(m) \, b(k,\ell,m) \, \frac{1}{|\ell-k|} \pv \frac{1}{|\ell-k|-|m|} \, \mathrm{d}m\mathrm{d}\ell,
\end{align}
where $b$ belongs to a suitable symbol class,
then, up to some small losses and some less important factors which we do not detail here,
\begin{align}\label{i18}
{\| B(g,h) \|}_{L^2} \lesssim {\| \mathcal{W}^\ast g \|}_{L^p} {\| \mathcal{W}^\ast h \|}_{L^q}, 
  \qquad \frac{1}{p}+\frac{1}{q}=\frac{1}{2},
\end{align}
where $\mathcal{W} = \what{\mathcal{F}}^{-1} \wt{\mathcal{F}}$ is the wave operator, see \eqref{W0}.
We refer the reader to Section \ref{secBE} and Theorem \ref{theomu1} for precise statements.
Similar estimates are also needed for all the other bilinear operators associated
to the other terms in the expansion of \eqref{i16}. 
An interesting aspect is how these estimates are obtained
by establishing results on bilinear pseudo-product operators supported on thin annuli
(see Lemmas \ref{lemBEan} also \ref{lemBEan2}).


\subsubsection*{Step 5: Set up for the nonlinear analysis}
With the precise information obtained on $\mu$,
we can proceed to study our nonlinear equation through the distorted Duhamel's formula.
From \eqref{introD} 
and \eqref{i11}-\eqref{i16} we see that  $\wt{f}(t) = \wt{u}_0 + \mathcal{D}(t)(f,f)$ where, at leading order, we have
\begin{align}
\label{i20}
\begin{split}
\mathcal{D}(t)(f,f) \approx \int_0^t \iint e^{is (-|k|^2 + |\ell|^2 + |m|^2 )} \widetilde{f}(s,\ell) \widetilde{f}(s,m)
  \, \frac{1}{|\ell-k|} \pv \frac{1}{|\ell-k|-|m|} \, \mathrm{d}\ell \mathrm{d}m\,\mathrm{d}s,
\end{split}
\end{align}
Our aim is to estimate globally-in-time the solution $u$ through its representation via $\wt{f}$ above.
To do this we devise a proper functional framework and place the evolution in a space 
that is strong enough to guarantee global decay,
but also sufficiently weak to allow us the possibility of closing the estimates.



As mentioned after Theorem \ref{maintheo}, we will prove the following bounds:
\begin{align}\label{i30}
{\| u(t) \|}_{H^N} \lesssim \e,
\qquad {\| \partial_k \widetilde{f}(t) \|}_{L^2} \lesssim \e,
\qquad {\| \partial_k^2 \widetilde{f}(t) \|}_{L^2} \lesssim \e \jt^{1/2+\delta},
\end{align}
for some small $\delta>0$, and all $t\in\R$;
see also the bootstrap Proposition \ref{proBoot}.
%
%
%
%

The remaining part of the argument is dedicated to estimating a priori the nonlinear expressions
in the right-hand side of \eqref{i20} according to \eqref{i30}.
The most difficult estimate is the one for $\partial_k^2\wt{f}$, for which we have to allow 
a certain growth in time. This is ultimately due to the lack of invariances (such as scaling and gauge-invariance)
of the equation.

\subsubsection*{Step 6: Nonlinear estimates and ``good directions''}
For simplicity, let us concentrate on the leading order term in \eqref{i20}, that is 
\begin{align}
\label{i40}
\begin{split}
\mathcal{B}(f,f)(t,k) & := \int_0^t \iint e^{is \Phi(k,\ell,m)} \widetilde{f}(s,\ell+k) \widetilde{f}(s,m)
  \,  \frac{1}{|\ell|} \pv \frac{1}{|\ell|-|m|} \, \mathrm{d}\ell \mathrm{d}m\,\mathrm{d}s,
  \\
  \Phi(k,\ell,m) & := |\ell|^2 + 2\ell\cdot k + |m|^2.
\end{split}
\end{align}
$\Phi$ is the so-called `phase' or `modulation'.
Recall that we have H\"older-type bilinear estimates for such expressions, see \eqref{i10}-\eqref{i11},
and that we may restrict to frequencies $|k|+|\ell|+|m| \lesssim 1$.
Moreover, we may restrict our attention to the case 
\begin{align}\label{i40'}
\big| |\ell| - |m| \big| \ll |\ell| \approx |m|
\end{align}
where the $\pv$ is indeed singular. 

The main difficulty is that an application of $\partial_k^j$, $j=1,2$, to \eqref{i40} leads to terms like
\begin{align}
\label{i41}
\begin{split}
\int_0^t \iint (-2is\ell)^j \, e^{is \Phi(k,\ell,m)} \widetilde{f}(s,\ell+k) \widetilde{f}(s,m)
  \,  \frac{1}{|\ell|} \pv \frac{1}{|\ell|-|m|} \, \mathrm{d}\ell \mathrm{d}m\,\mathrm{d}s
\end{split}
\end{align}
which contain powers of $s$ in the integrand. 
To obtain good bounds on expressions like \eqref{i41} it is necessary to exploit
oscillations through integration by parts arguments.
This leads to several difficulties:

\begin{itemize}
\item[(1)] the amplitudes, i.e. the profiles $\widetilde{f}$, have limited smoothness, 
according to the a priori assumptions (see \eqref{i30}),

\item[(2)] the oscillating factor $s \Phi(k,\ell,m)$ is stationary in many directions, and 

\item[(3)] the integral is taken with respect to a singular kernel. 
\end{itemize}

\noindent
We note that issues similar to (1) and (2) 
have been handled in various problems for equations 
without external potential. 
The third issue, and its combination with (2),
is however a new difficulty and, to our knowledge, appears here for the first time.

Due to the singularity of the kernel, several directions of integrations are forbidden, 
such as, for example, $\partial_\ell$ and $\partial_m$.
However, there is a natural choice of direction along which we are allowed to integrate, that is,
the ``{\it good direction}''
\begin{align}\label{i50}
X := \partial_{|\ell|} + \partial_{|m|}
\end{align}
which is tangential to the singularity.
A calculations gives 
\begin{align*}
\Phi(k,\ell,m) = (|m|-|\ell|)^2 + |\ell| X\Phi(k,\ell,m) - 2|\ell|^2,
\end{align*}
and it follows that, close to the singularity of the kernel,
\begin{align}\label{i52}
|X\Phi(k,\ell,m)| \not\approx |\ell| \quad \Longrightarrow  \quad |\Phi| \gtrsim |\ell| \max(|\ell|, |X\Phi|). 
\end{align}
In other words, one of the following three things happens:
either (a) the kernel is not very singular, or
(b) the phase is non-stationary in the $X$ direction (more precisely, $|X\Phi| \gtrsim |\ell|$)
or (c) the integrand of \eqref{i41} is non-stationary in the $s$ direction, 
(more precisely, $|\Phi| \gtrsim |\ell|^2$).
These facts turn out sufficient to obtain the desired weighted $L^2$ bounds.


We refer the reader to Section \ref{secdkL2} for details of these weighted estimates for the main term \eqref{i40}.
The lower order terms corresponding to the ``similar and better terms'' in \eqref{i11} are 
estimated in Section \ref{ssecmu23Est}.

\subsection*{Notation}
We fix $\varphi: \R \to [0,1]$ an even smooth function supported in $[-8/5,8/5]$ and equal to $1$ in $[-5/4,5/4]$. 
For simplicity of notation, we also let $\varphi: \R^n \to [0,1]$ denote the corresponding radial function on $\R^n$. 
Let
\begin{equation}
\label{LP0}
\begin{split}
& \varphi_K(x) := \varphi(|x|/2^K)-\varphi(|x|/2^{K-1}) \quad \text{ for any } \quad K \in \Z,
  \qquad \varphi_I:= \sum_{M\in I\cap \Z} \varphi_M \text{ for any } I\subseteq \R,
\\
& \varphi_{\leq B}:=\varphi_{(-\infty,B]},\quad\varphi_{\geq B}:=\varphi_{[B,\infty)},
  \quad \varphi_{<B}:=\varphi_{(-\infty,B)}, \quad \varphi_{>B}:=\varphi_{(B,\infty)}.
\end{split}
\end{equation}
For any $A<B\in\mathbb{Z}$ and $J \in [A,B] \cap \Z$ we let
\begin{equation}
\label{LP2}
\varphi^{[A,B]}_J:=
\begin{cases}
\varphi_{J} \quad & \text{ if } A<J<B,
\\
\varphi_{\leq A} \quad & \text{ if } J=A,
\\
\varphi_{\geq B} \quad &\text{ if } J=B,
\end{cases} \qquad
\varphi^{(A)}_J:=
\begin{cases}
\varphi_{J} \quad & \text{ if } A<J,
\\
\varphi_{\leq A} \quad & \text{ if } J=A.
\end{cases}
\end{equation}
For simplicity of notation, we will also use $\varphi_{\sim K}$ to denote a generic smooth cutoff function 
which is one on the support of $\varphi_K$ and is supported in $[c_1 2^K, c_22^K]$ 
for some absolute constants $c_1 < 1 < c_2$.
We will also sometimes denote with $\varphi^\prime$ 
a generic cutoff function with support properties similar 
to those of $\varphi$, such as derivatives of $\varphi$.

$P_K$, $K\in \Z$, denotes the Littlewood--Paley projection operator defined by the (flat) Fourier multiplier
$\xi \to \varphi_K(\xi)$.  
$P_{\leq B}$ denotes the operator defined by the Fourier 
multiplier $\xi\to \varphi_{\leq B}(\xi)$.
Similarly we define $P_{< B}$, $P_{\geq B}$ and so on.

For any $x\in\mathbb{Z}$ let $x_{+}=\max(x,0)$ and $x_-:=\min(x,0)$.
For any number $p\in\R$ we will denote with $p+$, resp. $p-$, a number which is larger, resp. smaller, than $p$ 
but can be chosen arbitrarily close to $p$.

We use standard notation for functional spaces such as $L^p$, $W^{s,p}$ and $H^s$.
$\mathcal{S}$ denotes the Schwartz class.

\medskip
\subsection{The main bootstrap argument}
We place our evolution in the space $X = A \cap W$ defined by the following norms:
\begin{align}
\label{space}
\begin{split}
& {\| u(t) \|}_A := {\| \langle k \rangle^N \wt{f}(t) \|}_{L^2} 
  + {\| \partial_k \wt{f}(t) \|}_{L^2},
\\
& {\| u(t) \|}_W := {\| \partial_k^2 \wt{f}(t) \|}_{L^2},
\end{split}
\end{align}
where we recall that $u$ and $f$ are related by $u=e^{itH}f$. 

The first  norm ${\| \langle k \rangle^N \wt{f}(t) \|}_{L^2}$ is the equivalent of the Sobolev norm ${\|u(t)\|}_{H^N}$.
Since we will be able to prove integrable-in-time decay for $u$ the norm ${\| u(t) \|}_A$ 
can be uniformly bounded in $t$.
On the contrary, the highest weighted-type norm ${\| u(t) \|}_W$ is allowed to grow at a certain
(quite fast) rate of $\jt^{1/2+}$. 
This is still sufficient to obtain a certain control (with growth) of the $L^1_x$-norm of $\mathcal{W} f$,
and infer pointwise-in-$x$ time decay via a standard dispersive estimate.

The proof of Theorem \ref{maintheo} relies on the following bootstrap estimates.

\begin{proposition}[Main Bootstrap]\label{proBoot}
Let $u$ be a solution of \eqref{NLSV} on a time interval $[0,T]$, with initial data 
satisfying
\begin{align}\label{proBootdata}
{\| u_0 \|}_{H^N} + {\| \partial_k \widetilde{u_0} \|}_{L^2} + {\| \partial_k^2 \widetilde{u_0} \|}_{L^2} 
\leq \e_0.
\end{align}
With the definitions \eqref{space} assume the a priori bounds
\begin{align}
\label{apriori}
\sup_{t\in[0,T]} \Big( {\|u(t)\|}_{A} + \jt^{-1/2-\delta}{\|u(t)\|}_{W} \Big) \leq \e \leq \e_0^{2/3},
\end{align}
for some properly chosen\footnote{We can choose $\delta=240/(N-5)$ for example.} 
$\delta\in(0,1/4)$.
Then, we have the improved bounds
\begin{align}
\label{proBoot2}
\sup_{t\in[0,T]} \Big( {\|u(t)\|}_{A} + \jt^{-1/2-\delta}{\|u(t)\|}_{W} \Big) \leq \frac{\e}{2}.
\end{align}
\end{proposition}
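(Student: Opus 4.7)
The plan is to establish the improved bounds \eqref{proBoot2} by controlling each of the three ingredients of $\|u\|_A$ and $\|u\|_W$ separately through the distorted Duhamel formula \eqref{introD}--\eqref{intromu}, after substituting in the asymptotic expansion of the NSD $\mu$ from Steps 2--3 of \ref{ssecidea1}. Before analyzing the nonlinear terms, I would record the linear pointwise decay consequence of \eqref{apriori}: interpolating $\|\partial_k\wt f\|_{L^2}\lesssim \e$ with $\|\partial_k^2\wt f\|_{L^2}\lesssim \e\jt^{1/2+\delta}$ gives control of $\|\mathcal W f\|_{L^1}$ up to a small power of $t$, which together with the dispersive estimate for $e^{it\Delta}$ and the boundedness of the wave operator yields $\|u(t)\|_{L^\infty}\lesssim \e\jt^{-3/2+\delta'}$. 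This decay is integrable in time, which is what will drive the closure of all bilinear estimates. I would then split frequencies into $|k|\gtrsim\jt^\delta$ and $|k|\lesssim \jt^\delta$: the high-frequency piece is handled purely by the $H^N$ bound, since any derivative loss incurred by the expansion \eqref{i5} of $\psi_1$ or by the symbol class containing $g_j$ is offset by a factor $\jt^{-\delta(N-\text{const})}$. All subsequent work may therefore be restricted to the low-frequency regime where the expansion \eqref{i10}--\eqref{i16} is effective with a fast-decaying remainder.

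For the Sobolev component $\|\langle k\rangle^N\wt f\|_{L^2}$, I would insert the leading expansion of $\mu$ into the bilinear $\mathcal B(f,f)$ of \eqref{i40} and apply the Coifman--Meyer type bound \eqref{i17}--\eqref{i18} (together with its counterparts from Section \ref{secBE} for the annular-kernel pieces), placing $\mathcal W u$ in $L^\infty$ on one factor and in $L^2$ on the other, and integrating the pointwise decay in time. The bound on $\|\partial_k\wt f\|_{L^2}$ is slightly subtler: when $\partial_k$ falls on $\wt f(s,\ell+k)$ the resulting term has exactly the same structure and is handled as above, but when it produces the phase factor $-2is\ell$ one gains a loss of $s$ that must be recovered by integration by parts. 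Here I would invoke the trichotomy \eqref{i52}: on the region where $|\ell-m|\not\ll |\ell|$ the $\pv$-kernel is regular and a direct H\"older estimate suffices; on the region $|X\Phi|\gtrsim |\ell|$ I integrate by parts along the good direction $X=\partial_{|\ell|}+\partial_{|m|}$ from \eqref{i50}, which is tangential to the singular annulus $|\ell|=|m|$ and therefore preserves the bilinear class of Section \ref{secBE} while gaining a factor $(sX\Phi)^{-1}$; and on the region $|\Phi|\gtrsim |\ell|^2$ I use $e^{is\Phi}=(i\Phi)^{-1}\partial_s e^{is\Phi}$ to integrate by parts in $s$, paying a boundary term at $s=t$ (bounded by $t\cdot\|\mathcal W u\|_{L^2}\|\mathcal W u\|_{L^\infty}$) and an interior term involving $\partial_s\wt f=\wt{\mathcal F}(u^2)$ which benefits from the quadratic pointwise decay of $u^2$.

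The hard part will be the $\partial_k^2$ bound for $\|u\|_W$, where one obtains factors of $s^2$ in the integrand and only a budget of growth $\jt^{1/2+\delta}$ is available. The scheme is to iterate the preceding trichotomy: after a first integration by parts (in $X$ or in $s$) has converted one power of $s$ into a better symbol, one is faced with an expression that still carries a loss of $s$ and a singular kernel, and one must perform a second integration by parts. This forces careful bookkeeping of commutator-type terms where derivatives fall on the symbols $\ell/X\Phi$ and $|\ell|^2/\Phi$, on the smooth cutoffs $\varphi(|X\Phi|/|\ell|)$ separating the three regions, and on the $\omega$-dependent coefficient $g_0$ from \eqref{i6}; each such term must be shown to lie in (or be absorbed by) a bilinear operator of the class treated in Section \ref{secBE}. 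Boundary terms at $s=t$ are bounded using $\|u\|_W\lesssim \e\jt^{1/2+\delta}$ and the pointwise decay, while interior terms containing $\partial_s\wt f=\wt{\mathcal F}(u^2)$ are estimated by combining the bilinear bounds with the quadratic gain $\|u^2\|\lesssim \e^2\jt^{-3+\delta'}$. Finally, the subleading pieces in the expansion \eqref{i11}--\eqref{i16} of $\mu$, corresponding to the remaining $g_j$ and to higher negative powers of $|x|$, come with additional decay or regularity in $k$ and are handled in parallel by the same trichotomy, with estimates consolidated in Section \ref{ssecmu23Est}. Putting these bounds together and choosing $\delta$ small relative to $N$ yields the improvement by a factor of $1/2$ in \eqref{proBoot2}.
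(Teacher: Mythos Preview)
Your outline is essentially the paper's strategy and would close the bootstrap. A few points where the paper's execution differs from what you sketch. First, the Sobolev component $\|\langle k\rangle^N\wt f\|_{L^2}$ is not handled through the NSD expansion and the bilinear bounds of Section~\ref{secBE} at all; the paper uses a direct energy estimate (Proposition~\ref{proSob}), differentiating \eqref{NLSV} by $(-\Delta+V)^j$ and using the integrable $L^\infty$ decay, which avoids commuting $\langle k\rangle^N$ through the singular bilinear operators. Second, for the weighted norms the paper does not literally split into the three cases of the trichotomy \eqref{i52}; instead it encodes the dichotomy between $X$-integration and $s$-integration in a single identity (see \eqref{PhiXPhiIBP}), namely $c(\ell,m)^{-1}\big(\tfrac{1}{is}|\ell|\mathbf X+i\partial_s\big)e^{is\Phi}=e^{is\Phi}$ with $c\approx|\ell|^2$ on the singular region, and integrates by parts once (or twice, for $\partial_k^2$) against this combined operator. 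This produces the same families of terms you anticipate but with less case-management. Third, your discussion is written entirely for the $\mathcal N_1$-piece where $\nu_1=\nu_1(\ell,m)$; the companion term $\mathcal N_2$ in \eqref{D12} has $\nu_1=\nu_1(\ell,k)$ depending on the output variable, so $\partial_k$ hits the kernel directly. The paper handles this in \S\ref{ssecdkN_2} with a different ``good'' vectorfield $\mathbf Y=\partial_k+\tfrac{k}{|k|}(\tfrac{\ell}{|\ell|}\cdot\partial_\ell)$ tangential to $\{|\ell|=|k|\}$, together with the bilinear bounds of Theorem~\ref{theomu1'}; you should flag this explicitly rather than folding it into ``subleading pieces''.
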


Through a standard bootstrap argument this proposition gives us global solutions for \eqref{NLSV}.
The Sobolev bound in \eqref{maintheoconc} follows from 
${\| \jk^N \wt{f} \|}_{L^2} = {\| \jk^N \wt{u} \|}_{L^2} \approx {\| u \|}_{H^N}$.
The bounds \eqref{proBoot2} imply the poitnwise decay estimate stated in \eqref{maintheoconc}
with $\alpha=1/4-\delta/2$, via the linear estimate \eqref{linearinfty}
and the interpolation $\| \wt{f} \|_{L^\infty}^2 \lesssim \| \partial_k \wt{f} \|_{L^2}
\| \partial_k^2 \wt{f} \|_{L^2}$.

The proof of Proposition \ref{proBoot} is performed in \ref{secdkL2} (for the leading order terms)
and \ref{ssecmu23Est} (for all the lower order terms).


\medskip
\section{Linear Spectral Theory}\label{SecLin}

\subsection{Generalized eigenfunctions and distorted Fourier transform}

Given a potential $V:\R^3\rightarrow \R$, consider the Schr\"odinger operator $H=-\Delta+V$ associated to it.
If $V$ decays fast enough (e.g. it is `short range' in the sense of Agmon \cite{Agmon}) the spectrum of
$H$ consists of the absolutely continuous spectrum $[0,\infty)$ and a countable number of 
negative eigenvalues $0>\lambda_1 >\lambda_2 > \dots$ with finite multiplicity.
One has the orthogonal decomposition $L^2(\R^3)=L^2_{\mathrm{ac}}(\R^3) \oplus L^2_{\mathrm{p}}(\R^3)$
where $L^2_{\mathrm{ac}}(\R^3)$ is the absolutely continuous subspace for $H$
and $L^2_{\mathrm{p}}(\R^3)$ is the span of the eigenfunctions corresponding to the negative eigenvalues.


For any $k \in \R^3\smallsetminus\{0\}$ we have that $|k|^2$ is in the continuous spectrum of $H$ and 
the associated (generalized) eigenfunctions $\psi(x,k)$ are defined as solutions of
\begin{align}\label{psixk}
(-\Delta + V ) \psi(x,k) = |k|^2 \psi(x,k), \qquad \forall \, k \in \R^3\smallsetminus\{0\},
\end{align}
with the asymptotic condition $\psi(x,k) - e^{ix\cdot k} = O(|x|^{-1})$ for $|x| \rightarrow \infty$,
and the Sommerfeld radiation condition
\begin{align*}
r (\partial_r - i|k|) v(x,k) \longrightarrow 0,
\end{align*}
as $r=|x| \rightarrow \infty$.
The functions $\psi(x,k)$ are `distorted' version of the plane waves $e^{ik\cdot x}$. 
They satisfy the so-called Lippmann-Schwinger equation
\begin{align*}
\psi(x,k) = e^{ix\cdot k} - R_V(|k|^2)(V e^{ix\cdot k}),
\end{align*}
where $R_V(\lambda) = (H-\lambda)^{-1}$ is\footnote{This can be formally understood as 
$R_V(\lambda) := \lim_{\epsilon \rightarrow 0+}(H-\lambda+i\epsilon)^{-1}$,
where the limit is taken with respect to a proper operator norm topology,
say from $\langle x \rangle^{-s}L^2$ to $\langle x \rangle^s H^2$ for some $s>1/2$.}
the (perturbed) resolvent. 
For our analysis it will actually be more convenient to write $\psi$ as a solution of the integral equation
\begin{align*}
\psi(x,k) = e^{ix\cdot k} - R_0(|k|^2) \big(V \psi(\cdot,k)\big),
\end{align*}
where $R_0(\lambda) = (-\Delta - \lambda)^{-1}$ is the flat/unperturbed resolvent;
more explicitly,
\begin{align}
\label{psi0}
\psi(x,k) = e^{ix\cdot k} - \frac{1}{4\pi} \int_{\R^3} \frac{e^{i|k||x-y|}}{|x-y|} V(y) \psi(y,k) \, \mathrm{d}y.
\end{align}

The following Theorem guarantees the existence of the Distorted Fourier Transform
and its inverse, under suitable assumptions on the potential.

\begin{theorem}[Distorted Fourier Transform]\label{theodFT}
Consider the Schr\"odinger operator $H=-\Delta + V$ with a fast decaying potential $V = O(|x|^{-1-})$
in dimension $d=3$, and assume \eqref{assV1}.
For $g\in\mathcal{S}$ define the distorted Fourier Transform (dFT) by
\begin{align}\label{dFTdef}
(\Ftil g)(k) := \wt{g}(k):= \frac{1}{(2\pi)^{3/2}} \lim_{R\rightarrow \infty} 
  \int_{|x|\leq R} \overline{\psi(x,k)} \, g(x) \, \mathrm{d}x.
\end{align}
Then, $\Ftil$ extends to an isometric isomorphism of $L^2(\R^3)$ with inverse
\begin{align}\label{dFTinv}
(\Ftil^{-1} g) (x):= \frac{1}{(2\pi)^{3/2}} \lim_{R\rightarrow \infty} 
  \int_{|x|\leq R} \psi(x,k) \, g(k) \, \mathrm{d}k.
\end{align}
Moreover, $\Ftil$ diagonalizes the Schr\"odinger operator: $\Ftil H \Ftil^{-1} = |k|^2$.
\end{theorem}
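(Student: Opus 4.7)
This is a classical result in the style of Agmon--Ikebe, and I would follow that route. The plan is: (i) solve the Lippmann--Schwinger equation to produce the $\psi(x,k)$; (ii) use the limiting absorption principle (LAP) to express the spectral resolution of $H$ in terms of boundary values of the free resolvent $R_0(\lambda\pm i0)$ and of $\psi$; (iii) invoke the assumption \eqref{assV1} to conclude that the absolutely continuous subspace fills all of $L^2$; (iv) read off Plancherel, inversion, and the diagonalization identity.

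First I would rewrite \eqref{psi0} as $(I + T(k))\psi(\cdot,k) = e^{i\,\cdot\,k}$, where $T(k)g(x) = \tfrac{1}{4\pi}\int \tfrac{e^{i|k||x-y|}}{|x-y|} V(y)g(y)\,dy$, and view it between weighted spaces $\langle x\rangle^{-s}L^2$, $s>1/2$. Under the decay hypothesis on $V$ the operator $T(k)$ is compact on these spaces and depends continuously on $k\in\R^3\setminus\{0\}$; the hypothesis \eqref{assV1} (no embedded eigenvalues and no zero-energy resonance) forces $\mathrm{Ker}(I+T(k))=0$ by a standard Rellich--Kato uniqueness argument. Fredholm theory then produces a unique $\psi(x,k)$ satisfying \eqref{psi0} and the Sommerfeld condition, and the map $k\mapsto \psi(\cdot,k)$ is continuous on $\R^3\setminus\{0\}$. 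At this stage I would also record the identity $\psi(\cdot,k) = (I - R_V(|k|^2 + i0)V)e^{i\,\cdot\, k}$, which ties $\psi$ to the perturbed resolvent.

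The heart of the argument is the limiting absorption principle. I would show that $R_V(\lambda\pm i\varepsilon)$ has norm-limits as $\varepsilon\downarrow 0$ as operators $\langle x\rangle^{-s}L^2\to \langle x\rangle^{s}H^2$ for every $\lambda>0$, again using Fredholm theory and the absence of embedded eigenvalues/resonances. Combining LAP with Stone's formula, the spectral projection onto $L^2_{ac}$ is
\begin{equation*}
P_{ac} g \;=\; \frac{1}{2\pi i}\int_0^\infty \bigl[R_V(\lambda+i0)-R_V(\lambda-i0)\bigr]g\,d\lambda.
\end{equation*}
Substituting $\lambda=|k|^2$, passing the free resolvent kernel to the generalized eigenfunctions via the identity above, and using the polar decomposition $dk = |k|^2\,d|k|\,d\omega$ together with the Fourier representation of the jump of $R_0$ on the sphere $\{|k|^2=\lambda\}$, gives exactly
\begin{equation*}
(P_{ac} g)(x) \;=\; \frac{1}{(2\pi)^3}\int_{\R^3}\psi(x,k)\,\wt{g}(k)\,dk,
\end{equation*}
with $\wt{g}$ defined by \eqref{dFTdef}. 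Polarizing yields the Parseval identity $\langle P_{ac}g, h\rangle_{L^2_x} = \langle \wt{g},\wt{h}\rangle_{L^2_k}$.

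Finally I would invoke \eqref{assV1}: with no negative eigenvalues, no embedded eigenvalues and no resonance at zero, we have $L^2_{p}=\{0\}$ and no singular continuous spectrum, hence $P_{ac}=I$. Parseval then upgrades $\Ftil$ to an isometry of $L^2$, and the inversion formula \eqref{dFTinv} is exactly the spectral identity above with $P_{ac}=I$. Surjectivity (hence the isomorphism property) follows from the identity $\Ftil^{-1}\Ftil = \Ftil\,\Ftil^{-1} = I$, obtained on the dense Schwartz class via Fubini and the eigenfunction formula and then extended by continuity. The diagonalization $\Ftil H\Ftil^{-1}=|k|^2$ is immediate from $H\psi(\cdot,k)=|k|^2\psi(\cdot,k)$, integrated against a test function in $\mathcal{S}$ where $H$ can be moved from $\psi$ to the argument by integration by parts using the growth control on $\psi$ coming from \eqref{psi0}.

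The most delicate step is the LAP and the exclusion of positive embedded eigenvalues; everything else is bookkeeping once LAP is in hand. In our setting the hypothesis on $V$ (smoothness and pointwise decay much stronger than $|x|^{-1-}$) is far more than enough for Agmon's framework to apply, so I would simply cite his work for the quantitative LAP bounds and focus the write-up on assembling the formulas above.
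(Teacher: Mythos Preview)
Your proposal is correct and follows exactly the classical Agmon--Ikebe route; the paper itself does not prove this theorem but simply cites it as a known result due to Ikebe, Alsholm--Schmidt, and Agmon, referring to \cite{GHW} for further discussion. Your sketch is thus more detailed than what the paper provides, and is precisely the argument underlying the references the authors invoke.
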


This theorem is due to several authors including
Ikebe \cite{Ikebe}, Alsholm-Schmidt \cite{AS}, and Agmon \cite{Agmon}.
We refer the interested reader to Section 2 of \cite{GHW} for a more extensive 
presentation of this topic, and a discussion about the validity of Theorem \ref{theodFT}
under weaker assumptions on the potential, such as those made in the references cited above.
In the case that \eqref{assV1} does not hold and $H_V$ has discrete spectrum in $(-\infty,0)$, 
then the generalized eigenfucntions will diagonalize $H_V$ restricted to the absolutely continuous subspace 
$L^2_{\mathrm{ac}}$, and the dFT is well-defined and invertible there.

An important object in the study of the flow associated to $H$ is the wave operator defined by
\begin{align}\label{W0}
\mathcal{W} = \mathrm{s-lim}_{t\to \infty} e^{itH} e^{it\Delta},
\end{align}
where the limit is in the strong operator topology.
The wave operator is unitary on $L^2$ and is connected to the dFT by the formula
\begin{align}\label{W01}
\mathcal{W} = \widetilde{\mathcal{F}}^{-1}  \widehat{\mathcal{F}},
\end{align}
where $\widehat{\mathcal{F}}$ is the regular/flat Fourier transform.
In particular, $\mathcal{W}^{-1} = \mathcal{W}^* = \widehat{\mathcal{F}}^{-1} \widetilde{\mathcal{F}}$,
and one has the following intertwining formulas for $H$ and $H_0=-\Delta$:
\begin{align}\label{Wint}
a(H) = \mathcal{W} a(H_0) \mathcal{W}^*.
\end{align}
Under relatively mild decay and regularity assumptions on $V$ (much weaker than our assumption \eqref{assV2})
and provided that $V$ is generic, that is, there are no solutions
of $H\psi = 0$ in $\langle x \rangle^{1/2+} L^2$ (no resonances),
we have that $\mathcal{W}$ and $\mathcal{W}^\ast$ are bounded on $W^{k,p}$.
See Yajima \cite{Yajima} and the discussion in \cite{GHW} and reference therein.

It is worth pointing out that while we use standard results on the $L^p$ boundedness of wave operators,
we do not rely on any specific structural property about them, such as those 
found in the literature\footnote{For 
convenience we use a result of \cite{GHW}, Proposition \ref{proGHW} below,
which relies on the structure of $\mathcal{W}$;
but it is should be possible  to use our approach to obtain this independently.}, 
see for example \cite{Yajima,GHW,BeSch}.
On the other hand, our approach does rely on analyzing the structure of `wave operator'- like quantities
at a nonlinear level.

\medskip
\subsection{Bounds on $\psi(x,k)$}
We begin our analysis by establishing some basic estimates on $\psi$ and its derivatives
in $x$ and $k$.

\begin{lemma}[Basic properties of $\psi$]\label{lemmapsi}
Let $\psi$ be defined as in \eqref{psi0} with $V$ satisfying \eqref{assV2}.
Then:
\begin{align}
\label{psiLinfty}
& | \partial_x^\alpha \partial_k^\beta \psi(x,k) | 
  \lesssim ( \jk^{|\alpha|} + (|k|/\jk)^{1-|\beta|} ) \jx^{|\beta|}
  \qquad 0 \leq |\alpha|,|\beta| \leq N_1
\end{align}
\end{lemma}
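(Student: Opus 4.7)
The plan is to proceed by induction on $|\alpha|+|\beta|$ starting from the Lippmann--Schwinger equation \eqref{psi0}. The key conceptual tool is that under \eqref{assV2}--\eqref{assV1}, the operator $I + R_0(|k|^2)V$ is invertible on suitable weighted $L^\infty$ spaces uniformly in $k\in\R^3\setminus\{0\}$, so every equation of the form $(I+R_0(|k|^2)V)w = F$ inherits an $L^\infty$-type bound from one on $F$. I apply this repeatedly to the equations obtained by differentiating \eqref{psi0} in $x$ and/or $k$. For the base case $|\alpha|=|\beta|=0$, writing $v(x,k) := \psi(x,k)-e^{ix\cdot k}$, one has $(I+R_0(|k|^2)V)v = -R_0(|k|^2)(Ve^{i\cdot k})$, whose right-hand side is uniformly bounded by \eqref{assV2}; classical Agmon--Ikebe--Alsholm--Schmidt theory then yields $|\psi(x,k)|\lesssim 1$.

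For the $x$-derivatives, I would differentiate \eqref{psi0} directly. The plane-wave part produces $(ik)^\alpha e^{ix\cdot k}$ of size $\jk^{|\alpha|}$, and each $x$-derivative of the Helmholtz kernel satisfies
\[
\partial_{x_j}\!\left(\frac{e^{i|k|r}}{r}\right) = \hat r_j\, e^{i|k|r}\!\left(\frac{i|k|}{r} - \frac{1}{r^2}\right),\qquad r = |x-y|,
\]
which either contributes a factor of $|k|$ while preserving the Helmholtz structure or sharpens the singularity at $y=x$ to $|x-y|^{-2}$, a kernel still integrable against $V$ under \eqref{assV2}. Iterating, one obtains an equation $(I + R_0(|k|^2)V)(\partial_x^\alpha \psi) = F_\alpha$ with $\|F_\alpha\|_{L^\infty} \lesssim \jk^{|\alpha|}$, and uniform invertibility gives the $\jk^{|\alpha|}$ growth.

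For the $k$-derivatives, the crucial computation is
\[
\partial_{k_j}\!\left(\frac{e^{i|k|r}}{r}\right) = \frac{ik_j}{|k|}\, e^{i|k|r},
\]
so a single $k$-derivative converts the Helmholtz kernel into a bounded one, while the $\jx^{|\beta|}$ growth on the right-hand side of \eqref{psiLinfty} originates from the factor $ix\,e^{ix\cdot k}$ produced by $\partial_{k_j}e^{ix\cdot k}$. For $|\beta|\geq 2$, subsequent $k$-derivatives either act on $e^{i|k|r}$, producing an additional factor $|x-y|$ (hence further $\jx$ growth after integration against $V$), or differentiate the symbol $k_j/|k|$, producing an extra factor of $|k|^{-1}$. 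Crucially, the first derivative is singularity-free, and each of the remaining $|\beta|-1$ derivatives contributes at most one power of $|k|^{-1}$ near the origin: this accounts precisely for the factor $(|k|/\jk)^{1-|\beta|}$. Mixed derivatives then follow by combining the two iteration schemes and tracking the joint homogeneity in $k$ coming from both $x$- and $k$-differentiations.

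The main obstacle is to establish the uniform invertibility of $I + R_0(|k|^2)V$ on the relevant weighted $L^\infty$ space across \emph{all} $k\in\R^3\setminus\{0\}$, and in particular near the threshold $|k|=0$. This is precisely what is ensured by the generic assumption \eqref{assV1} of no zero-energy eigenvalue or resonance; it is also what prevents any extra $|k|^{-1}$ factor in the base case and thus validates the $|\beta|=0$ end of the above pattern. Once this uniform resolvent bound is in hand at each step of the induction, all the cases $0\leq|\alpha|,|\beta|\leq N_1$ of \eqref{psiLinfty} follow by the scheme outlined above.
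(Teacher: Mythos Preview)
Your overall strategy matches the paper's: differentiate the Lippmann--Schwinger equation, obtain a Fredholm-type equation for each $\partial_x^\alpha\partial_k^\beta\psi$, and invoke uniform invertibility. The paper organizes this by introducing normalized quantities $v_{\alpha\beta}:=\jk^{-|\alpha|}(|k|/\jk)^{|\beta|-1}\jx^{-|\beta|}\partial_x^\alpha\partial_k^\beta v$ and a class $\mathcal T^N$ of operators, then proves inductively that $v_{\alpha\beta}+T_{0,\beta}v_{\alpha\beta}=G_{\alpha\beta}$ with $G_{\alpha\beta}$ built from lower-order data.

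There are two genuine gaps in your sketch. First, the $x$-derivative step as written breaks for $|\alpha|\ge 2$: iterating your direct differentiation of the Helmholtz kernel produces a local singularity $|x-y|^{-1-|\alpha|}$, which is not integrable in $\R^3$ once $|\alpha|\ge 2$ (already $\Delta_x(e^{i|k|r}/r)$ contains a $\delta$). The correct maneuver, which the paper uses, is to exploit that the kernel depends on $x-y$: convert $\partial_x$ into $-\partial_y$ and integrate by parts onto $V\psi$ (equivalently, use $[\partial_x,R_0]=0$ so that $\partial_x^\alpha R_0(V\psi)=R_0(\partial_x^\alpha(V\psi))$ and distribute by Leibniz). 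This keeps the kernel at $e^{i|k|r}/r$ throughout and yields exactly the equation $(I+R_0(|k|^2)V)\partial_x^\alpha\psi=F_\alpha$ you state, with $F_\alpha$ involving only lower-order $\partial_x^{\alpha'}\psi$ and derivatives of $V$.

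Second, the invertibility you need is not just on $L^\infty$ but on $\jx^{|\beta|}L^\infty$, since the forcing from $\partial_k^\beta e^{ix\cdot k}$ and from the $|x-y|$ factors grows like $\jx^{|\beta|}$. Unwinding the weight, the homogeneous problem is equivalent to finding a \emph{polynomially bounded} (not bounded) solution of $(-\Delta+V-|k|^2)h=0$; assumption \eqref{assV1} alone rules out $L^2$ or $\jx^{1/2+}L^2$ solutions, but not a priori those with polynomial growth. The paper closes this by invoking Simon's result \cite{SimonSpec} that for $|k|^2$ in the spectrum there are no nontrivial polynomially bounded solutions. You should make this step explicit rather than subsuming it under ``uniform invertibility on the relevant weighted space''.
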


\begin{proof}

For $f \in L^\infty_{x,k}$ let us define the operator
\begin{align}
\label{lemmapsiT}
\big(T_k f(\cdot,k) \big)(x) := - \frac{1}{4\pi} \int_{\R^3} \frac{e^{i|k||x-y|}}{|x-y|} V(y) f(y,k) \, \mathrm{d}y.
\end{align}
Let us write 
\begin{align}\label{lemmapsiv}
\begin{split}
& v(x,k) := \psi(x,k) - e^{ix\cdot k}, 
\end{split}
\end{align}
so that \eqref{psi0} implies
\begin{align}\label{lemmapsiv2}
v(x,k) & = (T_k v(\cdot,k))(x) + (T_k e^{ix\cdot k})(x).
\end{align}
Note that $T_k$ is a compact operator from $L^\infty$ to $C_0$, where $C_0$
is the space of bounded continuous functions decaying to $0$ at infinity.
In particular, for any $g \in C_0$ there exists a unique $C_0$ solution to the integral equation $f = g + T_k f$
if and only if $f = T_k f$ admits only the trivial solution;
this is indeed the case since $T_k f = -R_0 (Vf)$, where $R_0$ is the flat resolvent
$(-\Delta - |k|^2)^{-1}$ and we are assuming absence of eigenvalues and resonances for $-\Delta + V$.
This and \eqref{lemmapsiv2} imply \eqref{psiLinfty} for $\alpha=\beta=0$.

Let us define
\begin{align}\label{lemmapsivab}
& v_{\alpha\beta} :=  \jk^{-|\alpha|}  \big( |k|/\jk \big)^{|\beta|-1} \jx^{-|\beta|} \partial_x^\alpha\partial_k^\beta v,
\qquad |\beta|\geq 1.
\end{align}
The conclusion \eqref{psiLinfty} will follow from uniform bounds on $v_{\alpha\beta}$.
To obtain these bounds we will show, by induction, 
that $v_{\alpha\beta}$ satisfies an integral equation similar to the one satisfied $v$ \eqref{lemmapsiv2}, 
up to lower order terms.

To formalize this, let us define the class $\mathcal{T}^N$ of $k$ dependent operators as follows:
\begin{align}\label{lemmapsiclassT}
\begin{split}
T_k \in \mathcal{T}^N \quad \stackrel{def}{\Longleftrightarrow} \quad 
	& T_k f := \int_{\R^3} \frac{e^{i|k||x-y|}}{|x-y|} a(x,y) f(y) \, \mathrm{d}y, 
\\
& \mbox{with} \qquad \int_{\R^3} \jy^{N} \big( \langle \partial_x \rangle^N 
	+ \langle \partial_y \rangle^N \big) |a(x,y)| \, \mathrm{d}y \lesssim 1.
\end{split}
\end{align}
We think of operators in $\mathcal{T}^N$ as acting on function $f=f(x,k)$.
The operator $T_k$ in \eqref{lemmapsiT} belongs to $\mathcal{T}^{N_1}$ by the assumption \eqref{assV2}.
The following properties hold:

\begin{itemize}
 
\item[(i)] For $N>3$, operators in $\mathcal{T}^N$ are compact from $L^\infty$ to $C_0$.

\item[(ii)] We have
\begin{align}\label{lemmapsiT1}
\begin{split}
& \mathcal{T}^{N} \subset \mathcal{T}^{N'}, \quad N'\leq N, 
\\
& \jx^{-\ell} \mathcal{T}^{N} \subset \mathcal{T}^{N}, \quad \forall \, \ell \geq 0,
\\
& \mathcal{T}^{N}( \jy \cdot), \, \mathcal{T}^{N}(y \cdot) \subset \mathcal{T}^{N-1},
\\
& \jx^{-1} \partial_k \mathcal{T}^{N} \subset \frac{k}{|k|} \mathcal{T}^{N-1}.
\end{split}
\end{align}
Here, for $T_k \in \mathcal{T}_k$ we denote $\partial_kT_k$ 
the operator obtained by differentiating in $k$ the term $e^{i|k||x-y|}$ in \eqref{lemmapsiclassT}.

\item[(iii)] We have
\begin{align}\label{lemmapsiT2}
\begin{split}
[\partial_x, T_k] := \partial_x T_k - T_k \partial_x \subset \mathcal{T}^{N-1}.
\end{split}
\end{align}
This can be seen by applying directly $\partial_x$ to \eqref{lemmapsiclassT}, converting it into $-\partial_y$
and integrating by parts.


\end{itemize}

\medskip
{\it Claim}.
Let $v_{\alpha\beta}$ be defined as in \eqref{lemmapsivab} for $|\alpha| + |\beta| = N \leq N_1$.
The following identity holds true:
\begin{align}\label{lemmapsiclaim1}
v_{\alpha\beta} + T_{0,\beta}(v_{\alpha\beta}) = G_{\alpha\beta}
\end{align}
where 
\begin{align}\label{lemmapsiclaim1'}
T_{0,\beta} (f) :=  \frac{1}{\jx^{|\beta|}} \frac{1}{4\pi} \int_{\R^3} \frac{e^{i|k||x-y|}}{|x-y|} V(y) \jy^{|\beta|} f(y) \, \mathrm{d}y,
\end{align}
and $G_{\alpha\beta}$ is a linear combination of the form
\begin{align}\label{lemmapsiclaim2}
\begin{split}
& G_{\alpha\beta} = \sum a_\ell(k) T_\ell ( v_{\gamma\delta}) + \sum a_\ell'(k) T_\ell' (e^{ix\cdot k}), 
	\qquad T_\ell, T_\ell' \in \mathcal{T}^{N_1-N-1},
\end{split}
\end{align}
where the sums run over finitely many indexes $\ell$ and
\begin{align}\label{lemmapsiclaim2'}
\begin{split}
|\gamma|\leq |\alpha|, \quad |\delta| \leq |\beta|, \quad |\gamma|+|\delta| \leq N- 1, 
\end{split}
\end{align}
and with coefficients $a_\ell(k), \, a_\ell'(k)$ that are either 
(a) smooth and bounded with all their derivatives, or 
(b) $0$-homogeneous for $|k| \ll 1$ and otherwise smooth and bounded with all their derivatives.

\medskip
{\it Proof of the Claim}.
We proceed by induction on $N$. 
The case $N=0$ is given by \eqref{lemmapsiv2}.
Let us assume that the claimed identity is true for $v_{\alpha\beta}$ with $|\alpha|+|\beta| = N$.
In order to prove it for $N+1$ we derive the corresponding identity for $v_{\alpha\beta'}$ with $\beta' = \beta + \beta_0$, 
$|\beta_0|=1$.
This will suffice since the case of $v_{\alpha'\beta}$ with $\alpha' = \alpha + \alpha_0$, $|\alpha_0|=1$,
is simpler and follows more directly by applying \eqref{lemmapsiT2}.

Assuming without loss of generality $\beta'=\beta+(1,0,0)$, 
from \eqref{lemmapsivab} and \eqref{lemmapsiclaim1}, denoting $T=T_{0,\beta}$, we have
\begin{align}\label{lemmapsiclaim3}
v_{\alpha\beta'} = \frac{1}{\jx} \frac{|k|}{\jk} \partial_{k_1} v_{\alpha\beta}  = 
  -\frac{1}{\jx}  \frac{|k|}{\jk} \partial_{k_1} T(v_{\alpha\beta}) + \frac{1}{\jx} \frac{|k|}{\jk} \partial_{k_1} G.
\end{align}
First we calculate
\begin{align*}
\frac{1}{\jx}  \frac{|k|}{\jk} \partial_{k_1} T(v_{\alpha\beta}) 
	& = \frac{1}{\jx} T\big( \frac{|k|}{\jk} \partial_{k_1} v_{\alpha\beta} \big)
 	+ \frac{1}{\jx}  \frac{|k|}{\jk} (\partial_{k_1} T)(v_{\alpha\beta})
 	\\
 	& = \frac{1}{\jx} T\big( \jy \, v_{\alpha\beta'}\big)
 	+ \frac{1}{\jx}  \frac{|k|}{\jk} (\partial_{k_1} T)(v_{\alpha\beta}).
\end{align*}
Since $T\in \mathcal{T}^{N_1-N}$, in view of the properties \eqref{lemmapsiT1}, this is of the form 
\[ T_1(v_{\alpha\beta'}) + \frac{k_1}{\jk} T_2(v_{\alpha\beta}), 
	\qquad T_1,T_2\in \mathcal{T}^{N_1-(N+1)}, \]
which is consistent with \eqref{lemmapsiclaim1}-\eqref{lemmapsiclaim2'}. 

Next, we look at the second term in \eqref{lemmapsiclaim3}, and consider the first contribution to $G$ 
from \eqref{lemmapsiclaim2}. 
For $|\gamma|+|\delta| \leq N-1$ and $\delta' = \delta +(1,0,0)$, proceeding similarly as above we have
\begin{align*}
& \frac{1}{\jx}  \frac{|k|}{\jk} \partial_{k_1} \big[ a_\ell(k) T_\ell (v_{\gamma\delta}) \big]
\\ & = a_\ell(k) \frac{1}{\jx} T_\ell \big( \jy v_{\gamma\delta'}\big)
+ \big( \frac{|k|}{\jk} \partial_{k_1} a_\ell(k) \big) \, \frac{1}{\jx}  T_\ell (v_{\gamma\delta}) 
+ \frac{|k|}{\jk} a_\ell(k) \frac{1}{\jx}  (\partial_{k_1} T_\ell) (v_{\gamma\delta}) 
\end{align*}
which, using the properties \eqref{lemmapsiT1}, is of the form
\[ a(k) T_1(v_{\gamma\delta'}) + b(k) T_2(v_{\gamma\delta}) + c(k) T_3(v_{\gamma\delta})
	\qquad T_1,T_2,T_3\in \mathcal{T}^{N_1-(N+1)}, \]
for some coefficients $a,b,c$ with the same properties of $a_\ell$.
This is consistent with \eqref{lemmapsiclaim2}-\eqref{lemmapsiclaim2'} with $N$ and $\beta$ replaced by $N+1$ and $\beta'$
as desired.
We can deal similarly with the second sum in \eqref{lemmapsiclaim2}, thus obtaining our induction step.

\medskip
{\it Conclusion}.
From \eqref{lemmapsiclaim1}-\eqref{lemmapsiclaim3} we can deduce inductively that $v_{\alpha\beta}$
is the unique bounded solution of the equation \eqref{lemmapsiclaim1}.
Indeed, the existence of $v_{\alpha\beta} \in L^\infty_{x,k}$ is given by the fact that $T_{0,\beta}$ 
is compact, and $G_{\alpha\beta} \in C_0$. 
Moreover, $v_{\alpha\beta}$ is the unique solution of \eqref{lemmapsiclaim1}
if and only if the equation $f + T_{0,\beta}f = 0$ admits only the trivial solution $f\equiv0$. 
To verify that this is the case, we notice that if $f + T_{0,\beta}f = 0$ for a bounded $f$, then
$g = \jx^{|\beta|} f$ is a polynomially bounded solution of $g = T_k g$;
this means that $g$ is in the spectrum of $-\Delta + V$ and thus has to be trivial \cite{SimonSpec}.

From \eqref{lemmapsiv} we see that
\begin{align*}
| \partial_x^\alpha\partial_k^\beta \psi(x,k)| 
& \lesssim  | \partial_x^\alpha\partial_k^\beta e^{ix\cdot k}| + \jk^{|\alpha|}  \big( |k|/\jk \big)^{1-|\beta|} \jx^{|\beta|} 
\\
& \lesssim  \big(\jk^{|\alpha|}  + \big( |k|/\jk \big)^{1-|\beta|} \big) \jx^{|\beta|} 
\end{align*}
which proves the estimates \eqref{psiLinfty}.
\end{proof}

\medskip
\subsection{Expansion of $\psi$}\label{ssecpsiexp}
From the formula \eqref{psi0} we write 
\begin{align}
\label{psipsi1}
\begin{split}
\psi(x,k) & = e^{ix\cdot k} - e^{i|k||x|} \frac{1}{4\pi|x|} \psi_1(x,k),
\\
\psi_1(x,k) & := \int_{\R^3} e^{i|k| [ |x-y| - |x| ]} \frac{|x|}{|x-y|} V(y) \psi(y,k) \, \mathrm{d}y.
\end{split}
\end{align}
$\psi_1$ is the key linear object that we want to study, and for which we want to obtain precise asymptotic expansions.

In the following Lemma we summarize some basic properties of $\psi_1$.

\begin{lemma}[Basic properties of $\psi_1$]\label{Lempsi1}
Under the assumption \eqref{assV2}, 
the function $\psi_1$ defined by \eqref{psipsi1} satisfies, for $|x| \gtrsim 1$,
\begin{align}
\label{psi10}
\begin{split}
& \big| \jk^{-|\alpha|} |x|^{|\alpha|} \nabla^{\alpha}_x \psi_1(x,k) \big| \leq c_\alpha, \qquad |\alpha | \leq N_1,
\\
& \big| \jk^{-|\alpha|} |x|^{|\alpha|} \nabla^{\alpha}_x \nabla_k^\beta \psi_1(x,k) \big| 
	\leq c_{\alpha,\beta} \max \big(1, |k|^{1-|\beta|} \big),  
	\quad \quad |\alpha| + |\beta| \leq N_1, \quad \beta\neq 0.
\end{split}
\end{align}
In particular , if we define the angular derivative vectorfields
\begin{align}
\label{defOmega}
\Omega_x := x \wedge \nabla_x = (x_2\partial_{x_3}-x_3\partial_{x_2}, x_3\partial_{x_1}-x_1\partial_{x_3},
x_1\partial_{x_2}-x_2\partial_{x_1}) =: (\Omega_1, \Omega_2,\Omega_3)
\end{align}
one has
\begin{align}
\label{psi11}
\begin{split}
& \big| \jk^{-|\alpha|} \Omega^{\alpha}_x \psi_1(x,k) \big| \leq c_\alpha, \qquad |\alpha | \leq N_1,
\\
& \big| \jk^{-|\alpha|} \Omega^{\alpha}_x \nabla_k^\beta \psi_1(x,k) \big| \leq c_\alpha \max(1,|k|)^{1-|\beta|}, 
  \quad \quad |\alpha| + |\beta| \leq N_1, \quad \beta\neq 0.
\end{split}
\end{align}

\end{lemma}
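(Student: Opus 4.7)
The plan is to prove the bounds \eqref{psi10} directly by differentiating under the integral sign in \eqref{psipsi1}, using the decay assumptions on $V$ from \eqref{assV2} together with the already-established pointwise bounds on $\psi$ from Lemma \ref{lemmapsi}. The implication \eqref{psi10} $\Rightarrow$ \eqref{psi11} will then be essentially automatic from the structure of $\Omega_x = x\wedge\nabla_x$.

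First I would split the $y$-integral in \eqref{psipsi1} into the regions $|y| \leq |x|/2$ and $|y| > |x|/2$. In the latter region the estimate $|V(y)| \lesssim \jy^{-(N_1+10)}$ (a consequence of \eqref{assV2}) combined with the polynomial $L^\infty$ bounds on $\psi$ from Lemma \ref{lemmapsi} immediately gives a contribution of size $O(|x|^{-N})$ for any $N\leq N_1$, which is stronger than required. So the real work is on the main region $|y| \leq |x|/2$, where $|x-y|\approx |x|$ and both the amplitude $A(x,y) := |x|/|x-y|$ and the phase $\Phi(x,y) := |x-y|-|x|$ are smooth in $x$. The key elementary computations are the bounds
\begin{align*}
|\partial_x^\alpha \Phi(x,y)| \lesssim |y| \, |x|^{-|\alpha|}, \qquad |\partial_x^\alpha A(x,y)| \lesssim \jy \, |x|^{-|\alpha|},
\end{align*}
valid for $|\alpha|\geq 1$ and $|y|\leq|x|/2$ (with the first following from a Taylor expansion $\Phi = -\hat x\cdot y + O(|y|^2/|x|)$).

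For the pure $x$-derivative estimate in the first line of \eqref{psi10}, I would apply Leibniz and Faà di Bruno to $e^{i|k|\Phi} A(x,y) V(y) \psi(y,k)$. Each $x$-derivative falling on the exponential contributes an $i|k|\partial_x^\gamma\Phi$ factor, so overall $\partial_x^\alpha$ produces a sum bounded by a product of factors $|k|^{j} |y|^{j}|x|^{-|\alpha|}$ with $j\leq |\alpha|$; the remaining $|y|^{|\alpha|}$ is absorbed by the moments of $V$, yielding the required $\jk^{|\alpha|}/|x|^{|\alpha|}$. For the mixed estimate in the second line, I would additionally differentiate in $k$: each $\partial_{k}$ hitting $e^{i|k|\Phi}$ gives a factor $i(k/|k|)\Phi$ that is bounded by $|y|$ (again absorbed by $V$) and introduces at most one power of $k/|k|$, whose subsequent derivatives produce the $|k|^{1-|\beta|}$ loss for $|\beta|\geq 2$. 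The $k$-derivatives falling on $\psi(y,k)$ itself contribute exactly the $\max(1,|k|^{1-|\beta|})\jy^{|\beta|}$ behavior from Lemma \ref{lemmapsi}, and the resulting $|y|^{|\beta|}$ is again absorbed by the moments of $V$ thanks to \eqref{assV2}. Checking that the $k$-differentiation commutes with the Lippmann--Schwinger iteration that defines $\psi$ is routine since we differentiate only a bounded number $N_1$ of times and $V$ has more than enough decay.

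Finally, to obtain \eqref{psi11}, I would use the standard observation that any monomial in the angular vector fields $\Omega_x$ of order $|\alpha|$ is a linear combination of operators $x^{\gamma}\partial_x^{\delta}$ with $|\delta|\leq|\alpha|$ and $|\gamma|=|\delta|$, where each coefficient is polynomial in $x$ of degree $|\delta|$. Applying \eqref{psi10} to each such term yields a bound $|x|^{|\delta|}\cdot \jk^{|\delta|}|x|^{-|\delta|} = \jk^{|\delta|} \leq \jk^{|\alpha|}$, and the mixed case is identical. The main conceptual obstacle, which dictates the structure above, is the competition between the growth in $|k|$ coming from $x$-derivatives of the phase and the (mild) singularity at $|k|=0$ coming from $k$-derivatives of $k/|k|$; both are handled simultaneously by carefully tracking where each derivative lands and using the boundedness of $|\Phi|$ by $|y|$ on the support of $V$.
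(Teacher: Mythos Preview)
Your treatment of the inner region $|y|\le |x|/2$ via Fa\`a di Bruno matches the paper's, and your deduction of \eqref{psi11} from \eqref{psi10} is correct. However, there is a genuine gap in the outer region $|y|>|x|/2$.

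You claim the decay of $V$ immediately makes that contribution $O(|x|^{-N})$. That is true for $\psi_1$ itself, but the lemma requires bounds on $\nabla_x^\alpha\psi_1$ for $|\alpha|$ up to $N_1$. In the outer region the diagonal $y=x$ lies inside the domain of integration, and applying $\partial_x^\alpha$ to $|x|/|x-y|$ (or to $e^{i|k||x-y|}$) produces powers $|x-y|^{-1-j}$ which, for $j\ge 2$, are not locally integrable in $\R^3$. One therefore cannot simply differentiate under the integral sign and absorb the result into the decay of $V$; the argument as written breaks down already at $|\alpha|=2$.

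The paper resolves this by a further split of the outer piece according to whether $|x-y|\gtrsim 1$ or $|x-y|\lesssim 1$. Away from the diagonal the naive bound works, since then $|\partial_x^\alpha(|x|/|x-y|)|\lesssim 1+|y|$ and the exponential derivatives are harmless. Near the diagonal one rewrites the kernel as $E_k(x-y)$ with $E_k(z)=|z|^{-1}e^{i|k||z|}\varphi_{\le 0}(z)$, converts each $\partial_x$ hitting $E_k(x-y)$ into $-\partial_y$, and integrates by parts onto $V(y)\psi(y,k)$; this is where the derivative assumptions on $V$ in \eqref{assV2} and the bound \eqref{psiLinfty} on $\nabla_y^\alpha\psi$ are actually used. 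This step is missing from your proposal.
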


\medskip
\begin{proof}
Let us decompose 
\begin{align}
\label{Lempsi1pr1}
\begin{split}
\psi_1(x,k) & = \psi_1^-(x,k) + \psi_1^+(x,k),
\\
\psi_1^-(x,k) & := \int_{\R^3} e^{i|k| [ |x-y| - |x| ]} \frac{|x|}{|x-y|} V(y) \psi(y,k) \, 
	\varphi_{\leq-10}(|y|/|x|) \, \mathrm{d}y,
\\
\psi_1^+(x,k) & := \int_{\R^3} e^{i|k| [ |x-y| - |x| ]} \frac{|x|}{|x-y|} V(y) \psi(y,k) \,
	\varphi_{>-10}(|y|/|x|) \, \mathrm{d}y;
\end{split}
\end{align}
recall the notation for cutoffs from \eqref{LP0}.

\medskip
{\it Estimate of $\psi_1^-$}.
From Fa\'{a}-di Bruno's formula we see that $\partial_{x_1}^{\alpha_1} e^{i|k| [ |x-y| - |x| ]}$ 
is bounded by a linear combination of terms of the form
\begin{align}
\label{Lempsi1pr2.0}
\sup_{\substack{(p_1,\dots,p_a) \, : \, p_i\geq 0 \\ \sum_{a\geq 1} a p_a = \alpha_1}} 
	\quad \prod_{b=1}^{\alpha_1} \big( |k| \partial_{x_1}^b ( |x-y| - |x| ) \big)^{p_b}.
\end{align}
On the support of $\psi_1^-$ we have $|\partial_{x_1}^b ( |x-y| - |x| )| \lesssim |y| |x|^{-b}$
and therefore we see that
\begin{align}
\label{Lempsi1pr2}
\big|  \partial_{x_1}^{\alpha_1} e^{i|k| [ |x-y| - |x| ]} \big| \lesssim (|k|+|k|^{\alpha_1})
	\cdot (|y| + |y|^{\alpha_1}) |x|^{-\alpha_1}.
\end{align}
Since we also have
\begin{align*}
\Big|\partial_{x_1}^{\alpha_1} \frac{|x|}{|x-y|}\Big| 
  + |\partial_{x_1}^{\alpha_1} \varphi_{\leq-10}(|y|/|x|) | \lesssim |x|^{-\alpha_1}
\end{align*}
the first inequality in \eqref{psi10} follows for the term $\psi_1^-$. 

To deal with derivatives in $k$ we apply again Fa\'{a}-di Bruno and estimate
\begin{align}\label{Lemmapsipr3}
\begin{split}
\big|  \partial_{k_1}^{\beta_1} e^{i|k| [ |x-y| - |x| ]} \big|
\lesssim \sup_{\substack{(p_1,\dots,p_b) \, : \, p_i\geq 0 \\ \sum_{b\geq 1} b p_b = \beta_1}} 
	\quad \prod_{b=1}^{\beta_1} \big| (\partial_{k_1}^b |k|) ( |x-y| - |x| )  \big|^{p_b}
	\lesssim (1+|k|^{1-\beta_1}) \cdot |y|/|x|
\end{split}
\end{align}
Arguing as before for the $x$-derivatives, and using the estimates 
\eqref{psiLinfty} to bound $\partial_{k_1}^{\beta_1} \psi$,
with the assumptions on $V$, 
we obtain the second inequality in \eqref{psi10}. 

\medskip
{\it Estimate of $\psi_1^+$}.
First notice that since the potential satisfies \eqref{assV2}, $\psi_1^+$ decays very fast in $x$:
\begin{align*}
| \psi_1^+(x,k) | \lesssim {|x|}^{-N_1-5} \int_{\R^3} \frac{|x|}{|x-y|} {|y|}^{N_1+5} |V(y)| \varphi_{>-10}(|y|/|x|)
  \,\mathrm{dy}
  \\ \lesssim |x|^{-N_1-5} \, {\big\| \jx^{N_1+6} V \big\|}_{L^\infty \cap L^1}.
\end{align*}
To prove estimates on several $x$ derivatives however we need 
to take care of the singularity arising when differentiating the integrand.
Let us write
\begin{align}
\label{Lempsi1pr5}
\begin{split}
\psi_1^+(x,k) & = a(x,k) + b(x,k),
\\
a(x,k) & = \int_{\R^3} e^{i|k| [ |x-y| - |x| ]} \frac{|x|}{|x-y|} V(y) \psi(y,k) \, 
	\varphi_{>-10}(|y|/|x|) \varphi_{> 0 }(|x-y|)\, \mathrm{d}y,
\\
b(x,k) & = \int_{\R^3} e^{i|k| [ |x-y| - |x| ]} \frac{|x|}{|x-y|} V(y) \psi(y,k) \, 
	\varphi_{>-10}(|y|/|x|) \varphi_{\leq 0 }(|x-y|)\, \mathrm{d}y.
\end{split}
\end{align}

Arguing as in the proof of \eqref{Lempsi1pr2} above, on the support of $a(x,k)$, 
where $|y| \gtrsim |x|\gtrsim 1$ and $|y-x| \gtrsim 1$, we have
\begin{align}
\label{Lempsi1pr6}
\big|  \partial_{x_1}^{\alpha} e^{i|k| [ |x-y| - |x| ]} \big| \lesssim |k|+|k|^{\alpha}, 
\end{align}
as well as 
\begin{align}
\label{Lempsi1pr7}
\Big|  \partial_{x_1}^{\alpha} \frac{|x|}{|x-y|} \Big| \lesssim 1+|y|. 
\end{align}
Then we can estimate $|\partial_{x_1}^{\alpha_1} a(x,k)|$ by a linear combination of terms of the form
\begin{align}
I_{\alpha_2\alpha_3} = \Big| \int_{\R^3} 
  \Big[ \partial_{x_1}^{\alpha_2} e^{i|k| [ |x-y| - |x| ]} \Big] 
  \Big[ \partial_{x_1}^{\alpha_3} \frac{|x|}{|x-y|} \Big] V(y) \psi(y,k) \, 
  \varphi_{>-10}(|y|/|x|) \varphi_{> 0}(|x-y|)\, \mathrm{d}y \Big|
\end{align}
with $\alpha_2+\alpha_3 = \alpha_1$, 
plus easier terms arising when derivatives hit the cutoffs, which we disregard.
We then see that
\begin{align*}
I_{\alpha_2\alpha_3} & \lesssim (|k| + |k|^{\alpha_2}) 
	\int_{\R^3} (1+|y|)|V(y)|\, \varphi_{>-10}(|y|/|x|)\,\mathrm{d}y
	\lesssim (1+|k|)^{\alpha_1} 
	(1+|x|)^{-N_1}
\end{align*}
which is consistent with the right-hand side of \eqref{psi10}.
To deal with the derivatives in $k$ we use
\begin{align*}
\big|  \partial_{k_1}^{\beta_1} e^{i|k| [ |x-y| - |x| ]} \big| \lesssim (1+|k|^{1-\beta_1}) (1+|y|)^\beta,
\end{align*}
see \eqref{Lemmapsipr3},  and obtain that $\partial_{k_1}^{\beta_1} a(x,k)$ is
bounded by a linear combination of terms of the form
\begin{align*}
J_{\beta_2\beta_3} & = \int_{\R^3} \big| \partial_{k_1}^{\beta_2} e^{i|k| [ |x-y| - |x| ]} \big| \frac{|x|}{|x-y|}
  \big| V(y) \big|\, \big| \partial_{k_1}^{\beta_3} \psi(y,k) \big| \, \varphi_{>-10}(|y|/|x|) \,\mathrm{d}y  \Big| 
\\ 
& \lesssim \int_{\R^3} (1+|k|^{1-\beta_2}) (1+|y|)^{\beta_2}
  (1+|y|) \, |V(y)| \, (1+|y|)^{\beta_3} (1+|k|^{1-\beta_3}) \, \varphi_{>-10}(|y|/|x|)\,\mathrm{d}y
\end{align*}
for $\beta_2+\beta_3=\beta_1$, having used \eqref{psiLinfty}.
In view of \eqref{assV2} and $\beta_1 \leq N_1$, we have 
\begin{align*}
|J_{\beta_2\beta_3}| & \lesssim (1+|x|)^{-N_1+1}(1 + |k|^{1-\beta_1})
\end{align*}
which is sufficient for the second inequality in \eqref{psi10} when $\alpha=0$.
The same arguments can be used to obtain the full bound for $(x,k)$-derivatives. 

 

To estimate the term $b(x,k)$ in \eqref{Lempsi1pr5} we need to take care of the singularity of 
high derivatives of $|x-y|^{-1}$. We first rewrite
\begin{align}\label{Lempsi1pr10}
b(x,k) = E_k^{-1}(x)  \int_{\R^3} E_k(x-y) V(y) \psi(y,k) \, \varphi_{>-10}(|y|/|x|)\, \mathrm{d}y,
	\qquad E_k(z) = \frac{e^{i|k| |z|}}{|z|} \varphi_{\leq 0}(z).
\end{align}
When applying $k$ derivatives we can use the same arguments as above.
For the spatial derivatives instead, 
the desired estimates can be easily seen to hold when derivatives hit $E_k^{-1}(x)$.
We may then just look at the cases when derivatives hit the integrand.
For such terms we convert $\partial_x$ hitting $E_k(x-y)$ into $-\partial_y$ and integrate by parts onto $V\psi$.
Using the assumptions \eqref{assV2} and \eqref{psiLinfty} we arrive at \eqref{psi10}. 
\end{proof}

The next lemma gives an expansion for $\psi_1$ in powers of $|x|^{-1}$.

\begin{lemma}\label{lemmapsi1}
Let $N_2 \in [1,N_1] \cap \Z$ where $N_1$ is as in \eqref{assV2}. 
Denoting $r=|x|$ and $\omega = x/|x|$, we have the expansion
\begin{align}\label{lemmapsi1exp}
\psi_1(x,k) = \sum_{j=0}^{N_2-1} g_{j}(\omega,k) \, r^{-j} \jk^j  + R_{N_2}(x,k),
\end{align}
for $r \geq 1$, where 
\begin{align}\label{lemmapsi1g}
g_0(\omega,k) := -\frac{1}{4\pi}\int_{\R^3} e^{-i|k|\omega\cdot y} V(y)\psi(y,k) \, 
	\mathrm{d}y, 
\end{align}
the coefficients $g_j$, $j=0,1,\dots,N_2-1$, satisfy

\begin{align}\label{lemmapsi1gj}
\big| \partial_\omega^\alpha \partial_k^\beta g_j(\omega,k) \big| \lesssim \jk^{|\alpha|} + (|k|/\jk)^{1-|\beta|},
  \qquad |\alpha| + |\beta| \leq N_1 - N_2,
\end{align}
and
\begin{align}\label{lemmapsi1R}
\big| \partial_k^\beta R_{N_2}(x,k) \big| \lesssim  r^{-N_2}  \big( \jk^{N_2} + (|k|/\jk)^{(1-|\beta|)} \big),
  \qquad  |\beta| \leq N_1 - N_2-1.
\end{align}


\end{lemma}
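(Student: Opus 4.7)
\textbf{Proof plan for Lemma \ref{lemmapsi1}.}
The strategy is to expand the phase and the amplitude in \eqref{psipsi1} in inverse powers of $r = |x|$ on a region where $|y| \lesssim r$, use the strong decay of $V$ to absorb the complementary region into a remainder, and then collect terms of matching order. First I would introduce a cutoff and split
\[
\psi_1(x,k) = \int_{\R^3} e^{i|k|(|x-y|-|x|)} \frac{|x|}{|x-y|} V(y)\psi(y,k)\,\varphi_{\leq-10}(|y|/r)\,\mathrm{d}y + R^{\mathrm{out}}_{N_2}(x,k),
\]
where $R^{\mathrm{out}}_{N_2}$ is estimated trivially by extracting $(1+|y|/r)^{N_2}r^{-N_2}$ and using the decay of $V$ from \eqref{assV2} together with the pointwise bounds on $\psi$ from Lemma \ref{lemmapsi}; this already satisfies \eqref{lemmapsi1R}.

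On the inner region I write $|x-y|-|x| = -\omega\cdot y + r\sum_{j\geq 2} c_j(\omega,\hat y)(|y|/r)^j$ and $|x|/|x-y| = 1 + \sum_{j\geq 1} d_j(\omega,\hat y)(|y|/r)^j$, both convergent since $|y|/r \leq 1/8$. Substituting and Taylor-expanding $\exp\bigl(i|k|\sum_{j\geq 2} c_j|y|^j/r^{j-1}\bigr)$ to order $N_2-1$, the product of the two series, together with $e^{-i|k|\omega\cdot y}$, yields exactly the form
\[
\sum_{j=0}^{N_2-1} \frac{1}{r^j}\,\Bigl(\sum_{m\leq j} |k|^m P_{j,m}(y,\omega)\Bigr)\, e^{-i|k|\omega\cdot y}\, V(y)\,\psi(y,k)
\]
plus a Taylor remainder controlled by $(|y|/r)^{N_2}$ times powers of $|k||y|$. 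Integrating against $V\psi$ produces the coefficients $g_j(\omega,k)$; the $j=0$ term gives precisely \eqref{lemmapsi1g}, and the factor $\jk^j$ in \eqref{lemmapsi1exp} comes from normalizing the powers of $|k|$ (bounded from below by $1$) appearing in $P_{j,m}$.

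To verify \eqref{lemmapsi1gj}, I differentiate under the integral sign: each $\partial_\omega$ produces either a factor $-i|k|y$ from $e^{-i|k|\omega\cdot y}$ or angular derivatives of the smooth coefficients $c_j,d_j$, giving the $\jk^{|\alpha|}$ growth. For $k$-derivatives the only non-smooth piece is $|k|$, whose differentiation at the first step is harmless while further derivatives of $k/|k|$ produce the $(|k|/\jk)^{1-|\beta|}$ singularity at the origin; derivatives of $\psi(y,k)$ are absorbed via Lemma \ref{lemmapsi} since $V$ localizes $y$ and the factor $\jk^{-|\beta|+1}\cdot\jk^{|\beta|}$ balances out the bound \eqref{psiLinfty}. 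The condition $|\alpha|+|\beta| \leq N_1-N_2$ reflects the $N_2$ factors of $y$ that may be spent differentiating the Taylor coefficients and the residual $N_1$ derivatives available on $V\psi$.

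The main obstacle is the bookkeeping for the remainder $R_{N_2}$: the Taylor error has to be controlled together with its $k$-derivatives, and each $k$-derivative falling on the remaining exponential factor $\exp(i|k|(\cdot))$ produces the non-smooth $k/|k|$, which must be compensated either by the gain $(|y|/r)^{N_2}$ or by the explicit form of the Taylor remainder in integral form. I would handle this by writing the remainder as an integral $\int_0^1(1-s)^{N_2-1}\partial_s^{N_2}[\cdots]\,\mathrm{d}s$ and differentiating under this integral sign, so that each $\partial_k$ introduces at worst one factor $|y|\cdot k/|k|$, producing the bound $r^{-N_2}(\jk^{N_2}+(|k|/\jk)^{1-|\beta|})$ after combining with the decay of $V$ and the bounds of Lemma \ref{lemmapsi}; this explains the slightly reduced range $|\beta|\leq N_1-N_2-1$ in \eqref{lemmapsi1R}.
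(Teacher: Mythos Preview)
Your plan is correct and matches the paper's proof essentially step for step: the same inner/outer cutoff at $|y|\sim r$, Taylor expansion of both $|x-y|-|x|$ and $|x|/|x-y|$ in powers of $|y|/r$ on the inner region, expansion of the exponential, and absorption of the outer region into the remainder via the weighted decay of $V$ combined with Lemma~\ref{lemmapsi}. The paper organizes the inner-region computation by separating the phase correction and the amplitude correction into two integrals $I_1,I_2$ rather than multiplying the two series, but this is purely cosmetic.

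One small point to tighten: as written, your $j=0$ coefficient still carries the cutoff $\varphi_{\leq -10}(|y|/r)$ and hence depends on $r$, so it is not literally the $g_0(\omega,k)$ of \eqref{lemmapsi1g}. The paper handles this by subtracting off the cutoff-free $g_0$ first and putting the resulting outer-region discrepancy into the remainder term $I_3$; equivalently, you should observe that the difference between your cutoff $j=0$ term and the full integral \eqref{lemmapsi1g} is $O(r^{-N_2})$ by the decay of $V$ and can be absorbed into $R_{N_2}$.
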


\smallskip
\begin{proof}
From the definition \eqref{psipsi1}, writing $x = r\omega$, $r=|x|$, we have
\begin{align}\label{lemmapsi1pr0}
\begin{split}
\psi_1(r\omega,k) & = -\frac{1}{4\pi} \int_{\R^3} e^{i|k| r (|\omega-y/r| - 1)} \frac{1}{|\omega-y/r|} 
	V(y) \psi(y,k) \, \mathrm{d}y
\\
& = 
g_0(\omega,k) + I_1(x,k) + I_2(x,k) + I_3(x,k),
\end{split}
\end{align}
where $g_0$ is defined in \eqref{lemmapsi1g} and 
\begin{align}
\label{lemmapsi1pr0.1}
I_1 & := -\frac{1}{4\pi} \int_{\R^3} \Big[ e^{i|k| r (|\omega-y/r| - 1)} - e^{-i|k|  \omega \cdot y} \Big]  \frac{1}{|\omega-y/r|} 
  V(y) \psi(y,k) \, \varphi_{\leq -10}(y/|x|) \, \mathrm{d}y,
\\
\label{lemmapsi1pr0.2}
I_2 & := -\frac{1}{4\pi} \int_{\R^3} e^{-i|k|  \omega \cdot y} \Big[  \frac{1}{|\omega-y/r|} - 1 \Big] V(y) \psi(y,k) \,
  \varphi_{\leq -10}(y/|x|) \, \mathrm{d}y,
\\
\label{lemmapsi1pr0.3}
I_3 & := -\frac{1}{4\pi} \int_{\R^3} \Big[ \frac{e^{i|k| r (|\omega-y/r| - 1)}}{|\omega-y/r|}  - e^{-i|k|  \omega \cdot y} \Big] 
  V(y) \psi(y,k) \,  \varphi_{> -10}(y/|x|) \, \mathrm{d}y.
\end{align}
We will expand the integrands in the first two terms in powers of $y/r$, while the third 
term is a remainder that can be absorbed into $R_{N_2}$ directly.


\medskip
{\it Estimate of \eqref{lemmapsi1pr0.1}}.
Observe that, for $|y| \leq r/2$ and arbitrary $n$ we can expand
\begin{align}
\label{lemmapsi1pr1}
\begin{split}
|\omega-y/r| & = \sqrt{1 + |y|^2 r^{-2} - 2\omega\cdot y/r} 
\\ & = 1 - \omega\cdot y/r + \sum_{j=2}^{n-1} r^{-j} \sum_{j_1+j_2= j}  a_{j_1j_2} |y|^{j_1} (\omega\cdot y)^{j_2} + 
  R_n(x,y),
\end{split}
\end{align}
for some coefficients $a_{j_1j_2}\in \C$, with
\begin{align}
\label{lemmapsi1pr2}
R_n(x,y) = (|y|/r)^{n}(1 + a(\omega,y)), \qquad |\partial_\omega^{\alpha}a(\omega,y)| \lesssim 1.
\end{align}
A similar expansion holds for $|\omega-y/r|^{-1}$.
Then, we can write 
\begin{align}\label{lemmapsi1pr3}
\begin{split}
& X := r (|\omega-y/r| - 1) + \omega \cdot y = 
  \sum_{j=1}^{n-1} r^{-j} a_{j}(\omega,y) + R_n(x,y),
\\
& \mbox{with} \qquad | \partial_\omega^\alpha a_{j}(\omega,y) | \lesssim \jy^{j+1},
\qquad | \partial_\omega^\alpha R_n(x,y) | \lesssim \jy^{n+1} r^{-n}.
\end{split}
\end{align}
We look at the factor in the integrand of \eqref{lemmapsi1pr0.1} and write 
\begin{align}\label{lemmapsi1pr5.1}
\begin{split}
& \Big[ e^{i|k| r (|\omega-y/r| - 1)} - e^{-i|k|  \omega \cdot y} \Big] \frac{1}{|\omega-y/r|}  
  =  e^{-i|k| \omega \cdot y} \frac{1}{|\omega-y/r|} \big[ e^{i|k|X} - 1 \big]
\\ 
& = e^{-i|k|  \omega \cdot y} \Big[ \sum_{j=1}^{n-1} r^{-j} \sum_{1 \leq \ell\leq j} |k|^\ell a_{j,\ell}(\omega,y) + 
	\sum_{1 \leq \ell \leq n} |k|^\ell R_{n,\ell}(x,y) \Big] ,
\end{split}
\end{align}
where the coefficients and remainder terms satisfy
\begin{align}\label{lemmapsi1pr5.2}
\begin{split}
& | \partial_\omega^\alpha a_{j,\ell}(\omega,y) | \lesssim \jy^{j+1},
	\qquad | R_{n,\ell}(x,y) | \lesssim \jy^{n+1} r^{-n}.
\end{split}
\end{align}

Using the definition \eqref{lemmapsi1pr0.1} and the expansion \eqref{lemmapsi1pr5.1} we have
\begin{align}
I_1(x,k) = \sum_{j=1}^{N_2-1} b_j(\omega,k) \jk^j r^{-j} + R_{N_2}^1(x,k),
\end{align}
having defined 
\begin{align}
\begin{split}
b_j(\omega,k) &:= \int_{\R^3} e^{-i|k|  \omega \cdot y} \frac{1}{\jk^j} 
  \sum_{1\leq \ell\leq j} |k|^\ell a_{j,\ell}(\omega,y) \, V(y) \psi(y,k) \, \mathrm{d}y,
\\
R_{N_2}^1(x,k) & := \int_{\R^3} e^{-i|k|  \omega \cdot y} 
  \sum_{1 \leq \ell\leq N_2} |k|^\ell R_{N_2,\ell}(x,y) \, V(y) \psi(y,k) \, \mathrm{d}y. 
\end{split}
\end{align}
In view of the first estimate of \eqref{lemmapsi1pr5.2}, the integrability assumptions on $V$ 
from \eqref{assV2}, the constraints $|\alpha|+ |\beta| \leq N_1-N_2$,
and the estimates \eqref{psiLinfty} giving 
$|\partial_k^\beta \psi(y,k)| \lesssim \jy^{|\beta|}(|k|/\jk)^{1-|\beta|}$ for $\beta\neq 0$,
we see that the coefficients $b_j$ satisfy estimates as in \eqref{lemmapsi1gj}.
Similarly the remainder $R^1_{N_2}$ satisfies estimates as in \eqref{lemmapsi1R}.
This gives an expansion of the desired form \eqref{lemmapsi1exp} for $I_1$.

\medskip
{\it Estimate of \eqref{lemmapsi1pr0.2}}.
The term $I_2$ is similar to \eqref{lemmapsi1pr0.1} so we can skip the details.

\medskip
{\it Estimate of \eqref{lemmapsi1pr0.3}}.
Since on the support of $I_3$ we have $|y| \gtrsim |x|$, 
we can use the weighted integrability of $V$ in \eqref{assV2} to show that this term is a remainder as in \eqref{lemmapsi1R}.
Using that for $|x| \lesssim |y|$, $\beta\neq 0$, we have the bounds
\begin{align*}
\big| \partial_k^\beta e^{i|k| r (|\omega-y/r| - 1)} \big| + \big| \partial_k^\beta e^{-i|k| \omega\cdot y} \big|
	+ | \partial_k^\beta \psi(y,k) | 
	\lesssim  \jy^{|\beta|} (|k|/\jk)^{1-|\beta|}),
\end{align*}
see \eqref{psiLinfty}, 
we have, for $|\beta| \leq N_1-N_2-1$
\begin{align*}
|\partial_k^\beta I_3(x,k) | 
	& \lesssim  (|k|/\jk)^{1-|\beta|}) \,
	\int_{\R^3} \frac{|x|}{|x-y|} \, \jy^{|\beta|} V(y) \,  \varphi_{> -10}(y/r)\, \mathrm{d}y,
	\\	
	& \lesssim (|k|/\jk)^{1-|\beta|}) \, r^{-N_2} \cdot {\big\| \jx^{N_1} V \big\|}_{L^\infty\cap L^1}. 
\end{align*}
This concludes the proof of the Lemma.
\end{proof}

\medskip
Motivated by \eqref{psi10} and the expansion \eqref{lemmapsi1exp} we define the following classes of symbols:

\medskip
\begin{definition}\label{Gclass}
For $N \in \mathbb{Z}_+$ we let $\mathcal{G}^N$ be the class of $L^\infty_{x,k}$ 
functions $f : \mathbb{S}^2 \times \R^3 \mapsto \C$ such that
\begin{align}\label{classG_Nomega}
& \big| \nabla_\omega^\alpha \nabla_k^\beta f(\omega,k) \big| \leq c_{\alpha,\beta}
  \big(\jk^{|\alpha|} + (|k|/\jk)^{1-|\beta|} \big)
  \qquad 1\leq|\alpha |+|\beta|\leq N. 
\end{align}
\end{definition}

To fix ideas one can think of functions in $ \mathcal{G}^N$ as functions of the form $\exp(i|k|x_1/|x|)$.
This is essentially how $\psi_1$ looks like, with the exception that its differentiability in $k$,
is limited by the integrability of $V$.
More precisely, one should think of the class $\mathcal{G}^N$ as functions of the form
\begin{align}
\int_{\R^3} e^{i|k| \frac{x}{|x|}\cdot y} f(y) \, \mathrm{d}y, \qquad {(1+|y|)}^N f(y) \in L^1.
\end{align}
Compare this with the formula for $g_0$ in \ref{lemmapsi1g}.
%
%
Functions in $ \mathcal{G}^N$ will often appear 
in the expressions for symbols of bilinear operators in our applications.
As symbols these are not standard ones (e.g., of bilinear Mihlin-H\"{o}rmander type),
for example because of losses when $k$ is large.

\medskip
\section{Preliminary bounds: Linear estimates and high frequencies}

In this section we first state some decay estimates for the linear evolution
and then show how to obtain the bootstrap estimate on the standard Sobolev norms in \eqref{proBoot2} 
using the decay and the a priori assumptions.
The rest of the section is then dedicated to a priori bounds for the nonlinear evolution when
one restricts the analysis to high frequencies that are large compared to time.

\medskip
\subsection{Linear Estimates}
We start by collecting some dispersive estimates for Schr\"odinger operators.

\begin{lemma}\label{LemLinear}
Under the assumptions \eqref{Nparam}-\eqref{assV2} on the potential $V$, 
with $\wt{f}$ defined as in Theorem \ref{theodFT}, we have 
\begin{align}
\label{linearinfty}
{\| e^{it(-\Delta+V)} f \|}_{L^\infty} 
  & \lesssim \frac{1}{|t|^{3/2}}{\| \wt{f} \|}_{L^\infty} + \frac{1}{|t|^{7/4}} {\| \partial_k^2 \wt{f} \|}_{L^2},
\end{align}
and
\begin{align}\label{LemLinearL6}
{\| e^{it(-\Delta+V)} f \|}_{L^6} \lesssim \frac{1}{|t|} {\| \partial_k \widetilde{f} \|}_{L^2}.
\end{align}
Interpolating \eqref{LemLinearL6} with the $L^2$ conservation we have
\begin{align}\label{linearLp<6}
{\| e^{it(-\Delta+V)} f \|}_{L^p} \lesssim \frac{1}{|t|^{(3/2)(1-2/p)}} 
  {\| \widetilde{f} \|}_{H^1_k}, \qquad 2\leq p \leq 6.
\end{align}
Moreover, for all $6 < p < \infty$,
\begin{align}
\label{linearLp>6}
\begin{split}
{\| e^{it(-\Delta+V)} f \|}_{L^p} & \lesssim 
  \frac{1}{|t|^{3/2(1-2/p)}} {\big\| \partial_k \wt{f} \big\|}_{L^2}^{1-\theta}
  {\| \partial_k^2 \wt{f} \|}_{L^2}^{\theta}, \qquad \theta = \frac{1}{2} - \frac{3}{p}. 
\end{split}
\end{align}

\end{lemma}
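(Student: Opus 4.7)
The plan is to reduce every estimate to the free propagator via the wave operator, and then to apply the pseudo-conformal identity. By the intertwining $e^{itH}=\mathcal{W}\,e^{itH_0}\,\mathcal{W}^\ast$ with $H_0=-\Delta$, and the $L^p$-boundedness of $\mathcal{W}$ (discussed after \eqref{W01}), setting $g:=\mathcal{W}^\ast f$ one has $\widehat g=\widetilde f$ by \eqref{W01}, so every claim reduces to a corresponding bound for $e^{itH_0}g$ in terms of $\widehat g$. For the flat propagator, the classical pseudo-conformal identity, obtained by completing the square in $ix\cdot k+it|k|^2$, reads
\begin{equation*}
(e^{itH_0}g)(x)\;=\;\frac{c}{|t|^{3/2}}\,e^{-i|x|^2/(4t)}\,\widehat h\!\left(\tfrac{x}{2t}\right),\qquad h(y):=e^{-i|y|^2/(4t)}g(y),
\end{equation*}
and, using $|h|=|g|$ and a change of variables, gives the exact identity $\|e^{itH_0}g\|_{L^p}=c|t|^{-3/2+3/p}\|\widehat h\|_{L^p}$ for $2\le p\le\infty$.

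The $L^6$-estimate \eqref{LemLinearL6} is then immediate from the Sobolev embedding $\dot H^1(\R^3)\hookrightarrow L^6$ applied to $\widehat h$, together with the Plancherel identities $\|\nabla_k\widehat h\|_{L^2}=\|yh\|_{L^2}=\|yg\|_{L^2}=\|\partial_k\widehat g\|_{L^2}$. The intermediate estimates \eqref{linearLp<6} follow by log-convexity of $L^p$-norms between $p=2$ (where $\|\widehat h\|_{L^2}=\|\widehat g\|_{L^2}$) and $p=6$. For $p\in(6,\infty)$ one uses Sobolev $\dot H^s\hookrightarrow L^p$ with $s=3/2-3/p\in(1,3/2)$, together with the elementary H\"older interpolation
\begin{equation*}
\|\widehat h\|_{\dot H^s}\;\le\;\|\widehat h\|_{\dot H^1}^{1-\theta}\,\|\widehat h\|_{\dot H^2}^{\theta},\qquad\theta\;=\;s-1\;=\;1/2-3/p,
\end{equation*}
and $\|\widehat h\|_{\dot H^j}=\||y|^j g\|_{L^2}=\|\partial_k^j\widehat g\|_{L^2}$ for $j=1,2$; this yields \eqref{linearLp>6}.

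The main task is \eqref{linearinfty}. The identity above with $p=\infty$ reduces matters to proving
$\|\widehat h\|_{L^\infty}\lesssim\|\widehat g\|_{L^\infty}+|t|^{-1/4}\|\partial_k^2\widehat g\|_{L^2}$. Writing $\widehat h-\widehat g=\mathcal{F}\bigl[(e^{-i|y|^2/(4t)}-1)g\bigr]$ gives $\|\widehat h-\widehat g\|_{L^\infty}\le\|(e^{-i|y|^2/(4t)}-1)g\|_{L^1}$, and I split this integral at $|y|=\sqrt{|t|}$. On $\{|y|\le\sqrt{|t|}\}$ the Taylor bound $|e^{is}-1|\le|s|$ yields an integrand $\lesssim|y|^2|g|/|t|$; Cauchy--Schwarz against $\mathbf{1}_{|y|\le\sqrt{|t|}}\in L^2$ (norm $\sim|t|^{3/4}$) produces $|t|^{-1/4}\||y|^2 g\|_{L^2}$. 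On $\{|y|>\sqrt{|t|}\}$ the trivial bound $|e^{is}-1|\le 2$ and Cauchy--Schwarz against $|y|^{-2}\mathbf{1}_{|y|>\sqrt{|t|}}\in L^2$ (norm $\sim|t|^{-1/4}$) yield the same $|t|^{-1/4}\||y|^2 g\|_{L^2}$. Plancherel converts $\||y|^2 g\|_{L^2}$ into $\|\partial_k^2\widehat g\|_{L^2}$, and multiplication by the overall $|t|^{-3/2}$ prefactor produces the desired $|t|^{-7/4}$ decay of the error term. The only real subtlety is this splitting at $|y|=\sqrt{|t|}$, which is the unique balance point that simultaneously controls the small-$y$ Taylor error and the large-$y$ tail.
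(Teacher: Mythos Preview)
Your proof is correct and follows the same overarching strategy as the paper: reduce to the free propagator via the intertwining $e^{itH}=\mathcal{W}e^{-it\Delta}\mathcal{W}^\ast$ and the $L^p$-boundedness of $\mathcal{W}$, then establish the corresponding flat estimates. The paper cites \cite{GMS2} for the flat $L^\infty$ bound \eqref{linearinfty0} and uses the $L^{q'}\!\to\!L^q$ dispersive estimate plus an interpolation $\|f\|_{L^{q'}}\lesssim\|xf\|_{L^2}^{1-\theta}\|x^2f\|_{L^2}^{\theta}$ for \eqref{linearLp>6}; your route through the exact pseudo-conformal factorization, Sobolev embedding on the Fourier side, and the explicit $|y|=\sqrt{|t|}$ splitting is a cleaner and fully self-contained alternative that yields the same conclusions.
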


\begin{proof}
All these linear estimates can be deduced from 
the corresponding estimates involving the flat Fourier transform, 
and using the boundedness of the wave operator.
We recall that, under our assumptions, 
the wave operators, defined by 
$\mathcal{W}_{\pm} := \lim_{t\rightarrow \pm \infty} e^{it(-\Delta+V)} e^{it\Delta}$
are bounded on Sobolev spaces; see for example Yajima \cite{Yajima}. 
Moreover, as in see \eqref{W0}, $\mathcal{W} := \mathcal{W}_+ = \wt{\mathcal{F}}^{-1} \what{\mathcal{F}}$.

To prove \eqref{linearinfty}, recall first that
\begin{align}\label{linearinfty0}
{\| e^{-it\Delta} f \|}_{L^\infty} \lesssim \frac{1}{|t|^{3/2}}{\| \what{f} \|}_{L^\infty} 
	+ \frac{1}{|t|^{7/4}} {\| \partial_k^2 \what{f} \|}_{L^2},
\end{align}
see, for example, \cite{GMS2}. Then it suffices to write
\begin{align}\label{Wop}
e^{it(-\Delta+V)} f =  \mathcal{W} e^{-it\Delta} \mathcal{W}^\ast f 
\end{align}
and, by the boundedness of $\mathcal{W}$ on $L^p$ spaces,
\eqref{linearinfty0},
and the fact that $\what{\mathcal{F}}$ and $\wt{\mathcal{F}}$ are unitary on $L^2$, we obtain \eqref{linearinfty}.

Similarly, \eqref{LemLinearL6} can be obtained using the standard Klainerman-Sobolev type embedding
\begin{align*}
{\| e^{-it\Delta} f \|}_{L^6} \lesssim
\frac{1}{|t|} {\| x f \|}_{L^2}  \lesssim \frac{1}{|t|} {\| \partial_k \what{f} \|}_{L^2}
\end{align*}
%
and \eqref{linearLp>6} using, for $q>6$ with $1/q+1/q'=1$, and $\theta = 1/2-3/p$, that
\begin{align*}
{\| e^{-it\Delta} f \|}_{L^q} \lesssim \frac{1}{|t|^{(3/2)(1-2/q)}} {\| f \|}_{L^{q'}} 
 	\lesssim \frac{1}{|t|^{(3/2)(1-2/q)}}  {\| x f \|}_{L^2}^{1-\theta} {\| x^2 f \|}_{L^2}^{\theta}.
\end{align*}
\end{proof}

Next, we use Lemma \ref{LemLinear} to obtain some a priori decay bounds 
as direct consequences of the a priori assumptions \eqref{apriori}.

\begin{lemma}\label{LemLinear2}
Let $u = e^{it(-\Delta+V)}f$ and assume the bounds \eqref{apriori} hold
with the definitions in \eqref{space}. Then,
\begin{align}
\label{aprioriL<6}
{\| e^{it(-\Delta+V)} f \|}_{L^p} & \lesssim \e \jt^{-3/2(1-2/p)},  \qquad  2\leq p \leq 6,
\\
\label{aprioriL>6}
{\| e^{it(-\Delta+V)} f \|}_{L^p} & \lesssim \e \jt^{-5/4 + 3/(2p) + \delta\theta},
	\qquad p > 6, \quad \theta = \frac{1}{2} - \frac{3}{p}.
\end{align}
\end{lemma}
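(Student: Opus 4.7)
The plan is to deduce both bounds directly from Lemma \ref{LemLinear} by substituting the a priori assumptions \eqref{apriori}. From the definitions \eqref{space}, the bootstrap bound $\|u\|_A + \jt^{-1/2-\delta}\|u\|_W \le \e$ immediately yields
\begin{align*}
\|\wt{f}(t)\|_{L^2} + \|\partial_k \wt{f}(t)\|_{L^2} \lesssim \e, \qquad \|\partial_k^2 \wt{f}(t)\|_{L^2} \lesssim \e \jt^{1/2+\delta},
\end{align*}
where $\|\wt{f}\|_{L^2} = \|u\|_{L^2} \lesssim \|u\|_A$ by the $L^2$-isometry of $\Ftil$.

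For the range $2 \leq p \leq 6$, I would invoke \eqref{linearLp<6}, obtaining
\begin{align*}
\|e^{itH}f\|_{L^p} \lesssim |t|^{-(3/2)(1-2/p)} \|\wt{f}\|_{H^1_k} \lesssim \e\, |t|^{-(3/2)(1-2/p)}.
\end{align*}
For $p>6$, I would apply \eqref{linearLp>6} with $\theta = 1/2 - 3/p$, giving
\begin{align*}
\|e^{itH}f\|_{L^p} \lesssim |t|^{-(3/2)(1-2/p)} \e^{1-\theta} \big(\e \jt^{1/2+\delta}\big)^{\theta} = \e\, |t|^{-(3/2)(1-2/p)} \jt^{(1/2+\delta)\theta}.
\end{align*}
The exponent $(1/2)\theta = 1/4 - 3/(2p)$ combines with $-(3/2)(1-2/p) = -3/2 + 3/p$ to produce exactly $-5/4 + 3/(2p) + \delta\theta$, matching \eqref{aprioriL>6}.

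To upgrade $|t|^{-\alpha}$ to $\jt^{-\alpha}$ in the trivial regime $|t|\le 1$, I would use a Sobolev embedding argument: by the boundedness of wave operators on $W^{s,p}$ spaces together with \eqref{W01} and the intertwining \eqref{Wint}, $\|u(t)\|_{L^p} \lesssim \|u(t)\|_{H^s} \lesssim \|\langle k\rangle^s \wt{f}(t)\|_{L^2} \lesssim \e$ for any $p<\infty$ with $s\in (3/2-3/p, N]$. Since $\jt\approx 1$ in this range, patching together the short-time and long-time estimates yields the claimed bounds. The main ``obstacle'' is purely bookkeeping: verifying the exponent arithmetic in the $p>6$ interpolation and splicing the Sobolev bound for small $t$ with the dispersive bound for large $t$; no genuinely new ingredient beyond Lemma \ref{LemLinear} and the a priori assumptions is needed.
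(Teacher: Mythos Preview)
Your proposal is correct and follows essentially the same approach as the paper: for $|t|\le 1$ use Sobolev embedding combined with the boundedness of wave operators and the a priori $H^N$ bound, and for $|t|\ge 1$ plug the a priori weighted bounds into \eqref{linearLp<6} and \eqref{linearLp>6} respectively. The exponent arithmetic you carry out for $p>6$ is exactly what is needed.
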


\begin{proof}
For $|t|\leq 1$ the estimates follow from the boundedness of wave operators,
Sobolev's embedding, and the a priori bound \eqref{apriori}:
\begin{align*}
{\| e^{it(-\Delta+V)} f \|}_{L^p} & \lesssim {\| e^{-it\Delta} \mathcal{W}^\ast f \|}_{L^p}
  \lesssim {\| e^{-it\Delta} \mathcal{W}^\ast f \|}_{H^2} 
  \\ & \lesssim {\| \mathcal{W}^\ast f \|}_{H^2} \lesssim  {\| \jk^2 \wt{\mathcal{F}}f \|}_{L^2}
  \lesssim \e.
\end{align*}
For $|t| \geq 1$ the estimate \eqref{aprioriL<6}, resp. \eqref{aprioriL>6}, 
is a direct consequences of \eqref{linearLp<6}, resp. \eqref{linearLp>6},
and the bounds on the weighted norms in \eqref{apriori}.
\end{proof}


\medskip
\subsection{Sobolev estimates}\label{secSob}
We now prove the bootstrap estimate \eqref{proBoot2} for the Sobolev-type norm
using energy estimates and the pointwise decay from Lemma \ref{LemLinear2}.

\begin{proposition}\label{proSob}
Under the a priori assumptions \eqref{apriori} we have
\begin{align}\label{proSobconc}
{\| u(t) \|}_{H^N} + {\| \langle k\rangle^N \wt{f} \|}_{L^2} \leq \e_0 + C \e^2.
\end{align}
\end{proposition}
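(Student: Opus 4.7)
The proof is a standard high-regularity energy estimate, but formulated through the distorted Fourier transform and tied to the a priori bounds via the wave operator calculus. The plan is to propagate the Sobolev norm using Duhamel, convert the product $u^2$ via a Moser inequality, and close using integrable-in-time pointwise decay deduced from Lemma \ref{LemLinear2}.

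First, I would set up the equivalence of norms. Since $\widetilde{f}(t,k) = e^{-it|k|^2}\widetilde{u}(t,k)$ has modulus one pointwise, $\|\langle k\rangle^N \widetilde{f}\|_{L^2} = \|\langle k\rangle^N \widetilde{u}\|_{L^2}$. The intertwining formula $\langle H\rangle^{N/2} = \mathcal{W}\langle -\Delta\rangle^{N/2}\mathcal{W}^*$ together with the boundedness of $\mathcal{W},\mathcal{W}^*$ on $H^N$ (under our assumptions on $V$, see Yajima \cite{Yajima}) then yields
\[
\|u\|_{H^N} \;\approx\; \|\mathcal{W}^*u\|_{H^N} \;\approx\; \|\langle H\rangle^{N/2}u\|_{L^2} \;\approx\; \|\langle k\rangle^N \widetilde{u}\|_{L^2} \;=\; \|\langle k\rangle^N \widetilde{f}\|_{L^2}.
\]
Thus it suffices to control $\|u(t)\|_{H^N}$.

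Next, I would apply Duhamel's formula $u(t)=e^{-itH}u_0+i\int_0^t e^{-i(t-s)H}u(s)^2\,\mathrm{d}s$ inside $\langle H\rangle^{N/2}L^2$. Since $e^{-itH}$ commutes with $\langle H\rangle^{N/2}$ and preserves $L^2$, combined with the norm equivalence above,
\[
\|u(t)\|_{H^N} \lesssim \|u_0\|_{H^N} + \int_0^t \|u(s)^2\|_{H^N}\,\mathrm{d}s.
\]
The standard Moser product estimate gives $\|u^2\|_{H^N} \lesssim \|u\|_{L^\infty}\|u\|_{H^N}$, and the a priori bound \eqref{apriori} yields $\|u(s)\|_{H^N}\lesssim \e$; hence
\[
\|u(t)\|_{H^N} \lesssim \e_0 + \e\int_0^t \|u(s)\|_{L^\infty}\,\mathrm{d}s.
\]

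The remaining task is to verify that $\|u(s)\|_{L^\infty}$ is integrable in time. For $|s|\leq 1$ we use Sobolev $H^N\hookrightarrow L^\infty$ and the a priori bound $\|u\|_{H^N}\lesssim \e$ to get a harmless bounded contribution. For $|s|\geq 1$, I would use \eqref{linearinfty} together with the three-dimensional interpolation inequality $\|\widetilde{f}\|_{L^\infty}\lesssim \|\widetilde{f}\|_{H^{3/2+}}\lesssim \|\partial_k\widetilde{f}\|_{L^2}^{1/2-\epsilon}\|\partial_k^2\widetilde{f}\|_{L^2}^{1/2+\epsilon}$, which by \eqref{apriori} gives $\|\widetilde{f}\|_{L^\infty}\lesssim \e\langle s\rangle^{1/4+O(\delta)+O(\epsilon)}$. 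Plugging back in,
\[
\|u(s)\|_{L^\infty} \lesssim |s|^{-3/2}\|\widetilde{f}\|_{L^\infty} + |s|^{-7/4}\|\partial_k^2\widetilde{f}\|_{L^2} \lesssim \e\langle s\rangle^{-5/4+O(\delta)},
\]
which is integrable provided $\delta<1/4$ (as assumed in \eqref{apriori}). Combining, $\int_0^t\|u(s)\|_{L^\infty}\,\mathrm{d}s \lesssim \e$ and we conclude $\|u(t)\|_{H^N}\leq \e_0+C\e^2$, which is \eqref{proSobconc}.

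The only non-routine point is the equivalence of the flat Sobolev norm with the distorted-Fourier-weighted norm; once one quotes the $H^N$-boundedness of the wave operators this reduces to a one-line calculation. The Moser inequality is applied in the flat Sobolev scale (where it is classical), so no boundedness of wave operators on products is needed. The rest is bookkeeping, and the a priori bounds are designed precisely so that the $L^\infty$ decay is integrable with room to spare.
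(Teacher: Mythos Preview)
The proposal is correct and follows essentially the same approach as the paper: establish the equivalence $\|u\|_{H^N}\approx\|\langle k\rangle^N\widetilde f\|_{L^2}$, control $\|u^2\|_{H^N}\lesssim\|u\|_{L^\infty}\|u\|_{H^N}$ by a product estimate, and close using the integrable-in-time $L^\infty$ decay coming from the a priori bounds. The only cosmetic differences are that the paper uses a direct $\frac{d}{dt}\|(-\Delta+V)^j u\|_{L^2}$ energy estimate instead of Duhamel, and invokes the $L^\infty$ decay directly from \eqref{aprioriL>6} rather than re-deriving it from \eqref{linearinfty} plus interpolation.
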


\begin{proof}
First notice that 
\begin{align}
{\| |k|^j \wt{f} \|}_{L^2} =  {\| |k|^j \wt{u} \|}_{L^2} = c {\| (-\Delta+V)^{j/2} u \|}_{L^2}.
\end{align}
Moreover, by direct estimates (or also using the boundedness of wave operators)
for any $j \leq N/2$
\begin{align}\label{proSob1}
{\| (-\Delta+V)^j g \|}_{L^2} \lesssim {\| g \|}_{H^{2j}} \lesssim \sum_{\ell = 0}^j {\| (-\Delta+V)^\ell g \|}_{L^2}.
\end{align}
In particular, the two norms in \eqref{proSobconc} are equivalent so it suffices to bound the first one.
We use a standard energy estimate.
We let
\begin{align*}
u^j := (-\Delta + V)^j u, 
\end{align*}
for $j=0,\dots,N$, and differentiate the equation \eqref{NLSV} using $-\Delta + V$ to obtain
\begin{align*}
i\partial_t u^j  + (-\Delta + V)u^j = (-\Delta + V)^j u^2.
\end{align*}
Therefore, using \eqref{proSob1} and standard product estimates, 
\begin{align*}
\frac{d}{dt} {\|u^j\|}_{L^2} \lesssim {\big\| (-\Delta + V)^j u^2 \big\|}_{L^2}
  \lesssim {\| u \|}_{H^{2j}} {\| u \|}_{L^\infty}.
\end{align*}
Using the apriori assumption \eqref{apriori} and the decay estimate \eqref{aprioriL>6} 
we get
\begin{align*}
{\|u^j(t)\|}_{L^2} - {\| u^j(0) \|}_{L^2} 
	& \lesssim \int_0^t {\| u(s) \|}_{H^{2j}} {\| u(s) \|}_{L^\infty} \, \mathrm{d}s
	\lesssim \int_0^t \e \cdot \e \js^{-5/4+\delta/2}  \, \mathrm{d}s \lesssim \e^2.
\end{align*}
Summing over $j\leq N/2$ 
gives the desired conclusion.
\end{proof}

\medskip
\subsection{Weighted estimates for high frequencies}\label{secHF}

Recall that we define the profile of a solution $u$ of \eqref{NLSV} by
\begin{align}
\label{prof}
f(t,x) := \big( e^{-it(-\Delta + V )} u(t,\cdot) \big) (x), 
  \qquad \wt{f}(t,k) = e^{-it|k|^2} \widetilde{u}(t,k).
\end{align}
and that this satisfies the equation
\begin{align}
\label{Duhamel0}
\begin{split}
& \widetilde{f}(t,k) = \widetilde{u_0}(k) - i \mathcal{D}(t)(f,f)
\\
& \mathcal{D}(t)(f,f) := \int_0^t \iint e^{is (-|k|^2 + |\ell|^2 + |m|^2 )} \widetilde{f}(s,\ell) \widetilde{f}(s,m)
  \, \mu(k,\ell,m) \, \mathrm{d}\ell \mathrm{d}m\,\mathrm{d}s,
\end{split}
\end{align}
where
\begin{align}
\label{mu0}
\mu(k,\ell,m) := {(2\pi)}^{-9/2} \int \overline{\psi(x,k) }\psi(x,\ell) \psi(x,m) \, \mathrm{d}x
\end{align}

We want to estimate the weighted norms in \eqref{space} as in Proposition \ref{proBoot} 
when frequencies are large relative to a (small) power of time.
This will be helpful later on in the analysis of 
the nonlinear spectral distribution and its asymptotic expansion.
More precisely, let us restrict \eqref{Duhamel0} to high frequencies by considering
\begin{align}
\label{HF1}
\begin{split}
\mathcal{D}_{HF}(t)(f,f) := 
  \int_0^t \iint e^{is (-|k|^2 + |\ell|^2 + |m|^2 )} \widetilde{f}(s,\ell) \widetilde{f}(s,m)
  \, \mu(k,\ell,m) \, \\ \times \varphi_{\geq0}((|\ell|^2+|m|^2+|k|^2)\js^{-2\delta_N})
  \, \mathrm{d}\ell\mathrm{d}m\,\mathrm{d}s,
\end{split}
\end{align}
where
\begin{align}\label{d_N}
\delta_N := \frac{3}{N-5}  
\end{align}
with $N$ the Sobolev regularity of our solution, see \eqref{Nparam} and \eqref{space}-\eqref{apriori}.
This is our main Proposition in this section:

\begin{proposition}[High frequencies estimates]\label{proHF}
Under the a priori assumptions \eqref{apriori} we have
\begin{align}\label{proHFconc}
{\| \partial_k \mathcal{D}_{HF}(t)(f,f) \|}_{L^2} +
  \jt^{-1/2-\delta} {\| \partial_k^2 \mathcal{D}_{HF}(t)(f,f) \|}_{L^2} \leq \e_0 + C \e^2.
\end{align} 
\end{proposition}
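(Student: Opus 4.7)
The plan is to leverage the high Sobolev bound from Proposition \ref{proSob} together with the dispersive decay from Lemma \ref{LemLinear2}. The key observation is that on the support of the cutoff $\varphi_{\geq 0}((|\ell|^2+|m|^2+|k|^2)\js^{-2\delta_N})$, at least one of $|k|, |\ell|, |m|$ must exceed $c\js^{\delta_N}$. Since we control $\|\langle k\rangle^N \wt{f}\|_{L^2}\lesssim \e$ and $(N-j)\delta_N = 3(N-j)/(N-5) > 3$ for $j \leq 2$, the cutoff can be pointwise-majorized as $\chi \lesssim \js^{-(N-j)\delta_N}(\langle k\rangle^{N-j}+\langle \ell\rangle^{N-j}+\langle m\rangle^{N-j})$, trading the localization for a Sobolev weight. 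This gain dominates any polynomial growth in $s$ produced by $\partial_k^j$.

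First I would split $\chi = \chi_A + \chi_B + \chi_C$ via a smooth partition of unity according to which of $|k|$, $|\ell|$, $|m|$ is largest; each $\chi_X$ is then supported where the corresponding frequency is $\gtrsim \js^{\delta_N}$ and satisfies $|\partial^\alpha \chi_X| \lesssim \js^{-|\alpha|\delta_N}$. For the piece $\chi_B$ (and symmetrically $\chi_C$), applying $\partial_k^j$ produces the main contribution $\int_0^t \iint (-2isk)^j e^{is\Phi}\wt{f}(\ell)\wt{f}(m)\mu(k,\ell,m)\chi_B\, d\ell\, dm\, ds$; on $\supp\chi_B$ one has $|k|\lesssim \langle\ell\rangle$, so the symbol is bounded by $s^j \js^{-(N-j)\delta_N}\langle\ell\rangle^N$. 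Applying the bilinear estimate from Theorem \ref{theomu1} and the boundedness of wave operators, together with $\|\langle\ell\rangle^N \wt{f}\|_{L^2}\lesssim \e$ from Proposition \ref{proSob} and $\|u(s)\|_{L^\infty}\lesssim \e\js^{-5/4+\delta}$ from Lemma \ref{LemLinear2}, reduces the task to checking that $\int_0^t s^j \js^{-(N-j)\delta_N - 5/4 + \delta}\, ds$ is uniformly bounded. Since $(N-j)\delta_N + 5/4 - \delta - j > 1$ for $j\leq 2$, the integral converges, yielding $\|\partial_k^j \mathcal{D}_{HF}^B\|_{L^2}\lesssim \e^2$ uniformly in $t$. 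Lower-order contributions from $\partial_k^j$ landing on $\mu$ or $\chi_B$ are handled identically via Lemma \ref{lemmapsi} and the symbol bounds on $\chi_B$.

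The main obstacle is the case $\chi_A$, where the large frequency is the output variable $k$ and the Sobolev weight must land on the output side of $\mu$. The same majorization gives a weighted bilinear bound of the form $\|\langle k\rangle^{N-j}s^j \cdot B_\mu(u,u)\|_{L^2_k}$, which is not directly accommodated by Theorem \ref{theomu1}. To handle this, I would commute $\langle k\rangle^{N-j}$ through the spectral measure $\mu$ using the eigenvalue identity $|k|^2 \psi(x,k) = H\psi(x,k)$ together with integration by parts in $x$; this redistributes powers of $(-\Delta+V)$ onto the product $\psi(x,\ell)\psi(x,m)$ and hence onto the other two input profiles, at the cost of lower-order $V$-dependent terms controlled by the assumptions \eqref{assV2}. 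The resulting estimate becomes a standard Sobolev product estimate of the form $\|u^2\|_{H^N}\lesssim \|u\|_{H^N}\|u\|_{L^\infty}\lesssim \e^2\js^{-5/4+\delta}$, using wave operator boundedness on $H^N$; the $s^j$ growth is absorbed by the $\js^{-(N-j)\delta_N}$ factor exactly as before.

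Combining the contributions from $\chi_A$, $\chi_B$, $\chi_C$, and using the trivial estimate $\|\partial_k^j\widetilde{u_0}\|_{L^2}\lesssim \e_0$ for the data portion, gives \eqref{proHFconc}. The delicate step is the commutation of $\langle k\rangle^{N-j}$ through $\mu$ in case $\chi_A$; everything else is a combination of bilinear estimates of Section \ref{secBE}, the decay of Lemma \ref{LemLinear2}, and the Sobolev bound of Proposition \ref{proSob}.
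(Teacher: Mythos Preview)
Your overall strategy—trading the high-frequency cutoff for a Sobolev weight $\langle\cdot\rangle^{N-j}$ and using $(N-j)\delta_N>3$ to absorb the $s^j$ growth—is the right idea, and it matches the paper's philosophy. However, there are two genuine gaps.

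\textbf{Wrong bilinear tool.} You invoke Theorem \ref{theomu1}, but that result concerns only the component $\nu_1$ of $\mu$ and, more importantly, requires $|k|+|\ell|+|m|\le 2^A$—the opposite of the high-frequency regime. The correct tool is Proposition \ref{proGHW}, which gives H\"older bounds for the \emph{full} $\mu$ under a Coifman--Meyer symbol assumption \eqref{CMnorm}. You would then need to verify that your symbol (e.g.\ $k^j\langle\ell\rangle^{-M}\chi_B$) lies in $CM_\delta$; the paper carries this out explicitly with symbols such as $n_1=-k\langle\ell\rangle^2|m|^{-4}\varphi_{\geq 0}$.

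\textbf{Derivatives of $\mu$.} You treat the contribution where $\partial_k$ lands on $\mu$ as a lower-order term ``handled identically via Lemma \ref{lemmapsi}''. This does not work: Lemma \ref{lemmapsi} gives only pointwise bounds $|\partial_k\psi(x,k)|\lesssim\langle x\rangle$, which say nothing about bilinear operator bounds for the distribution $\partial_k\mu$ (recall $\mu$ contains a $\delta(k-\ell-m)$ and singular pieces). The paper's proof relies on the commutation identity \eqref{proGHW2} from Proposition \ref{proGHW}(ii), which converts $\partial_k\mathcal{B}_n(g,h)$ into a sum of operators $\mathcal{B}'$ with derivatives on $n$ or on the inputs $g,h$, and crucially allows the derivative to fall on the \emph{low}-frequency input rather than the high one. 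This is the key technical mechanism, and without it your argument is incomplete.

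\textbf{The $\chi_A$ case.} Your proposal to commute $\langle k\rangle^{N-j}$ through $\mu$ via the eigenvalue identity $|k|^2\psi=H\psi$ is a different route from the paper, which instead uses $|\Phi|\gtrsim|k|^2$ when $|k|\gg|\ell|,|m|$ to integrate by parts in $s$ (gaining $1/\Phi$) before applying the commutation formula. Your idea is plausible when the symbol is trivial (then the operator is $\Ftil(u^2)$ and the claim reduces to $\|u^2\|_{H^N}\lesssim\|u\|_{H^N}\|u\|_{L^\infty}$), but with the cutoff $\chi_A(k,\ell,m)$ present you cannot directly move $\langle k\rangle^{N}$ onto the product; you would again need a commutation argument at the bilinear level. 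The paper's time-integration approach avoids this issue.

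Finally, the remark about ``the data portion'' $\partial_k^j\widetilde{u_0}$ is misplaced: $\mathcal{D}_{HF}$ is purely the Duhamel contribution and does not involve the data.
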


To prove Proposition \ref{proHF} we are going to make use, among other things,
of Proposition \ref{proGHW} below, which can be deduced from \cite{GHW}.
For convenience, and only for the purpose of stating Proposition \ref{proGHW} below 
and applying it to the proof of Proposition \ref{proHF}, we introduce a Coifman-Meyer type norm for symbols as in \cite{GHW}:
\begin{align}\label{CMnorm}
{\| n \|}_{CM_\delta} := \sup_{0 \leq |a| \leq 10} 
	\big| \big(|k|+|\ell|+|m|\big)^{\delta+|a|} \nabla^a n(k,\ell,m) \big|, \qquad \delta >0.
\end{align}

\begin{proposition}[Germain-Hani-Walsh \cite{GHW}]\label{proGHW}
Consider the bilinear operator
\begin{align}\label{proGHW0}
\begin{split}
\mathcal{B}_{n}(g,h)(k) := 
  \iint \widetilde{g}(\ell) \widetilde{h}(m) \, n(k,\ell,m) \, 
  \, \mu(k,\ell,m) \, \mathrm{d}\ell \mathrm{d}m,
\end{split}
\end{align}
where $n$ is a 
symbol verifying the estimate
\begin{align}
\label{proGHW1}
{\| n \|}_{CM_\delta} 
	\leq A,
\end{align}
for some $\delta >0$. Then:

\setlength{\leftmargini}{2em}
\begin{itemize}

\medskip
\item[(i)] (\it{H\"older estimates})
For any $1<p,q,r,p',q'<\infty$, the following estimate holds
\begin{align}
\label{proGHWest}
\begin{split}
{\| \wt{\mathcal{F}}^{-1} \mathcal{B}_{n}(g,h) \|}_{L^r} \lesssim A \big( {\|g\|}_{L^p} {\|h\|}_{L^q}
  + {\|g\|}_{L^{p'}} {\|h\|}_{L^{q'}} \big), \qquad \frac{1}{p} + \frac{1}{q}=\frac{1}{r} < \frac{1}{p'} + \frac{1}{q'}.
\end{split}
\end{align}
In particular, for  $p'\in (1,p)$, we have 
\begin{align}\label{proGHWest'}
{\| \wt{\mathcal{F}}^{-1} \mathcal{B}_{n}(g,h) \|}_{L^r} \lesssim A{\|g\|}_{L^p \cap L^{p'}} {\|h\|}_{L^q}, 
  \qquad \frac{1}{p} + \frac{1}{q} = \frac{1}{r},
\end{align}
and a similar estimate exchanging the roles of $g$ and $h$.

\medskip
\item[(ii)] (\it{Algebraic identity for the weights})
The following identities hold:
\begin{align}
\label{proGHW2}
\begin{split}
& \partial_k \mathcal{B}_{n}(g,h)(k) 
\\ & = \mathcal{B}_{\partial_k n}(g,h)(k) 
  + \mathcal{B}'_n \big(\Ftil^{-1} \partial_\ell \wt{g},h\big)(k) + \mathcal{B}'_{\partial_\ell n} (g,h)(k) 
  + \mathcal{B}'_n(g,h)(k)
  \\
  &= \mathcal{B}_{\partial_k n}(g,h)(k) + \mathcal{B}_n'\big(g,\Ftil^{-1}\partial_m \wt{h}\big)(k) 
  + \mathcal{B}_{\partial_m n}'(g,h)(k) + \mathcal{B}_n'(g,h)(k),
\end{split}
\end{align}
where we use the `prime' notation $\mathcal{B}'_n$ to denote a generic operator of the form \eqref{proGHW0},
with $\mu$ replaced by a slightly different expression $\mu'$,
and which satisfy the same H\"older bounds \eqref{proGHWest}-\eqref{proGHWest'} satisfied by $\mathcal{B}_n$.

Moreover, one can iterate formula \eqref{proGHW2} to obtain
\begin{align}
\label{proGHW2'}
\begin{split}
& \partial_k^2 \mathcal{B}_{n}(g,h)(k) 
= \sum_{a,b\geq 0, \, a+b\leq 2} \mathcal{B}'_{\partial_{(k,\ell)}^a n}\big(\Ftil^{-1}\partial_\ell^b \wt{g},h)(k) 
  = \sum_{a,b\geq 0, \, a+b\leq 2} \mathcal{B}'_{\partial_{(k,m)}^a n}\big(g,\Ftil^{-1}\partial_m^b \wt{h}\big)(k),
\end{split}
\end{align}
where $\mathcal{B}'$ are operators as above, and where we denote $\partial_{(k,\ell)}$ a generic 
derivative in $k$ and/or $\ell$, and similarly for $\partial_{(k,m)}$.


\end{itemize}

\end{proposition}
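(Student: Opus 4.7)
The plan is to reduce both parts to classical Coifman--Meyer theory via the wave-operator factorization $\mathcal{W} = \Ftil^{-1}\what{\mathcal{F}}$, using the decomposition of $\psi(x,k)$ into a plane-wave piece and a remainder controlled by Lemma \ref{lemmapsi}. The core observation is that $\wt{g}(\ell) = \what{\mathcal{F}}(\mathcal{W}^\ast g)(\ell)$ for $g \in L^2$, so the profiles $\wt{g}, \wt{h}$ appearing in $\mathcal{B}_n$ can be rewritten as ordinary Fourier transforms of $\mathcal{W}^\ast g$ and $\mathcal{W}^\ast h$; under the $L^p$-boundedness of $\mathcal{W}$ and $\mathcal{W}^\ast$ (Yajima \cite{Yajima}, valid under \eqref{assV2}--\eqref{assV1}), all $L^p$ norms of $g, h$ and their wave-operator pullbacks are comparable.

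For part (i), write $\psi(x,k) = e^{ix\cdot k} + \phi(x,k)$ with $\phi$ defined by \eqref{psi0}, and expand the triple product $\overline{\psi(x,k)}\psi(x,\ell)\psi(x,m)$ inside the definition \eqref{mu0} of $\mu$ into eight terms according to whether each factor contributes its free part or its correction $\phi$. The purely free term produces $(2\pi)^3\delta(-k+\ell+m)$, so the corresponding piece of $\mathcal{B}_n$ is a standard pseudo-product with Coifman--Meyer symbol $n$, and the classical multilinear Coifman--Meyer theorem gives the H\"older bound \eqref{proGHWest} for any $1/p+1/q=1/r$ after passing through $\mathcal{W}$ and $\mathcal{W}^\ast$. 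The remaining seven terms each carry at least one $\phi$, and by the weighted estimates on $\psi_1$ from Lemmas \ref{Lempsi1}--\ref{lemmapsi1} these behave as smoothing operators on one (or more) of the inputs, effectively yielding pseudo-product operators with improved integrability in one of the slots; the flexibility $1/p'+1/q' > 1/r$ in the second term of \eqref{proGHWest} precisely accommodates these lower-order pieces via interpolation, and \eqref{proGHWest'} then follows by taking $q'=q$, $p'<p$.

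For part (ii), differentiate \eqref{proGHW0} in $k$: the derivative lands either on $n$, yielding $\mathcal{B}_{\partial_k n}(g,h)$, or on $\mu$, giving a term we must reorganize. From \eqref{mu0}, $\partial_k \mu$ involves $\int \overline{\partial_k \psi(x,k)}\,\psi(x,\ell)\psi(x,m)\,dx$. Using \eqref{psi0} we write $\overline{\partial_k\psi(x,k)} = -ix\,\overline{\psi(x,k)} + r(x,k)$, where $r$ inherits from the Lippmann--Schwinger kernel a wave-operator-type smoothing structure; the factor $-ix$ then multiplies $\psi(x,\ell)\psi(x,m)$, and we convert it using the mirror identity $ix\,\psi(x,\ell) = \partial_\ell \psi(x,\ell) + r'(x,\ell)$, which transfers the derivative onto the $\ell$-variable and hence, via Plancherel, onto $\wt{g}$ in the form $\Ftil^{-1}\partial_\ell \wt{g}$. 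The leftover remainder terms $r, r'$ and their interactions give rise to modified spectral distributions $\mu'$ built from one or more factors of $\phi$; these satisfy the same structural bounds as $\mu$ and hence yield operators $\mathcal{B}'_n$ to which part (i) applies. Symmetry between $\ell$ and $m$ gives the second identity in \eqref{proGHW2}. Iterating the procedure, with careful accounting of where the new derivatives $\partial_k, \partial_\ell, \partial_m$ land on $n, \mu, \mu'$, produces the second-order identity \eqref{proGHW2'}.

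The main obstacle is the bookkeeping in part (ii): one must verify that the corrected kernels $\mu'$ arising after the identity $\overline{\partial_k\psi} = -ix\overline{\psi}+r$ and the dual transfer onto $\psi(x,\ell)$ or $\psi(x,m)$ really do fit into a class of NSD-type distributions for which the H\"older estimates of part (i) remain valid, and in particular that no uncontrolled $x$-weight is left behind. Equivalently, one must check that each "prime" operator $\mathcal{B}'_n$ in \eqref{proGHW2} admits a representation of the form \eqref{proGHW0} with a new symbol and a new $\mu'$, both lying in the Coifman--Meyer class. The iteration for \eqref{proGHW2'} compounds this: one must show that further derivatives of $n$ remain in $CM_\delta$ (trivial) and that the structural properties of $\mu, \mu'$ are preserved when one differentiates a second time, so that all the resulting bilinear operators continue to satisfy the H\"older estimates of part (i).
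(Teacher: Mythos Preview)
Your proposal tracks the right idea, but differs in packaging from the paper's proof and contains one mischaracterization worth flagging.

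For part (i), the paper simply invokes Theorem~1.1 of \cite{GHW}; your eight-term expansion is indeed the skeleton of that argument. However, describing the seven non-free pieces as ``smoothing operators'' is misleading. As the paper's own Sections~\ref{secNSM}--\ref{secBE} make clear, the pieces of $\mu$ coming from at least one factor of $\phi$ (e.g.\ $\nu_1$) carry genuine singularities---deltas and principal values on annuli---and are \emph{not} smoothing. They satisfy H\"older bounds because of their particular singular structure, and the loss $1/p'+1/q'>1/r$ in \eqref{proGHWest} is what absorbs this; it is not a byproduct of interpolation against a smoothing remainder.

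For part (ii), the paper takes a more operator-theoretic route: it introduces $E:=\wt{\mathcal F}\,[|x|,\mathcal W]\,\mathcal W^\ast\,\wt{\mathcal F}^{-1}$ and uses the distributional identity $|\nabla_k|\mu=|\nabla_\ell|\mu+E^\ast_\ell\mu-E^\ast_k\mu$ (formula (3.57) of \cite{GHW}), passing between $\partial_{k_i}$ and $|\nabla_k|$ via Riesz transforms. Your direct identity $\partial_k\psi=ix\,\psi+r$ is the physical-space avatar of the same mechanism: the commutator $[|x|,\mathcal W]$ encodes exactly the discrepancy $r=\partial_k\psi-ix\,\psi$ at the level of kernels. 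So the two routes are equivalent; the paper's formulation is cleaner because it expresses the correction as a bounded operator $E$ acting on $\mu$ rather than as a new integrand that must be re-assembled into a $\mu'$.

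In both approaches the hard step is identical and neither closes it independently: one must show that the corrected kernels (your terms involving $r,r'$; the paper's $R_k E^\ast_\ell\mu$, $R_k E^\ast_k\mu$, $R_\ell R_k\mu$) define bilinear operators obeying \eqref{proGHWest}. The paper cites Theorem~3.13 of \cite{GHW} for this; you correctly identify it as ``the main obstacle'' but do not resolve it either. Your sketch is therefore at the same level of completeness as the paper's, just phrased more concretely and with the caveat above about part~(i).
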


\smallskip
Let us make a few remarks:

\setlength{\leftmargini}{2em}
\begin{itemize}
 
\item Proposition \ref{proGHW} is contained in \cite{GHW}. Below we give some more precise references and 
a few elements of the proof for completeness.
In \cite{GHW} the analysis is actually carried out for a slightly different $\mu$,
with $\psi(x,k)$ instead of $\bar{\psi(x,k)}$ in \eqref{mu0}.
However, this has no impact on the structure of $\mu$ that is used in the proofs 
and on the final estimates stated in the theorem.

\item Part (i) gives product H\"older-type estimates for $\wt{\mathcal{F}}^{-1} \mathcal{B}_{n}(g,h)$.
When $\mu(k,\ell,m) = \delta(k-\ell-m)$
the expression $\widehat{\mathcal F}^{-1} \mathcal{B}_{n}(g,h)$ is usually called a pseudo-product,
and H\"older-type estimates
under conditions similar to \eqref{proGHW1} (with $\delta=0$) are due to Coifman-Meyer \cite{CM}. 


\item Part (ii) is a commutation formula 
which essentially states that the bilinear commutator between $\mathcal{B}_{n}(g,h)(k)$ and $\partial_k$
is given by pseudo-products of the same form as $\mathcal{B}_{n}$ and where the symbol gets differentiated.

\item The assumption on the symbols \eqref{proGHW1} is probably not optimal, but it suffices for our purposes.


\end{itemize}

\begin{proof}[Proof of Proposition \ref{proGHW}]

(i) The estimate \eqref{proGHWest} is the content of Theorem 1.1 in \cite{GHW}.

(ii) To explain \eqref{proGHW2}, let us first introduce some notation.
Let $\mathcal{W} = \wt{\mathcal{F}}^{-1}\what{\mathcal{F}}$ be the wave operator as in \eqref{Wop}, 
let $R_{k_i} := \partial_{k_i} /|\nabla_k|$, $i=1,2,3$, be the standard Euclidean Riesz transform and denote
\begin{align}\label{GHW9}
E := \wt{\mathcal{F}} [|x|, \mathcal{W}] \mathcal{W}^\ast \wt{\mathcal{F}}^{-1}.
\end{align}
$E_k$ will be used to denote the operator $E$ acting on the variable $k$.
With our notation \eqref{mu0} for $\mu$, the formula (3.57)
derived on pages 8523-8524 of \cite{GHW}, to be understood in the sense of distributions, reads
\begin{align}\label{GHW10}
\partial_{k_i} \mu(k,\ell,m) = R_{k_i} R_{\ell} \cdot \partial_\ell \mu(k,\ell,m)
  + R_{k_i} E^\ast_{\ell} \mu(k,\ell,m) - R_{k_i} E^\ast_k \mu(k,\ell,m),
\end{align}
or, equivalently, 
\begin{align}\label{GHW10'}
\begin{split}
|\nabla_{k}| \mu(k,\ell,m) & = |\nabla_\ell| \mu(k,\ell,m)
  + E^\ast_{\ell} \mu(k,\ell,m) - E^\ast_k \mu(k,\ell,m)
  \\ & = |\nabla_m| \mu(k,\ell,m)
  + E^\ast_m \mu(k,\ell,m) - E^\ast_k \mu(k,\ell,m).
\end{split}
\end{align}

Applying \eqref{GHW10} to an expression like \eqref{proGHW0} and integrating by parts in $\ell$ gives
\begin{align}
\label{GHW11}
\begin{split}
\partial_k \mathcal{B}_{n}(g,h) & = \mathcal{B}_{\partial_k n}(g,h) + A + B + C + D,
\\
A & := - \iint \widetilde{g}(\ell) \widetilde{h}(m) \, \partial_\ell n(k,\ell,m) 
  \cdot R_{\ell} R_k \mu(k,\ell,m) \, \mathrm{d}\ell \mathrm{d}m, 
\\
B  & := - \iint \partial_\ell \widetilde{g}(\ell) \widetilde{h}(m) \,n(k,\ell,m)
  \cdot R_{\ell} R_k \mu(k,\ell,m) \, \mathrm{d}\ell \mathrm{d}m, 
\\
C & := \iint \widetilde{g}(\ell) \widetilde{h}(m) \, n(k,\ell,m) \, 
  \, R_k E^\ast_\ell \mu(k,\ell,m) \, \mathrm{d}\ell \mathrm{d}m, 
\\
D & := -\iint \widetilde{g}(\ell) \widetilde{h}(m) \, n(k,\ell,m) \, 
  \, R_k E^\ast_k \mu(k,\ell,m) \, \mathrm{d}\ell \mathrm{d}m.
\end{split}
\end{align}

One is then led to study the operators 
\begin{align}\label{GHW12} 
\begin{split}
\Lambda_1(g,h)  & := \iint \widetilde{g}(\ell) \widetilde{h}(m) \, n(k,\ell,m) \, 
 R_{\ell_i} R_k \mu(k,\ell,m) \, \mathrm{d}\ell \mathrm{d}m, 
\\
\Lambda_2(g,h) & := \iint \widetilde{g}(\ell) \widetilde{h}(m) \, n(k,\ell,m) \, 
  \, R_k E^\ast_\ell \mu(k,\ell,m) \, \mathrm{d}\ell \mathrm{d}m, 
\\
\Lambda_3(g,h) & := \iint \widetilde{g}(\ell) \widetilde{h}(m) \, n(k,\ell,m) \, 
  \, R_k E^\ast_k \mu(k,\ell,m) \, \mathrm{d}\ell \mathrm{d}m,
\end{split}
\end{align}
corresponding to the three different bilinear operators in \eqref{GHW11}, where $n$ denotes a generic symbol. 
Theorem 3.13 of \cite{GHW}, gives H\"older estimates, with small losses in the Lebesgue exponents as in \eqref{proGHWest},
on the operators \eqref{GHW12}; more precisely, 
under the assumption \eqref{proGHW1} on the symbol $n$, one has, for $j=1,2,3$,
\begin{align}
{\| \wt{\mathcal{F}}^{-1} \Lambda_j(g,h) \|}_{L^r} \lesssim {\|g\|}_{L^p} {\|h\|}_{L^q}
  + {\|g\|}_{L^{p'}} {\|h\|}_{L^{q'}}, \qquad \frac{1}{p} + \frac{1}{q}=\frac{1}{r} < \frac{1}{p'} + \frac{1}{q'}.
\end{align}
This estimate and \eqref{GHW11} give the claimed identity \eqref{proGHW2}.

By iterating the application of \eqref{GHW10} to the expressions in \eqref{GHW11},  we can derive
the second identity \eqref{proGHW2'}.
\end{proof}

\smallskip
\begin{proof}[Proof of Proposition \ref{proHF}]
By symmetry we may assume $|m| \geq |\ell|$ on the support of \eqref{HF1}.


\medskip
{\it Case $|k| \lesssim |m|$.}
In this case we have $|m| \approx |m|+|\ell|+|k| \gtrsim \js^{\delta_N}$ on the support of $\mathcal{D}_{HF}$.
We want to apply the identity \eqref{proGHW2} to \eqref{HF1}.
To shorten our formulas we will often omit the argument of the cutoff $\varphi_{\geq 0}$ or the measure $\mu$,
or other arguments when there is no risk of confusion.
Formula \eqref{proGHW2} applied to \eqref{HF1} gives
\begin{align}\label{prHF1}
\begin{split}
\partial_k \mathcal{D}_{HF} &= \mathcal{D}_{HF,1} + \mathcal{D}_{HF,2} + \mathcal{D}_{HF,3},
\\
\mathcal{D}_{HF,1}(f,f)(t) &= \int_0^t 
  \iint e^{is \Phi(k,\ell,m)} \widetilde{f}(s,\ell) \widetilde{f}(s,m)
  \,\varphi_{\geq0} \, \big( -is k \mu+ is \ell \mu' + \mu'  \big)
  \, \mathrm{d}\ell\mathrm{d}m\,\mathrm{d}s,
\\
\mathcal{D}_{HF,2}(f,f)(t) &= \int_0^t \iint e^{is \Phi(k,\ell,m)} 
  \partial_\ell \widetilde{f}(s,\ell) \widetilde{f}(s,m)
  \, \varphi_{\geq0} \, \mu' \, \mathrm{d}\ell\mathrm{d}m\,\mathrm{d}s,
\\
\mathcal{D}_{HF,3}(f,f)(t) &= \int_0^t \iint e^{is \Phi(k,\ell,m)} 
  \widetilde{f}(s,\ell) \widetilde{f}(s,m)
  \, \big( \partial_k \varphi_{\geq0} \, \mu + \partial_\ell \varphi_{\geq0} \, \mu' \big)
  \, \mathrm{d}\ell\mathrm{d}m\,\mathrm{d}s,
\end{split}
\end{align}
where $\mu'$ is as in the statement of Proposition \ref{proGHW}(ii).
Notice that thanks to the two identities in \eqref{proGHW2} the above expressions
do not involve $\partial_m \wt{f}$, which is the function with largest frequency. 

For the first term in \eqref{prHF1} it suffices to show how to estimate the contribution involving $\mu$,
since the other two contributions involving $\mu^\prime$ are similar. 
We rewrite it as
\begin{align}\label{prHF1'}
\begin{split}
\mathcal{D}_{HF,1}' & := \int_0^t is \, e^{is|k|^2} 
  \iint  \langle\ell\rangle^{-2}\widetilde{u}(s,\ell) \, e^{is|m|^2} |m|^4\widetilde{f}(s,m)
  \, n_1(k,\ell,m)\, \mu(k,\ell,m)\, \mathrm{d}\ell\mathrm{d}m\,\mathrm{d}s,
  \\
  n_1(k,\ell,m) &:= \frac{-k\langle\ell\rangle^2}{|m|^4}\varphi_{\geq0}((|\ell|^2+|m|^2+|k|^2)\js^{-2\delta_N}) \,.
\end{split}
\end{align}
It is easy to check that since $|m|\gtrsim \max(\js^{\delta_N},|\ell|,|k|)$, we have, see \eqref{CMnorm},
\begin{align*}
{\| n_1 \|}_{CM_\delta} \lesssim 1.
\end{align*}
The assumptions \eqref{proGHW1} is then verified with $A \lesssim 1$.
For small $\gamma$ we let 
\begin{align*}
2+:= (1/2 - \gamma)^{-1}, \quad M:= \gamma^{-1}, 
\end{align*}
and apply \eqref{proGHWest'} to estimate
\begin{align*}
{\| \mathcal{D}_{HF,1}'(f,f)(t) \|}_{L^2} \lesssim \int_0^t s & \cdot
  {\| (-\Delta+V + 1)^{-1} u(s) \|}_{L^M \cap L^{M-\gamma}}
  \\ & \cdot 
  {\| \wt{\mathcal{F}}^{-1} (|m|^4 \varphi_{\geq -10}(|m|\js^{-\delta_N}) \wt{u}(s) ) \|}_{L^{2+}} \, \mathrm{d}s.
\end{align*}
Using Sobolev's embeddings, and the apriori Sobolev bound \eqref{apriori} we deduce
\begin{align*}
& {\| (-\Delta+V + 1)^{-1} u(s) \|}_{L^M \cap L^{M-\gamma}} \lesssim {\| u(s) \|}_{L^2} \lesssim \e,
\end{align*}
and, using also the boundedness of $\wt{\mathcal{F}}^{-1} \what{\mathcal{F}}$ and see \eqref{d_N},
\begin{align*}
{\| \wt{\mathcal{F}}^{-1} (|m|^4 \varphi_{\geq 0}(|m|\js^{-\delta_N}) \wt{u}(s) ) \|}_{L^{2+}}
  & \lesssim {\| |m|^5 \varphi_{\geq 0}(|m|\js^{-\delta_N}) \widetilde{f}(s) \|}_{L^2} 
  \\ & \lesssim {\| |m|^N \widetilde{f}(s) \|}_{L^2} \js^{-\delta_N (N-5)}
  \lesssim \e \js^{-3}.
\end{align*}
It follows that
\begin{align*}
{\| \mathcal{D}_{HF,1}(f,f)(t) \|}_{L^2} \lesssim \int_0^t s \cdot \e \cdot \e \js^{-3} \, \mathrm{d}s
  \lesssim \e^2.
\end{align*}
The terms $ \mathcal{D}_{HF,2}$ and $\mathcal{D}_{HF,3}$ are easier and can be dealt with similarly,
using also the a priori bound \eqref{apriori} for $\partial_\ell\wt{f}$.

To estimate $\partial_k^2 \mathcal{D}_{HF}$ 
we use the identity \eqref{proGHW2'}.
This leads to many terms, but since all of them can be treated similarly, we only give details for some.
Among the many terms generated by applying \eqref{proGHW2'} the main ones are
\begin{align}\label{prHF20}
\begin{split}
\mathcal{D}_{HF,4}(f,f)(t) &= \int_0^t
  \iint s^2 \ell^2 \, e^{is \Phi(k,\ell,m)} \widetilde{f}(s,\ell) \widetilde{f}(s,m)
  \,\varphi_{\geq0} \, \mu'(k,\ell,m) \, \mathrm{d}\ell\mathrm{d}m\,\mathrm{d}s,
\\
\mathcal{D}_{HF,5}(f,f)(t) &= \int_0^t \iint s\ell \, e^{is \Phi(k,\ell,m)}
  \partial_\ell \widetilde{f}(s,\ell) \widetilde{f}(s,m)
  \, \varphi_{\geq0} \,  \mu'(k,\ell,m) \, \mathrm{d}\ell\mathrm{d}m\,\mathrm{d}s,
\\
\mathcal{D}_{HF,6}(f,f)(t) &= \int_0^t 
  \iint e^{is \Phi(k,\ell,m)} \partial_\ell^2 \widetilde{f}(s,\ell) \widetilde{f}(s,m)
  \,\varphi_{\geq0} \,  \mu'(k,\ell,m) \, \mathrm{d}\ell\mathrm{d}m\,\mathrm{d}s.
\end{split}
\end{align}
Similarly to \eqref{prHF1'} we rewrite
\begin{align*}
\mathcal{D}_{HF,4} & := \int_0^t s^2 \, e^{is|k|^2} 
  \iint  \langle\ell\rangle^{-2}\widetilde{u}(s,\ell) \, |m|^5\widetilde{u}(s,m)
  \, n_4(k,\ell,m)\, \mu'(k,\ell,m)\, \mathrm{d}\ell\mathrm{d}m\,\mathrm{d}s,
  \\
  n_4(k,\ell,m) &:= \frac{-\ell^2 \langle\ell\rangle^2}{|m|^5}
  	\varphi_{\geq0}((|\ell|^2+|m|^2+|k|^2)\js^{-2\delta_N}),
\end{align*}
and, using ${\| n_4 \|}_{CM_\delta} \lesssim 1$ and the same notation above for the indexes, we can estimate
\begin{align*}
{\| \mathcal{D}_{HF,4}(f,f)(t) \|}_{L^2} \lesssim \int_0^t s^2 & \cdot
  {\| (-\Delta+V + 1)^{-1} u(s) \|}_{L^M \cap L^{M-}}
  \\ & \cdot
  {\| \wt{\mathcal{F}}^{-1} (|m|^5 \varphi_{\geq 0}(|m|\js^{-\delta_N}) \wt{u}(s) ) \|}_{L^{2+}} \, \mathrm{d}s
\\
& \lesssim \int_0^t s^2 \cdot \e \cdot \e \js^{-\delta_N (N-6)} \, \mathrm{d}s
  \lesssim \e^2 \js^{1/2}.
\end{align*}
Similarly, using the apriori bounds \eqref{apriori}
\begin{align*}
{\| \mathcal{D}_{HF,5}(f,f)(t) \|}_{L^2} \lesssim \int_0^t s & \cdot
  {\| (-\Delta+V + 1)^{-1} \wt{\mathcal{F}}^{-1} e^{it|\ell|^2} \partial_\ell \wt{f}(s) \|}_{L^M \cap L^{M-}}
  \\ &
  \cdot {\| \wt{\mathcal{F}}^{-1} (|m|^4 \varphi_{\geq 0}(|m|\js^{-\delta_N}) \wt{u}(s) ) \|}_{L^{2+}} \, \mathrm{d}s
\\
\lesssim \int_0^t s & \cdot {\| \partial_\ell \wt{f}(s) \|}_{L^2}
  \cdot \js^{-\delta_N (N-5)} {\| |m|^N \wt{u}(s) ) \|}_{L^2} \, \mathrm{d}s
\\
\lesssim \int_0^t & s \cdot \e \cdot \e \js^{-3} \, \mathrm{d}s \lesssim \e^2,
\end{align*}
and
\begin{align*}
{\| \mathcal{D}_{HF,6}(f,f)(t) \|}_{L^2} \lesssim \int_0^t &
  {\| (-\Delta+V + 1)^{-1} \wt{\mathcal{F}}^{-1} e^{it|\ell|^2} \partial_\ell^2 \wt{f}(s) \|}_{L^M \cap L^{M-}}
  \\ &
  \cdot {\| \wt{\mathcal{F}}^{-1} (|m|^3 \varphi_{\geq 0}(|m|\js^{-\delta_N}) \wt{u}(s) ) \|}_{L^{2+}} \, \mathrm{d}s
\\
\lesssim \int_0^t & {\| \partial_\ell^2 \wt{f}(s) \|}_{L^2}
  \cdot \js^{-5} {\| |m|^N \wt{u}(s) ) \|}_{L^2} \, \mathrm{d}s
\\ \lesssim \int_0^t & \e \js^{1/2+\delta} \cdot \e \js^{-3} \, \mathrm{d}s
  \lesssim \e^2.
\end{align*}

\medskip
{\it Case $|k| \gg |m|$.}
In this case $|k| \gg |m| + |\ell|$ and $|k| \gtrsim \js^{\delta_N}$ on the support of $\mathcal{D}_{HF}$.
Our strategy is to first integrate by parts in $s$ and then estimate the weighted norms of the resulting expression.
More precisely, we use $e^{is\Phi} = (i\Phi)^{-1}\partial_s e^{is\Phi}$ and write
\begin{align}\label{prHF2}
\begin{split}
\mathcal{D}_{HF}(t)(f,f) = A_{HF}(t)(f,f) - A_{HF}(t)(f,f)(0) - \int_0^t A_{HF}(s)(\partial_s f,f)\,\mathrm{d}s
  \\ - \int_0^t A_{HF}(s)(f,\partial_s f)\,\mathrm{d}s + \mbox{easier terms}
\end{split}
\end{align}
where
\begin{align}
A_{HF}(s)(g,h) :=  \iint e^{is\Phi(k,\ell,m)} \widetilde{g}(s,\ell) \widetilde{h}(s,m)
  \frac{1}{i\Phi(k,\ell,m)} \, \mu(k,\ell,m) \, \varphi_{\geq0} \, \mathrm{d}\ell\mathrm{d}m
\end{align}
and the ``easier terms'' are those where $\partial_s$ hits the cutoff which gives 
$\partial_s \varphi_{\geq0}= s^{-1} \varphi_{\sim0}$ and an easier term to treat.

We apply $\partial_k$ and $\partial_k^2$ to the terms in \eqref{prHF2}
using the identities \eqref{proGHW2}-\eqref{proGHW2'} 
obtaining many different contributions. In the case of one derivative, we have (omitting irrelevant constants)
\begin{align}\label{prHF3}
\begin{split}
& \partial_k \mathcal{D}_{HF}(f,f) = A_1 + A_2 + A_3 + \mbox{easier terms}
\\
& A_1 := t k \iint e^{it\Phi(k,\ell,m)} \widetilde{f}(t,\ell) \widetilde{f}(t,m)
  \frac{1}{\Phi(k,\ell,m)} \, \mu(k,\ell,m) \, \varphi_{\geq0} \, \mathrm{d}\ell\mathrm{d}m,
\\
& A_2 := \int_0^t s k \iint e^{is\Phi(k,\ell,m)} \partial_s \widetilde{f}(s,\ell) \widetilde{f}(s,m)
  \frac{1}{\Phi(k,\ell,m)} \, \mu(k,\ell,m) \, \varphi_{\geq0} \, \mathrm{d}\ell\mathrm{d}m \,\mathrm{d}s,
\\
& A_3 := \int_0^t \iint e^{is\Phi(k,\ell,m)} \partial_s \widetilde{f}(s,\ell) \, \partial_m \widetilde{f}(s,m)
  \frac{1}{\Phi(k,\ell,m)} \, \mu(k,\ell,m) \, \varphi_{\geq0} \, \mathrm{d}\ell\mathrm{d}m \,\mathrm{d}s.
\end{split}
\end{align}
To estimate these terms we can use that ${\| k/\Phi \|}_{CM_\delta} \lesssim 1$, and
\begin{align}\label{prHFdsf}
\wt{\mathcal{F}}^{-1} e^{is|k|^2} \partial_s \widetilde{f}(s)  = u^2(s).
\end{align}
Using the H\"older estimate \eqref{proGHWest'}, and the decay estimate \eqref{aprioriL<6} in Lemma \ref{LemLinear2},
we can bound
\begin{align*}
{\| A_1 \|}_{L^2} & \lesssim t {\| u(t) \|}_{L^6 \cap L^{6-}} {\| u(t) \|}_{L^3}
 \lesssim t \cdot \e\jt^{-(1-)} \cdot \e \jt^{-1/2} \lesssim \e^2.
\end{align*}
Similarly, using also \eqref{prHFdsf} above, we can estimate
\begin{align*}
{\| A_2 \|}_{L^2} & \lesssim \int_0^t s \cdot {\| u^2(s) \|}_{L^3} {\| u(s) \|}_{L^6\cap L^{6-}} \, \mathrm{d}s
 \lesssim \int_0^t s \cdot \e^2 \js^{-2} \cdot \e \js^{-(1-)} \, \mathrm{d}s \lesssim \e^2,
\end{align*}
and
\begin{align*}
{\| A_3 \|}_{L^2} & \lesssim \int_0^t s \cdot {\| u^2(s) \|}_{L^3\cap L^{3-}} 
	{\| \wt{\mathcal{F}}^{-1} e^{is|m|^2} \partial_m \wt{f}(s) \|}_{L^6} \, \mathrm{d}s
\\
& \lesssim \int_0^t s \cdot \e^2 \js^{-(2-)} \cdot \e s^{-1} \js^{1/2+\delta} \, \mathrm{d}s
\lesssim \e^2
\end{align*}
having used also \eqref{LemLinearL6} for the second inequality.

When applying $\partial_k^2$ we again obtain several terms.
Omitting irrelevant constants, we can write schematically
\begin{align}\label{prHF4}
\begin{split}
& \partial_k^2 \mathcal{D}_{HF}(f,f) = B_1 + B_2 + B_3 + \mbox{easier terms}
\\
& B_1 = t^2 \iint e^{it\Phi(k,\ell,m)} \widetilde{f}(t,\ell) \widetilde{f}(t,m)
  \frac{k^2}{\Phi(k,\ell,m)} \, \mu(k,\ell,m) \, \varphi_{\geq0} \, \mathrm{d}\ell\mathrm{d}m,
\\
& B_2 = t \iint e^{it\Phi(k,\ell,m)} \widetilde{f}(t,\ell) \, \partial_m \widetilde{f}(t,m)
  \frac{k}{\Phi(k,\ell,m)} \, \mu(k,\ell,m) \, \varphi_{\geq0} \, \mathrm{d}\ell\mathrm{d}m,
\\
& B_3 = \int_0^t s^2 \iint e^{is\Phi(k,\ell,m)} \partial_s \widetilde{f}(s,\ell) \widetilde{f}(s,m)
  \frac{k^2}{\Phi(k,\ell,m)} \, \mu(k,\ell,m) \, \varphi_{\geq0} \, \mathrm{d}\ell\mathrm{d}m \,\mathrm{d}s,
\\
& B_4 = \int_0^t s \iint e^{is\Phi(k,\ell,m)} \partial_s \widetilde{f}(s,\ell) \partial_m \widetilde{f}(s,m)
  \frac{k}{\Phi(k,\ell,m)} \, \mu(k,\ell,m) \, \varphi_{\geq0} \, \mathrm{d}\ell\mathrm{d}m \,\mathrm{d}s.
\end{split}
\end{align}
The ``easier terms'' contain those terms where two derivatives fall on the profile $f$, which can be treated by 
an H\"older type inequality using the integrable-in-time $L^p$ decay, $p>6$, 
and other terms where derivatives hit the cutoff. 

In $B_1$ we write $k^2/\Phi(k,\ell,m) = -1 + (|\ell|^2+|m|^2)/\Phi(k,\ell,m)$
The contribution corresponding to the symbol $-1$ is estimated using \eqref{proGHWest'}:
\begin{align*}
t^2 {\| u(t) \|}_{L^6 \cap L^{6-}} {\| u(t) \|}_{L^3}
	\lesssim t^2 \cdot \e\jt^{-(1-)} \cdot \e \jt^{-1/2} \lesssim \e^2 \jt^{1/2+},
\end{align*}
where $1/2+$ denotes here a number larger, but arbitrarily close to, $1/2$.
This is consistent with \eqref{proHFconc}.
For the other contribution we note that 
${\| 1/\Phi(k,\ell,m)  \|}_{CM_\beta} \lesssim 1$, $\beta>0$, and estimate
the bilinear term by 
\begin{align}\label{prHF5}
t^2 {\| u(t) \|}_{L^6 \cap L^{6-}} {\| \Delta u(t) \|}_{L^3}
 \lesssim t^2 \cdot \e\jt^{-(1-)} \cdot \e |t|^{-1/2} \jt^{\rho} \lesssim \e^2 \jt^{1/2+\rho+}
\end{align}
having used 
\begin{align*}
{\| \Delta u(t) \|}_{L^3} & \lesssim {\| P_{\leq K_0}\Delta u(t) \|}_{L^3} + {\| P_{> K_0}\Delta u(t) \|}_{L^3}
\\ & \lesssim 2^{2K_0} {\| u(t) \|}_{L^3} + {\| u(t) \|}_{H^N} 2^{-K_0(N-3)}
\lesssim \e \big( 2^{2K_0} |t|^{-1/2} + 2^{-K_0(N-3)} \big)
\end{align*}
with $2^{K_0} = |t|^{\rho/2}$ and $\rho= 1/(N-3)$.
Imposing  that $\rho < \delta$, the bound \eqref{prHF5} is consistent with 
the desired \eqref{proHFconc}.

For the second term in \eqref{prHF4} we use ${\| k/\Phi(k,\ell,m)  \|}_{CM_\delta} \lesssim 1$,
the decay estimate \eqref{LemLinearL6}, and the apriori bounds, to obtain
\begin{align*}
{\|B_2\|}_{L^2} & \lesssim 
t {\| u(t) \|}_{L^3 \cap L^{3-}} {\| \wt{\mathcal{F}}^{-1} e^{it|m|^2} \partial_m \wt{f}(t) \|}_{L^6}
\\ & \lesssim t \cdot \e\jt^{-(1/2-)} \cdot |t|^{-1} {\| \partial_m^2 \wt{f}(t) \|}_{L^2}
\\ & \lesssim t \cdot \e\jt^{-(1/2-)} \cdot \e |t|^{-1} \jt^{1/2+\delta} \lesssim \e^2 \jt^{2\delta}.
\end{align*}

The term $B_3$ is similar to the term $B_1$ as it contains a time integration but one profile is differentiated in time.
One can then proceed similarly with an $L^3\times L^{6-}$ estimate using, see \eqref{prHFdsf}, 
\begin{align*}
{\| \wt{\mathcal{F}}^{-1} e^{it|k|^2} \partial_s \widetilde{f}(s) \|}_{L^3}
\lesssim {\| u(s) \|}_{L^6}^2 \lesssim \e^2 \js^{-2}.
\end{align*}
The term $B_4$ is easier and can be treated similarly to the previous ones by an 
$L^3\cap L^{3-} \times L^6$ estimate, so we skip it.
\end{proof}


\medskip
\section{Analysis of the NSD I: structure of the leading order}\label{secNSM}

\subsection{Expansion of $\mu$ and leading order nonlinear terms}
We expand the integrand in \eqref{mu0} in negative powers of $|x|$ according to the 
relation between $\psi$ and $\psi_1$ in \eqref{psipsi1}, and write
\begin{align}\label{mudec}
\begin{split}
{(2\pi)}^{9/2}\mu(k,\ell,m) & = {(2\pi)}^{3/2}\delta_0(k-\ell-m) 
\\ &  -\frac{1}{4\pi} \mu_1(k,\ell,m) + \frac{1}{{(4\pi)}^2} \mu_2(k,\ell,m) - \frac{1}{{(4\pi)}^3}\mu_3(k,\ell,m),
\end{split}
\end{align}
where the integrand in $\mu_\rho$ is $O(|x|^{-\rho})$; more precisely

\medskip
\begin{itemize}

\item The distribution $\mu_1$ is given by
\begin{align}
\label{mu1}
\mu_1(k,\ell,m) & = \nu_1(-k+\ell,m) + \nu_1(-k+m,\ell) + \bar{\nu_1(-\ell-m,k)}
\end{align}
where
\begin{align}
\label{nu_1}
\nu_1(p,q) & := \int e^{ix \cdot p} \, \frac{e^{i|q||x|}}{|x|} \psi_1(x,q)\, \mathrm{d}x;
\end{align}

\item The measure $\mu_2$ is given by
\begin{align}
\label{mu2}
\mu_2(k,\ell,m) & = \nu_2^1(k,\ell,m) + \nu_2^2(k,\ell,m) + \nu_2^2(k,m,\ell)
\end{align}
where
\begin{align}
\label{nu_2}
\begin{split}
\nu_2^1(k,\ell,m) & := \int e^{-ix \cdot k} \frac{e^{i(|\ell|+|m|)|x|}}{|x|^2} \psi_1(x,\ell) \psi_1(x,m)\, \mathrm{d}x,
\\
\nu_2^2(k,a,b) & := \int e^{ix\cdot a} \frac{e^{i|x|(-|k|+|b|)}}{|x|^2} \bar{\psi_1(x,k)} \psi_1(x,b) \, \mathrm{d}x;
\end{split}
\end{align}

\item  The measure $\mu_3$ is given by
\begin{align}
\label{mu3}
\begin{split}
\mu_3(k,\ell,m) = \int \frac{e^{i(-|k|+|\ell|+|m|)|x|}}{|x|^3} \bar{\psi_1(x,k)} \psi_1(x,\ell) \psi_1(x,m) \, \mathrm{d}x.
\end{split}
\end{align}

\end{itemize}

We are interested in the regularity of these distributions, and therefore
are mostly concerned with the behavior for large $|x|$ of the integrands.

\medskip
According to \eqref{mudec} we decompose the nonlinear interaction in Duhamel's formula \eqref{Duhamel0} as
\begin{align}
\label{Duhameldec}
\begin{split}
{(2\pi)}^{9/2}\mathcal{D}(t)(f,f) &= {(2\pi)}^{3/2}\mathcal{D}_0(t)(f,f)
  \\ & - \frac{1}{4\pi}\mathcal{D}_1(t)(f,f) + \frac{1}{(4\pi)^2}\mathcal{D}_{2}(t)(f,f) 
  - \frac{1}{(4\pi)^3}\mathcal{D}_{3}(t)(f,f),
\\
\mathcal{D}_0(t)(f,f) & := \int_0^t \iint e^{is (-|k|^2 + |\ell|^2 + |k-\ell|^2 )} \widetilde{f}(s,\ell) 
  \widetilde{f}(s,k-\ell) \, \mathrm{d}\ell \,\mathrm{d}s,
\\
\mathcal{D}_\ast(t)(f,f) & := \int_0^t \iint e^{is (-|k|^2 + |\ell|^2 + |m|^2 )} \widetilde{f}(s,\ell) \widetilde{f}(s,m)
  \, \mu_\ast(k,\ell,m) \, \mathrm{d}\ell \mathrm{d}m \,\mathrm{d}s.
\end{split}
\end{align}

In this section we will analyze in details $\mu_1$,
and postpone the analysis of the lower order terms \eqref{mu2}-\eqref{mu3} 
to Subsection \ref{ssecmu2} and \ref{ssecmu3}. 
We write more explicitly the leading order terms in \eqref{Duhameldec} using \eqref{mu1}
\begin{align}
\label{D1}
\begin{split}
\mathcal{D}_1(t)(f,f) & = \int_0^t 
  \iint e^{is (-|k|^2 + |\ell|^2 + |m|^2 )} \widetilde{f}(s,\ell) \widetilde{f}(s,m)
  \, \nu_1(-k+m,\ell) \, \mathrm{d}\ell \mathrm{d}m\,\mathrm{d}s
\\
& + \int_0^t \iint e^{is (-|k|^2 + |\ell|^2 + |m|^2 )} \widetilde{f}(s,\ell) \widetilde{f}(s,m)
  \, \nu_1(-k+\ell,m) \, \mathrm{d}\ell \mathrm{d}m\,\mathrm{d}s
\\
& +
\int_0^t \iint e^{is (-|k|^2 + |\ell|^2 + |m|^2 )} \widetilde{f}(s,\ell) \widetilde{f}(s,m)
  \, \bar{\nu_1(-\ell-m,k)} \, \mathrm{d}\ell \mathrm{d}m\,\mathrm{d}s
\end{split}
\end{align}
which, using the symmetry in $\ell$ and $m$ and changing variables, we may rewrite as 
\begin{align}
\label{D12}
\begin{split}
\mathcal{D}_1(t)(f,f) & = 
  2 \int_0^t \iint e^{is (|\ell|^2 + 2 k\cdot \ell + |m|^2 )} \widetilde{f}(s,\ell+k) \widetilde{f}(s,m)
  \, \nu_1(\ell,m) \, \mathrm{d}\ell \mathrm{d}m \,\mathrm{d}s
\\
& + \int_0^t \iint e^{is (-|k|^2 + |\ell|^2 + 2\ell \cdot m + 2|m|^2 )} 
  \widetilde{f}(s,-\ell-m) \widetilde{f}(s,m)
  \, \bar{\nu_1(\ell,k)}  \, \mathrm{d}\ell \mathrm{d}m \,\mathrm{d}s.
\end{split}
\end{align}
Note that the two integrals in \eqref{D12} are somewhat similar but have slightly different structure as, for example,
the measure in the second one is $k$ dependent.
This will require a slightly different treatment in some of the estimates that will follow in Sections \ref{secBE}
and \ref{secdkL2}.




\medskip
\subsection{Structure and properties of $\nu_1$}\label{ssecML}
Motivated by \eqref{D12} we need to study $\nu_1$ as in \eqref{nu_1}.
Our main aim in this section is to prove that a precise version of the following 
approximate identity:
\begin{align*}
\nu_1(p,q) = \frac{1}{|p|} \Big[  \delta(|p|-|q|) + \pv \frac{1}{|p|-|q|} \Big] 
  m_0(p,q) + R(p,q)
\end{align*}
where $m_0$ is a ``nice'' symbol of Coifman-Meyer type up to some losses,
and $R$ is a better behaved remainder. 
%
%
This is the content of the following main proposition:

\medskip
\begin{proposition}[Structure of $\nu_1$]\label{Propnu+}
Let $\nu_1$ be the distribution defined in \eqref{nu_1}, with 
$\psi_1$ defined by \eqref{psipsi1}. 
Fix $N_2 \in [5, N_1/4] \cap \Z$. 
Let $p,q\in\R^3$ with $|p|\approx 2^P, |q|\approx 2^Q$, and assume that $P,Q \leq A$ for some $A>0$.
Then we can write
\begin{align}
\label{Propnu+1}
\nu_1(p,q) = \nu_0(p,q) + \nu_L(p,q) + \nu_R(p,q),
\end{align}
where:

\medskip
\begin{itemize}
\item[(1)] The leading order is
\begin{align}
\label{Propnu+1.0}
\nu_0(p,q) :=  \frac{b_0(p,q)}{|p|} \Big[ i\pi \, \delta(|p|-|q|) + \pv \frac{1}{|p|-|q|} \Big]
\end{align}
with
$b_0$ satisfying the bounds
\begin{align}
\label{Propnu+1.1}
\big| \varphi_P(p) \varphi_Q(q) \, \nabla_p^\alpha \nabla_q^\beta b_0(p,q) \big| 
  \lesssim 2^{-|\a|P} \big(2^{|\a|Q} + 2^{(1-|\b|)Q_-}\big) \cdot \mathbf{1}_{\{|P-Q|< 5\}},
\end{align}
for all $P,Q \leq A$, $|\a|+|\b| \leq N_1$.
Recall our notation $Q_- = \min(Q,0)$.

\medskip
\item[(2)] 
The lower order terms $\nu_L(p,q)$ can be written as 
\begin{align}
\label{Propnu+2}
\nu_L(p,q) = \frac{1}{|p|} 
  \sum_{a=1}^{N_2} \sum_{J\in\Z} b_{a,J}(p,q) \cdot 2^J K_a\big(2^J(|p|-|q|)\big) 
\end{align}
with 
$K_a \in \mathcal{S}$ 
and $b_{a,J}$ satisfying
\begin{align}
\label{Propnu+2.1}
\sum_{J\in\Z} 
  \big|  \varphi_P(p) \varphi_Q(q) \nabla_p^\alpha \nabla_q^\beta  b_{a,J}(p,q) \big| \lesssim 
  2^{-|\a|P} \big(2^{|\a|Q}  + 2^{(1-|\beta|)Q_-}\big) \cdot \mathbf{1}_{\{|P-Q|< 5\}},
\end{align}
for all $P,Q \leq A$, $|\a|+|\b| \leq N_2$. 

\medskip
\item[(3)] The remainder term $\nu_R$ satisfies the estimates 
\begin{align}\label{Propnu+3}
\begin{split}
\big| \varphi_P(p) \varphi_Q(q) \nabla_p^\alpha \nabla_q^\beta \nu_R(p,q) \big| 
  \lesssim 2^{-2\max(P,Q)} \cdot  2^{-(|\a|+|\b|)\max(P,Q)} \cdot 2^{(|\a|+|\b|+2)5A}
\end{split}
\end{align}
for $|P-Q| < 5$, and 
\begin{align}\label{Propnu+3imp}
\begin{split}
\big| \varphi_P(p) \varphi_Q(q) \nabla_p^\alpha \nabla_q^\beta \nu_R(p,q) \big| 
  \lesssim 2^{-2\max(P,Q)} \cdot 2^{-|\a|\max(P,Q)} \max(0,2^{(1-|\b|)Q_-}) \cdot 2^{(|\a|+|\b|+2)5A}
\end{split}
\end{align}
for $|P-Q| \geq 5$,
for all $|\a|+|\b| \leq N_2/2-3$.

%

\end{itemize}

\end{proposition}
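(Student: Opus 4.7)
The plan is to substitute the expansion of $\psi_1$ from Lemma~\ref{lemmapsi1} into \eqref{nu_1} and analyze each resulting oscillatory integral by combining spherical stationary phase on $S^2$ with a distributional evaluation of the radial integral. The singular structure $i\pi\delta(|p|-|q|)+\pv\frac{1}{|p|-|q|}$ in \eqref{Propnu+1.0} will emerge naturally from the radial integration at the leading critical point $\omega=-\hat p$ of the spherical phase $\omega\cdot p$. First I would split the $x$-integral with a cutoff $\varphi_{\leq 0}(|x|)+\varphi_{>0}(|x|)=1$: the region $|x|\leq 1$ gives a Schwartz-in-$(p,q)$ contribution (using \eqref{psiLinfty}) that is absorbed in $\nu_R$; on $|x|\geq 1$ I insert \eqref{lemmapsi1exp} up to order $N_2$, and the $R_{N_2}$-remainder is handled via \eqref{lemmapsi1R} by repeated integrations by parts in the radial and tangential directions, gaining the powers of $|p|$ and $|p|-|q|$ required by \eqref{Propnu+3}-\eqref{Propnu+3imp}. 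The problem reduces to analyzing, for $j=0,\ldots,N_2-1$, the quantities
\[
I_j(p,q):=\jq^j\int_0^\infty r^{1-j}\,e^{ir|q|}\,A_j(r,p,q)\,\mathrm{d}r,\qquad A_j(r,p,q):=\int_{S^2} g_j(\omega,q)\,e^{ir\omega\cdot p}\,\mathrm{d}\omega,
\]
with the cutoff $\varphi_{>0}(r)$ implicit.

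For each $j$, passing to coordinates $u=\omega\cdot\hat p$ with $\hat p=p/|p|$ and iteratively integrating by parts in $u$, I obtain the spherical stationary-phase expansion
\[
A_j(r,p,q)=\sum_{n=1}^{M}\frac{1}{(r|p|)^n}\Big[e^{ir|p|}c_{j,n}^+(\hat p,q)+e^{-ir|p|}c_{j,n}^-(\hat p,q)\Big]+\mathcal{R}_{j,M}(r,p,q),
\]
with leading coefficients $c_{j,1}^-(\hat p,q)=2\pi i\,g_j(-\hat p,q)$, $c_{j,1}^+(\hat p,q)=-2\pi i\,g_j(\hat p,q)$, and higher $c_{j,n}^\pm$ built from angular derivatives of $g_j$ at $\pm\hat p$; by \eqref{lemmapsi1gj} these inherit the symbol-type bounds prescribed by \eqref{Propnu+1.1}-\eqref{Propnu+2.1}. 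Inserting the expansion into $I_j$ and dyadically decomposing $r\sim 2^M$ converts each term into $2^{M(2-j-n)}\,\hat\phi_{j,n}(2^M(|q|\pm|p|))$, where $\phi_{j,n}(r')=\chi(r')(r')^{1-j-n}$ and $\hat\phi_{j,n}$ is Schwartz on $\R$. Since $|q|+|p|\approx 2^{\max(P,Q)}$ never vanishes, the $+$-sign terms sum in $M$ to a smooth function of $(p,q)$ which joins $\nu_R$. For the $-$-sign and $(j,n)=(0,1)$, summing over $M\geq 0$ recovers (up to a smooth correction) the distribution $\pi\delta(|p|-|q|)+i\pv\frac{1}{|p|-|q|}$; combined with the $|p|^{-1}$ prefactor and $c_{0,1}^-$, this produces $\nu_0$ as in \eqref{Propnu+1.0} with $b_0(p,q)\propto g_0(-\hat p,q)$. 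All remaining $-$-sign pairs $(j,n)$ with $j+n\geq 2$ are reorganized into the sum over $J=M$ in \eqref{Propnu+2}, indexed by $a=j+n-1$, with kernels $K_a=\hat\phi_{j,n}$ and symbols $b_{a,J}(p,q)$ built from $c_{j,n}^\pm(\hat p,q)$ and the dyadic prefactors.

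The main obstacle is the precise verification of the symbol bounds \eqref{Propnu+1.1}-\eqref{Propnu+2.1}, and in particular the sharp dependence $2^{-|\alpha|P}(2^{|\alpha|Q}+2^{(1-|\beta|)Q_-})$. Each $\nabla_p$ acting on $g_j(\pm\hat p,q)$ produces a factor $|p|^{-1}\sim 2^{-P}$ together with an $\omega$-derivative of $g_j$, which by \eqref{lemmapsi1gj} costs at most $\jq\sim 2^{Q_+}$, yielding the $2^{-|\alpha|P}2^{|\alpha|Q}$ term; the complementary $2^{(1-|\beta|)Q_-}$ captures the $(|q|/\jq)^{1-|\beta|}$ low-frequency loss in $q$-derivatives inherited from \eqref{psiLinfty} through \eqref{lemmapsi1gj}, relevant when $Q\leq 0$. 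The improved bound \eqref{Propnu+3imp} in the non-resonant regime $|P-Q|\geq 5$ comes from additional integrations by parts using $e^{ir(|q|-|p|)}=(i(|q|-|p|))^{-1}\partial_r e^{ir(|q|-|p|)}$; each iteration gains a factor $(|p|-|q|)^{-1}\sim 2^{-\max(P,Q)}$ and can be repeated within the $r$-regularity of $\psi_1$ furnished by Lemma~\ref{Lempsi1}. Finally, outside the resonant band $|P-Q|<5$ the singular kernels $\delta(|p|-|q|)$ and $2^J K_a(2^J(|p|-|q|))$ (restricted to $2^{-J}\lesssim 2^{\max(P,Q)}$) are either identically zero or rapidly decaying, so their contribution can be moved into $\nu_R$ consistently with \eqref{Propnu+3imp}, thereby justifying the indicator $\mathbf{1}_{\{|P-Q|<5\}}$ in \eqref{Propnu+1.1}-\eqref{Propnu+2.1}.
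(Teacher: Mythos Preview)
Your overall strategy matches the paper's: expand $\psi_1$ via Lemma~\ref{lemmapsi1}, perform the angular stationary-phase analysis to isolate the critical points $\pm\hat p$, and recognize that the radial integral at the $-\hat p$ critical point produces the $i\pi\delta(|p|-|q|)+\pv\frac{1}{|p|-|q|}$ kernel. Your iterated integration by parts in $u=\omega\cdot\hat p$ (giving an expansion in integer powers $(r|p|)^{-n}$) is a legitimate alternative to the paper's single integration by parts followed by one-dimensional stationary phase (giving half-integer powers); both extract the same leading term.

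There is, however, a genuine gap in your choice of cutoff scale. You split at $|x|\sim 1$, whereas the paper splits at the $A$- and frequency-dependent scale
\[
J_0 \;=\; 4A - \min(0,P,Q),
\]
so that in the main region one has $r|p|\gtrsim 2^{4A}\gg 1$; see \eqref{prJ}--\eqref{J_0} and the statement of Lemma~\ref{Lemmanu0}, which is formulated only for $J\in\mathcal J$. This is not cosmetic: with your cutoff, the lower-order $(j,n)$ contributions to $\nu_L$ have coefficients $b_{a,M}\sim c^-_{j,n}\,\langle q\rangle^{j}\,|p|^{-(n-1)}\,2^{-M(j+n-1)}$, and summing over $M\geq 0$ gives
\[
\sum_{M\geq 0}|b_{a,M}| \;\sim\; \frac{|c^-_{j,n}|\,\langle q\rangle^{j}}{|p|^{\,n-1}},
\]
which is \emph{not} $\lesssim 1$ when $P\ll 0$ (or when $Q>0$), so \eqref{Propnu+2.1} fails. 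The paper avoids this by starting the dyadic sum at $J_0$; then $\sum_{J\geq J_0}2^{-J(j+n-1)}\lesssim 2^{-J_0}\lesssim 2^{-4A}\min(1,|p|)$, which is uniformly small. The complementary region $|x|\lesssim 2^{J_0}$ (the paper's $\nu^-$) is estimated directly without any spherical expansion and absorbed into $\nu_R$; see \eqref{prnu+40}--\eqref{prnu+52}. The same $J_0$ also governs the ``smooth correction'' $A_R$ in \eqref{prnu+113} and is what produces the explicit $2^{CA}$ factors in \eqref{Propnu+3}--\eqref{Propnu+3imp}, which your outline does not account for.

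In short: keep your argument, but move the radial cutoff from $|x|\sim 1$ to $|x|\sim 2^{4A-\min(0,P,Q)}$, and treat the inner region by a direct size estimate (no expansion). That single change repairs the summability of the $\nu_L$ coefficients and makes the $A$-dependence in the remainder bounds transparent.
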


Here are some comments

\begin{itemize}

\item 
One should think of Proposition \ref{Propnu+} as the statement that 
\begin{align}
\mu(p,q) \approx \nu_1(p,q) \approx \frac{1}{|p|} \delta(|p| - |q|) + \frac{1}{|p|} \, \pv \frac{1}{|p| - |q|},
\end{align}
up to small losses.
This is clearly most relevant when $||p|-|q|| \ll |p| \approx |q|$
(note the indicator functions in \eqref{Propnu+1.1} and \eqref{Propnu+2.1})
and is essentially exact when $|p|\approx|q| \lesssim 1$. 
It is important to notice how Proposition \ref{Propnu+} 
singles out the singularity of $\nu_1$, its strength and its structure up to a sufficiently high order,
after which the measure is essentially smooth.

\smallskip
\item There are some losses in our estimates when frequencies are large,
see the factors of $2^A$ in \eqref{Propnu+3}-\eqref{Propnu+3imp}.
These are coming from the various expansions, such as the one in \eqref{lemmapsi1exp},
where we allow growing factors of the frequency.
In the evolution problem we will handle these by comparing the size of frequencies and time,
using the high Sobolev regularity; see Proposition \ref{proHF} where `large' frequencies are treated,
leaving us only with frequencies of size $\lesssim \js^{\delta_N}$, see \eqref{d_N}, in the integrals in \eqref{D12}.



\smallskip
\item In the estimates \eqref{Propnu+1.1} and \eqref{Propnu+2.1} we have $|p| \approx |q|$
so the factors on the right-hand sides could be simplified a bit.
Nevertheless, we have decided to leave the explicit dependence on $2^P$ and $2^Q$ to highlight
the different roles played by the two variables,
such as the fact that the integrand in $\nu_1$ is smooth in $p$ but not in $q$.
\end{itemize}

\medskip
Proposition \ref{Propnu+} is proven in Subsection \ref{ssecprnu+}
using as key step Lemma \ref{Lemmanu0} below, 
which gives asymptotics for a basic ``building block'' \eqref{Lemmanu0def}.

\smallskip
\begin{lemma}[Asymptotic expansion for the ``building block'']\label{Lemmanu0}
Let $p,q\in\R^3$ with $|p|\approx 2^P, |q|\approx 2^Q$ and $|P-Q|<5$. 
Assume that for some for some $A>0$ we have $P,Q \leq A$, 
and let \begin{align}\label{NSM2}
\mathcal{J}(A,P,Q) := \mathcal{J}:=\{
  J\geq -\min(P,Q,0)+ 4A\}.
\end{align}
Consider the function
\begin{align}
\label{Lemmanu0def}
\K_J(p,q) := \int_{\R^3} e^{ix \cdot p} \frac{e^{i|x||q|}}{|x|} g(\omega,q) \, \varphi(x 2^{-J}) 
  \, \mathrm{d}x, \qquad J\in \mathcal{J},
\end{align}
where $\omega:=x/|x|$, with $g \in \mathcal{G}^N$ (see Definition \ref{Gclass})
and some smooth compactly supported $\varphi$. 
Then, for any fixed $M\in(10,N) \cap \Z$ we have the expansion
\begin{align}
\label{Lemma1conc1}
\begin{split}
\K_J(p,q) & = \frac{a_0(p,q)}{|p|} \, 2^J \chi_0(2^J(|p| - |q|)) 
  \\ & + \sum_{\ell=1}^{M-1} \frac{a_\ell(p,q)}{|p|} \cdot 2^J \chi_\ell(2^J(|p| - |q|)) \cdot C_J
  + R_{J,M}(p,q) \cdot C_J^R,
\end{split}
\end{align}
where:

\begin{itemize}

\medskip
\item $\chi_\ell$ are Schwartz functions; 

\medskip
\item $a_0,a_1,\dots$ are smooth functions of $(p,q)\neq (0,0)$  with
\begin{align}
\label{Lemma1conc1.1}
a_0 := 2\pi i g(-p/|p|,q), 
\end{align}
and satisfying
\begin{align}
\label{Lemma1conc1.2}
%
|\nabla_p^\alpha \nabla_q^\beta a_{\ell}(p,q)| \lesssim 
  2^{-|\alpha|P}\big(2^{Q|\alpha|}  + 2^{(1-|\beta|)Q_-}\big) \cdot \mathbf{1}_{\{|P-Q|< 5\}}
\end{align} 
for $|\alpha|+|\beta| \leq N-\ell$. 

\medskip
\item The remainder satisfies
\begin{align}
\label{Lemma1est2}
|\nabla_p^\alpha \nabla_q^\beta R_{J,M}(p,q) | \lesssim 2^{-2\max(P,Q)}
	\cdot
	2^{-(|\alpha|+|\beta|)\max(P,Q)}\cdot 2^{A(|\a|+|\b|+2)}
\end{align}
for $|\alpha| + |\beta| \leq \min(N-M-1,M/4-2)$;


\medskip
\item The coefficients $C_J, C_J^R \geq 0$ satisfy 
\begin{align}\label{Lemma1conc1.3}
\sum_{J\in \mathcal{J}} C_J + C_J^R \leq 1.
\end{align}


\end{itemize}
\end{lemma}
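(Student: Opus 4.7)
My plan is to reduce $\mathcal{K}_J(p,q)$ to a product of a radial and an angular oscillatory integral, and then run stationary phase on the sphere at the two poles $\omega=\pm\hat{p}$. Passing to spherical coordinates $x=r\omega$, $\omega\in\mathbb{S}^2$, and denoting $\hat{p}=p/|p|$, the integral factors as
$$\mathcal{K}_J(p,q) = \int_0^\infty r\,\varphi(r/2^J)\,e^{ir|q|}\,I(r,p,q)\,\mathrm{d}r, \quad I(r,p,q) := \int_{\mathbb{S}^2} e^{ir|p|\,\omega\cdot\hat{p}}\,g(\omega,q)\,\mathrm{d}\omega.$$
On the support of the integrand $r\approx 2^J$, and the constraint $J \geq -\min(P,Q,0)+4A$ defining $\mathcal{J}$ forces $r|p|\gtrsim 2^{J+P}\gtrsim 2^{4A}$ and $r|q|\gtrsim 2^{4A}$, placing both integrals squarely in the large-parameter regime where asymptotic analysis applies.

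For the angular integral I would take coordinates with $\hat{p}$ as the north pole and set $u=\cos\theta=\omega\cdot\hat{p}$. Writing $\tilde{g}(u,q) = \int_0^{2\pi} g(\omega(u,\phi),q)\,\mathrm{d}\phi$, smooth in $u$ with $\tilde{g}(\pm 1,q) = 2\pi g(\pm\hat{p},q)$, iterated integration by parts in $u$ on $\int_{-1}^{1} e^{ir|p|u}\tilde{g}(u,q)\,\mathrm{d}u$ yields, for any $M\leq N$,
$$I(r,p,q) = \sum_{\ell=1}^{M}\frac{(-1)^{\ell-1}}{(ir|p|)^\ell}\bigl[e^{ir|p|}\partial_u^{\ell-1}\tilde{g}(1,q) - e^{-ir|p|}\partial_u^{\ell-1}\tilde{g}(-1,q)\bigr] + E_M(r,p,q),$$
with $|E_M|\lesssim (r|p|)^{-M}$ and analogous bounds for its $p,q$ derivatives.

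Plugging this expansion back into the radial integral splits it into two families of one-dimensional oscillatory integrals with phases $e^{ir(|p|+|q|)}$ (non-coherent, with large modulation $|p|+|q|\approx 2^{P+1}$) and $e^{-ir(|p|-|q|)}$ (coherent, stationary exactly on $|p|=|q|$). Rescaling $s=r/2^J$, each becomes a one-sided Fourier transform $\int_0^\infty s^{1-\ell}\varphi(s)\,e^{i 2^J s(|q|-|p|)}\,\mathrm{d}s$, which is a Schwartz function of $2^J(|p|-|q|)$ since $\varphi$ is compactly supported away from $0$. The only piece that does not acquire $J$-decay is the coherent $\ell=1$ term coming from the pole $u=-1$; extracting it produces exactly the main term $a_0(p,q)|p|^{-1}\,2^J\chi_0(2^J(|p|-|q|))$ with $a_0 = 2\pi i\,g(-\hat{p},q)$, matching \eqref{Lemma1conc1.1}. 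Every other contribution carries either $2^{-(J+P)(\ell-1)}$ (from the $\ell\geq 2$ coherent terms) or an arbitrary negative power of $2^{J+P}$ (from the non-coherent channel, extracted by repeated integration by parts in $r$). These factors fit into the weights $C_J$ and $C_J^R$; using $J+P\geq 4A$ along $\mathcal{J}$, the resulting geometric series gives $\sum_{J\in\mathcal{J}}(C_J+C_J^R)\leq 1$, as claimed in \eqref{Lemma1conc1.3}.

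The main obstacle, I expect, is the simultaneous bookkeeping of the derivative estimates \eqref{Lemma1conc1.2} and the remainder bound \eqref{Lemma1est2}. Each $\partial_p$ hits either the direction $\hat{p}$ in the coefficients $\partial_u^{\ell-1}\tilde{g}(\pm 1,q)$ (producing $1/|p|$, hence the $2^{-|\alpha|P}$ factor), or the rescaled argument $2^J(|p|-|q|)$ (producing $2^J$, absorbed by the Schwartz decay of $\chi_\ell$), or the explicit $1/|p|^\ell$ weight. Each $\partial_q$ acts on $g(\pm\hat{p},q)$, where the $\mathcal{G}^N$ class only guarantees one clean derivative: for $|\beta|\geq 2$ the singular factor $(|q|/\langle q\rangle)^{1-|\beta|}\approx 2^{(1-|\beta|)Q_-}$ appears, and additional $\partial_q$'s on $e^{ir|q|}$ bring down powers of $r\approx 2^J$ that must be traded against the Schwartz decay of $\chi_\ell$. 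The loss $2^{A(|\alpha|+|\beta|+2)}$ in \eqref{Lemma1est2} reflects $\omega$-derivatives of $g$ costing $\langle q\rangle^{|\alpha|}\lesssim 2^{|\alpha|A}$ combined with similar high-$|q|$ losses from $q$-derivatives; balancing these against the $(r|p|)^{-M}\approx 2^{-M\max(P,Q)}$ gain from $E_M$ is what ultimately fixes the admissible regularity thresholds $|\alpha|+|\beta|\leq N-\ell$ and $|\alpha|+|\beta|\leq M/4-2$.
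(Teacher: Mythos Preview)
Your plan is correct and tracks the paper's proof in its overall architecture: factor into radial $\times$ angular, expand the angular integral asymptotically in $X=r|p|$, then split the radial integral into the coherent channel (phase $|q|-|p|$, giving the leading term and the $\chi_\ell$-terms) and the non-coherent channel (phase $|p|+|q|$, feeding the remainder). The one genuine difference is in how the angular expansion is produced. The paper integrates by parts once in the polar angle $\phi$ to peel off the leading boundary term $I_0$, and then applies a classical stationary-phase lemma to the remaining integral $II=(iX)^{-1}\int e^{iX\cos\phi}\partial_\phi g\,\mathrm d\phi\,\mathrm d\theta$, obtaining an expansion in \emph{half-integer} powers $X^{-1/2-\ell/2}$. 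You instead pass to $u=\omega\cdot\hat p$ and iterate integration by parts with endpoint contributions, which gives \emph{integer} powers $X^{-\ell}$. Your route is more elementary (no stationary phase) and yields a slightly coarser but perfectly adequate expansion; the paper's route has the minor advantage that after the first step one need not verify regularity of the integrand at the endpoints.

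That last point is the only item in your sketch that deserves an explicit line of justification: the coordinate map $u\mapsto\omega(u,\theta)$ is singular at $u=\pm1$, so $\partial_u^\ell$ of $g(\omega(u,\theta),q)$ is \emph{not} individually bounded there, and your iterated integration by parts relies on the zonal average $\tilde g$ being smooth up to the endpoints. This does hold, because the $\theta$-average is even in $\rho=\sqrt{1-u^2}$ and hence a smooth function of $\rho^2=1-u^2$; in particular $\tilde g\in C^N([-1,1])$ with $\|\partial_u^\ell\tilde g\|_\infty\lesssim\|g\|_{C^\ell(\mathbb S^2)}\lesssim\langle q\rangle^\ell$. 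Once you record this, the derivative bookkeeping and the absorption of the $\langle q\rangle^\ell$ losses into the $(2^{J+P})^{-(\ell-1)}$ weights go through exactly as you outline.
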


\medskip
\begin{remark}
Notice that \eqref{Lemma1conc1.2} implies the 
symbol-type bound (with losses)
\begin{align}\label{a_lbound2}
\big| \varphi_P(p)\varphi_Q(q) \nabla_p^{\alpha} \nabla_q^{\beta} \big[ \nabla_q a_{\ell}(p,q) \big] \big|
  & \lesssim 2^{-|\alpha|P} 2^{-|\beta|Q} 2^{A(|\alpha|+|\beta|)},
\end{align}
$|\alpha|+|\beta| \leq N-\ell-1$.
This is an estimate for $\nabla_q a_\ell$ as well.
This symbol-type bounds will be more convenient to use in some cases than \eqref{Lemma1conc1.2}.
\end{remark}

\medskip
\subsection{Proof of Lemma \ref{Lemmanu0}}\label{ssecprnu0}
Recall that by Definition \ref{Gclass} we have
\begin{align}
\label{prLemma1g1}
& {\| \partial_\omega^{\alpha} \partial_q^\beta g(\cdot,q) \|}_{L^\infty_\omega} 
	\lesssim_{|\alpha|,|\beta|} \jq^{|\alpha|} + (|q|/\jq)^{1-|\beta|} 
\end{align}
for all $1 \leq |\alpha|+|\beta| \leq N$. Note that, using
the assumption $|q|\approx 2^Q \lesssim 2^A$, $A>0$ we also have the (slightly less precise) bound
\begin{align}
\label{prLemma1g1'}
& {\| \partial_\omega^{\alpha} \partial_q^\beta g(\cdot,q) \|}_{L^\infty} 
	\lesssim 2^{-Q|\beta|} 2^{A(|\alpha|+|\beta|)}.
\end{align}
We write in polar coordinates, $x=r\omega$,
\begin{align}
\label{prLemma1pol}
4\pi \K_J(p,q) = \int_0^\infty \Big( \int_{\mathbb{S}^2} e^{ir\omega \cdot p} g(\omega,q) \mathrm{d}\omega \Big) 
  \, e^{ir |q|} \varphi(r2^{-J}) \, r \mathrm{d}r.
\end{align}

\medskip
{\it Asymptotics in the angular variable}.
We look at the spherical integral in \eqref{prLemma1pol} and, using the rotation invariance, without loss of generality
we reduce matters to considering $p=|p|e_3$ and the integral
\begin{align}
\label{prLemma1I}
I(X;q) := \int_{\mathbb{S}^2} e^{iX \omega \cdot e_3} g(\omega,q) \mathrm{d}\omega, \qquad X := r|p|,
\end{align}
where $X \approx 2^J |p| \gg 1$ by assumption.
Writing in standard spherical coordinates
$\theta \in [0,2\pi],\phi\in[0,\pi]$, $\omega=(\cos\theta\sin\phi,\sin\theta\sin\phi,\cos\phi)$, we see that
\begin{align}
\label{prLemma1Isplit}
\begin{split}
I & = \int_0^{2\pi} \int_0^\pi e^{iX \cos\phi} g(\omega,q)\, \sin\phi \, \mathrm{d}\phi \mathrm{d}\theta = I_0 + II
  \\ 
I_0 & = \frac{2\pi}{iX} \big[ e^{iX} g(e_3,q) - e^{-iX} g(-e_3,q) \big]
\\
II & = \frac{1}{iX} \int_0^{2\pi} \int_0^\pi e^{iX \cos\phi} \partial_\phi g(\omega,q) \, \mathrm{d}\phi \mathrm{d}\theta.
\end{split}
\end{align}

The contribution of the leading order term $I_0$ to \eqref{prLemma1pol} is
\begin{align}
\nonumber
& \int_0^\infty  \frac{2\pi}{iX} \big[ e^{iX} g(e_3,q) - e^{-iX} g(-e_3,q) \big]
  \, e^{ir|q|} \varphi(r2^{-J}) \, r \mathrm{d}r
\\ 
\nonumber
& = \frac{2\pi}{i|p|}  g(p/|p|,q) \int_0^\infty e^{ir (|q|+|p|)} \varphi(r2^{-J}) \,\mathrm{d}r
  - \frac{2\pi}{i|p|} g(-p/|p|,q) \int_0^\infty e^{ir (|q|-|p|)} \varphi(r2^{-J}) \,\mathrm{d}r
\\ \label{prL10}
& = \frac{2\pi}{i|p|}  g(p/|p|,q) \, 2^J \check{\varphi}(2^J(|q|+|p|))
  - \frac{2\pi}{i|p|} g(-p/|p|,q) \, 2^J \check{\varphi}(2^J(|q|-|p|)).
\end{align}
The second term in \eqref{prL10} coincides with 
the first term on the right-hand side of \eqref{Lemma1conc1} with $a_0$ in \eqref{Lemma1conc1.1}.
The first term of \eqref{prL10} can instead be absorbed into the remainder $R_{J,M}$ as we explain below.
In view of \eqref{prLemma1g1}, 
we have
\begin{align}
\label{prLemma1g2}
\begin{split}
|\partial_p^{\alpha} \partial_q^\beta g(p/|p|,q) |
  \lesssim_{\alpha,\beta} |p|^{-|\alpha|} (\jq^{|\alpha|} + (|q|/\jq)^{1-|\beta|})
  \\ \lesssim 2^{-P|\alpha|} 2^{-Q|\beta|} 2^{A(|\alpha|+|\beta|)}.
\end{split}
\end{align}
Given our localization of the variables and \eqref{NSM2} we have $2^{-\min(P,Q)} \lesssim 2^{J}$, so that
for arbitrarily large $\rho$,
\begin{align}
\label{prL10.1}
&  \big( 2^J (|p|+|q|) \big)^\rho \, |\partial_p^{\alpha} \partial_q^\beta 
  \, 2^J \check{\varphi}(2^J(|p|+|q|))|
  \lesssim_{\alpha,\beta} 2^{-\max(P,Q)(1+|\alpha|+|\beta|)}.
\end{align}
Together with \eqref{prLemma1g2} this gives
\begin{align}\label{prL10.2}
\begin{split}
& \Big|\partial_p^{\alpha} \partial_q^\beta \Big(\frac{1}{|p|} g(p/|p|,q) 2^J \check{\varphi}(2^J(|p|+|q|)) \Big)\Big|
  \\ & \lesssim 2^{-2\max(P,Q)} \cdot 2^{-\max(P,Q)(|\alpha|+|\beta|)} 2^{A(|\alpha|+|\beta|)} \cdot c_J
\end{split}
\end{align}
with $c_J:=2^{-(P+J)}$. 
Thus the property \eqref{Lemma1est2} with \eqref{Lemma1conc1.3} holds true for this term.

Next we analyze the contribution from the lower order term $II$ in \eqref{prLemma1Isplit}, that is,
\begin{align}\label{prL15}
\begin{split}
& \int_0^\infty II(X;q) e^{ir |q|} \varphi(r2^{-J}) \, r \mathrm{d}r
\\ 
& = \frac{1}{i|p|} \int_0^\infty \Big(\int_0^{2\pi} \int_0^\pi e^{iX \cos\phi} \partial_\phi g(\omega,q) \, \mathrm{d}\phi \mathrm{d}\theta \Big) 
  \, e^{ir |q|} \varphi(r2^{-J}) \,\mathrm{d}r.
\end{split}
\end{align}
To deal with the innermost integral we want to apply the following stationary phase 
expansion:

\medskip
{\it Claim}: Suppose $f$ is a smooth function with $f(x_0) = f^\prime(x_0) = 0$ and $f^{\prime\prime}(x_0) \neq 0$,
and let $F$ be a function supported in a sufficiently small neighborhood of $x_0$
where $f$ does not have any other critical point. Define
\begin{align}\label{ST1}
I(\lambda) := \int e^{i\lambda f(x)} F(x) \, \mathrm{d}x.
\end{align}
Then, for $\lambda \gtrsim 1$, there exist coefficients $a_\ell$
such that
\begin{align}\label{ST2} 
\Big| \Big(\frac{d}{d\lambda}\Big)^r \big[ I(\lambda)
  - \lambda^{-1/2} \sum_{\ell=0}^M a_\ell \, \lambda^{-\ell/2} \big] \Big| \lesssim \lambda^{-r-(M+1)/2}.
\end{align}
The coefficients $a_\ell$ depend linearly on the first $\ell$ derivatives of $F$ at the point $x_0$;
they also depend on a lower bound for $f^\prime$ on the support of $F$, and on higher-order derivatives of $f$.
In our application, we will have that all the quantities involving the phase $f$ 
are uniformly bounded by some absolute constant.
Then, we slightly abuse notation and let $a_\ell = c_\ell \partial_x^\ell F(x_0)$, for some $c_\ell\in\R$,
while this coefficient should technically be of the form $\sum_{k=0}^\ell c_k\partial_x^kF(x_0)$.
Finally, the implicit constant in \eqref{ST2} is upperbounded by the $L^\infty$ norm of at most $M+1$ derivatives of $F$;
we will similarly abuse notation and assume the constant is $C{\| \partial_x^{M+1}F\|}_{L^\infty}$.

The above claim is a classical statement, see for example Stein's book \cite[Proposition 3, p. 334]{SteinBook1}.

%

\medskip
We isolate the stationary points $x_0 = 0$ and $\pi$ in the $d\phi$ integral in \eqref{prL15}
by defining
\begin{align*}
& 2\int_0^\pi e^{iX \cos\phi} \partial_\phi g(\omega,q) \, \mathrm{d}\phi = J_+(X,\theta,q) + J_-(X,\theta,q),
\\
& J_+ := \int_{-\pi}^\pi e^{iX \cos\phi} \partial_\phi g(\omega,q) \, \varphi_1(\phi)
  \, \mathrm{d}\phi,
\\
& J_- := \int_{-\pi}^\pi e^{iX \cos\phi} \partial_\phi g(\omega,q) \, \varphi_2(\phi) \, \mathrm{d}\phi,
\end{align*}
where $0\leq \varphi_1\leq 1$ is a smooth cutoff around $\phi=0$ and $\varphi_2 = 1 -\varphi_1$.

With $\lambda = X \gg 1$, the non-degenerate phase $\cos\phi - 1$, the stationary point $x_0 = 0$,
we deduce from the claim above and \eqref{ST2} that
\begin{align}
\label{prL1asymain1}
\Big| \Big(\frac{d}{dX}\Big)^\alpha \partial_q^\beta 
  \Big[ J_+(X) - e^{iX} X^{-1/2} \sum_{\ell=0}^{M-1} b_\ell^+ X^{-\ell/2} \Big] \Big| 
  \lesssim X^{-M/2-\alpha} \sup_\phi|\partial_\phi^{M+1} \partial_q^\beta g(\omega,q)|
\end{align}
and, similarly, with the phase $\cos\phi + 1$ and the stationary point $x_0 = \pi$, we get
\begin{align}
\label{prL1asymain2}
\Big| \Big(\frac{d}{dX}\Big)^\alpha \partial_q^\beta \Big[ J_-(X) - e^{-iX} X^{-1/2} \sum_{\ell=0}^{M-1} b_\ell^- X^{-\ell/2}\Big] \Big| 
  \lesssim X^{-M/2-\alpha} \sup_\phi|\partial_\phi^{M+1} \partial_q^\beta g(\omega,q)|
\end{align}
where we have defined
\begin{align}
\label{prL13}
b_\ell^\pm(p,q) := c_\ell \partial_\phi^{\ell+1} g(\pm p/|p|,q),
\end{align}
for some absolute constants $c_\ell$.

Plugging the asymptotics \eqref{prL1asymain1}-\eqref{prL1asymain2} into \eqref{prL15} we see that
\begin{align}
\nonumber
& \int_0^\infty II(X;q) e^{ir |q|} \varphi(r2^{-J}) \, r \mathrm{d}r 
\\
\label{prL17.1}
  & = \sum_{\ell=0}^{M-1} \frac{2\pi b_\ell^-(p,q)}{i|p|^{3/2 + \ell/2}} \,
  \int_0^\infty e^{-ir|p|} \, e^{ir |q|} \, r^{-(\ell+1)/2} \varphi(r2^{-J}) \,\mathrm{d}r
\\
\label{prL17.2}
& + \sum_{\ell=0}^{M-1} \frac{2\pi b_\ell^+(p,q)}{i|p|^{3/2 + \ell/2}} \,
  \int_0^\infty e^{ir|p|} \, e^{ir |q|} \, r^{-(\ell+1)/2} \varphi(r2^{-J}) \,\mathrm{d}r
\\
& + \frac{1}{|p|}R_{J,M}^-(p,q) + \frac{1}{|p|}R_{J,M}^+(p,q)
\label{prL17.3}
\end{align}
with
\begin{align}
\label{prL18.1}
& R_{J,M}^\pm(p,q) = \int_0^\infty e^{\pm ir|p|} \, e^{ir |q|} \, \varphi(r2^{-J}) B^\pm(r|p|;q) \,\mathrm{d}r,
\end{align}
where 
\begin{align}\label{prL18.B}
B^\pm(X;q) := J_\pm(X) - e^{\pm iX} X^{-1/2} \sum_{\ell=0}^{M-1} b_\ell^\pm X^{-\ell/2} 
\end{align}
satisfies
\begin{align}\label{prL18.2}
\begin{split}
\Big| \Big(\frac{d}{dX}\Big)^\alpha \partial_q^\beta B^\pm(X;q) \Big| & \lesssim X^{-M/2-\alpha} 
  \sup_{\omega\in\mathbb{S}^2}|\partial_\phi^{M+1} \partial_q^\beta g(\omega,q)|.
\end{split}
\end{align}
Moreover, we can see that
\begin{align}\label{prL18.3}
\begin{split}
\Big| \partial_p^\alpha \partial_q^\beta B(r|p|;q) \Big| & \lesssim 
  \sup_{\alpha_1 + \alpha_2 = \alpha} 
  X^{-M/2-|\alpha_1|} r^{|\alpha_1|} \cdot |p|^{-|\alpha_2|}
  \sup_{|\alpha'| \leq |\alpha_2|, \, \omega\in\mathbb{S}^2}|\partial_\phi^{M+1} \partial_\omega^{\alpha'} 
  \partial_q^\beta g(\omega,q)|
  \\ 
  & \lesssim 
  X^{-M/2} 2^{-P|\alpha|} \cdot (2^{Q(M+1+|\alpha|)} + 2^{-(|\beta|-1)Q}).
\end{split}
\end{align}

\medskip
{\it Contribution of \eqref{prL17.1}}.
The sum in \eqref{Lemma1conc1}, with the claimed properties, 
arises from the sum in \eqref{prL17.1}. To see this we write it as
\begin{align}
\label{prL19}
\begin{split}
& \sum_{\ell=1}^{M} \frac{b_{\ell-1}^-(p,q)}{i|p|^{1 + \ell/2}} \,
  \int_0^\infty e^{-ir|p|} \, e^{ir |q|} \, r^{-\ell/2} \varphi_J(r) \,\mathrm{d}r 
  \\ & = \sum_{\ell=1}^{M} \frac{b_{\ell-1}^-(p,q)}{i|p|}
  \, (2^{J} |p|)^{-\ell/2} \cdot 2^J \what{(r^{-\ell/2} \varphi)}(2^J(|p|-|q|))
  \\ & = \sum_{\ell=1}^{M} \frac{a_\ell(p,q)}{|p|}
  \, C_J \, 2^J \what{\chi_\ell}(2^J(|q|-|p|))
\end{split}
\end{align}
where we define, according to the notation in \eqref{Lemma1conc1} (see also \eqref{prL13})
\begin{align}\label{prL19.1}
\begin{split}
a_{\ell}(p,q) 
	& := (2^{J+P})^{-\ell/2 +\delta} (2^{-P}|p|)^{-\ell/2} \, c_{\ell-1} \partial_\phi^{\ell} g(-p/|p|,q),
\\
\chi_\ell & := \what{r^{-\ell/2} \varphi},
\\
C_J & := (2^{J+P})^{-\delta},
\end{split}
\end{align}
for some small fixed $\delta>0$.
To verify the estimates \eqref{Lemma1conc1.2} 
on the coefficients $a_\ell$ we use the fact that our parameters satisfy $J+P \geq 4A \geq 4Q$, see \eqref{NSM2},
and the estimates \eqref{prLemma1g1} on $g$: for $|p|\approx 2^P, |q|\approx 2^Q$
\begin{align*}
| \nabla_p^{\alpha} \nabla_q^{\beta} a_{\ell}(p,q) |
  & \lesssim (2^{J+P})^{-\ell/2 +\delta} 
  \sup_{\alpha_1+\alpha_2=\alpha}
  |p|^{-|\alpha_2|} \big| \nabla_p^{\alpha_1} \nabla_q^{\beta} \partial_\phi^{\ell} g(-p/|p|,q) \big|
\\
& \lesssim (2^{4A})^{-\ell/2 +\delta} 
  \sup_{\alpha_1+\alpha_2=\alpha}
  2^{-|\alpha_2|P} 2^{-|\alpha_1|P} \sup_{\rho \leq |\alpha_1|+\ell} 
  {\big\| \nabla_q^{\beta} \nabla_\omega^{\rho} g(\cdot,q) \big\|}_{L^\infty}
\\
& \lesssim (2^{4A})^{-\ell/2 +\delta} 
  2^{-|\alpha|P} (\jq^{|\alpha|+\ell} + (|q|/\jq)^{1-|\beta|})
.
\end{align*}
For $Q\geq 0$ this gives the bound
$2^{-|\alpha|P} \jq^{|\alpha|} \lesssim 2^{-|\alpha|P} 2^{A|\alpha|}
$
while for $Q\leq 0$ it implies a bound of
$2^{-|\alpha|P} (1 + 2^{(1-|\beta|)Q_-})
$;
these are consistent with the desired bounds \eqref{Lemma1conc1.2}.
The definition of $C_J$ in \eqref{prL19.1} gives the property \eqref{Lemma1conc1.3}, 
since $J+P \gg 1$ in the set $\mathcal{J}$.

To complete the proof of the lemma it suffices to show how the three remaining terms in \eqref{prL17.2}-\eqref{prL17.3} 
satisfy the estimates \eqref{Lemma1est2} and can therefore be absorbed into the remainder $R_{J,M}$.

\medskip
{\it Contribution of \eqref{prL17.2}}.
Similar to \eqref{prL19}, the sum in \eqref{prL17.2} is given by
\begin{align}
\label{prL20}
\begin{split}
& \sum_{\ell=1}^{M} \frac{b_{\ell-1}^+(p,q)}{i|p|^{1 + \ell/2}} \,
  \int_0^\infty e^{ir|p|} \, e^{ir |q|} \, r^{-\ell/2} \varphi(r2^{-J}) \,\mathrm{d}r
\\ 
& = \frac{1}{i|p|(|p|+|q|)} \sum_{\ell=1}^{M} d_\ell(p,q) 
  \, 2^J (|q|+|p|)\what{(r^{-\ell/2} \varphi)}(2^J(|q|+|p|) \cdot C_J
\end{split}
\end{align}
having defined $d_{\ell} 
:= (2^{P+J})^{-\ell/2+\delta} (2^{-P}|p|)^{\ell/2} c_{\ell-1} \partial_\phi^{\ell} g(p/|p|,q)$, similarly to $a_\ell$,
and $C_J$ as in \eqref{prL19.1}.
Since $2^{P+J} \gtrsim 2^{4A}$, 
and $r^{-\ell/2} \varphi$ is a Schwartz function, 
it follows that
\begin{align}
\label{prL21}
\begin{split}
\Big| & \sum_{\ell=1}^{M} \frac{b_{\ell-1}^+(p,q)}{i|p|^{1 + \ell/2}} \,
  \int_0^\infty e^{ir|p|} \, e^{ir |q|} \, r^{-\ell/2} \varphi(r2^{-J}) \,\mathrm{d}r \Big|
  \\ & \lesssim \frac{1}{(|p|+|q|)^2} \cdot \sup_{\ell =1,\dots,M} {\|d_\ell\|}_{L^\infty} \cdot C_J
  \\ & \lesssim \frac{1}{2^{2\max(P,Q)}}  \cdot 2^{3A (-\ell/2+\delta)} 
    \sup_{\ell =1,\dots,M} \sup_\omega \big| \partial^\ell_\omega g(\omega,q) \big| \cdot C_J,
\end{split}
\end{align}
having used the definitions \eqref{prL13}. 
In view of the inequality \eqref{prLemma1g1} and $|q| \lesssim 2^A$,
we see the validity of \eqref{Lemma1est2} for $\alpha=\beta=0$.
The general estimate \eqref{Lemma1est2} for $|\alpha|+|\beta| \geq 1$
follows similarly after differentiating the first line of \eqref{prL20} 
and using as before \eqref{prL13}, the estimate \eqref{prLemma1g1}, 
and our localization and restrictions on the parameters \eqref{NSM2}.
Note in particular how each differentiation of the integral in $p$ or $q$ can cost a potentially
dangerous factor of $2^J$, which can be traded for a factor of 
$(|p|+|q|)^{-1}$ since $\varphi$ is Schwartz.

\medskip
{\it Contribution of the remainders \eqref{prL17.3}}.
Since $|P-Q| \leq 5$, we see from \eqref{prL18.1} and \eqref{prL18.2} that
\begin{align}\label{prL22}
\begin{split}
\frac{1}{|p|} |R_{J,M}^{\pm}(p,q)| & \lesssim 2^{-P} \sup_{\omega\in\mathbb{S}^2}|\partial_\phi^{M+1} g(\omega,q)|
  \int_0^\infty \varphi(r2^{-J}) (r|p|)^{-M/2} \, \mathrm{d}r 
  \\
  & \lesssim 2^{-2P} (1+ 2^{(M+1)Q}) \cdot 2^{-(J+P) (M/2-1)} 
  \\
  & \lesssim 2^{-2P} 2^{-\delta(P+J)},
\end{split}
\end{align}
for $\delta>0$ small enough, since $P+J \geq 4A \geq 4Q$ and $M>5$.
This is consistent with \eqref{Lemma1est2} when $\alpha=\beta=0$.
The general estimate \eqref{Lemma1est2} 
follows by differentiating the formula \eqref{prL18.1}:
\begin{align*}
& \frac{1}{|p|} \big| \nabla_p^\alpha \nabla_q^\beta R_{J,M}^\pm(p,q) \big| 
  \\ & \lesssim \sup_{\substack{\alpha_1+\alpha_2=\alpha \\ \beta_1+\beta_2=\beta}}
  \Big| \int_0^\infty \nabla_p^{\alpha_1} e^{\pm ir|p|} \,\nabla_q^{\beta_1}  e^{ir |q|} \,
    \nabla_p^{\alpha_2} \nabla_q^{\beta_2} B(r|p|;q)  \, \varphi(r2^{-J}) \,\mathrm{d}r \Big|.
\end{align*}
On the support of the integral, since $r \approx 2^J \gtrsim 2^{-P} \approx 2^{-Q}$, we have
\begin{align*}
\big| \nabla_p^{\alpha_1} e^{\pm ir|p|} \nabla_q^{\beta_1} e^{ir |q|} \big|
\lesssim 2^{(|\alpha_1|+|\beta_1|)J}. 
\end{align*}
Using \eqref{prL18.3} we obtain
\begin{align}\label{prL25}
\begin{split}
& \frac{1}{|p|} \big| \nabla_p^\alpha \nabla_q^\beta R_{J,M}^\pm(p,q) \big| 
\\ & \lesssim 2^{-2P} \sup_{\substack{\alpha_1+\alpha_2=\alpha \\ \beta_1+\beta_2=\beta}}
  2^{(|\alpha_1|+|\beta_1|)J} 
  \cdot (2^{J+P})^{-M/2+1} 2^{-P|\alpha_2|} \cdot \big(2^{Q(M+1+|\alpha_2|)} + 2^{-(|\beta_2|-1)Q}\big)  
\end{split}
\end{align}
If $Q\leq 0$ for each term above we have a bound of 
\begin{align*}
2^{-2P} 2^{-(J+P)(M/2-1-|\alpha_1|-|\beta_1|)} \cdot 2^{-(|\a|+|\beta_1|)P} 2^{-|\beta_2|Q} 
\end{align*}
which is consistent with \eqref{Lemma1est2}. 
For $Q\geq 0$ instead each term in \eqref{prL25} is bounded by
\begin{align*}
& 2^{-2P} \cdot 2^{(|\alpha_1|+|\beta_1|)J}
  \cdot (2^{J+P})^{-M/2+1} 2^{-P|\alpha_2|} \cdot 2^{Q(M+1+|\alpha_2|)}
\\
& \lesssim 2^{-2P} \cdot (2^{J+P})^{-M/2+1+|\alpha_1|+|\beta_1|} 
	\cdot 2^{Q(M+1)} 2^{|\alpha_1|A} \cdot 2^{-|\b_1|Q} 2^{-P(|\alpha_1|+|\alpha_2|)}
\\
& \lesssim 2^{-2P} \cdot  2^{-P|\alpha|} 2^{-Q|\beta|} \cdot 2^{A(|\alpha|+|\beta|)} \cdot 2^{-\delta(J+P)}
\end{align*}
having used $J+\min(P,Q) \geq 4A \geq 4Q$ and $|\alpha|+|\beta| \leq M/4-2$.
This concludes the proof of \eqref{Lemma1est2} and the Lemma. $\hfill \Box$

\medskip
We now combine Lemma \ref{Lemmanu0}
and  the asymptotic expansion \eqref{lemmapsi1exp} for $\psi_1$ in Lemma \ref{lemmapsi1}
to prove Proposition \ref{Propnu+}.

\medskip
\subsection{\bf Proof of Proposition \ref{Propnu+}}\label{ssecprnu+}
We consider $|p| \approx 2^P$ and $|q| \approx 2^Q$ with $P,Q \leq A$, $A\gg 1$ and write
\begin{align}\label{pr1}
\begin{split}
\nu(p,q) & = \nu^+(p,q)\mathbf{1}_{\{|P-Q| < 5\}} + \nu^-(p,q),
  \qquad \nu^+(p,q) := 
  \sum_{J \in \mathcal{J}} \nu^J(p,q), 
\\
\nu^J(p,q) & := \int e^{ix \cdot p} \, \frac{e^{i|q||x|}}{|x|} \psi_1(x,q) \, \varphi_J^{(0)}(x) \,\mathrm{d}x,
\end{split}
\end{align}
where the cutoff $\varphi_J^{(0)}$ is defined in \eqref{LP0}-\eqref{LP2} and we recall
\begin{align}\label{prJ}
\mathcal{J} := \{ J \geq 4A, \, J \geq -\min(P,Q) + 4A\}. 
\end{align}
The term $\nu_+\mathbf{1}_{\{|P-Q| < 5\}}$ will give rise to the leading order terms (and some remainder terms), 
while all the terms in $\nu_-$ are lower order remainders.




\medskip
\noindent
{\it Analysis of $\nu_+\mathbf{1}_{\{|P-Q|\leq 5\}}$}.
For $g \in \mathcal{G}^N$, $N> N_2$, see \eqref{classG_Nomega}, let us denote 
\begin{align}
\label{prnu+K}
\K_{J,n}(p,q)[g] := \int_{\R^3} e^{ix \cdot p} \frac{e^{i|x||q|}}{|x|} g(\omega,q) \, 
  (|\cdot|^{-n}\varphi)(x/2^J)  \mathrm{d}x.
\end{align}
From the definition of $\nu^+$ in \eqref{pr1}, and using the expansion \eqref{lemmapsi1exp}
from Lemma \ref{lemmapsi1}, we can write
\begin{align}\label{prnu+exp}
\begin{split}
\nu^+(p,q) 
  & = \sum_{J \in \mathcal{J}} \sum_{n=0}^{N_2-1} \K_{J,n}(p,q)[g_n] \, 2^{-J n} \jq^n + R(p,q)
\end{split}
\end{align}
where
\begin{align}\label{prnu+R}
R(p,q) = \sum_{J \in \mathcal{J}} 
  \int_{\R^3} e^{ix \cdot p} \frac{e^{i|x||q|}}{|x|} R_{N_2}(x,q) \, \varphi(x/2^J) \, \mathrm{d}x 
\end{align}
for $R_{N_2}(x,q)$ satisfying estimates as in \eqref{lemmapsi1R}.


\medskip
{\it Leading order contribution}.
Let us look first at the term with $n=0$ in \eqref{prnu+exp}, 
that is $\sum_{J\in\mathcal{J}} \K_{J,0}(p,q)[g_0]$.
Since $|P-Q| < 5$, in view of the result of Lemma \ref{Lemmanu0}, and adopting the same notation, 
we have that 
\begin{align}\label{prnu+1}
\sum_{J \in \mathcal{J}} & \K_{J,0}(p,q)[g_0] = A + \sum_{\ell=1}^{M-1} B_\ell + C,
\\ 
\label{prnu+11}
A & := \frac{a_0(p,q)}{|p|} \sum_{J \in \mathcal{J}} 2^J \hat{\varphi}(2^J(|p| - |q|)),
\\
\label{prnu+12}
B_\ell & := \frac{a_\ell(p,q)}{|p|} \sum_{J \in \mathcal{J}}  
	2^J \chi_\ell (2^J(|p| - |q|)) \cdot C_J 
\\ 
\label{prnu+13}
C & := \sum_{J \in \mathcal{J}} R_{J,M}(p,q)\cdot C_J^R,
\end{align}
where the coefficients $a_\ell$ 
satisfy the estimates \eqref{Lemma1conc1.2},
$\sum_{J\in\mathcal{J}} |C_J| + |C_J^R| < \infty$, 
and the estimates \eqref{Lemma1est2} hold for the remainder $R_{J,M}$.

Using the properties of the standard cutoff $\varphi$, see \eqref{LP0}, 
we can define 
\begin{align}\label{J_0}
J_0 : = \max(4A, -\min(P,Q) + 4A) = 4A - \min(0,P,Q)
\end{align}
and rewrite the term \eqref{prnu+11} as
\begin{align}\label{prnu+111}
\begin{split}
A = \frac{a_0(p,q)}{|p|}  \sum_{J \geq J_0} \what{\varphi(\cdot \, 2^{-J})} (|p|-|q|) 
	& = \frac{a_0(p,q)}{|p|} \what{\varphi_{\geq 1}(\cdot \, 2^{-J_0})} (|p|-|q|).
\end{split}	
\end{align}
Writing
$\varphi_{\geq 1}(x) = \int_{-\infty}^x \psi(y)\, \mathrm{d}y = \mathbf{1}_{\{x>0\}} \ast \psi$, 
for a smooth $\psi \geq 0$ which is compactly supported in $[1/8,2]$ and with integral $1$,
we deduce the formula
\begin{align*}
\what{\varphi_{\geq 1}}(\xi) = \mathcal{F}\big( (1+\sign x)/2 \ast \psi \big)(\xi) 
	& = \sqrt{2\pi} \mathcal{F}\big( (1+\sign x)/2 \big)(\xi)  \cdot \what{\psi}(\xi)
	\\ & = \sqrt{\frac{\pi}{2}} \delta_0(\xi) + \pv \frac{\what{\psi}(\xi)}{i \xi}.
\end{align*}
It follows that 
\begin{align}\label{prnu+112}
\begin{split}
A & = \frac{a_0(p,q)}{|p|} 2^{J_0} \what{\varphi_{\geq 1}}((|p|-|q|) 2^{J_0})  
	\\ & =  \frac{a_0(p,q)}{|p|} \frac{1}{i\sqrt{2\pi}} \Big[ i\pi \delta(|p|-|q|) 
	+ \sqrt{2\pi} \, \pv \frac{\what{\psi}(2^{J_0}(|p|-|q|)}{|p|-|q|} \Big].
\end{split}	
\end{align}
Up to slightly redefining $a_0$, this gives us the first terms in the right hand-side of \eqref{Propnu+1}
with \eqref{Propnu+1.0}-\eqref{Propnu+1.1},
provided we show that the term (we use that $\what{\psi}(0) = 1/\sqrt{2\pi}$)
\begin{align}\label{prnu+113}
\begin{split}
A_R & := \frac{a_0(p,q)}{|p|} \pv \frac{\sqrt{2\pi} \,\what{\psi}(2^{J_0}(|p|-|q|)) - 1}{|p|-|q|}
  \\ & \, = \frac{a_0(p,q)}{|p|} \sqrt{2\pi} \, 2^{J_0} \int_0^1 {\hat{\psi}}^\prime(2^{J_0}(|p|-|q|) t) \, \mathrm{d}t
\end{split}	
\end{align}
satisfies estimates as in \eqref{Propnu+3}. 
To see that this is the case, let us consider first the case $J_0 = 4A$, $\min(P,Q) \geq 0$. 
It is not hard to see, using 
the estimates for $a_0$ from \eqref{Lemma1conc1.3}, and $2^{\max(P,Q)} \leq 2^A \leq 2^{J_0}$, that
\begin{align*}
|\nabla^{\alpha}_p \nabla^{\beta}_q A_R | & \lesssim 2^{J_0}2^{-P} \cdot 2^{J_0(|\alpha|+|\beta|)}
  \\ & \lesssim 2^{-2\max(P,Q)} 2^{2A} \cdot 2^{-|\alpha|P} 2^{(|\alpha|-1)A} \cdot 2^{4A(|\alpha|+|\beta|+1)} 
\end{align*}
which is acceptable.
The case $J_0 = 4A-\min(P,Q)$ (i.e. $\min(P,Q) \leq 0$) 
is similar, using again that $\what{\psi}'$ is Schwartz, and that $2^P \approx 2^Q$.

To verify that the terms $B_\ell$ in \eqref{prnu+12} are of the form \eqref{Propnu+2}-\eqref{Propnu+2.1},
it suffices to recall that 
$a_\ell$ satisfies \eqref{Lemma1conc1.2} for $1\leq \ell < M$, and $M \leq N_2$.

For the term in \eqref{prnu+13} we can use directly the estimate \eqref{Lemma1est2}-\eqref{Lemma1conc1.3}, 
to see that this satisfies bounds like those in \eqref{Propnu+3}.


\medskip
{\it Lower order contributions}.
Let us consider the contribution to the sums in \eqref{prnu+exp} with $n = 1,\dots,N_2$.
We consider a term of the form
\[
I_n = \sum_{J\in\mathcal{J}} \K_{J,n}(p,q)[g_n] \cdot  2^{-J n} \jq^n,
	\qquad g_n \in \mathcal{G}^{N_1-n},
\]
and want to apply Lemma \ref{Lemmanu0} 
with $M \leq N_1-n$.
Since $|q|\approx 2^{Q} \lesssim 2^A$ and for $J\in\mathcal{J}$ 
we must have $(1+|q|)2^{-J} \leq 2^{-J/2}$, the conclusion of Lemma \ref{Lemmanu0} gives
\begin{align*}
I_n &= \sum_{\ell=0}^{M-1}  B^{(n)}_\ell + C^{(n)},
\end{align*}
where
\begin{align}
\label{prnu+21}
B^{(n)}_\ell & := \sum_{J\in\mathcal{J}}  
	\frac{a_\ell^{(n)}(p,q)}{|p|} \cdot 2^J \widehat{\chi_\ell^{(n)}}(2^J(|p| - |q|)) \cdot 2^{-J n/2},
\\ 
\label{prnu+22}
C^{(n)} & := \sum_{J\in\mathcal{J}}  R_{J,M}^{(n)}(p,q) \cdot 2^{-J n/2},
\end{align}
and we have that 

\begin{itemize}

\smallskip
\item[(1)] $a_\ell^{(n)}(p,q) \in \mathcal{G}^{N_1-n-\ell} \subset \mathcal{G}^{N_1-N_2-M}$,
for $0\leq \ell < M$,  
and 

\smallskip
\item[(2)] $R_{j,M}^{(n)}(p,q)$ satisfy estimates like those in \eqref{Lemma1est2} with $|\alpha|+|\beta| 
\leq \min(N_1-n-M-1,M/4-2)$.

\end{itemize}

In particular, we see that the terms $B^{(n)}_\ell$ are of the form \eqref{Propnu+2}-\eqref{Propnu+2.1}.
The remainder term \eqref{prnu+22} 
satisfies estimates which are consistent with \eqref{Propnu+3} 
since we can choose $M=N_1-N_2-n$,


\medskip
{\it The remainder $R$ in \eqref{prnu+R}}.
From \eqref{lemmapsi1R} we know that, for $|x| \approx 2^J$ and $|q| \approx 2^Q$
\begin{align}\label{prnu+30}
| \nabla_q^\beta R_{N_2}(x,q) | \lesssim 2^{-N_2 J} \big(2^{N_2 Q} + 2^{(1-|\beta|)Q_-} \big)
\end{align}
for all $|\beta|\leq N_1-N_2$. 
Differentiating \eqref{prnu+R}, using \eqref{prnu+30}, and $Q \leq J/4$, 
we see that, as long as $|\alpha|+|\beta| \leq N_2/2-3$,
\begin{align}\label{prnu+31}
|\nabla_p^{\alpha} \nabla_q^\beta R(p,q) | \lesssim 2^{(1-|\beta|)Q_-}.
\end{align}
This is upper bounded by the right-hand side of \eqref{Propnu+3}.
We can therefore absorb the term $R$ into $\nu_R$.

\medskip
{\it The remainder $\nu^-$ in \eqref{pr1}}.
Finally we show that the term $\nu_-$ can also be absorbed into the remainder $\nu_R$.
We look at the case $\min(P,Q)\leq 0$, since the other case is easier. 
By definition
\begin{align}\label{prnu+40}
\begin{split}
& \nu^-(p,q) = \sum_{J \in \mathcal{J}^c} \nu_J(p,q) \mathbf{1}_{\{|P-Q|<5\}}
   + \sum_{J \in \Z_+} \nu_J(p,q) \mathbf{1}_{\{|P-Q|\geq 5\}}, 
\\
& \mathcal{J}^c = \{ J \in \Z_+ \, : \, J < -\min(P,Q) + 4A\}.
\\
\end{split}
\end{align}

Let us look first at the term with $|P-Q| < 5$ and $J\in\mathcal{J}^c$.
We inspect the formula \eqref{pr1} for $\nu_J$
to see that $\partial_p^\alpha \partial_q^\beta \nu^-(p,q)$  is a linear combination of terms of the form
\begin{align}\label{prnu+41}
I_{\alpha,\beta_1,\beta_2} := \int_{\R^3} (ix)^{\alpha} e^{ix \cdot p} \, \partial_q^{\beta_2} \big( e^{i|x||q|} \big) 
  \frac{1}{|x|} \, \partial_q^{\beta_1} \psi_1(x,q) \, 
  \varphi(x/2^J) \, \mathrm{d}x
\end{align}
for 
$\beta_1+\beta_2 =\beta$. 
The estimates \eqref{psi10} for $\psi_1$ give us
\begin{align}\label{prnu+42}
& \big| \nabla_q^{\beta_1} \psi_1(x,q) \big| 
  \lesssim 1+ 2^{(1-|\beta_1|)Q_-}.
\end{align}
Since we also have 
\begin{align}\label{prnu+43}
\begin{split}
\big| \nabla_q^{\beta_2} e^{i|x||q|} \big| \lesssim \sum_{a + b = |\beta_2|-1} 2^J \cdot 2^{Ja} \cdot 2^{-bQ} 
  \\
  \lesssim 2^J 2^{(1-|\beta_2|)Q_-} + 2^{J|\beta_2|}
\end{split}
\end{align}
we see that 
\begin{align}\label{prnu+44}
|I_{\alpha,\beta_1,\beta_2}| & \lesssim 2^{2J} \cdot 2^{J|\alpha|}
  \cdot \big(2^J 2^{(1-|\beta_2|)Q_-} + 2^{J|\beta_2|} \big) \cdot (1+2^{(1-|\beta_1|)Q_-})
  \\
\nonumber
& \lesssim 2^{-2\max(P,Q)} \cdot 2^{-\max(P,Q)(|\alpha|+|\beta|)}  \cdot 2^{4A(|\alpha|+|\beta|+2)},
\end{align}
since 
$J\leq -\max(P,Q) + 4A + 5$. 
This is consistent with the desired bound \eqref{Propnu+3}.

For the elements in the second sum in \eqref{prnu+40} 
we can resort to an integration by parts argument using that $||p|-|q|| \gtrsim \max(|p|,|q|)$.
Notice that, for any integer $\rho>0$, we can write
\begin{align}\label{prnu+50}
e^{i(x\cdot p + |x||q|)} =  T^\rho e^{i(x\cdot p + |x||q|)}, \qquad 
  T:= \frac{p+(x/|x|)|q|}{i\big|p+(x/|x|)|q|\big|^2} \cdot \nabla_x.
\end{align}
Since $|p+(x/|x|)|q|| \gtrsim 2^{\max(P,Q)}$, 
and for any $|\gamma|\geq 1$ we have $|\nabla_x^\gamma (p+(x/|x|)|q|)| \lesssim 2^{-|\gamma| J} 2^{Q}$
and 
\begin{align*}
& \big| \nabla_x^\gamma \psi_1(x,q) \big| \lesssim 2^{-|\gamma|J} 2^{|\gamma|Q_+},
\end{align*}
see \eqref{psi10}, we obtain
\begin{align}\label{prnu+51}
\begin{split}
|\nu_J(p,q)| = \Big| \int e^{i(x \cdot p + |q||x|)} (T^\ast)^\rho 
  \Big[ \frac{1}{|x|} \psi_1(x,q) \, \varphi_J^{(0)}(x) \Big] \,\mathrm{d}x \Big|
  \\ \lesssim 2^{2J} 2^{-J \rho} 2^{-\max(P,Q) \rho} 2^{A\rho}.
\end{split}
\end{align}
With $\rho=2$ this gives us \eqref{Propnu+3} for $\alpha=\beta=0$.
For $|\alpha|+|\beta|\geq 1$ we apply derivatives obtaining terms as in \eqref{prnu+41}
and then use integration by parts as above.
Using also the first line of \eqref{prnu+43}, we get the following improvement of \eqref{prnu+44}:
$|I_{\alpha,\beta_1,\beta_2}|$ is bounded by a linear combination of terms of the form
\begin{align}\label{prnu+52}
2^{2J} \cdot 2^{-J \rho} 2^{-\max(P,Q) \rho} 2^{A\rho} 
  \cdot 2^{J|\alpha|}
  \cdot ( 2^J 2^{Ja} 2^{-bQ} ) \cdot (1+2^{(1-|\beta_1|)Q_-}).
\end{align}
for $a+b=|\beta_2|-1$ (with the understanding that if $|\beta_2|=0,1$ then the whole term 
  involving $a$ and $b$ is absent).
We then use \eqref{prnu+52} with $\rho = |\alpha|+a+3$ to get that 
$|I_{\alpha,\beta_1,\beta_2}|$ is bounded by a linear combination of factors
\begin{align*}
\begin{split}
& 2^{-2\max(P,Q)} 2^{-|\alpha|\max(P,Q)}
  \cdot 2^{-\max(P,Q)(a+1)} 2^{-bQ} \cdot 2^{(1-|\beta_1|)Q_-} \cdot 2^{A(|\alpha|+a+3)} 
  \\
  & \lesssim 2^{-2\max(P,Q)} 2^{-|\alpha|\max(P,Q)} \cdot 2^{-Q(a+b+1)}  
  \cdot 2^{(1-|\beta_1|)Q_-} \cdot 2^{A(|\alpha|+a+3)} 
  \\
  & \lesssim 2^{-2\max(P,Q)} 2^{-|\alpha|\max(P,Q)}
  \cdot 2^{(1-|\beta|)Q_-} \cdot 2^{A(|\alpha|+|\beta|+3)}
\end{split}
\end{align*}
consistently with \eqref{Propnu+3imp}.
$\hfill \Box$



\medskip
\section{Bilinear estimates for the leading order of the NSD}\label{secBE}

In this section we prove several bilinear estimate for the (singular) multipliers appearing in our problem, 
such as those arising from the asymptotic formulas of Proposition \ref{Propnu+}.
The bilinear operators that we need to look at have the form
\begin{align}\label{bilinearex0}
T_{\mu_j}(g,h)(x) := 
\mathcal{F}^{-1}_{k\rightarrow x} 
	\iint_{\R^3\times\R^3} g(\ell) h(m) \, \mu_j(k,\ell,m) \, \mathrm{d}\ell \mathrm{d}m, \qquad j=1,2,3,
\end{align}
see \eqref{mudec}.
We will often need to consider these operators with additional symbols $b$ with suitable properties to be specified below,
that is, we will look at 
\begin{align}\label{bilinearex1}
T_{\mu_j}[b](g,h)(x) := \mathcal{F}^{-1}_{k\rightarrow x} 
	\iint_{\R^3\times\R^3} g(\ell) h(m) \, b(k,\ell,m) \mu_j(k,\ell,m) \, \mathrm{d}\ell \mathrm{d}m, \qquad j=1,2,3.
\end{align}

Our results will be a series of H\"older type 
estimates with some (small) losses and up to suitable remainders.
These estimates will then be used to establish the nonlinear bounds of Section \ref{secdkL2}. 

\subsection{Bilinear estimates for $\mu_1$}
The most important operator is the one corresponding to the leading order term $\mu_1$, see \eqref{mudec}-\eqref{mu1}. 
Using the notation \eqref{bilinearex0}, the formula \eqref{mu1}, 
and the symmetry in exchanging $\ell$ and $m$, we see that,
\begin{align}
\label{T_mu}
T_{\mu_1}(g,h) = 2T_1(g,h)(k) + T_2(g,h)(k),
\end{align}
where
\begin{align}
\label{theomu11}
T_1(g,h)(x) & := \mathcal{F}^{-1}_{k\rightarrow x} \iint_{\R^3\times\R^3} g(k-\ell) h(m) \, \nu_1(\ell,m) \, \mathrm{d}\ell \mathrm{d}m,
\\
\label{theomu12}
T_2(g,h)(x) &:= \mathcal{F}^{-1}_{k\rightarrow x} \iint_{\R^3\times\R^3} g(-m-\ell) h(m) \, \bar{\nu_1(\ell,k)} \, \mathrm{d}\ell \mathrm{d}m,
\end{align}
where $\nu_1$ is defined in \eqref{nu_1} and satisfies the formulas of Proposition \ref{Propnu+}.
To allow for additional symbols we define
\begin{align}
\label{theomu11b}
T_1[b](g,h)(x) & := \mathcal{F}^{-1}_{k\rightarrow x} \iint_{\R^3\times\R^3} g(k-\ell) h(m) \,b(k,\ell,m)\, \nu_1(\ell,m) \, \mathrm{d}\ell \mathrm{d}m,
\\
\label{theomu12b}
T_2[b](g,h)(x) &:= \mathcal{F}^{-1}_{k\rightarrow x} \iint_{\R^3\times\R^3} g(-\ell-m) h(m) \,b(k,\ell,m)\, \bar{\nu_1(\ell,k)} \, \mathrm{d}\ell \mathrm{d}m.
\end{align}

\def\AfactorBB1{C_0}
\def\FfactorBB{{\max(L,M)}}
\def\FfactorBB2{{\max(L,K)}}

\medskip
\begin{theorem}[Bilinear bounds 1]\label{theomu1}
Let $T_1[b]$ and $T_2[b]$ be the bilinear operators defined in \eqref{theomu11b} and \eqref{theomu12b}.
Assume that:

\setlength{\leftmargini}{2em}
\begin{itemize}

\medskip
\item The symbol $b$ is such that
\begin{align}\label{theomu1asb1}
\supp(b) \subseteq \big\{ (k,\ell,m) \in \R^9\,:\, |k|+|\ell|+|m| \leq 2^A, \, |\ell| \approx 2^L,
  \, |m| \approx 2^M\big\},
\end{align}
for some $A\geq1$.


\medskip
\item For all $|k| \approx 2^{K}$, $|\ell|\approx 2^L$ and $|m| \approx 2^M$
\begin{align}\label{theomu1asb2}
| \nabla_k^a \nabla^\alpha_\ell \nabla^\beta_m b(k,\ell,m)| 
	\lesssim 2^{-K|a|} 2^{-|\alpha|L}2^{-|\beta|M} \cdot 2^{(|a|+|\alpha|+|\beta|)A}, \qquad |a|,|\a|,|\b|\leq 5.
\end{align}


\medskip
\item There is $10A \leq D \leq 2^{A/10}$ such that
\begin{align}
\label{theomu1asgh}
\begin{split}
\mathcal{D}(g,h) := {\| g \|}_{L^2} {\| h \|}_{L^2} 
	+ \min\big( {\| \partial_k g \|}_{L^2} {\| h \|}_{L^2}, {\| g \|}_{L^2} {\| \partial_k h \|}_{L^2} \big) 
	\leq 2^{D}.
\end{split}
\end{align}

\end{itemize}

Then, the following estimates hold:

\begin{itemize}

\item[(i)] For any $p,q \in (1,\infty)$ and $r> 1$ with 
\begin{align}
\frac{1}{p} + \frac{1}{q} > \frac{1}{r}, \qquad 
\end{align}
we have
\begin{align}
\label{theomu1conc}
\begin{split}
{\big\| T_1[b]\big(g,h\big) \big\|}_{L^r} 
  & \lesssim {\big\| \what{g} \big\|}_{L^p} {\big\| \what{h} \big\|}_{L^q} \cdot 2^{\max(L,M)}
  \cdot 2^{\AfactorBB1 A} + 2^{-D} \mathcal{D}(g,h), 
\end{split}
\end{align}
and
\begin{align}
\label{theomu1concT2}
\begin{split}
{\big\| P_K T_2[b](g,h) \big\|}_{L^r} 
  & \lesssim {\big\| \what{g} \big\|}_{L^p} {\big\| \what{h} \big\|}_{L^q} \cdot 2^{\max(L,K)}
  \cdot 2^{\AfactorBB1 A} + 2^{-D} \mathcal{D}(g,h),
\end{split}
\end{align}
where\footnote{This is a convenient value of the absolute constant $C_0$ 
that we can choose in our proof, but it can certainly be improved. 
In the nonlinear estimates for the evolution equation (Sections \ref{secdkL2} and \ref{ssecmu23Est})
we are going to impose conditions on the smallness of $C_0 \delta_N$ (or similar quantities), 
see the condition \eqref{condition} for example, and recall the definition \eqref{d_N}.
Then, a smaller value of $C_0$ would reduce the total number of derivatives $N$ required for our initial data.
} $C_0:=65$.
Recall the notation after \eqref{LP2} for the projection $P_K$.


\medskip
\item[(ii)] Define the ``good vector-field''
\begin{align}\label{theomu1X}
{\bf X} = \partial_{|\ell|} + \partial_{|m|}
\end{align}
and, for $a \leq 2$, 
define the operators
\begin{align}\label{theomu1TX}
\begin{split}
T_{{\bf X}^a}[b](g,h)(k) := \mathcal{F}^{-1}_{k\mapsto x} \iint_{\R^3\times\R^3} g(k-\ell) h(m) 
  \, b(k,\ell,m) 
  \, {\bf X}^a\nu_1(\ell,m) \, \mathrm{d}\ell \mathrm{d}m. 
\end{split}
\end{align}
Then
\begin{align}
\label{theomu1Xconc}
\begin{split}
{\big\| T_{{\bf X}^a}[b]\big(g,h\big) \big\|}_{L^r} 
  & \lesssim {\big\| \what{g} \big\|}_{L^p} {\big\| \what{h} \big\|}_{L^q} \cdot 2^{-a\min(L,M)}
    \cdot 2^{\max(L,M)} \cdot 2^{(\AfactorBB1 + 12)A} + 2^{-D}\mathcal{D}(g,h).
\end{split}
\end{align}
\end{itemize}

\end{theorem}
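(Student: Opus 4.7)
\bigskip

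\textbf{Proof proposal for Theorem \ref{theomu1}.} The plan is to insert the asymptotic expansion
\[
\nu_1(p,q) = \nu_0(p,q) + \nu_L(p,q) + \nu_R(p,q)
\]
from Proposition \ref{Propnu+} into the definitions of $T_1[b]$, $T_2[b]$, and then handle each piece separately. For $T_1[b]$ the integration variables are $(\ell,m)$, so the singularity of $\nu_1(\ell,m)$ lives on $|\ell|\approx|m|$ (i.e.\ $|L-M|<5$), and I will restrict to this sub-region at the outset by absorbing the complementary region into the $\nu_R$ estimate (where the kernel is essentially smooth with controlled $A$-losses). Throughout, the restriction $|k|+|\ell|+|m|\leq 2^A$ from \eqref{theomu1asb1} will let me treat $b$ and the coefficients $b_0,\,b_{a,J}$ in Proposition \ref{Propnu+} as Coifman--Meyer-type symbols up to explicit losses in $2^{A}$.

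\medskip

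First I will handle the leading piece $\nu_0$, which is $\frac{b_0(\ell,m)}{|\ell|}\bigl[i\pi\delta(|\ell|-|m|)+\mathrm{pv}\,(|\ell|-|m|)^{-1}\bigr]$. Dyadically localize in the co-radial variable: for $J\in\Z$, introduce cutoffs $\varphi_J(|\ell|-|m|)$ splitting $\mathrm{pv}$ into pieces supported on $||\ell|-|m||\approx 2^{-J}$, plus a smooth low-frequency tail. Each piece then reduces to a bilinear operator whose kernel is supported on a thin annulus of thickness $2^{-J}$ inside the region $|\ell|\approx|m|\approx 2^{L}$. The key ingredient here is Lemmas \ref{lemBEan}--\ref{lemBEan2}, which provide H\"older-type estimates for such annular bilinear pseudo-products with the desired losses in $p,q,r$. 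The $\delta(|\ell|-|m|)$ contribution is the borderline $J=\infty$ case and is handled by the same annular estimate (Lemma \ref{lemBEan2}) applied to the sphere $|\ell|=|m|$. Summing over $J$ and using the factor $2^{-J}$ from the pv, and the symbol bound \eqref{Propnu+1.1} on $b_0$ combined with \eqref{theomu1asb2} on $b$ (both giving Coifman--Meyer-type behavior up to $2^{C_0 A}$ losses), yields the factor $2^{\max(L,M)}/|\ell|\approx 2^{0}$ (times extra $2^{L}$ from the integration), i.e.\ the prefactor $2^{\max(L,M)}2^{C_0 A}$. For $T_2[b]$, in the integrand the role of $m$ is played by $k$ in $\bar{\nu_1(\ell,k)}$, so the singular set is $|\ell|\approx|k|$, which gives the $2^{\max(L,K)}$ factor in \eqref{theomu1concT2}; applying $P_K$ frequency-localizes this properly.

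\medskip

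Second, the lower-order contribution $\nu_L$ of \eqref{Propnu+2} has the same structural form as the pv part, but with Schwartz kernels $K_a(2^J(|p|-|q|))$ replacing the singular pv. This is strictly easier: for each $J$, $2^J K_a(2^J\cdot)$ is an approximate identity, and the coefficients $b_{a,J}$ satisfy symbol bounds identical in form to $b_0$ by \eqref{Propnu+2.1}, so a direct application of the same annular bilinear estimate gives the same H\"older-type bound. The remainder $\nu_R$ from \eqref{Propnu+3}--\eqref{Propnu+3imp} is, up to factors of $2^{C_0 A}$, a genuinely integrable multiplier (its pointwise bound is $2^{-2\max(P,Q)}$, integrable against the support conditions), and so contributes a product of $L^2\cdot L^2\cdot(\text{absolutely summable in dyadic scales})$ type; the tail beyond the range where $2^{C_0 A}$-losses can be absorbed by the symbol-type bounds will contribute the $2^{-D}\mathcal{D}(g,h)$ remainder, using \eqref{theomu1asgh} and Plancherel. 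This accounts for part (i).

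\medskip

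For part (ii), the crucial observation is that the vector field ${\bf X}=\partial_{|\ell|}+\partial_{|m|}$ annihilates both distributions $\delta(|\ell|-|m|)$ and $\mathrm{pv}\,(|\ell|-|m|)^{-1}$, since these depend only on the difference $|\ell|-|m|$. Therefore
\[
{\bf X}\nu_0 = \frac{{\bf X}b_0(\ell,m)}{|\ell|}\Bigl[i\pi\delta(|\ell|-|m|)+\mathrm{pv}\,\tfrac{1}{|\ell|-|m|}\Bigr] - \frac{b_0(\ell,m)}{|\ell|^2}\bigl[\cdots\bigr],
\]
and analogously for ${\bf X}\nu_L$. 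From \eqref{Propnu+1.1} and \eqref{Propnu+2.1}, a single ${\bf X}$ costs at most $2^{-\min(L,M)}$ on the coefficient side (the derivative $\partial_{|m|}b_0$ contributes $2^{(1-1)Q_-}=2^{0}$ while the $1/|\ell|^2$ factor costs $2^{-L}$), and the $\nu_R$ piece loses at most a comparable factor under the same differentiation bound. Iterating for $a=2$ produces two such factors plus mixed commutator terms, all of the same order $2^{-a\min(L,M)}$, after which one applies exactly the same annular bilinear estimates as in part (i), absorbing the extra derivative-losses on symbols into the $2^{12 A}$ slack in the exponent of \eqref{theomu1Xconc}. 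The main technical obstacle I anticipate is the bookkeeping needed to keep every symbol used in the annular estimates within the admissible Coifman--Meyer class (i.e.\ verifying the required number of derivatives bounds on $b\cdot b_0$ and $b\cdot b_{a,J}$ with the correct powers of $2^{A}$), and to show that the $J$-sum in the pv-decomposition converges uniformly in the parameters $L,M,A$; both should follow from Proposition \ref{Propnu+} together with \eqref{theomu1asb2}.
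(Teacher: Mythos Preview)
Your overall architecture matches the paper's: decompose $\nu_1$ via Proposition~\ref{Propnu+}, handle $\nu_\delta$, $\nu_{\mathrm{pv}}$, $\nu_L$, $\nu_R$ separately using the annular bilinear Lemma~\ref{lemBEan} (not Lemma~\ref{lemBEan2}, which concerns $|k|\pm|\ell|\pm|m|$ and is for $\mu_2,\mu_3$), and for part~(ii) use that ${\bf X}$ annihilates any function of $|\ell|-|m|$. The $T_2$ operator is handled by duality, reducing it to a $T_1$-type estimate.

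The genuine gap is in your treatment of the $\mathrm{pv}$ piece. After dyadic localization to $||\ell|-|m||\approx 2^{B}$, the $\mathrm{pv}$ factor has size $2^{-B}$ while Lemma~\ref{lemBEan} gains exactly $2^{B}$ (the annulus thickness), so each scale contributes $O(1)$ and the sum over $B$ does \emph{not} converge; there is no net ``$2^{-J}$ from the pv'' as you claim. The paper resolves this by splitting the $B$-range into three regimes: a ``high'' range $B\gtrsim L$ where the kernel is non-singular (Lemma~\ref{lemBER}); a ``medium'' range $B_0<B<L-10$ with only $O(A+D)\lesssim 2^{A}$ scales, summed brutally; and a ``low'' tail $B\leq B_0\sim -50(D+10A)$. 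It is this low tail, not $\nu_R$, that produces the $2^{-D}\mathcal{D}(g,h)$ term. The mechanism is the actual cancellation in the principal value: one writes the difference $g(k-\ell)-g(k-|m|\ell/|\ell|)$ (and similarly for the symbol), using that the integral of the odd kernel $(|\ell|-|m|)^{-1}$ against the symmetric remainder vanishes. This costs a derivative of $g$, which is why one first splits $g=P_{>X}g+P_{\leq X}g$ with $X\sim 2D - B/100$; the high-frequency piece is small in $L^2$ by $\|\partial_k g\|_{L^2}$, and the low-frequency piece has controlled derivative, letting the $B$-sum converge and land below $2^{-D}\mathcal{D}(g,h)$. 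Without this cancellation argument the sum is divergent and the proof does not close.
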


\medskip
Let us make a few comments about the statement of the theorem and its uses:

\setlength{\leftmargini}{2em}
\begin{itemize}



\smallskip
\item Note that our operators are localized according to \eqref{theomu1asb1} and that factors
of $2^{\max(L,M)}$ and $2^A$ enter the final bound \eqref{theomu1conc}.
The power of $2^A$ represents a loss for high frequencies, due to the fact that 
we allow multipliers $b$ which are not standard ones, and satisfies estimates with losses \eqref{theomu1asb2}.
Even when $b=1$, our proof would give similar types of losses coming from the contribution of large frequencies.
The factor of $2^{\max(L,M)}$ is consistent with the homogeneity of $\nu_1$ 
and gives a useful gain for small frequencies.


\smallskip
\item The choice of $D$ and $A$: in our application of the bilinear estimate \eqref{theomu1conc} to
the nonlinear evolution, a typical choice of the parameters will be, see \eqref{d_N},
\begin{align}\label{theomu1comm}
2^A \approx \jt^{6/N}, \qquad 2^D \approx \jt^{3}.
\end{align}
With this choice of $A$, there are only very small losses in the estimate \eqref{theomu1conc},
The choice of $D$ allows us to: (a) comfortably verify \eqref{theomu1asgh} 
for the various arguments $g$ and $h$ that we will encounter,
(b) treat the $2^{-D}$ factor in \eqref{theomu1conc} as a remainder which decays fast in time 
and can always be disregarded.
Moreover, with the choice \eqref{theomu1comm} the technical restriction $D \leq 2^A$ is clearly satisfied.

\smallskip
\item Note the compatibility of \eqref{theomu1asb2} 
with the properties of the coefficients $b_0$ and $b_{a,J}$ in \eqref{Propnu+1.1} and \eqref{Propnu+2.1}.

\smallskip
\item The estimates for the operators $T_{{\bf X}^a}$ where the good vectorfield \eqref{theomu1X} 
is applied to $\nu_1$ follow from the structural Proposition \ref{Propnu+},
and the proof for the case $a=0$.
The key point is that ${\bf X}f(|\ell|-|m|)=0$ for all $f$.

\smallskip
\item The analogue of the good vectorfield \eqref{theomu1X} for the operator $T_2$, is the derivative $\partial_m$,
so this does not require a separate estimate as \eqref{theomu1TX}-\eqref{theomu1Xconc}.
\end{itemize}

\medskip
Theorem \ref{theomu1} is proved in Subsection \ref{prtheomu1}. 
Its proof will be done in several steps using as key ingredients the decomposition 
and the asymptotic formulas for $\nu_1$ in Proposition \ref{Propnu+}, and Lemma \ref{lemBEan} below.

In certain frequency configuration we will need to differentiate $\nu_1$ in
directions other than ${\bf X}$. The next Theorem establishes bilinear bounds 
for the relevant operators that will appear in the nonlinear analysis in Section \ref{secdkL2}.

\medskip
\begin{theorem}[Bilinear bounds with vectorfields 1]\label{theomu1'}
Under the assumption and notation 
of Theorem \ref{theomu1} the following additional bilinear estimates hold:

\begin{itemize}
\medskip
\item[(i)] 
For $a=(a_1,a_2)$, $1\leq |a| \leq 2$ 
define the operators
\begin{align}\label{theomu1'1a}
\begin{split}
T_{\nabla^a}[b](g,h)(k) := \mathcal{F}^{-1}_{k\mapsto x} \iint_{\R^3\times\R^3} g(k-\ell) h(m) 
  \, b(k,\ell,m) \\ \times 
  \nabla^{a_1}_{\ell} \nabla^{a_2}_{m} \big[ \nu_1(\ell,m) \chi_+(\ell,m) \big]\, 
  \mathrm{d}\ell \mathrm{d}m
\end{split}
\end{align}
where
\begin{align}\label{theomu1'1b}
\chi_+(\ell,m) := \varphi_{\geq \max(L,M)-10}(|\ell|-|m|);
\end{align}
recall the notation \eqref{LP0} for the cutoffs.

\item
Then
\begin{align}\label{theomu1'conc1}
\begin{split}
{\big\| T_{\nabla^a}[b]\big(g,h\big) \big\|}_{L^r} 
  & \lesssim {\big\| \what{g} \big\|}_{L^p} {\big\| \what{h} \big\|}_{L^q}
    \cdot 
    2^{(1-|a|)M}  2^{(\AfactorBB1 + 12)A} + 2^{-D}\mathcal{D}(g,h).
\end{split}
\end{align}


\medskip
\item[(ii)] 
For $1\leq |a| \leq 2$ as above, and $K\in\Z$, define the operators (we omit the $K$ dependence)
\begin{align}\label{theomu1'3a}
\begin{split}
T_{\partial^a}[b](g,h)(k) & := \mathcal{F}^{-1}_{k\mapsto x} \iint_{\R^3\times\R^3} g(-m-\ell) h(m) 
  \, b(k,\ell,m) \\ & \times \big[ \varphi_K(k) \nabla^{a_1}_{k} \nabla^{a_2}_{\ell} 
  \overline{\nu_1(\ell,k)} \chi_+(\ell,k) \big]
  \, \mathrm{d}\ell \mathrm{d}m, \qquad \mbox{with} \quad \partial \in \{ \partial_\ell, \partial_k\},
\end{split}
\end{align}
where (with a slight abuse of notation)
\begin{align}\label{theomu1'3b}
\chi_+(\ell,k) := \varphi_{\geq \max(L,K)-10}(|\ell|-|k|).
\end{align}
Then
\begin{align}
\label{theomu1'conc3}
\begin{split}
{\big\| T_{\partial^a}[b]\big(g,h\big) \big\|}_{L^r} 
  & \lesssim {\big\| \what{g} \big\|}_{L^p} {\big\| \what{h} \big\|}_{L^q} \cdot 
  2^{(1-|a|)K} \cdot 2^{(\AfactorBB1+12)A} + 2^{-D}\mathcal{D}(g,h).
\end{split}
\end{align}

\medskip
\item[(iii)] 
Let
\begin{align}\label{theomu1'2b}
{\bf Y} = \partial_k + \frac{k}{|k|} \big( \frac{\ell}{|\ell|} \cdot \partial_{\ell} \big)
\end{align}
and, for $a = 1, 2$, $K\in\Z$, define the operators 
\begin{align}\label{theomu1'2a}
\begin{split}
T_{{\bf Y}^a}[b](g,h)(k) := \mathcal{F}^{-1}_{k\mapsto x} \iint_{\R^3\times\R^3} g(-m-\ell) h(m) 
  \, b(k,\ell,m) \\ \times \big[ \varphi_K(k) {\bf Y}^a \overline{\nu_1(\ell,k)} \big]
  \, \mathrm{d}\ell \mathrm{d}m.
\end{split}
\end{align}
Then 
\begin{align}
\label{theomu1'conc2}
\begin{split}
{\big\| T_{{\bf Y}^a}[b]\big(g,h\big) \big\|}_{L^r} 
  & \lesssim {\big\| \what{g} \big\|}_{L^p} {\big\| \what{h} \big\|}_{L^q} \cdot 2^{(1-a)K}
    \cdot 2^{(\AfactorBB1 + 12)A} + 2^{-D}\mathcal{D}(g,h).
\end{split}
\end{align}
\end{itemize}
\end{theorem}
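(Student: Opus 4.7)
The argument will proceed by reducing Theorem \ref{theomu1'} to Theorem \ref{theomu1}, via the decomposition $\nu_1 = \nu_0 + \nu_L + \nu_R$ from Proposition \ref{Propnu+}, together with a careful accounting of how the derivatives $\nabla^a$, $\partial^a$ and the vectorfield $\mathbf{Y}$ act on each piece. Parts (i) and (ii) exploit that the cutoff $\chi_+$ removes the singularity of $\nu_1$; part (iii) relies on the algebraic fact that $\mathbf{Y}$ is tangential to that singularity.

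For (i), on the support of $\chi_+(\ell,m)$ we have $||\ell|-|m||\gtrsim 2^{\max(L,M)}$, so the Dirac mass in $\nu_0$ is killed and $\pv\,|\ell|-|m|)^{-1}$ reduces to a smooth function whose $j$-th derivative is bounded by $2^{-(j+1)\max(L,M)}$. The Schwartz factors $K_a(2^J(|\ell|-|m|))$ in $\nu_L$ produce only rapidly decaying remainders because $2^J\geq 2^{4A}$ forces their argument to be large on the support of $\chi_+$, and $\nu_R$ is already smooth by \eqref{Propnu+3imp}. Combining these estimates with the prefactor $|\ell|^{-1}$ and the bounds \eqref{Propnu+1.1}, \eqref{Propnu+2.1} on the coefficients, one verifies that $\nabla^a[\nu_1(\ell,m)\chi_+(\ell,m)]$ satisfies a Coifman--Meyer-type bound \eqref{CMnorm} of size $\lesssim 2^{-(|a|+1)\max(L,M)} 2^{C_0 A}$. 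Applying Proposition \ref{proGHW}(i) with the $2^{3\max(L,M)}$ volume of the annular integration domain yields the claimed factor $2^{(1-|a|)M}$ in \eqref{theomu1'conc1}. Part (ii) is identical with $m$ replaced by $k$ and \eqref{theomu1concT2} in place of \eqref{theomu1conc}.

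For (iii), the key observation is the cancellation $\mathbf{Y}(|\ell|-|k|) = -k/|k| + (k/|k|)(\ell/|\ell|\cdot\ell/|\ell|) = 0$, so that $\mathbf{Y}$ annihilates every function of $|\ell|-|k|$. Consequently, when $\mathbf{Y}$ acts on $\nu_0(\ell,k)=|\ell|^{-1}b_0(\ell,k)[i\pi\delta(|\ell|-|k|)+\pv(|\ell|-|k|)^{-1}]$ the singular distribution in brackets is preserved, and $\mathbf{Y}$ only differentiates either the symbol $b_0$ or the prefactor $|\ell|^{-1}$. Using $|L-K|<5$ and \eqref{Propnu+1.1}, $\mathbf{Y}(b_0)$ is seen to satisfy the same bounds as $b_0$ itself (up to a loss $2^{(|a|)A}$), while $\mathbf{Y}(|\ell|^{-1})=-(k/|k|)|\ell|^{-2}$ costs a single factor $2^{-K}$. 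Thus $\mathbf{Y}^a\nu_1$ decomposes in the same form as $\nu_1$ (with modified but still admissible coefficients in the sense of Proposition \ref{Propnu+}), at the expense of at most $a$ prefactors $2^{-K}$. Feeding this back into Theorem \ref{theomu1}(ii), whose base estimate gives the factor $2^{\max(L,K)}\approx 2^K$, we obtain the improvement $2^{K}\cdot 2^{-aK}=2^{(1-a)K}$ claimed in \eqref{theomu1'conc2}.

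The main obstacle will be the bookkeeping for (iii) when $a=2$: a naive application of the mechanism above would give $2^{(1-a)K}$ only after cancelling cross terms generated by $\mathbf{Y}^2$ acting jointly on the angular factors $k/|k|$, $\ell/|\ell|$ and on the coefficients of the $\nu_L$ expansion. One must verify that these cross terms still fit inside the structural decomposition of Proposition \ref{Propnu+}, so that Theorem \ref{theomu1}(ii) applies to each piece with the same $2^{\max(L,K)}$ base factor and only the advertised $2^{-aK}$ improvement. A parallel (but simpler) bookkeeping is required for the $\nu_L$ pieces in parts (i)--(ii), where each $\nabla^a$ on the Schwartz factor $K_a(2^J\cdot)$ could in principle produce factors of $2^J$; these are absorbed by the rapid decay $\langle 2^J(|\ell|-|m|)\rangle^{-\rho}$ on the support of $\chi_+$, leaving only the $2^{C_0 A}$ losses that are already present in Theorem \ref{theomu1}.
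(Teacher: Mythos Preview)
Your overall strategy matches the paper's: parts (i)--(ii) work because $\chi_+$ kills the singularity so that $\nabla^a[\nu_1\chi_+]$ is a regular symbol, and part (iii) works because $\mathbf{Y}(|\ell|-|k|)=0$ so that the singular bracket $[i\pi\delta+\pv]$ is preserved and only the coefficients $b_0/|\ell|$, $b_{a,J}/|\ell|$ get differentiated. The paper proceeds exactly this way.

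There is, however, a genuine error in the tool you invoke for (i)--(ii). Proposition~\ref{proGHW} concerns operators of the form $\iint\widetilde g(\ell)\widetilde h(m)\,n(k,\ell,m)\,\mu(k,\ell,m)\,d\ell\,dm$ with the \emph{full} NSD $\mu$ built in; it cannot be applied to $T_{\nabla^a}[b]$, whose kernel is $b\cdot\nabla^a[\nu_1\chi_+]$ and contains no $\mu$. The correct tool is Lemma~\ref{lemBER} (regular-symbol bilinear estimate), which takes as input symbol bounds of the type \eqref{lemBERb2} and outputs the $2^{3\max(L,M)}$ volume factor you mention. The paper carries this out by verifying that $2^{-2A}2^{(|a|-1)M}\nabla^a[\nu_1\chi_+]$ satisfies \eqref{lemBERb2} with an extra $2^{-3\max(L,M)}$ and then applying Lemma~\ref{lemBER}.

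For (iii) your accounting has a small gap. You say $\mathbf{Y}(b_0)$ satisfies the \emph{same} bounds as $b_0$, so that the only $2^{-K}$ factors come from $\mathbf{Y}^j(|\ell|^{-1})$; but then the Leibniz piece with $j=0$ carries no $2^{-K}$ at all and would give $2^K$ (not $2^{(1-a)K}$) after Theorem~\ref{theomu1}. This is ultimately harmless because $K\le A$ forces $2^K\le 2^{(1-a)K}\cdot 2^{aA}$, so the slack in $2^{(C_0+12)A}$ absorbs it---but the argument as you wrote it does not close. The paper instead shows directly from \eqref{Propnu+1.1} that $|\mathbf{Y}^a b_0(\ell,k)|\lesssim 2^{-a\max(L,K)}2^{aA}$ (so the gain is uniform across all Leibniz pieces), and separately treats $\nu_R$ in the regime $|K-L|\ge 5$ via the improved remainder bound \eqref{Propnu+3imp}, which you do not address.
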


Theorem \ref{theomu1'} is proved in Subsection \ref{sectheomu1'}. 
Let us explain how we are going to use these estimates:

\begin{itemize}

 \smallskip
 \item Part (i) is used to prove bilinear bounds for operators of the $T_1$-type
 when the support is restricted away from the singularity of $\nu_1(\ell,m)$; see \ref{lnotm}.
 
 \smallskip
 \item Part (ii) is used similarly to (i) when dealing with operators of $T_2$-type 
 away from the singularity of $\nu_1(\ell,k)$; see \ref{ssecd_kN21}. 
 
 \smallskip
 \item The bounds in part (iii) are used to estimate $\partial_k^a T_2$; see Subsection \ref{ssecdkN_2}.
 In particular, we are going to use \eqref{theomu1'2b}-\eqref{theomu1'conc2}
 to transform $k$ derivatives of $\nu_1(\ell,k)$
 into $T_{\bf Y}$ operators plus operators involving more manageable $\ell$ derivatives.
 Notice that the operator in \eqref{theomu1'2a} has no restriction on the support in terms of $(|\ell|-|k|)^{-1}$,
 so we are dealing with the full singular kernel.
 The main point of \eqref{theomu1'conc2} is that using the vectorfield ${\bf Y}$
 does not increase the singularity in terms of the size of $|\ell|-|k|$.
 
 \smallskip
 \item Note how the bounds \eqref{theomu1'conc1} and \eqref{theomu1'conc2} 
 have a certain gain in terms of factors of $2^{-K}$:
 the application of ${\bf Y}^a$ only gives a factor $2^{-(a-1)K}$ instead of a more singular $2^{-aK}$.
 This type of gain is consistent 
 with the estimate \eqref{Propnu+3imp}, where a $\nabla_q^\beta$-derivative costs $2^{(1-|\beta|)Q_-}$.
 These bounds will be helpful in the nonlinear estimates of Section \ref{secdkL2}.

 
\end{itemize}

\medskip
\subsection{Bilinear operators supported on thin annuli}\label{secBE0}



Let us first state a Lemma for bilinear operators with ``regular'' symbols.

\begin{lemma}[Bounds for regular symbols]
\label{lemBER}
For $L,M \in \Z$, consider the bilinear operator
\begin{align}
\label{lemBER1}
\begin{split}
B[b](g,h)(x) = \mathcal{F}^{-1}_{k\mapsto x} 
	\iint_{\R^3\times\R^3} g(\ell-k) h(m) \, b(k,\ell,m)
	\, \mathrm{d}\ell \mathrm{d}m,
\end{split}
\end{align}
under the assumptions
\setlength{\leftmargini}{2em}
\begin{itemize}

\medskip
\item For some $A\geq1$
\begin{align}\label{lemBERb1}
\supp(b) \subseteq \big\{ (k,\ell,m) \in \R^9\,:\, |k|+|\ell|+|m| \lesssim 2^A, 
  \, |k|\approx 2^K, \,|\ell| \approx 2^L, \,|m|\approx 2^M \big\};
\end{align}

\medskip 
\item The following estimate holds
\begin{align}\label{lemBERb2}
| \nabla_k^a \nabla^\alpha_\ell \nabla^\beta_m b(k,\ell,m)| 
	\lesssim 2^{-|a|K-|\alpha|L-|\beta|M} \cdot 2^{(|a|+|\alpha|+|\beta|)A},
	\qquad |a|, |\alpha|, |\beta|\leq 4.
\end{align}
\end{itemize}

\medskip
\noindent
Then, for $p,q,r\in[1,\infty]$, we have
\begin{align}
\label{lemBERconc}
{\|B[b](g, h) \|}_{L^r} \lesssim 2^{3\max(L,M)} \cdot 2^{8A} \cdot 
  {\| \what{g} \|}_{L^p} {\| \what{h} \|}_{L^q}, \qquad \frac{1}{p}+\frac{1}{q} = \frac{1}{r}.
\end{align}
\end{lemma}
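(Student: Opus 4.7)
The plan is to reduce $B[b](g,h)$ to an integral against a tensor-product kernel obtained from the full Fourier transform of $b$, and conclude via one H\"older step, one Young step, and a second H\"older step. This exploits the fact that \eqref{lemBERconc} has exponents satisfying $1/p + 1/q = 1/r$ exactly, so the inequalities chain without any slack.

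First, I would introduce $G := \mathcal{F}^{-1} g$ and $H := \mathcal{F}^{-1} h$, insert the Fourier inversion formulas $g(\xi) = \int G(y) e^{-iy\xi}\, dy$ and $h(\xi) = \int H(z) e^{-iz\xi}\, dz$ into the definition of $B[b]$, and apply Fubini, performing the $k$-integration last. This produces the representation
\begin{equation*}
B[b](g,h)(x) = c \iint G(y)\, H(z)\, \tilde\beta(x+y,\, y,\, z)\, dy\, dz, \qquad
\tilde\beta(w,y,z) := \iiint b(k,\ell,m)\, e^{ikw - iy\ell - izm}\, dk\, d\ell\, dm,
\end{equation*}
with a universal constant $c$. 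The essential structural feature is that the shift by $x+y$ arises in the $y$-variable alone (because of the form $g(\ell-k)$), while $z$ enters only pointwise; this is what will allow a convolution/Young step in $y$ but not in $z$.

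Next, I would estimate $\tilde\beta$ using the support property \eqref{lemBERb1} and the derivative bound \eqref{lemBERb2}: integrating by parts up to four times in each of $k$, $\ell$, $m$ separately and taking the minimum with the trivial bound yields the tensor-product estimate
\begin{equation*}
|\tilde\beta(x,y,z)| \lesssim 2^{3(K+L+M)}\, \psi_K(x)\, \psi_L(y)\, \psi_M(z), \qquad
\psi_J(w) := \min\!\bigl(1,\, (2^{A-J}/|w|)^4\bigr).
\end{equation*}
Each $\psi_J$ is effectively concentrated in $|w| \lesssim 2^{A-J}$, and a direct computation gives $\|\psi_J\|_{L^s} \lesssim 2^{3(A-J)/s}$ for every $s \in [1,\infty]$.

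Finally, I would perform three estimates in succession. H\"older in $z$ with exponents $(q, q')$ absorbs $H$, producing the factor $\|H\|_{L^q}\cdot 2^{3(A-M)/q'}$ multiplied by $\psi_K(x+y)\psi_L(y)$; then Young's inequality in $y$ with exponents $(1, r)$ — using the shift structure to view the $\psi_K(x+y)$-term as convolution with $\psi_K$ — gives $\|\psi_K\|_{L^1}\|G\psi_L\|_{L^r}$; finally H\"older in $y$ with exponents $(p, q)$, which is admissible precisely because $1/p + 1/q = 1/r$, yields $\|G\|_{L^p}\|\psi_L\|_{L^q}$. Assembling the powers of $2$: the $K$-contribution cancels via $2^{3K}\|\psi_K\|_{L^1}\lesssim 2^{3A}$, the $L$ and $M$ contributions combine to $2^{3L/q' + 3M/q}\cdot 2^{3A(1/q+1/q')}=2^{3L/q'+3M/q}\cdot 2^{3A}$, and the bound $3L/q' + 3M/q \leq 3\max(L,M)(1/q+1/q') = 3\max(L,M)$ together with $\|G\|_{L^p}\lesssim\|\hat g\|_{L^p}$ gives
\begin{equation*}
\|B[b](g,h)\|_{L^r} \lesssim 2^{3\max(L,M)}\cdot 2^{6A}\cdot \|\hat g\|_{L^p}\|\hat h\|_{L^q},
\end{equation*}
which is in fact stronger than the claimed \eqref{lemBERconc} with $2^{8A}$. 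I do not expect any serious obstacle: the only points needing care are the bookkeeping of the $(2\pi)$-constants under the conventions in use, and checking that four integration-by-parts in each variable suffice — which matches the hypothesis $|a|,|\alpha|,|\beta|\leq 4$ in \eqref{lemBERb2} with just enough room (we need $4r > 3$ in each variable for $\|\psi_J\|_{L^s}$ to be finite, which holds since $r \geq 1$).
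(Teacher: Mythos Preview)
Your proof is correct and follows essentially the same approach as the paper: write the operator via the physical-space kernel obtained as the full Fourier transform of $b$, bound that kernel by integration by parts using the symbol estimates \eqref{lemBERb2}, and conclude with H\"older and Young. The paper's version differs only cosmetically: it first changes variables $\ell\mapsto\ell+m$ (assuming $L\geq M$) and then integrates by parts only in $k$ and $m$, obtaining a two-factor kernel bound $(1+2^{2K}|x-y|^2)^{-2}(1+2^{2M}|y+z|^2)^{-2}$ rather than your three-factor tensor bound $\psi_K\psi_L\psi_M$. Your variant is slightly cleaner and indeed yields the sharper loss $2^{6A}$ in place of the paper's $2^{8A}$; the only small imprecision is that to get the full tensor-product kernel bound you must allow the mixed derivative $\nabla_k^4\nabla_\ell^4\nabla_m^4 b$ (not just each block ``separately''), which is permitted since the hypothesis reads $|a|,|\alpha|,|\beta|\leq 4$ independently.
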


The proof of Lemma \ref{lemBER}, which is more standard than that of Lemma \ref{lemBEan} below,
is given at the end of the section.

\medskip
\begin{lemma}[Bilinear operators restricted to small annuli 1]\label{lemBEan}
Let $j\geq 1$, consider the bilinear operator
\begin{align}
\label{lemBE11}
\begin{split}
B_{j}[b](g,h)(x) = \mathcal{F}^{-1}_{k\mapsto x} 
	\iint_{\R^3\times\R^3} g(\ell-k) h(m) \, b(k,\ell,m) \, \chi(2^j(|\ell| - |m|)) \, \mathrm{d}\ell \mathrm{d}m,
\end{split}
\end{align}
where $\chi$ is a Schwartz function.
Assume:

\setlength{\leftmargini}{2em}
\begin{itemize}

\medskip
\item For some $A\geq1$ and $L \gg -j$ we have
\begin{align}\label{lemBEanb1}
\supp(b) \subseteq \big\{ (k,\ell,m) \in \R^9\,:\, |k|+|\ell|+|m| \lesssim 2^A, \, |k|\approx 2^K, \,|\ell| \approx 2^L \big\};
\end{align}

\medskip 
\item The following estimate holds
\begin{align}\label{lemBEanb2}
| \nabla_k^a \nabla^\alpha_\ell \nabla^\beta_m b(k,\ell,m)| 
	\lesssim 2^{-|a|K-|\alpha|L - |\beta|M} \cdot 2^{(|a|+|\alpha|+|\beta|)A},
	\qquad |a|, |\alpha|, |\beta|\leq 4.
\end{align}
\end{itemize}

\medskip
\noindent
Then, for $p,q,r\in[1,\infty]$, we have
\begin{align}
\label{lemBE12}
{\|B_{j}[b](g, h) \|}_{L^r} \lesssim 2^{-j} \cdot 2^{2L} \cdot 2^{8A} \cdot 
  {\| \what{g} \|}_{L^p} {\| \what{h} \|}_{L^q}, \qquad \frac{1}{p}+\frac{1}{q} = \frac{1}{r}.
\end{align}
\end{lemma}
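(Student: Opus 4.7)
\smallskip
\noindent\textbf{Proof plan.} The goal is to improve on Lemma \ref{lemBER} by a factor $2^{-j} 2^{-L}$, which is the relative volume of a thin annulus of width $2^{-j}$ around the sphere $\{|m|=|\ell|\approx 2^L\}$ inside the full shell $\{|m|\approx 2^L\}$. Accordingly, I would first pass to polar coordinates $m=\rho\sigma$, $\rho\in(0,\infty)$, $\sigma\in S^2$, so that $dm = \rho^2\,d\rho\,d\sigma$, and substitute $u = 2^j(|\ell|-\rho)$. This substitution extracts the $2^{-j}$ factor, restricts $\rho=\rho_u(\ell):=|\ell|-u 2^{-j}$ to $\rho_u\approx 2^L$ (here the assumption $L\gg -j$ is essential to ensure $\rho_u>0$ and $\rho_u\approx|\ell|$ on the support of $\chi$), and gives the decomposition
\begin{align*}
B_j[b](g,h)(x) = 2^{-j}\int_{\R}\chi(u)\,\mathcal{T}_u(g,h)(x)\,du,
\end{align*}
where
\begin{align*}
\mathcal{T}_u(g,h)(x) = \mathcal{F}^{-1}_{k\to x}\!\int g(\ell-k)\,\rho_u(\ell)^2\!\!\int_{S^2}\! h(\rho_u(\ell)\sigma)\,b(k,\ell,\rho_u(\ell)\sigma)\,d\sigma\,d\ell.
\end{align*}

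Since $\int|\chi(u)|\,du\lesssim 1$, by Minkowski's inequality it suffices to prove the uniform bound $\|\mathcal{T}_u(g,h)\|_{L^r}\lesssim 2^{2L}\cdot 2^{8A}\|\what g\|_{L^p}\|\what h\|_{L^q}$ for $|u|$ of polynomial size (using the Schwartz decay of $\chi$ for $|u|$ large). The factor $\rho_u^2\approx 2^{2L}$ produces the $2^{2L}$ on the right-hand side. Pulling this prefactor out, what remains is to bound a bilinear operator whose symbol restricts $h$ to the sphere of radius $\rho_u(\ell)$.

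To handle this spherical restriction, I would Fourier-invert $h$: $h(\rho_u\sigma) = c\int \what h(y)\,e^{-i\rho_u\sigma\cdot y}\,dy$, so that
\begin{align*}
\int_{S^2} h(\rho_u\sigma)\,b(k,\ell,\rho_u\sigma)\,d\sigma
= c\int \what h(y)\,J_{u}(k,\ell;y)\,dy,
\qquad J_u(k,\ell;y) := \int_{S^2} e^{-i\rho_u(\ell)\sigma\cdot y}\,b(k,\ell,\rho_u(\ell)\sigma)\,d\sigma.
\end{align*}
The kernel $J_u$ is the Fourier transform of a smooth density on a sphere of radius $\rho_u\approx 2^L$; stationary phase along $S^2$, exploiting the regularity \eqref{lemBEanb2} of $b$, yields bounds $|\partial_k^a\partial_\ell^\alpha\partial_y^\gamma J_u(k,\ell;y)|\lesssim (1+2^L|y|)^{-N}\cdot 2^{-|a|K-|\alpha|L}\cdot 2^{c_0 A}$ for any fixed $N$ (and up to a finite number of derivatives), at the cost of additional factors $2^{O(A)}$ from the derivatives of $b$. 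Substituting this representation, $\mathcal{T}_u(g,h)$ becomes a trilinear Fourier integral in $(k,\ell,y)$ that has the form of a standard Coifman--Meyer operator with a $C^4$ symbol satisfying estimates compatible with those of Lemma \ref{lemBER} (after absorbing the rapidly decaying factor $(1+2^L|y|)^{-N}$ into the integration in $y$, which gives a bounded multiplier). An application of Lemma \ref{lemBER} in these variables then produces the bound $\lesssim 2^{8A}\|\what g\|_{L^p}\|\what h\|_{L^q}$ for $\|\mathcal{T}_u(g,h)/\rho_u^2\|_{L^r}$, completing the proof.

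The main obstacle is the careful tracking of the $2^A$ losses generated at each step: derivatives of $b$ on $S^2$ at scale $\rho_u\approx 2^L$ cost $2^{A-L}$ per derivative, and the stationary phase argument on $S^2$ requires up to four such derivatives to access the decay in $|y|$; one must also check that the resulting trilinear symbol in $(k,\ell,y)$ lies in a Coifman--Meyer class with constant of size $\lesssim 2^{8A}$, which motivates the choice of the exponent in \eqref{lemBE12}. Once these bookkeeping issues are handled, the polar reduction cleanly yields the $2^{-j} 2^{2L}$ improvement and the claimed H\"older-type bound.
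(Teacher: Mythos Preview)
Your polar-coordinate reduction extracting the factor $2^{-j}$ and the prefactor $\rho_u^2\approx 2^{2L}$ is correct and natural. The genuine gap is in the stationary-phase step: the kernel
\[
J_u(k,\ell;y)=\int_{S^2}e^{-i\rho_u(\ell)\,\sigma\cdot y}\,b(k,\ell,\rho_u(\ell)\sigma)\,d\sigma
\]
is (a rescaling of) the Fourier transform of a smooth density on a sphere in $\R^3$, and this object only decays like $(1+2^L|y|)^{-1}$, \emph{not} $(1+2^L|y|)^{-N}$ for arbitrary $N$. The leading contribution comes from the stationary points $\sigma=\pm y/|y|$ and is $\sim (\rho_u|y|)^{-1}e^{\pm i\rho_u|y|}\,b(k,\ell,\pm\rho_u y/|y|)$; higher regularity of $b$ improves only the remainder, never the principal term, whose decay rate is fixed by the curvature of $S^2$. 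With only $(2^L|y|)^{-1}$ decay, $\int|J_u(\cdot,\cdot;y)|\,dy$ diverges, the ``absorb into the $y$-integration'' step fails, and the resulting symbol is not in the class required by Lemma~\ref{lemBER}. In effect, after your reduction the residual estimate $\|\mathcal T_u(g,h)/\rho_u^2\|_{L^r}\lesssim 2^{8A}\|\what g\|_{L^p}\|\what h\|_{L^q}$ is a spherical restriction/extension bound for $\what h$, and this cannot be obtained from crude stationary-phase decay alone.

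The paper's argument is structurally different and avoids this obstruction. It keeps the $(\ell,m)$ integral and covers the annulus $\{\,||\ell|-|m||\lesssim 2^{-j},\ |\ell|\approx 2^L\,\}$ by $2^{2(j+L)}$ balls of radius $\approx 2^{-j}$ centered at rotated points $p_r=R_r m$. On each ball one changes variables $\ell\mapsto\ell+m$ and integrates by parts four times in $m$ to obtain the pointwise kernel bound $|A_{j,r}(z,y,k)|\lesssim 2^{3L}(1+2^{2L}|z+R_r y|^2)^{-2}\cdot 2^{-3j}\cdot 2^{8A}$. Summing over $r$ and using the rotational invariance of $\|\what h\|_{L^q}$ then yields exactly the annular volume factor $2^{-j}2^{2L}$. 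The angular decomposition and the rotation structure here play the role that, in your outline, would have to be filled by a restriction-type estimate.
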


The main conclusion of Lemma \ref{lemBEan} in the final bound  \eqref{lemBE12} is the $2^{-j+2L}$ factor
which gives a gain proportional to the volume of the annulus in which the support of the operator lies,
up to some small losses 
due to the presence of the multiplier $b$.
The proof of Lemma \ref{lemBEan} is given in Subsection \ref{proofBEpre}.

\medskip
\subsection{Proof of Theorem \ref{theomu1}}\label{prtheomu1}
Let us begin by estimating the operator $T_1$ in \eqref{theomu11}. 

\medskip
\noindent
{\it Frequency localized estimate}.
We first claim that we can reduce the proof of the main conclusion \eqref{theomu1conc} to the following slightly 
stronger localized version:
\begin{align}\label{theomu1loc1}
\begin{split}
&{\big\| P_K T_1[b]\big(g, 
  h\big) \big\|}_{L^r}
  \lesssim {\big\| \what{g} \big\|}_{L^p} {\big\| \what{h} \big\|}_{L^q} \cdot 2^{\max(L,M)}
  \cdot 2^{(C_0-1)A} + 2^{-D'},
\\
& \frac{1}{p}+\frac{1}{q} = \frac{1}{r}, \quad r\geq1, \qquad D':=D + \delta A 
  , \quad \delta \ll 1,
\end{split}
\end{align}
where $A,L,M,D$ are as in the statement of the theorem.
Assume \eqref{theomu1loc1}, let $(p,q,r)$ be such that $1/p+1/q>1/r$ and, for $\delta\ll1$ as above,
let $1<r-\delta<r'<r$ 
be such that 
$1/p+1/q = 1/r'$. 
%
%
Using Bernstein's inequality, and \eqref{theomu1loc1} with exponents $r',p$ and $q$, we have
\begin{align*}
\begin{split}
{\big\| T_1[b]\big(g,h\big) \big\|}_{L^r} 
& \lesssim \sum_{K \leq A} {\big\| P_K T_1[b]\big(g,h\big) \big\|}_{L^r}
\lesssim \sum_{K\leq A} 2^{3(\frac{1}{r'}-\frac{1}{r})K} {\big\| P_K T_1[b]\big(g,h\big) \big\|}_{L^{r'}}
\\
& \lesssim 
	\sum_{K \leq A} 2^{3(\frac{1}{r'}-\frac{1}{r})K}
	\Big(   {\big\| \what{g} \big\|}_{L^p} \cdot
	{\big\| \what{h} \big\|}_{L^{q}}  \cdot  2^{\max(L,M)} \cdot 2^{(C_0-1)A} + 2^{-D'} \Big)
\\
& \lesssim 2^{\delta A} 
  {\| \what{g} \|}_{L^p} {\big\| \what{h} \big\|}_{L^q}  \cdot  2^{\max(L,M)} \cdot 2^{(C_0-1)A} + 2^{-D}.
\end{split}
\end{align*}
This implies the main estimate \eqref{theomu1conc}.
In the rest of the proof we then concentrate on the estimate \eqref{theomu1loc1}.

\medskip
\noindent
{\it Decomposition of $T_1$}.
Using the decomposition \eqref{Propnu+1} and defining
\begin{align}
\label{theomu13.0}
\nu_\delta(\ell,m) = \frac{b_0(\ell,m)}{|\ell|}\delta_0(|\ell|-|m|), 
  \qquad \nu_\pv(\ell,m) = \frac{b_0(\ell,m)}{|\ell|} \pv \frac{1}{|\ell|-|m|},
\end{align}
we reduce to proving the desired bound \eqref{theomu1loc1} for the operators
\begin{align}
\label{theomu13}
\begin{split}
T_{\nu}[b](g,h)(x) 
	:= \mathcal{F}^{-1}_{k\mapsto x} 
	\iint_{\R^3\times\R^3} g(k-\ell) h(m) \, b(k,\ell,m) \, \nu(\ell,m) \, \mathrm{d}\ell \mathrm{d}m,
	\\ \nu \in \{ \nu_\delta, \nu_\pv, \nu_L, \nu_R \}.
\end{split}
\end{align}
Recall that, in view of the support restrictions on $b$, we have $|\ell|\approx 2^L$ and $|m|\approx 2^M$,
and that $\nu_\delta$, $\nu_{\pv}$ and\footnote{Please note that this index $L$ has no relation
with the size of $\ell$.} $\nu_L$ are non-zero only when $|L-M|<5$.

\medskip
\noindent
{\it Estimate of $T_{\nu_\delta}$}.
By definition 
\begin{align}
\label{theomu15}
\begin{split}
\what{\mathcal{F}}\big(T_{\nu_\delta}[b](g,h)\big)(k) & = \lim_{\epsilon \rightarrow 0} \frac{1}{\epsilon} T_{\epsilon}(g,h)(k),
\\
T_{\epsilon}(g,h)(k) & :=\iint_{\R^3\times\R^3} g(k-\ell) h(m) 
  \, b(k,\ell,m) \frac{b_0(\ell,m)}{|\ell|}\varphi\Big(\frac{|\ell|-|m|}{\epsilon}\Big) \, \mathrm{d}\ell \mathrm{d}m,
\end{split}
\end{align}
where $\varphi$ is a smooth, even, positive and radially decreasing cutoff which equals $1$ close to the origin,
and whose integral is $1$.
In view of the properties of $b$, see \eqref{theomu11b}-\eqref{theomu12b} and of $b_0$, see \eqref{Propnu+1.1},
we may let $b_0\equiv 1$.
Moreover, we may consider $\epsilon \ll 2^L$ and write
(recall the notation for $\varphi_\sim$ after \eqref{LP2})
\begin{align}
T_{\epsilon}(g,h) & = 
\iint_{\R^3\times\R^3} g(k-\ell) h(m) 
 	\frac{b(k,\ell,m)}{|\ell|} \varphi_{\sim L}(\ell) \varphi_{\sim L}(m)
  	\varphi\Big(\frac{|\ell|-|m|}{\epsilon}\Big) \, 
  	\, \mathrm{d}\ell \mathrm{d}m.
\end{align}
Using the notation of Lemma \ref{lemBEan}, and changing slightly the definition of $\varphi_{\sim L}(\ell)$,
we have, for $K\in\Z$,
\begin{align*}
& \what{\mathcal{F}}^{-1}
	\big(\varphi_K(k) \, T_{\epsilon}(g,h) \Big) = 2^{-L} \, T_{j}[c](g,h), \qquad 2^j = \epsilon^{-1},
\end{align*}
with
\begin{align*}
c(k,\ell,m) := \varphi_K(k) \varphi_{\sim L}(\ell)\varphi_{\sim L}(m) b(k,m,\ell) 
\end{align*}
The assumptions \eqref{lemBEanb1}-\eqref{lemBEanb2} of Lemma \ref{lemBEan} hold true for such $c$, 
and applying the conclusion \eqref{lemBE12} we obtain
\begin{align}
\label{theomu19.1}
\frac{1}{\epsilon} {\| \what{\mathcal{F}}^{-1}
	\big(\varphi_K(k) T_{\epsilon}(g,h) \big) \|}_{L^r} 
	\lesssim 2^L \cdot {\| \what{g} \|}_{L^p} {\| \what{h} \|}_{L^q} \cdot 2^{12A}.
\end{align}
This gives the desired bound \eqref{theomu1loc1} for 
${\| P_K T_{\nu_\delta}[b](g,\varphi_M h) \|}_{L^r}$. 

\def\low{\mathrm{low}}
\def\med{\mathrm{med}}
\def\high{\mathrm{high}}

\medskip
\noindent
{\it Estimate of $T_{\pv}$}.
Recall the definition \eqref{theomu13.0}-\eqref{theomu13}. As above we may disregard the symbol $b_0$.
We define (omitting the dependence on fixed $K,L$ and $M$)
\begin{align}
\label{theomu20.0}
\begin{split}
T_{\epsilon,B}(g,h) := \what{\mathcal{F}}^{-1}_{k\mapsto x} 
  \iint_{| |\ell|-|m| | \geq \epsilon} g(k-\ell) h(m) \, \underline{b}(k,\ell,m) \, 
  \frac{1}{|\ell|} \frac{\varphi_{B}(|\ell|-|m|)}{|\ell|-|m|} \, \mathrm{d}\ell \mathrm{d}m,
\\
\underline{b}(k,\ell,m):= \varphi_K(k)\varphi_{M}(m) \varphi_{\sim L}(\ell)b(k,\ell,m).
\end{split}
\end{align}
Note that these are trivial if $B> L+10$.
We then decompose according to the size of the singularity:
\begin{align}
\label{theomu20}
& P_K T_{\pv}(g,\varphi_M h) = \lim_{\epsilon \rightarrow 0} \Big[ T_{\epsilon,\low}(g,h)
	+ T_{\epsilon,\med}(g,h) + T_{\epsilon,\high}(g,h) \Big], 
\\
\label{theomu20.1}
& T_{\epsilon,\low} := \sum_{B\leq B_0} T_{\epsilon,B}, \qquad B_0 := \min(-50(D'+10A), L-10,0)
\\
\label{theomu20.2}
& T_{\epsilon,\med} := \sum_{B_0< B <L-10} T_{\epsilon,B},
\\
\label{theomu20.3}
& T_{\epsilon,\high} := \sum_{B \geq L-10} T_{\epsilon,B}.
\end{align}

\medskip
{\it Estimate of \eqref{theomu20.1}.} 
On the support of \eqref{theomu20.1} we have $||\ell|-|m|| \lesssim 2^{B_0} \ll \min(2^{-D'},2^L)$
and we estimate it using the principal value and the regularity of the inputs. 
In particular, let us assume that the assumption \eqref{theomu1asgh} is fulfilled by
\begin{align}\label{theomu21as}
\mathcal{D}(g,h) = ({\| g \|}_{L^2} + {\| \partial_k g\|}_{L^2}) {\|h\|}_{L^2} \leq 2^{D}.
\end{align}

We split the function $g$ which appears as an argument 
in each term $T_{\epsilon,B}(g,h)$ of the sum \eqref{theomu20.1} as
\begin{align}\label{theomu20g}
g = g_1 + g_2, \qquad g_1 := P_{> X} g, \quad g_2 := P_{\leq X} g, \qquad X:= 2D' + 20A - (1/100)B.
\end{align}

The pieces corresponding to the high frequency part of $g$ are estimated using Lemma \ref{lemBEan}:
\begin{align*}
\begin{split}
\sum_{B \leq B_0} {\| T_{\epsilon,B}(g_1,h) \|}_{L^1}
  & \lesssim \sum_{B\leq B_0} 2^L \cdot {\| \what{g_1} \|}_{L^2} {\| \what{h} \|}_{L^2} \cdot 2^{12A}
\\
 & \lesssim \sum_{B\leq B_0} 2^L \cdot 2^{-X} {\| \nabla g_1 \|}_{L^2} {\| \what{h} \|}_{L^2} \cdot 2^{12A}
 \\
 & \lesssim \sum_{B\leq B_0} 2^{-2D' + (1/100)B} \cdot 2^D
 \\
 & \lesssim 2^{-D'}.
\end{split}
\end{align*}
The estimate for $L^r$, $r>1$, instead of $L^1$, is obtained by first applying Bernstein and then estimating
as above.

The contribution with $g_2$, see \eqref{theomu20g}, is handled using the principal value. 
We begin by writing
\begin{align}
\label{theomu21.0}
T_{\epsilon,B}(g_2,h) = T_{B}^{(1)}(g_2,h) + T_{B}^{(2)}(g_2,h) + T_{B}^{(3)}(g_2,h),
\end{align}
where
\begin{align}
\label{theomu21}
\begin{split}
\what{T^{(1)}_B}(g,h)(k) & := \iint_{| |\ell|-|m| | \geq \epsilon} g(k-\ell) 
  h(m) \frac{\underline{b}(k,\ell,m) }{|\ell|^2} \varphi_{B}(|\ell|-|m|) \, \mathrm{d}\ell \mathrm{d}m,
\\
\what{T^{(2)}_B}(g,h)(k) & := \iint_{| |\ell|-|m| | \geq \epsilon} g(k-\ell)
  \big[ \underline{b}(k,\ell,m)- \underline{b}\big(k,|m|\ell/|\ell|,m) \big]
  \\ & \hskip50pt \times h(m)\frac{|m|}{|\ell|^2} \frac{1}{|\ell|-|m|} \varphi_{B}(|\ell|-|m|) 
  \, \mathrm{d}\ell \mathrm{d}m,
\\
\what{T^{(3)}_B}(g,h)(k) & := \iint_{| |\ell|-|m| | \geq \epsilon} 
  \big[g(k-\ell) - g\big(k - |m|\ell/|\ell|\big) \big] \, \underline{b}\big(k,|m|\ell/|\ell|,m) 
  \\ & \hskip50pt \times h(m)\frac{|m|}{|\ell|^2} \frac{1}{|\ell|-|m|} \varphi_{B}(|\ell|-|m|) 
  \, \mathrm{d}\ell \mathrm{d}m,
\end{split}
\end{align}
having used that
\begin{align}
\label{theomu22}
\begin{split}
& \iint_{| |\ell|-|m| | \geq \epsilon} g_2(k-\ell|m|/|\ell|) h(m) \frac{|m|}{|\ell|^2} 
  \frac{\underline{b}(k,|m|\ell/|\ell|,m)}{|\ell|-|m|}
  \varphi_{B}(|\ell|-|m|) \, \mathrm{d}\ell \mathrm{d}m
\\
& = \int_{\R^3_m}  h(m) |m| \int_{\mathbb{S}^2_\theta} g_2(k-\theta|m|) \underline{b}\big(k,|m|\theta,m)
  \\ & \times \Big[ \int_{| \rho-|m| | \geq \epsilon} \frac{1}{\rho-|m|}
  \varphi_{B}(\rho-|m|) \, \mathrm{d}\rho \Big] \, \mathrm{d} \theta \mathrm{d}m
  = 0.
\end{split}
\end{align}

For the first term in \eqref{theomu21} we use Lemma \ref{lemBEan} to estimate
\begin{align*}
{\| T^{(1)}_B(g_2,h) \|}_{L^r} 
& \lesssim {\| \what{g} \|}_{L^p} {\| \what{h} \|}_{L^q} \cdot 2^{12A} \cdot 2^B
\end{align*}
Summing this bound over $B\leq B_0\leq L$ suffices.

The  term $ T^{(2)}_B(g_2,h)$ is similar by noticing that
\begin{align*} 
\Big|  \frac{|m|}{|\ell|-|m|} \big[b(k,\ell,m)- b\big(k,|m|\ell/|\ell|,m) \big] \Big| \lesssim 
  \min\Big( \frac{|m|}{|\ell|}, \frac{|m|}{||\ell|-|m||} \Big) \lesssim 1,
\end{align*}
with estimates for the derivatives matching the assumption \eqref{lemBEanb2}
of Lemma \ref{lemBEan}, see \eqref{theomu1asb2}, so that the same argument above applies.

To estimate the contribution from the last term in \eqref{theomu21} 
we take advantage of the restriction of $g_2$ to ``not too large'' frequencies. 
Again, it suffices to prove a (slightly stronger) $L^1$ bound and the $L^r$ bounds follow similarly.
We first rewrite
\begin{align}\label{theomu23.0}
\begin{split}
\what{T^{(3)}_B}(g_2,h)(k) & = \sum_{N\in\Z} \iint_{| |\ell|-|m| | \geq \epsilon} \varphi_N(k-\ell) h(m) 
  \, c(k,\ell,m) \, \frac{1}{|\ell|} \frac{\varphi_{B}(|\ell|-|m|)}{|\ell|-|m|} 
  \, \mathrm{d}\ell \mathrm{d}m,
\end{split}
\end{align}
where the symbol, which now involves $g_2$, is given by
\begin{align}\label{theomu23.1}
\begin{split}
c(k,\ell,m) := \big[g_2(k-\ell) - g_2\big(k - |m|\ell/|\ell|\big) \big] \, \underline{b}\big(k,|m|\ell/|\ell|,m\big).
\end{split}
\end{align}
The key observation is that $c$ satisfies good symbol bounds, up to the usual small losses,
plus losses in terms of the parameter $X$ defined in \eqref{theomu20g}.
More precisely, using that, with $\theta := \ell/|\ell|$,
\begin{align*}
g_2(k-\ell) - g_2\big(k - |m|\theta\big) = 
  \int_0^1 \nabla g_2(k-t\ell - (1-t)|m|\theta) \,\mathrm{d}t \cdot \theta \, (|\ell| -|m|),
\end{align*}
we can rewrite
\begin{align}\label{theomu23.5}
\begin{split}
\what{T^{(3)}_B}(g_2,h)(k) & = \sum_{N\in\Z} \iint_{| |\ell|-|m| | \geq \epsilon} \varphi_N(k-\ell) h(m) 
  \, d(k,\ell,m) \, \frac{1}{|\ell|} \varphi_{B}(|\ell|-|m|) 
  \, \mathrm{d}\ell \mathrm{d}m,
\\
d(k,\ell,m) & := \int_0^1 \nabla g_2(k-t\ell + (1-t)|m|\theta) \,\mathrm{d}t \cdot \ell/|\ell| \,
 \, \underline{b}\big(k,|m|\ell/|\ell|,m\big)  
\end{split}
\end{align}
In view of the assumptions on $b$, see \eqref{theomu1asb1}-\eqref{theomu1asb2} and $g_2 = P_{\leq X}g$,
we can see that
\begin{align}\label{theomu23.6}
\begin{split}
| \nabla_k^a \nabla^\alpha_\ell \nabla^\beta_m d(k,\ell,m)| 
  \lesssim 2^{-K|a|} 2^{-|\alpha|L}2^{-|\beta|M} \cdot 2^{(|a|+|\alpha|+|\beta|)A} 
  \cdot 2^{(|a|+|\a|+|\b|+2)X} {\|\nabla g\|}_{L^2}
\end{split}
\end{align}
for $|a|,|\a|,|\b|\leq 4$.
Using \eqref{theomu23.6} to apply Lemma \ref{lemBEan} to \eqref{theomu23.5}, we obtain
\begin{align*} 
{\| T^{(3)}_B(g_2,h) \|}_{L^1} & \lesssim \sum_{N\in\Z} 2^{15X} {\|\nabla g\|}_{L^2} 
\cdot 2^B 2^{12A} 2^L \cdot {\| \varphi_N \|}_{L^2} {\| h \|}_{L^2}
\\
& \lesssim 2^{15X} \cdot 2^B \cdot 2^D \cdot 2^{14A} 
\end{align*}
having used the assumption \eqref{theomu21as}, $\| \varphi_N \|_{L^2}\approx2^{3N/2}$ and $N\leq A+5$. 
Recalling the definition of $X$ from \eqref{theomu20g},
and summing over $B\leq B_0$, where $B_0$ is given in \eqref{theomu20.1}, we get
\begin{align*} 
\sum_{B\leq B_0} {\| T^{(3)}_B(g_2,h) \|}_{L^1} & 
	\lesssim 2^D \cdot 2^{14A} \sum_{B\leq B_0} 2^{15X} \cdot 2^B
	\\ & \lesssim 2^{D + 30D' + 320A} \cdot 2^{5B_0/6}
	\\ & \lesssim 2^{-D'}.
\end{align*}
This bound completes the estimate for the term \eqref{theomu20.1} provided \eqref{theomu21as} holds.
If the assumption \eqref{theomu1asgh} is fulfilled
with ${\| g\|}_{L^2} ({\|h\|}_{L^2} + {\|\partial_k h\|}_{L^2}) \leq 2^{-D}$ instead,
we can use a similar argument exchanging the roles of $g$ and $h$ 
and using the following replacement of \eqref{theomu22}:
\begin{align*}
\begin{split}
& \iint_{| |\ell|-|m| | \geq \epsilon} g(k-\ell) h(m|\ell|/|m|) \frac{|\ell|}{|m|^2} 
  \frac{\underline{b}\big(k,\ell,m|\ell|/|m|)}{|\ell|-|m|}
  \varphi_{\leq B_0}(|\ell|-|m|) \, \mathrm{d}\ell \mathrm{d}m
\\
& = \int_{\R^3_\ell}  g(k-\ell) |\ell| \int_{\mathbb{S}^2_\theta} h(\theta|\ell|) \underline{b}\big(k,\ell,|\ell|\theta)
  \\ & \times \Big[ \int_{||\ell| - \rho| \geq \epsilon} \frac{1}{|\ell|-\rho}
  \varphi_{\leq B_0}(|\ell|-\rho) \, \mathrm{d}\rho \Big] \, \mathrm{d} \theta \mathrm{d}\ell
  = 0.
\end{split}
\end{align*}

\medskip
{\it Estimate of \eqref{theomu20.2}.}
For this term we can use directly Lemma \ref{lemBEan}.
Since by assumption $2^{B_0} \lesssim ||\ell|-|m|| \approx 2^B \ll |\ell| \approx 2^L \approx |m| \lesssim 2^A$
on the support of the integral, there are at most $\sim (A + D)$ possible indexes $B$, see \eqref{theomu20.1}.
Using \eqref{lemBE12} we can estimate
\begin{align*}
\sum_{B_0 \leq B \leq L-10} 
{\| T_{\epsilon,B}(g,h)\|}_{L^r} 
	& \lesssim (|A|+|D|) \cdot 2^L \cdot  2^{12A} \cdot {\| \what{g} \|}_{L^p} {\| \what{h}\|}_{L^q}
	\\
	& \lesssim 2^L \cdot  2^{13A} \cdot {\| \what{g} \|}_{L^p} {\| \what{h}\|}_{L^q}
\end{align*}
having used also $D \lesssim 2^{A/10}$. 


\medskip
{\it Estimate of \eqref{theomu20.3}.}
To estimate this term we notice that on the support of the integral we have
$2^B \approx ||\ell|-|m|| \gtrsim |\ell| \approx 2^L$ so that the kernel is not singular. 
In particular, we have
\begin{align}\label{theomu25}
& T_{\epsilon,B} (g,h) = 2^{-L} 2^{-\max(L,M)} \iint_{| |\ell|-|m| | \geq \epsilon} g(k-\ell) h(m) \,c(k,\ell,m)\, \mathrm{d}\ell \mathrm{d}m,
\end{align}
with
\begin{align}\label{theomu25.1}
c(k,\ell,m) := b(k,\ell,m) \,\varphi_{K}(k) \varphi_{M}(m) \varphi_{\sim L}(\ell) 
  \frac{2^{\max(L,M)}}{|\ell|-|m|} \varphi_{\sim B}(|\ell|-|m|),
  \qquad B \geq L-10.
\end{align}
We verify that for all $|k|\approx 2^K$, and $|M-L|<5$ 
\begin{align}\label{theomu25.2}
|\nabla_k^a \nabla_\ell^\alpha \nabla_m^\beta c(k,\ell,m)|
  \lesssim 2^{-K|a|} 2^{-L|\alpha|} 2^{-M|\beta|} 2^{(|a|+|\alpha|+|\beta|)A}, \qquad |a|,|\alpha|,|\beta|\leq 4.
\end{align}
Using Lemma \ref{lemBER} we can then estimate, for each fixed $B \in [L-10,L+10]$,
\begin{align}\label{theomu26}
{\| T_{\epsilon,B} (g,h) \|}_{L^r} \lesssim 2^{-2\max(L,M)}
	\cdot {\| \what{g} \|}_{L^p} {\| \what{\varphi_{\sim M} h} \|}_{L^q} 
	\cdot 2^{3\max(L,M)} \cdot 2^{12A}.
\end{align}
which gives the desired \eqref{theomu1loc1}.


\medskip
\noindent
{\it Estimate of $T_{\nu_L}$}.
We now want to estimate by the right hand-side of \eqref{theomu1conc} the term $T_{\nu_L}$,
see \eqref{theomu13}. Since $\nu_L$ satisfies \eqref{Propnu+2}--\eqref{Propnu+2.1},
we can write $T_{\nu_L}[b](g,h)$ as a finite sum of terms of the form
\begin{align}\label{theomu30}
\begin{split}
& \sum_{J\in\mathcal{J}} T_{J}(g,h)[b](k),
\\
& \what{\mathcal{F}}(T_{J}(g,h)[b])(k) := \iint_{\R^3\times\R^3} g(k-\ell) h(m) \, b(k,\ell,m) 
  \frac{1}{|\ell|} b_{J}(\ell,m) \cdot 2^J K\big(2^J(|\ell|-|m|)\big)  \, \mathrm{d}\ell \mathrm{d}m
\end{split}
\end{align}
where $K$ is a Schwartz function and, for all $|\ell| \approx 2^{L}$ and $|m| \approx 2^M$, 
the symbols satisfy 
\begin{align}\label{theomu31}
\begin{split}
& \big| \nabla_\ell^\alpha \nabla_m^\beta b_{J}(\ell,m) \big| \lesssim 
  2^{-|\a|L} 2^{-|\b|M} \cdot 2^{(|\a|+|\b|)A} \cdot C_J, \qquad |\alpha|,|\beta| \leq 5,
\\
& \sum_{J\in\mathcal{J}}  C_J \leq 1.
\end{split}
\end{align}
For each term $T_J$ in \eqref{theomu30} we can apply the same arguments used to estimate
$T_\pv$, see \eqref{theomu13.0}-\eqref{theomu13}, based on Lemma \ref{lemBEan}
and, in view of the bounds on the symbols in \eqref{theomu31}, we can obtain
\begin{align*}
{\| T_{J}(g,h)[b] \|}_{L^r} \lesssim 2^L \cdot {\|\what{g}\|}_{L^p} {\|\what{h}\|}_{L^q} \cdot 2^{12A} \cdot C_J.
\end{align*}
Summing over $J$ using \eqref{theomu31} we obtain \eqref{theomu1conc}.

\medskip
\noindent
{\it Estimate of $T_{\nu_R}$}.
Recall the notation \eqref{theomu13.0}-\eqref{theomu13} 
and the definition and properties of $\nu_R$ in Proposition \ref{Propnu+}.
In both cases $|L-M|<5$ and $|L-M|\geq 5$, we use \eqref{Propnu+3} and \eqref{Propnu+3imp} respectively,
to write
\begin{align}\label{theomu40}
\begin{split}
& \what{\mathcal{F}}(T_{\nu_R}[b](g,h))(k) 
  = 2^{-2\max(L,M)} \iint_{\R^3\times\R^3} g(k-\ell) h(m) \, b(k,\ell,m) d(\ell,m) \, \mathrm{d}\ell \mathrm{d}m
\end{split}
\end{align}
with
\begin{align*}
\big| \nabla_\ell^\alpha \nabla_b^\beta d(\ell,m)\big| 
  \lesssim 2^{-|\a|L} 2^{-|\b|M} \cdot 2^{(|\a|+|\b|+2)5A} 2^A, \qquad |\alpha|,|\beta|\leq 4,
\end{align*}
for $|\ell| \approx 2^L$ and $|m|\approx 2^M$.
Then, the bilinear term in \eqref{theomu40} is similar to the one in \eqref{theomu25}-\eqref{theomu25.2},
up to 
the different power of $2^A$.
From the same argument 
above, using Lemma \ref{lemBER}, it then follows that
\begin{align}
{\| T_{\nu_R}[b](g,h) \|}_{L^r} \lesssim 
	\cdot {\| \what{g} \|}_{L^p} {\| \what{h} \|}_{L^q} \cdot 2^{\max(L,M)} \cdot 2^{60A} 
\end{align}
which gives \eqref{theomu1conc}.


\medskip
\noindent
{\it Estimate of $T_2$.}
To prove that the operator $T_2$ defined in \eqref{theomu12b} also satisfies the bound \eqref{theomu1conc},
one can use a similar proof to the one above done for $T_1$.
The main ingredient needed is a version of Lemma \ref{lemBEan}
and the main bound \eqref{lemBE12} adapted to the kernel in the operator $T_2$.
This can be seen by duality.
More precisely, in analogy with the notation for the operator $B_{j}[b](g,h)(x)$ in \eqref{lemBE11} 
we can define
\begin{align}
\label{lemBE11'}
\begin{split}
B_{j,2}[b](g,h)(x) = \mathcal{F}^{-1}_{k\mapsto x} 
	\iint_{\R^3\times\R^3} g(-m-\ell) h(m) \, b(k,\ell,m) \, \chi(2^j(|\ell| - |k|)) \, \mathrm{d}\ell \mathrm{d}m.
\end{split}
\end{align}
Using Plancharel (omitting the irrelevant factors of $\pi$), we have the following identities
\begin{align}
\begin{split}
& \langle B_{j,2}[b](g,h), f\rangle_{L^2(\R^d)} 
  = \int_{\R^3} \iint_{\R^3\times\R^3} g(-m-\ell) h(m) \,b(k,\ell,m)\, 
  \chi(2^j(|\ell| - |k|)) \, \mathrm{d}\ell \mathrm{d}m \, \bar{\what{f}(k)} \, \mathrm{d}k
  \\
  & = \int_{\R^3} \Big[ \iint_{\R^3\times\R^3} g(k-\ell) \bar{\what{f}(-m)} \,b(m,\ell,-k)\, 
  \chi(2^j(|\ell| - |m|)) \, \mathrm{d}\ell \mathrm{d}m \Big] \,  h(-k)  \, \mathrm{d}k
  \\
  & = \langle \bar{ \what{\mathcal{F}} B_j\big[b'\big]\big(\bar{g},
  \what{\bar{f}} \big) }, \overline{h(-\cdot)} \rangle_{L^2} 
  = \langle \bar{ B_j\big[b'\big]\big(\bar{g},\what{\bar{f}} \, \big) }, \overline{\what{h}} \rangle_{L^2} 
\end{split}
\end{align}
where, according to the notation \eqref{lemBE11}, $b'(k,\ell,m) := \bar{b(m ,\ell, -k)}$.
Then, assumptions analogous to \eqref{lemBEanb1}-\eqref{lemBEanb2} hold for $b'$.
From \eqref{lemBE12} it follows that for any $1/r=1/p+1/q$, $g\in L^p, h\in L^q$ and $f\in L^{r'}$
\begin{align*}
\big| \langle B_{j,2}[b](g,h), f\rangle_{L^2(\R^d)} \big|
  & \lesssim {\| \check{h} \|}_{L^q} {\| B_j\big[b'\big]\big(\bar{g},\what{\bar{f}}\big) \|}_{L^{q'}}
  \\ & \lesssim {\| \check{h} \|}_{L^q} \cdot 2^{-j} 2^{2L} 2^{8A} {\| \what{\bar{g}}\|}_{L^p} {\|f\|}_{L^{r'}}.
\end{align*}
This give the desired bound by the right-hand side of \eqref{lemBE12} for the operator $B_{j,2}$.

\medskip
{\it Proof of \eqref{theomu1Xconc}.}
To conclude we show how to estimate the operators \eqref{theomu1TX}, where the ``good vectorfield'' \eqref{theomu1X}
acts on the measure $\nu_1$.
By the statement of Proposition \ref{Propnu+}, and since ${\bf X}_{\ell,m} f(|\ell|-|m|) = 0$ for any $f$, 
an application of ${\bf X}^a$ to $\nu_1$ gives:
\begin{align}\label{Propnu+1X}
\begin{split}
{\bf X}^a \nu_1(\ell,m) & = {\bf X}^a \nu_0(\ell,m) + {\bf X}^a \nu_L(\ell,m) + {\bf X}^a \nu_R(\ell,m)
  \\ & = \nu_{0,a}(\ell,m) + \nu_{L,a}(\ell,m) + \nu_{R,a}(\ell,m),
\end{split}
\end{align}
where:

\medskip
\begin{itemize}
\item[(1)] The leading order has the form
\begin{align*}
\nu_{0,a}(\ell,m) = \frac{b_{0,a}(\ell,m)}{|\ell|} 
  \Big[ \delta(|\ell|-|m|) + \pv \frac{1}{|\ell|-|m|} \Big]
\end{align*}
with
\begin{align}\label{Propnu+1.1X}
\begin{split}
  \big| \nabla_\ell^\alpha \nabla_m^\beta \big( \varphi_L(\ell) \varphi_M(m) b_{0,a}(\ell,m) \big) \big| 
  \lesssim 2^{-a\min(L,M)} \cdot 2^{-|\a|L} 2^{-|\b|M} \\ \cdot 2^{(|\a|+|\b| + a)A} \cdot \mathbf{1}_{\{|L-M|< 5\}}
\end{split}
\end{align}
for all $L,M \leq A$, $|\a|+|\b| + a \leq N_1$.

\medskip
\item[(2)] $\nu_{L,a}(\ell,m)$ can be written as
\begin{align*}
\nu_{L,a}(\ell,m) = \frac{1}{|\ell|} 
  \sum_{i=1}^{N_2} \sum_{J \in \mathcal{J}} b_{i,J}^a(\ell,m) \cdot 2^J K_i\big(2^J(|\ell|-|m|)\big) 
\end{align*}
with $K_i \in \mathcal{S}$ and
\begin{align}\label{Propnu+2.1X}
\begin{split}
\sum_{J\in\mathcal{J}} 
  \big| \nabla_\ell^\alpha \nabla_m^\beta \big( \varphi_L(\ell) \varphi_M(m) b_{i,J}^a(\ell,m) \big) \big| \lesssim 
  2^{-a\min(L,M)} 2^{-|\a|L} 2^{-|\b|M} \\
  \cdot 2^{(|\a|+|\b| + a)A} \cdot \mathbf{1}_{\{|L-M|< 5\}},
\end{split}
\end{align}
for all $L,M \leq A$, $|\a|+|\b| + a \leq N_1-N_2$.

\medskip
\item[(3)] The remainder term satisfies
\begin{align}\label{Propnu+3X}
\begin{split}
\big| \nabla_\ell^\alpha \nabla_m^\beta \big(\varphi_L(\ell) \varphi_M(m) \nu_{R,a}(\ell,m)\big) \big| 
  \lesssim 2^{-a\min(L,M)} \cdot 2^{-2\max(L,M)} \cdot 2^{-|\a|L} 2^{-|\b|M} \\ \cdot 2^{(|\a|+|\b|+2+a)5A}2^{2A}
\end{split}
\end{align}
for all $L,M \leq A$ and $|\a|+|\b|+a \leq N_2/2 -3$.
Note how we do not use here the improvement of \eqref{Propnu+3} given by \eqref{Propnu+3imp} here.

\end{itemize}

Therefore the components of ${\bf X}^a \nu_1$ in \eqref{Propnu+1X}
have the same structure of the components of $\nu_1$,
where the bounds \eqref{Propnu+1.1X}-\eqref{Propnu+3X} on the coefficients
are like the bounds in the case of $\nu_1$ times a factor of $2^{-a\min(L,M)}$.
Then, the same proof used to obtain \eqref{theomu1conc} can be applied and \eqref{theomu1Xconc} follows.
$ \hfill \Box$

\medskip
\subsection{Proof of Theorem \ref{theomu1'}}\label{sectheomu1'}

\medskip
{\it Proof of (i)}.
To prove \eqref{theomu1'conc1} it suffices to examine the structure and bounds satisfied by
$\nabla^{a_1}_{\ell} \nabla^{a_2}_{m} [ \nu_1(\ell,m) \chi_-(\ell,m)]$.
First note that when we differentiate the cutoff the behavior is
\begin{align*}
\nabla^{b_1}_{\ell} \nabla^{b_2}_{m} \chi_-(\ell,m) 
  \approx 2^{-(b_1+b_2)\max(L,M)} \varphi'_{\max(L,M)-10}(|\ell|-|m|).
\end{align*}
The bilinear terms corresponding to these symbols are easier to treat than those where the derivatives hit $\nu_1$,
so we concentrate on these latter ones.
According to the decomposition \eqref{Propnu+1} from Proposition \ref{Propnu+} we write
\begin{align}\label{theomu1'pr1} 
\begin{split}
2^{-2A} 2^{(|a|-1)M} 
  \nabla^{a_1}_{\ell} \nabla^{a_2}_{m} \nu_1(\ell,m) \, \chi_-(\ell,m) 
\\ 
= \big[ \nu_0^a(\ell,m) 
  + \nu^a_1(\ell,m) 
  + \nu^a_2(\ell,m) \big]  \, \chi_-(\ell,m)
\end{split}
\end{align}
with
\begin{align}\label{theomu1'pr2} 
\begin{split}
\nu_0^a(\ell,m) & := 
  2^{-2A} 2^{(|a|-1)M} 
  \nabla^{a_1}_{\ell} \nabla^{a_2}_{m} 
  \Big( \frac{b_0(\ell,m)}{|\ell|} \frac{1}{|\ell|-|m|} \Big)
  \mathbf{1}_{\{|L-M|<5\}},
\\
\nu_1^a(\ell,m) & := 2^{-2A} 2^{(|a|-1)M} 
  \nabla^{a_1}_{\ell} \nabla^{a_2}_{m} \nu_L(\ell,m)
  \mathbf{1}_{\{|L-M|<5\}},
\\
\nu_2^a(\ell,m) & := 2^{-2A} 2^{(|a|-1)M} 
  \nabla^{a_1}_{\ell} \nabla^{a_2}_{m} \nu_R(\ell,m).
\end{split}
\end{align}
It will suffices to show that the terms in \eqref{theomu1'pr2} satisfy
\begin{align}
\label{theomu1'pr3} 
|\nabla^{\alpha}_{\ell} \nabla^{\beta}_{m} \nu^a_\ast(\ell,m) |
  \lesssim 2^{-3\max(L,M)} \cdot 2^{-|\alpha|L} 2^{-|\alpha| M} 2^{5A(|\alpha|+|\beta|+4)}, \qquad \ast\in\{0,1,2\},
\end{align}
and then apply Lemma \ref{lemBER} to get the desired bound \eqref{theomu1'conc1}.

To verify \eqref{theomu1'pr3} for $\ast = 0$, 
we recall the estimates \eqref{Propnu+1.1} for the coefficient $b_0$, which in particular imply
\begin{align*}
|\nabla^{\alpha}_{\ell} \nabla^{\beta}_{m} b_0(\ell,m) |
  \lesssim 2^{-|\alpha|L} 2^{-|\beta|M} 2^{A(|\alpha|+|\beta|)}.
\end{align*}
Moreover, since on the support of $\nu_0^a$ we have $||\ell|-|m|| \approx \max(|m|,|\ell|)$, it follows that
\begin{align*}
|\nabla^{\alpha}_{\ell} \nabla^{\beta}_{m} \frac{1}{|\ell|} \frac{1}{|\ell|-|m|} |
  \lesssim 2^{-L} 2^{-\max(L,M)} \cdot 2^{-|\alpha|L} 2^{-\max(L,M)} 2^{-|\beta|M}. 
\end{align*}
Combining the last two inequalities, the estimate \eqref{theomu1'pr3} for $\nu_0^a$ follows.

The case of $\nu_L^a$ can be treated similarly by differentiating the formula for $\nu_L$
in \eqref{Propnu+2}, using the symbol bounds \eqref{Propnu+2.1} in the form used just above, 
and the fact that for any Schwartz function $K$ we have, on the support of $\chi_-(\ell,m)$,
\begin{align*}
\big|\nabla^{\alpha}_{\ell} \nabla^{\beta}_{m} 2^J K\big(2^J(|\ell|-|m|)\big) \big| 
  \lesssim 2^{-\max(L,M)} \cdot 2^{-|\alpha|L} \cdot 2^{-|\beta|M}.
\end{align*}

The bound \eqref{theomu1'pr3} for $\nu_2^a$ follows from the estimates \eqref{Propnu+3} 
and \eqref{Propnu+3imp} for $\nu_R$.

%

\medskip
{\it Proof of (ii)}.
The proof of \eqref{theomu1'conc3} can be done similarly to part (i) above, so we skip it.

\medskip
{\it Proof of (iii)}.
Our aim is to prove that ${\bf Y}^a \nu_1(\ell,k)$ has a similar structure and enjoys similar 
bilinear estimates as those satisfied by $\nu_1$ multiplied by a factor of $2^{12A} 2^{-(a-1)K} 2^{-\max(K,L)}$.
This will then imply the desired bilinear bound \eqref{theomu1'conc2}
by means of an application of the estimate \ref{theomu1concT2} to $2^{-12A} 2^{(a-1)K} 2^{\max(K,L)} {\bf Y}^a \nu_1$; %
compare the right-hand sides of \eqref{theomu1conc} and \eqref{theomu1'conc2}.

To prove the desired property we again look at the decomposition of $\nu_1$ in Proposition \ref{Propnu+}.
The point of using the vectorfield  ${\bf Y}$ is that for any function $f$ we have ${\bf Y}f(|\ell|-|k|)=0$.
In particular we have, for $|a|=1,2$,
\begin{align}\label{theomu1'pr10} 
{\bf Y}^a \nu_0(\ell,k) = {\bf Y}^a \Big[ \frac{b_0(\ell,k)}{|\ell|} \Big] 
  \Big[i\pi \delta(|\ell|-|k|) + \pv \frac{1}{|\ell|-|k|}\Big].
\end{align}
We now want to argue that the differentiated coefficient ${\bf Y}^a b_0(\ell,k)$
behave like $b_0$ up to the correct factor.
In view of the estimates \eqref{Propnu+1.1}, and treating ${\bf Y}^a$ 
as a regular $\nabla_\ell^{a_1}\nabla_k^{a_1}$-derivative, we have
\begin{align*}
| {\bf Y}^a b_0(\ell,k) | \lesssim 
  \sup_{a_1+a_2 = |a|} 2^{-a_1L} (2^{a_1K_+} + 2^{-(a_2-1)K_-})
\end{align*}
so that (recall that $|K-L|< 5$)
we have $| {\bf Y} b_0(\ell,k) | \lesssim 2^{-L}2^A$, and for $|a|=2$
\begin{align*}
| {\bf Y}^a b_0(\ell,k) | \lesssim 2^{-2L}2^{2A} + 2^{-K} \lesssim 2^{-2\max(L,K)} 2^{2A}.
\end{align*}
These bounds are consistent with the desired factor of $2^{12A} 2^{-(a-1)K} 2^{-\max(L,K)}$.

The term ${\bf Y}^a \nu_L(\ell,k)$, where $\nu_L$ is given by \eqref{Propnu+2}--\eqref{Propnu+2.1},
can be treated similarly to show that, up to the proper factor, ${\bf Y}^a \nu_L(\ell,k)$
has the same structure of $\nu_L$.

For the remainder term $\nu_R$ we treat the application of ${\bf Y}$
as a general derivative $\nabla_{(\ell,k)}$, and distinguish two cases.
When $|L-K|<5$, \eqref{Propnu+3} gives immediately that the $\nabla_\ell^{a_1} \nabla_k^{a_2}$-derivatives 
of $\nu_R(\ell,k)$ behave like those of $\nu_R$ times a factor of $2^{-a_1L} 2^{-a_2K} 2^{10A}$ which is more 
than sufficient when applying Lemma \ref{lemBER} to bound the associated bilinear operator.

When $|K-L| \geq 5$ we use instead \eqref{Propnu+3imp}.
Writing ${\bf Y}^a$ as a combination of $\nabla_\ell^{a_1} \nabla_k^{a_2}$
gives the bound
\begin{align*}
| \nabla_\ell^\alpha \nabla_k^\beta {\bf Y}^a \nu_R | \lesssim 2^{-2\max(K,L)} \cdot 2^{-|\alpha|\max(L,K)} 
  2^{-|\beta|K} \cdot 2^{(|\a|+|\b|+2)5A}
  \\ \times \sup_{a_1+a_2 = a} 2^{-a_1\max(L,K)} \max(0,2^{-(a_2-1)K_-}) 2^{5Aa}.
\end{align*}
The first line gives bounds of a regular symbol type times $2^{-2\max(K,L)}$; 
the second line is a factor that, in the worst case $a_1=0,a_2=2$, is bounded by $2^{-K_-} 2^{10A}$.
This is consistent with what we want and completes the proof of \eqref{theomu1'conc2}.
$\hfill \Box$

\medskip
\subsection{Proof of Lemma \ref{lemBEan}}\label{proofBEpre}
In view of the assumption \eqref{lemBEanb1} we have
\begin{align}
\label{prlemBE10}
\begin{split}
B_{j}[b](g,h)(x) = \what{\mathcal{F}}^{-1}_{k\mapsto x}
  \iint_{\R^3\times\R^3} g(\ell-k) h(m) \, \varphi_{\sim L}(\ell) \varphi_{\sim L}(m) 
  b(k,\ell,m) \\ \, \times\, \chi(2^j(|\ell| - |m|)) \, \mathrm{d}\ell \mathrm{d}m,
\end{split}
\end{align}
having used $|\ell| \approx 2^L \approx |m| \gg 2^{-j}$ to insert the cutoffs for $\ell$ and $m$.
Upon taking the inverse Fourier transform (disregarding factor of $2\pi$)
we may rewrite \eqref{prlemBE10} as
\begin{align}
\label{prlemBE11}
\begin{split}
B(x) & = \iint_{\R^3_k \times \R^3_z} e^{-i(z-x)\cdot k} H(z,k) \what{g}(z) \,\mathrm{d}z\mathrm{d}k
\end{split}
\end{align}
where (we omit the dependence on $L$)
\begin{align}
\label{prlemBE12}
\begin{split}
H(z,k) & := \int_{\R^3} A_j(z,y,k) \what{h}(y)  \, \mathrm{d}y
\\ 
A_j(z,y,k) & := \iint_{\R^3_\ell\times\R^3_m} e^{iz\cdot\ell} e^{iy\cdot m} \, b(k,\ell,m) \,
  \varphi_{\sim L}(\ell) \varphi_{\sim L}(m) \, \chi(2^j(|\ell| - |m|)) \, \mathrm{d}\ell \mathrm{d}m.
\end{split}
\end{align}

As a first step we integrate by parts in $k$ in the first formula of \eqref{prlemBE11}:
\begin{align*}
B(x) & = \iint_{\R^3_k \times \R^3_z} \frac{1}{(1+2^{2K}|x-z|^2)^2}
	 (1-2^{2K} \Delta_k)^2 e^{-i(z-x)\cdot k} \, \varphi_K(k) H(z,k) \, \what{g}(z) \,\mathrm{d}z\mathrm{d}k
\\
& = \iint_{\R^3_k \times \R^3_z} \frac{1}{(1+2^{2K}|x-z|^2)^2}
	e^{-i(z-x)\cdot k} (1-2^{2K} \Delta_k)^2 \big( \varphi_K(k) H(z,k) \big) \what{g}(z) \,\mathrm{d}z\mathrm{d}k
\end{align*}
and deduce, using H\"older's and Haussdorf-Young's inequalities, that
\begin{align*}
{\| B(x) \|}_{L^r} & \lesssim {\Big\| \int_{\R^3_z} \frac{2^{3K}}{(1+2^{2K}|x-z|^2)^2}
	\sup_k \Big| (1-2^{2K} \Delta_k)^2 \big( \varphi_K(k) H(z,k) \big) \Big| \, |\what{g}(z)| \,\mathrm{d}z \Big\|}_{L^r_x}
	\\
& \lesssim  {\Big\| \sup_k \Big| (1-2^{2K} \Delta_k)^2
	\big( \varphi_K(k) H(z,k) \big) \Big| \, |\what{g}(z)|  \Big\|}_{L^r_z}
\\	
& \lesssim  {\Big\| \sup_k \Big| (1-2^{2K} \Delta_k)^2
	\big( \varphi_K(k) H(z,k) \big) \Big| \Big\|}_{L^q_z} {\big\| \what{g} \big\|}_{L^p}.
\end{align*}
From the definition \eqref{prlemBE12} we see that it suffices to prove that
\begin{align}
\label{prlemBEmain}
 {\Big\| \int_{\R^3} \sup_k \big| A_j^\prime (z,y,k) \big| |\what{h}(y)|  \, \mathrm{d}y \Big\|}_{L^q_z}
 	\lesssim 2^{2L} \cdot 2^{-j} \cdot 2^{8A} \cdot {\big\| \what{h} \big\|}_{L^q}
\end{align}	
where $A_j^\prime$ is defined in the same way as $A_j$ in \eqref{prlemBE12} but with the symbol
\begin{align*}
b^\prime(k,\ell,m) := (1-2^{2K}\Delta_k)^2 \big( \varphi_K(k) b(k,\ell,m) \big)
\end{align*}
instead of $b$:
\begin{align*}
A_j^\prime(z,y,k) & := \iint_{\R^3_\ell\times\R^3_m} e^{iz\cdot\ell} e^{iy\cdot m} \, b^\prime(k,\ell,m) \,
  \varphi_{\sim L}(\ell) \varphi_{\sim L}(m) \, \chi(2^j(|\ell| - |m|)) \, \mathrm{d}\ell \mathrm{d}m.
\end{align*}

To estimate this kernel we observe that for fixed $m$, the integral over $\ell$ is supported 
on an annular region $C_{j,m} = \{ \ell \, : \, ||\ell|-|m||\lesssim 2^{-j}\}$,
which has volume $\approx 2^{2L}2^{-j}$.
We then pick points $p_r$, $r=1,\dots,2^{2(j+L)}$ , uniformly distributed on the sphere of radius $|m|$,
of the form $p_r = R_r m$ for suitable rotations $R_r$, with $R_1 = \id$.
We cover the annular region by $2^{2(j+L)}$ balls or radius $\approx 2^{-j}$ 
centered at the points $p_r$, and, with a partition of unity, write
\begin{align}
\label{prlemBE15}
\mathbf{1}_{C_{j,m}} = \sum_{r=1}^{2^{2(j+L)}} \chi_r(2^j(\ell - p_r)), \qquad p_r = R_rm,
\end{align}
for smooth compactly supported radial functions $\chi_r$.
We then write
\begin{align}
\label{prlemBE16}
& A_{j}^\prime(z,y,k) = \sum_{r=1}^{2^{2(j+L)}} A_{j,r}(z,y,k).
\\
\nonumber 
& A_{j,r}(z,y,k) := \iint_{\R^3_\ell\times\R^3_m} e^{iz\cdot\ell} e^{iy\cdot m} 
  \, \varphi_{\sim L}(\ell) \varphi_{\sim L}(m) \, b^\prime(k,\ell,m)
  \\ \nonumber & \qquad \qquad \qquad \qquad \,\times 
  \chi(2^j(|\ell| - |m|)) \, \chi_r\big(2^j(\ell - p_r)\big) \, \mathrm{d}\ell \mathrm{d}m.
\end{align}
We bound the term with $r=1$, that is $p_r = m$.
After changing variables $\ell$ to $\ell +m$, we write
\begin{align*}
(1+ 2^{2L}|z+y|^2)^2 |A_{j,1}(z,y,k)| 
= \Big| \iint_{\R^3_\ell\times\R^3_m} e^{ix\cdot\ell} (1-2^{2L}\Delta_m)^2 e^{i(z+y)\cdot m} 
  \, \varphi_{\sim L}(\ell+m)
  \varphi_{\sim L}(m)  
  \\ \times \, b^\prime(k,\ell+m,m) \, \chi(2^j(|\ell+m| - |m|)) \, \chi_r(2^j \ell) \, \mathrm{d}\ell \mathrm{d}m \Big|
\\
\lesssim \iint_{\R^3_\ell\times\R^3_m} 
	\Big| (1-2^{2L}\Delta_m)^2 \big[ \varphi_{\sim L}(\ell+m) \varphi_{\sim L}(m) 
	\, b^\prime(k,\ell+m,m) \,\\ \times \, \chi(2^j(|\ell+m| - |m|)) \big] \Big| \chi_r(2^j \ell) \,\mathrm{d}\ell \mathrm{d}m.
\end{align*}
Using the hypothesis \eqref{lemBEanb2} we see that, on the support of the integral,
\begin{align*}
& |\partial_m^a [ \varphi_{\sim L}(\ell+m) \varphi_{\sim L}(m) ] | \lesssim 2^{-L |a|},
\\
& |\partial_m^a \chi(2^j(|\ell+m| - |m|)) |\lesssim 2^{-L |a|},
\\
& |\partial_m^a b^\prime(k,\ell+m,\ell) | \lesssim 2^{-L |a|} 2^{|a|  A}, \qquad |a|\leq 4.
\end{align*}
Thus, we obtain
\begin{align}
\label{prlemBE17}
| A_{j,1}(z,y,k) | \lesssim \frac{2^{3L}}{(1 + 2^{2L}|z+y|^2)^2} \cdot 2^{-3j} \cdot 2^{8A}.
\end{align}
Note that by changing variables $\ell \mapsto R_r \ell$ in the integral in \eqref{prlemBE16},
the same argument above shows that, for any $r = 1,\dots,2^{2(j+L)}$,
\begin{align}
\label{prlemBE18}
| A_{j,r}(z,y,k) | \lesssim \frac{2^{3L}}{(1 + 2^{2L}|z+ R_r y|^2)^2} \cdot 2^{-3j} \cdot 2^{8A}.
\end{align}

Going back to the left-hand side of \eqref{prlemBEmain} and using \eqref{prlemBE16} and  \eqref{prlemBE18} 
we can estimate
\begin{align*}
& {\Big\| \int_{\R^3} \sup_k \big| A_j^\prime (z,y,k) \big| |\what{h}(y)|  \, \mathrm{d}y \Big\|}_{L^q_z} 
	\\
	& \lesssim 2^{2(j+L)}
	{\Big\|  \int_{\R^3_y} \sup_k  
	\sup_{r=1,\dots 2^{2(j+L)}}\big|A_{j,r}(z,y, k)\big|  |\what{h}(y)| \,\mathrm{d}y  \Big\|}_{L^q_z}
	\\
	& \lesssim 2^{2(j+L)} \cdot  2^{-3j} \cdot 2^{8A} 
	\sup_{r=1,\dots 2^{2(j+L)}}
	{\Big\|  \int_{\R^3_y}\frac{2^{3L}}{(1 + 2^{2L}|z+y|^2)^2} \, 
		\big|\what{h}(R_r^{-1}y)\big| \,\mathrm{d}y  \Big\|}_{L^q_z}
	\\
	& \lesssim 2^{2L} \cdot  2^{-j} \cdot 2^{8A} \cdot {\big\| \what{h} \big\|}_{L^q}.
\end{align*}
This gives \eqref{prlemBEmain} and concludes the proof.
%
%
$\hfill \Box$

\medskip
\begin{proof}[Proof of Lemma \ref{lemBER}]
We insert frequency localization according to \eqref{lemBERb1} and the notation after \eqref{LP2}, and write 
\begin{align}
\label{prBER1}
B[b](g,h)(x) & = 
  \iint_{\R^3_y \times \R^3_z} A(x,y,z) \what{g}(y) \what{h}(z) \,\mathrm{d}y\mathrm{d}z
\\
\nonumber
A(x,y,z) & := \iint_{\R^3_k\times\R^3_\ell\times\R^3_m} e^{ik\cdot (x-y)} e^{iz\cdot\ell} e^{iy\cdot m} 
  \, b(k,\ell,m) \,  \varphi_{\sim K}(k) \varphi_{\sim L}(\ell) \varphi_{\sim M}(m) \, \mathrm{d}k \mathrm{d}\ell \mathrm{d}m. 
\end{align}
Assume without loss of generality that $L \geq M$. changing variables $\ell \mapsto \ell+m$
and integrating by parts in $k$ and $m$ we get
\begin{align*}
\begin{split}
& \big(1 + 2^{2K}|x-y|^2\big)^2 \big(1 + 2^{2M}|y+z|^2\big)^2 \big| A(x,y,z) \big| 
\\
& \lesssim \Big| \iint_{\R^3_k\times\R^3_\ell\times\R^3_m}
  \big(1 - 2^{2K}\Delta_k\big)^2 \big(1 - 2^{2M}\Delta_m\big)^2
  \Big[ b(k,\ell+m,m) \, \\  & \qquad \qquad \qquad \qquad
  \times \varphi_{\sim K}(k) \varphi_{\sim L}(\ell+m) \varphi_{\sim M}(m) \Big] 
  \, \mathrm{d}k \mathrm{d}\ell \mathrm{d}m \Big|
  \\
  & \lesssim 2^{8A} \cdot 2^{3(K+L+M)},
\end{split}
\end{align*}
having used the estimates \eqref{lemBERb2} for the symbol.
The conclusion follows from \eqref{prBER1} and the Hausdorff-Young inequality.
\end{proof}

\medskip
\section{Weighted estimates for leading order terms}\label{secdkL2}

Recall Duhamel's formula \eqref{Duhameldec} 
according to the decomposition \eqref{mudec}-\eqref{mu3}.
Our main aim in this section is to estimate the weighted norms of the leading order term $\mathcal{D}_1$
as written in \eqref{D12}. 

For convenience we first localize the integrals dyadically in time $s\approx2^S$ 
by introducing a partition of $[0,t]$ associated to cutoff functions $\tau_S$, $S=0,\dots \log_2t$,
with $\int_0^t |\tau^\prime_S(s)| \, \mathrm{d}s\lesssim 1$.
More precisely we write
\begin{align}\label{dkL20}
\begin{split}
\mathcal{D}_1(t)(f,f) & = \sum_{S=0}^{\log_2t} \mathcal{N}_{1,S}(t)(f,f) + 2\mathcal{N}_{2,S}(t)(f,f),
\\
\mathcal{N}_{1,S}(t)(f,f) & := \int_0^t \iint e^{is (|\ell|^2 + 2 k\cdot \ell + |m|^2 )} 
  \widetilde{f}(s,\ell+k) \widetilde{f}(s,m)
  \, \nu_1(\ell,m) \, \mathrm{d}\ell \mathrm{d}m \, \tau_S(s) \mathrm{d}s,
\\
\mathcal{N}_{2,S}(t)(f,f) & = \int_0^t \iint e^{is (-|k|^2 + |\ell|^2 - 2\ell \cdot m + 2|m|^2 )} 
  \widetilde{f}(s,m-\ell) \widetilde{f}(s,m)
  \, \bar{\nu_1(\ell,k)}  \, \mathrm{d}\ell \mathrm{d}m \, \tau_S(s) \mathrm{d}s.
\end{split}
\end{align}
For lighter notation, we will often omit the dependence on $S$ in what follows; see for example 
\eqref{N_1L} where we omit the dependence on $S$ of the terms $\mathcal{N}_{L,M}$.

In view of Proposition \ref{proHF}, see \eqref{HF1}--\eqref{d_N},
we assume that on the support of the integral in \eqref{dkL20} we have
\begin{align}\label{dkL2HFass}
|\ell| + |m| + |k| \lesssim 2^A := 2^{\delta_NS}, \qquad \delta_N = \frac{3}{N-5}.
\end{align}
This is our main Proposition:

\begin{proposition}\label{prodkL2}
With the definitions \eqref{dkL20}-\eqref{dkL2HFass},
under the apriori assumptions \eqref{apriori}, we have
\begin{align}\label{dkest1}
{\| \partial_k \mathcal{N}_{1,S}(t)(f,f) \|}_{L^2} 
   \lesssim \e^2 2^{-\delta' S},
\end{align}
for some $\delta'>0$, and
\begin{align}\label{dkest2}
{\| \partial_k^2 \mathcal{N}_{1,S}(t)(f,f) \|}_{L^2} 
  \lesssim 2^{(1/2+\delta)S} \e^2.
\end{align}
\end{proposition}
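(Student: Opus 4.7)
The plan is to compute $\partial_k^a\mathcal{N}_{1,S}$ for $a=1,2$ directly from the definition \eqref{dkL20}. Since the only $k$-dependence of the integrand lies in $e^{is\Phi}$ with $\Phi(k,\ell,m)=|\ell|^2+2k\cdot\ell+|m|^2$, and in $\widetilde{f}(s,\ell+k)$, each derivative produces either a factor $2is\ell$ or a derivative on the profile. Thus $\partial_k^a\mathcal{N}_{1,S}$ is a linear combination of bilinear expressions of the form
\begin{align*}
\mathcal{I}_{a_1,a_2}(t,k) = \int_0^t \iint (2is\ell)^{a_1} e^{is\Phi}\,\partial_k^{a_2}\widetilde{f}(s,\ell+k)\,\widetilde{f}(s,m)\,\nu_1(\ell,m)\,\mathrm{d}\ell\mathrm{d}m\,\tau_S(s)\mathrm{d}s
\end{align*}
with $a_1+a_2\leq a$. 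The ``easy'' contributions $\mathcal{I}_{0,a_2}$ carry no $s$-loss and are estimated directly via Theorem \ref{theomu1}(i): the a priori bound \eqref{apriori} places $\partial_k^{a_2}\widetilde{f}$ in $L^2$ (up to growth $\jt^{1/2+\delta}$ when $a_2=2$), while the remaining factor is handled via $\widehat{\mathcal{F}}^{-1}\widetilde{f}=\mathcal{W}^\ast u$ and the linear decay of $u$ from Lemma \ref{LemLinear2}.

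To handle the terms with $a_1\geq 1$, which lose $s^{a_1}\approx 2^{a_1 S}$, I would dyadically decompose in $|\ell|\approx 2^L, |m|\approx 2^M$ and further insert smooth cutoffs $\chi_\pm(|\mathbf{X}\Phi|/|\ell|)$, where $\mathbf{X}=\partial_{|\ell|}+\partial_{|m|}$ is the good vectorfield tangential to the singularity of $\nu_1$. The complementary regime $|L-M|>5$ is easy because Proposition \ref{Propnu+} makes $\nu_1$ regular of size $2^{-2\max(L,M)}$ there, and Theorem \ref{theomu1'}(i) applies. In the singular regime $|L-M|\leq 5$, the identity $\Phi=(|m|-|\ell|)^2+|\ell|\mathbf{X}\Phi-2|\ell|^2$ yields the dichotomy: either $|\mathbf{X}\Phi|\gtrsim|\ell|$, or else $|\Phi|\gtrsim |\ell|^2$.

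On the piece where $|\mathbf{X}\Phi|\gtrsim|\ell|$ (the $\chi_+$ cutoff), I would integrate by parts $a_1$ times in $\mathbf{X}$ using $e^{is\Phi}=(is\mathbf{X}\Phi)^{-1}\mathbf{X} e^{is\Phi}$. Each integration removes a power of $s$; the symbol $\ell/\mathbf{X}\Phi$ is bounded on this support and satisfies \eqref{theomu1asb2} with only $2^A$-losses; derivatives that fall on $\nu_1$ are controlled by \eqref{theomu1Xconc} since $\mathbf{X}$ preserves its singular structure; and derivatives that fall on the profile reduce to $\partial_k\widetilde{f}$ or $\partial_k^2\widetilde{f}$, controlled by \eqref{apriori}. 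On the complementary piece $|\mathbf{X}\Phi|\ll|\ell|$ (the $\chi_-$ cutoff), I would use that $|\ell|^2/\Phi$ is a bounded symbol and integrate by parts in $s$ via $e^{is\Phi}=(i\Phi)^{-1}\partial_s e^{is\Phi}$. The boundary term at $s=t$ carries the factor $s^{a_1-1}$ and is estimated by Theorem \ref{theomu1}(i) combined with the linear decay $\|u(s)\|_{L^p}\lesssim\jt^{-(1-)}$ from Lemma \ref{LemLinear2}; the interior term contains $\partial_s\widetilde{f}=e^{-is|k|^2}\widetilde{\mathcal{F}}(u^2)$ from \eqref{NLSV}, which provides a gain of $\|u(s)\|_{L^6}^2\lesssim \js^{-2}$ that more than compensates the remaining $s^{a_1}$-loss.

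The hardest step will be the $\partial_k^2$ estimate \eqref{dkest2} with $a_1=2$, where the iteration of integration by parts generates symbols such as $\ell^2/(\mathbf{X}\Phi)^2$ or $\ell^2/\Phi$, together with mixed terms carrying one $\mathbf{X}$-derivative on each profile, or pairing $\partial_s\widetilde{f}$ with $\partial_k\widetilde{f}$. I will need to verify that all such symbols satisfy the hypothesis \eqref{theomu1asb2} with acceptable $2^A$ factors, and that the remaining derivatives on the profiles are bounded by $\|\partial_k^2\widetilde{f}\|_{L^2}\lesssim\e\jt^{1/2+\delta}$ from \eqref{apriori}, with no need to control $\partial_k^2 u$ independently. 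The logarithmic losses from summing the dyadic parameters $L,M$, the $2^{C_0 A}$ losses from Theorem \ref{theomu1} with $2^A=2^{\delta_N S}$, and the arbitrarily small $p\to\infty$ losses of Lemma \ref{LemLinear2} must all be absorbed into the allowed factor $2^{(1/2+\delta)S}$; this works because $\delta_N=3/(N-5)$ can be made much smaller than $\delta$ (see the condition suggested in the footnote to \eqref{apriori}).
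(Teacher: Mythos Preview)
Your high-level strategy is the one sketched informally in the paper's introduction, and it captures the right ideas: the good vectorfield $\mathbf{X}=\partial_{|\ell|}+\partial_{|m|}$, the dichotomy between $|\mathbf{X}\Phi|\gtrsim|\ell|$ (integrate by parts along $\mathbf{X}$) and $|\mathbf{X}\Phi|\ll|\ell|$ (use $|\Phi|\gtrsim|\ell|^2$ and integrate in $s$). However, the paper's actual proof implements this differently, and the difference matters.

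The paper does \emph{not} insert cutoffs $\chi_\pm(|\mathbf{X}\Phi|/|\ell|)$. Instead it observes the algebraic identity
\[
|\ell|\,\mathbf{X}\Phi(k,\ell,m)-\Phi(k,\ell,m)=c(\ell,m),\qquad c(\ell,m)=|m|^2+|\ell|^2-2|m|(|m|-|\ell|),
\]
which is \emph{independent of $k$} and satisfies $c(\ell,m)\approx|\ell|^2$ near the singularity $||\ell|-|m||\ll|\ell|$. This yields the single formula
\[
e^{is\Phi}=\frac{1}{c(\ell,m)}\Big(\frac{1}{is}|\ell|\mathbf{X}+i\partial_s\Big)e^{is\Phi},
\]
used once for \eqref{dkest1} and twice for \eqref{dkest2}; all resulting symbols ($\ell|\ell|/c$, $\ell/c$, $|\ell|^2/c^2$, etc.) are $k$-independent, so hypothesis \eqref{theomu1asb2} is trivially satisfied in the $k$-variable.

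This is where your plan has a gap. Your cutoffs $\chi_\pm(|\mathbf{X}\Phi|/|\ell|)$ and the symbols $\ell/\mathbf{X}\Phi$, $|\ell|^2/\Phi$ all depend on $k$ through $\mathbf{X}\Phi=2k\cdot\ell/|\ell|+2|\ell|+2|m|$ and through $\Phi$. A direct computation gives $|\nabla_k(\ell/\mathbf{X}\Phi)|\lesssim |\ell|/|\mathbf{X}\Phi|^2\lesssim 2^{-L}$ (and likewise for $\nabla_k\chi_\pm$ and $\nabla_k(|\ell|^2/\Phi)$), whereas \eqref{theomu1asb2} demands $\lesssim 2^{-K}2^A$ for every $K\leq A$. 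Taking $K=A$ this requires $2^{-L}\lesssim 1$, i.e.\ $L\geq 0$, which fails on the whole range $-S/3\lesssim L<0$. Your assertion that ``$\ell/\mathbf{X}\Phi$ satisfies \eqref{theomu1asb2} with only $2^A$-losses'' is therefore not justified, and the bilinear bounds of Theorem~\ref{theomu1} cannot be invoked as stated.

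The dichotomy approach can probably be rescued (for instance by also localizing in $|k|\approx 2^K$ and treating $K\gg L$ separately, or by replacing your $k$-dependent cutoff with one in the $k$-independent quantity $c(\ell,m)$), but the cleanest route is precisely the unified identity the paper uses. Two further small points: the paper first disposes of $L\leq -S/3$ by a direct $L^6\times L^{3-}$ estimate before any integration by parts, and for the $\partial_k^2$ bound a new ingredient appears---an auxiliary estimate $\|\varphi_{\leq A}\partial_k\partial_t\widetilde f\|_{L^2}\lesssim 2^{-S/2}2^{2\delta_N S}$ (obtained via Proposition~\ref{proGHW}) is needed to control the cross term where both $\partial_s$ and $\partial_{|\ell|}$ land on the same profile; your outline does not anticipate this.
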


The estimate \eqref{dkest1} is proven in Subsection \ref{ssecdkest1}
while \eqref{dkest2} is proven in Subsection \ref{ssecdkest2}.
In Subsection \ref{ssecdkN_2} we discuss how to obtain the same bounds for $\mathcal{N}_{2,S}$, and therefore
conclude the desired bootstrap estimate for $\mathcal{D}_1$ upon summing over $S$.

\medskip
\subsection{Proof of \eqref{dkest1}}\label{ssecdkest1}
In order to apply Theorem \ref{theomu1} we begin by localizing to $|\ell| \approx 2^L$ and $|m|\approx 2^M$,
and write
\begin{align}
\label{N_1L}
\begin{split}
\mathcal{N}_{1,S}(f,f) & = \sum_{L,M\in(-\infty,A]\cap\Z} \mathcal{N}_{L,M}(f,f),
\\
\mathcal{N}_{L,M}(g,h) & := \int_0^t \iint e^{is \Phi(k,\ell,m)} \widetilde{g}(s,\ell+k) \widetilde{h}(s,m)
  \, \varphi_L(\ell) \varphi_M(m) \nu_1(\ell,m)\, \mathrm{d}\ell \mathrm{d}m \,\tau_S(s) \mathrm{d}s,
\\
\Phi(k,\ell,m) & := |\ell|^2 + 2 k\cdot \ell + |m|^2. 
\end{split}
\end{align}
Taking $\partial_k$ we get
\begin{align}
\label{dk_N1}
\partial_k \mathcal{N}_{L,M}(f,f) & = \mathcal{M}_{1,L,M}(f,f) + 2\mathcal{M}_{2,L,M}(f,f),
\end{align}
where 
\begin{align}
\label{dkN_11}
\mathcal{M}_{1,L,M}(f,f) & = \int_0^t \iint e^{is \Phi(k,\ell,m)} \, \partial_k \widetilde{f}(s,\ell+k) \widetilde{f}(s,m)
  \,\varphi_L(\ell)\varphi_M(m) \nu_1(\ell,m)\, \mathrm{d}\ell \mathrm{d}m \,\tau_S(s) \mathrm{d}s,
\\
\label{dkN_12}
\mathcal{M}_{2,L,M}(f,f) & = \int_0^t \iint i s \ell \, e^{is \Phi(k,\ell,m)} \widetilde{f}(s,\ell+k) \widetilde{f}(s,m)
  \, \varphi_L(\ell)\varphi_M(m) \nu_1(\ell,m) \, \mathrm{d}\ell \mathrm{d}m \,\tau_S(s) \mathrm{d}s.
\end{align}
We proceed to estimate the terms above with the understanding that all the bounds will need to be summed over 
$L,M\in(-\infty,A] \cap \Z$.

\medskip
\subsubsection{Estimate of \eqref{dkN_11}}\label{dkN_11sec}
This term can be treated almost directly by applying the bilinear estimate \eqref{theomu1conc} from Theorem \ref{theomu1}.
Define the distorted Littlewood-Paley projection $g_M := \Ftil^{-1} \varphi_M(m) \wt{g}$,
and let \[b=\varphi_{\sim L}(\ell)\varphi_{\sim M}(m),\]
(recall the notation for $\varphi_\sim$ after \eqref{LP2}).
Using the notation in \eqref{theomu11b}, we can write
\begin{align}\label{dkest11}
\mathcal{M}_{1,L,M}(f,f)(t) & = 
  \int_0^t e^{-is|k|^2} \Fhat_{x\mapsto k} T_1[b]\big(e^{is|k|^2}\partial_k\wt{f}(s), \Ftil u_M(s) \big) 
  \, \tau_S(s) \mathrm{d}s.
\end{align}
In view of the apriori assumptions \eqref{apriori} we have 
${\| \partial_k \wt{f} \|}_{L^2} {\| \wt{f} \|}_{L^2} \lesssim \e^2$,
so that, choosing $2^D = \js^{2} \approx 2^{2S}$, the hypothesis \eqref{theomu1asgh} is verified.
Let us assume that $M\leq L$.
An application of \eqref{theomu1conc} then gives, for arbitrary large $q$,
\begin{align}
\nonumber
& {\| \mathcal{M}_{1,L,M}(f,f)(t) \|}_{L^2} \lesssim 
\int_0^t {\big\| T_1[b]\big(e^{is|k|^2}\partial_k\wt{f}(s), \Ftil u_M(s) \big) \big\|}_{L^2} \,\tau_S(s)  \mathrm{d}s
\\ 
\label{dkest11M}
& \lesssim \int_0^t \big[ {\| \Fhat \partial_k \wt{f}(s) \|}_{L^2} 
  \cdot 2^{(0+)M} {\| u(s) \|}_{L^{q-}} 
  \cdot 2^{\max(L,M)} \cdot 2^{C_0A} + \js^{-2} \e^2\, \big] \tau_S(s) \mathrm{d}s.
\end{align}
For the second inequality we have used
\begin{align*}
{\| \Fhat \Ftil u_M(s) \|}_{L^{q}} \lesssim 2^{(0+)M} \cdot {\| \Fhat \wt{u}(s) \|}_{L^{q-}},
\end{align*}
which follows from Bernstein's inequality (for flat Littlewood-Paley projections) 
and the boundedness of wave operators.
If we have $L\leq M$, a similar $L^{2+}\times L^q$ application of \eqref{theomu1conc}
would give the same conclusion \eqref{dkest11M} with a factor of $2^{(0+)L}$ instead of $2^{(0+)M}$.

Using that $2^A \approx 2^{\delta_NS}$ 
and the a priori dispersive estimate \eqref{aprioriL>6}, we obtain
\begin{align*}
\begin{split}
\sum_{L,M\leq A} {\| \mathcal{M}_{1,L,M}(t) \|}_{L^2} 
  \lesssim \sum_{L,M\leq A} \int_0^t \e \cdot \e \js^{-5/4+\delta} \cdot 2^{(0+)\min(L,M)} \cdot \js^{(C_0+1)\delta_N} 
  \, \tau_S(s)\mathrm{d}s + \e^2 2^{-S}
  \\ 
  \lesssim \e^2 2^{-S\delta'}
\end{split}
\end{align*}
provided 
\begin{align}\label{condition}
\delta + (C_0+1)\delta_N \leq 1/4 - \delta';
\end{align}
this last inequality holds with our choice of $\delta_N$ in \eqref{dkL2HFass}, $N$ in \eqref{Nparam}, and $C_0=65$.

\medskip
\subsubsection{Estimate of  \eqref{dkN_12}: Set-up}\label{ssecM_2}
This is the main term in the estimate of the first weight \eqref{dkest1}. 
Here we need to exploit the oscillations given by $e^{is\Phi}$,
by integrating by parts in the distorted frequency space and in time.
Due to the singularity of $\nu_1$ we can only integrate by parts in the direction
of the ``good vectorfield''
\def\X{{\bf X}}
\begin{align}
\label{Xlm}
\X := \partial_{|\ell|} + \partial_{|m|}.
\end{align}

In order to be able to exploit the oscillations efficiently, we first split \eqref{dkN_12}
into a region where $||\ell|-|m| |\ll |\ell|$ and the complement one, by defining:
\begin{align}\label{ssecM_2cut}
\chi_+(\ell,m) := \varphi_{> -10}\Big(\frac{|m|-|\ell|}{|m|+|\ell|}\Big), 
  \qquad  \chi_-(\ell,m) := \varphi_{\leq-10}\Big(\frac{|m|-|\ell|}{|m|+|\ell|}\Big) = 1 - \chi_+(\ell,m),
\end{align}
and the corresponding terms
\begin{align}
\label{dkest12}
\mathcal{M}_{2}^- & := \int_0^t \iint i s \ell \, e^{is \Phi} \widetilde{f}(s,\ell+k) \widetilde{f}(s,m)
  \varphi_L(\ell)\varphi_{\sim L}(m) \, \chi_-(\ell,m) \, \nu_1(\ell,m) \, \mathrm{d}\ell \mathrm{d}m \,\tau_S(s) \mathrm{d}s,
\\
\label{dkest12b}
\mathcal{M}_{2}^+ & := \int_0^t \iint i s \ell \, e^{is \Phi} \widetilde{f}(s,\ell+k) \widetilde{f}(s,m)
  \varphi_L(\ell)\varphi_M(m) \, \chi_+(\ell,m) \, \nu_1(\ell,m) \, \mathrm{d}\ell \mathrm{d}m \,\tau_S(s) \mathrm{d}s.
\end{align}
Notice how we have dropped the dependence on the indexes $L,M$ for brevity,
and replaced $\varphi_M(m)$ with $\varphi_{\sim L}(m)$ since on the support of $\chi_-$,
$|m|$ and $|\ell|$ are comparable.

For \eqref{dkest1} it will suffice to prove the following bounds:
\begin{align}
\label{dkest1a}
& \sum_{L\leq A} {\| \mathcal{M}_{2}^-(t) \|}_{L^2} \lesssim \e^2 2^{-S\delta'},
\\
\label{dkest1b}
& \sum_{L,M\leq A} {\| \mathcal{M}_{2}^+(t) \|}_{L^2} \lesssim \e^2 2^{-S\delta'}.
\end{align}

\medskip
\subsubsection{Proof of \eqref{dkest1a}}\label{l=m}
We calculate
\begin{align}
\label{XPhi}
\X \Phi := 2k \cdot \frac{\ell}{|\ell|} + 2 |\ell| + 2|m|, \qquad 
  \Phi(k,\ell,m) = |\ell|^2 + 2 k\cdot \ell + |m|^2 
\end{align}
and see that the following identity holds:
\begin{align}
\label{PhiXPhi}
\begin{split}
|\ell|\X \Phi(k,\ell,m) - \Phi(k,\ell,m) & = |\ell|^2 + 2|m||\ell| -|m|^2 
\\ & = |m|^2 + |\ell|^2 - 2|m|(|m|-|\ell|) =: c(\ell,m).
\end{split}
\end{align}
This implies
\begin{align}
\label{PhiXPhiIBP}
\frac{1}{c(\ell,m)} \Big( \frac{1}{is} |\ell|\X + i \partial_s \Big) e^{is\Phi(k,\ell,m)} = e^{is\Phi(k,\ell,m)}, 
\end{align}
which we can use to integrate by parts in \eqref{dkest12}.
Note that, since $||\ell|-|m|| \ll |\ell| \approx |m|$, 
then $1/c(\ell,m)$ behaves like a multiplier of the form $1/(|m|^2 + |\ell|^2)$: for all $\alpha,\beta\in\Z^3$
\begin{align}
\label{PhiXPhisym}
\big| \nabla_\ell^\alpha \nabla_m^\beta \frac{1}{c(\ell,m)} \big| \lesssim \frac{1}{|\ell|^2+|m|^2} 
  |\ell|^{-|\alpha|} |m|^{-|\beta|}.
\end{align}
Also, we can freely insert a cutoff in the size of $|m|\approx 2^L$ in \eqref{dkest12}.
We then have to estimate the $L^2$-norm of the term
\begin{align}
\label{dkest12'}
\begin{split}
\mathcal{M}_{L}(f,g)(t) & := \int_0^t \iint s \ell \, e^{is \Phi} \widetilde{f}(s,\ell+k) \widetilde{f}(s,m)
  \, a_L(\ell,m) \, \nu_1(\ell,m) \, \mathrm{d}\ell \mathrm{d}m \,\tau_S(s) \mathrm{d}s,
\\
a_L(\ell,m) & := \varphi_{\sim L}(m)\varphi_L(\ell) \chi_-(\ell,m). 
\end{split}
\end{align}

\medskip
\subsubsection*{$-$ The case $L \leq L_0:=-S/3$.}
First, we show that when $L$ is sufficiently small we can obtain \eqref{dkest1a} 
directly using the bilinear estimates \eqref{theomu1conc} 
of Theorem \ref{theomu1}: with
\[b = 2^{-L} \ell \varphi_{\sim L}(m)\varphi_{\sim L}(\ell) \chi_-(\ell,m) 
\] 
and using the a priori decay estimates \eqref{aprioriL<6}, we have
\begin{align*}
\begin{split}
{\| \mathcal{M}_{L}(t) \|}_{L^2} & \lesssim 
\int_0^t s \, 2^L {\big\| T_1[b]\big(\Ftil u(s), \Ftil u(s) \big) \big\|}_{L^2} \,\tau_S(s)  \mathrm{d}s
\\ 
& \lesssim \int_0^t \big[ {\| \Fhat \wt{u}(s) \|}_{L^3} {\| \Fhat \wt{u}(s) \|}_{L^{6-}}
  \cdot 2^{L} \cdot 2^{C_0A} + \js^{-3} \e^2\, \big] 2^L 2^S \, \tau_S(s) \mathrm{d}s.
\\ 
& \lesssim \int_0^t \e 2^{-S/2} \cdot \e 2^{-(1-)S}
  \cdot 2^{2L} \cdot 2^{C_0A} \cdot 2^S \tau_S(s) \mathrm{d}s + \e^2 2^{-S}
\\
& \lesssim \e^2 2^{(1/2 + (C_0+1)\delta_N)S} \cdot 2^{2L} + \e^2 2^{-S}
\end{split}
\end{align*}
Since $L \leq L_0 := -S/3$, and our choice of parameters
guarantees $(C_0+1)\delta_N + \delta' \leq 1/6$ (see \eqref{d_N} and \eqref{Nparam}),
we have
\begin{align}
\label{dkest1alow}
& \sum_{\substack{L \leq L_0}} {\| \mathcal{M}_{L}(t) \|}_{L^2} \lesssim \e^2 2^{-S\delta'}
\end{align}
consistently with \eqref{dkest1a}.
Note that, similarly to the estimate of $\mathcal{M}_1$ above,
we have used that the assumption \eqref{theomu1asgh} is fulfilled by choosing $2^D = \js^3 \approx 2^{3S}$.

\medskip
{\it Notation}.
For simplicity, in what follows we will often disregard the extra lower order terms 
coming from the $2^{-D}$ factor on the right-hand side of \eqref{theomu1conc}, 
since we will always be able to guarantee \eqref{theomu1asgh} with large enough $2^D$ of the form $\js^n$, 
under the apriori assumptions \eqref{apriori}.
Also, often times we will not fully detail how to write our bilinear terms as operators of the form $T_1[b]$
when this will be clear from the context.

\medskip
\subsubsection*{$-$ Integration by parts.}
For $L > L_0$, using \eqref{PhiXPhiIBP}, we write $\mathcal{M}_{L}$ in \eqref{dkest12'} as:
\begin{align}
\label{M_2split}
& \mathcal{M}_{L} = A_L + B_L, 
\\
\label{M_11}
& A_L = \int_0^t \iint \frac{\ell |\ell|}{c(\ell,m)} \big(\X e^{is \Phi}\big)\,
  \widetilde{f}(s,\ell+k) \widetilde{f}(s,m)
  \, a_L(\ell,m) \, \nu_1(\ell,m)
  \, \mathrm{d}\ell \mathrm{d}m \,\tau_S(s)\mathrm{d}s,
\\
\label{M_12}
& B_L = \int_0^t \iint \frac{-\ell}{c(\ell,m)} \, s \, \big(\partial_s e^{is \Phi}\big)\, 
  \widetilde{f}(s,\ell+k) \widetilde{f}(s,m)
  \, a_L(\ell,m) \, \nu_1(\ell,m) 
  \, \mathrm{d}\ell \mathrm{d}m \,\tau_S(s)\mathrm{d}s.
\end{align}
Integrating by parts in $\X$ we get 
\begin{align}
\label{M_110}
\begin{split}
-A_L & = \int_0^t \iint e^{is \Phi} \X \Big[ \frac{\ell |\ell|}{c(\ell,m)} a_L(\ell,m)
  \, \widetilde{f}(s,\ell+k) \widetilde{f}(s,m) \, \nu_1(\ell,m) \Big] \, \mathrm{d}\ell \mathrm{d}m \,\tau_S(s)\mathrm{d}s
\\
& = A_L^1 + A_L^2 + A_L^3,
\end{split}
\end{align}
where
\begin{align}
\label{M_111}
A_L^1 & = \int_0^t \iint e^{is \Phi} X \Big[ \frac{\ell |\ell|}{c(\ell,m)} a_L(\ell,m) \Big]
  \, \widetilde{f}(s,\ell+k) \widetilde{f}(s,m) \, \nu_1(\ell,m) \, \mathrm{d}\ell \mathrm{d}m \,\tau_S(s)\mathrm{d}s,
\\
\label{M_112}
A_L^2 & = \int_0^t \iint e^{is \Phi} \frac{\ell |\ell|}{c(\ell,m)} a_L(\ell,m)
  \, \X \Big[ \widetilde{f}(s,\ell+k) \widetilde{f}(s,m) \Big]\, \nu_1(\ell,m) 
  \,\mathrm{d}\ell \mathrm{d}m \,\tau_S(s)\mathrm{d}s,
\\
\label{M_113}
A_L^3 & = \int_0^t \iint e^{is \Phi} \frac{\ell |\ell|}{c(\ell,m)} a_L(\ell,m)
  \, \widetilde{f}(s,\ell+k) \widetilde{f}(s,m) \,\X \Big[\nu_1(\ell,m)\Big] 
  \,\mathrm{d}\ell \mathrm{d}m \,\tau_S(s)\mathrm{d}s.
\end{align}
Integrating by parts in $s$ instead we can write
\begin{align}
\label{M_120}
\begin{split}
B_L(t) 
& = - B_L^1 + B_L^2 + B_L^3 + B_L^4,
\end{split}
\end{align}
where
\begin{align}
\label{M_121}
B_L^1 & = t \iint e^{it \Phi} \frac{\ell}{c(\ell,m)} a_L(\ell,m)
  \, \widetilde{f}(t,\ell+k) \widetilde{f}(t,m) \, \tau_S(t) \nu_1(\ell,m) 
  \, \mathrm{d}\ell \mathrm{d}m \, \tau_S(t),
\\
\label{M_122}
B_L^2 & = \int_0^t \iint e^{is \Phi} \frac{\ell}{c(\ell,m)} a_L(\ell,m)
  \, \big( \partial_s\widetilde{f}(s,\ell+k) \big) \widetilde{f}(s,m) \nu_1(\ell,m) 
  \, \mathrm{d}\ell \mathrm{d}m \, s \tau_S(s)  \mathrm{d}s,
\\
\label{M_123}
B_L^3 & = \int_0^t \iint e^{is \Phi} \frac{\ell}{c(\ell,m)} a_L(\ell,m)
  \, \widetilde{f}(s,\ell+k) \big(\partial_s\widetilde{f}(s,m)\big) \nu_1(\ell,m) 
  \, \mathrm{d}\ell \mathrm{d}m \, s \tau_S(s)  \mathrm{d}s,
\\
\label{M_124}
B_L^4 & = \int_0^t \iint e^{is \Phi} \frac{\ell}{c(\ell,m)} a_L(\ell,m)
  \, \widetilde{f}(s,\ell+k)\widetilde{f}(s,m) \nu_1(\ell,m) 
  \, \mathrm{d}\ell \mathrm{d}m \, \partial_s \big(s \tau_S(s)\big) \mathrm{d}s.
\end{align}

\smallskip
\paragraph{{\it Estimate of \eqref{M_111}}}
With the notation of Theorem \ref{theomu1} we have
\begin{align}
\begin{split}
{\| A_L^1 \|}_{L^2_k} \lesssim \int_0^t  2^{-L} {\| T_1[b]\big(\Ftil u(s), \Ftil u(s) \big) \|}_{L^2} 
\, \tau_S(s) \mathrm{d}s
\\
\mbox{with} \qquad b(\ell,m) := 2^L \X \Big[ \frac{\ell |\ell|}{c(\ell,m)} a_L(\ell,m) \Big]
\end{split}
\end{align}
Since $||m|-|\ell||\ll |\ell|$ on the support of the integral \eqref{M_111},
it is easy to check that $b$ satisfies 
the symbol bounds \eqref{theomu1asb1}-\eqref{theomu1asb2}.
Using the bilinear estimate \eqref{theomu1conc}, and the apriori decay estimates \eqref{aprioriL<6}, we have
\begin{align*}
\begin{split}
{\| A_L^1 \|}_{L^2_k} & \lesssim 
\int_0^t  2^{-L}  \big[ {\| \Fhat \wt{u}(s) \|}_{L^3} {\| \Fhat \wt{u}(s) \|}_{L^{6-}}
  \cdot 2^{L} \cdot 2^{C_0A} + \js^{-3} \e^2\, \big] \, \tau_S(s) \mathrm{d}s
\\ 
& \lesssim \int_0^t \e 2^{-S/2} \cdot \e 2^{-(1-)S}
	\cdot 2^{C_0A} \tau_S(s) \mathrm{d}s + \e^2 2^{-S}
\\
& \lesssim \e^2 2^{-S/4}.
\end{split}
\end{align*}
Summing over $L \in [-L_0,A] \cap \Z$ at the expense of an $O(S)$
factor gives the desired bound on the right-hand side of \eqref{dkest1a}.

\smallskip
\paragraph{{\it Estimate of \eqref{M_112}}}
This term consists of two similar terms (corresponding to one of the two profiles being hit by $\X$),
so we only estimate one of them. We denote
\begin{align*}
A_L^2 = \int_0^t \iint e^{is \Phi} b(\ell,m) \varphi_L(\ell)
  \, \partial_{|\ell|} \widetilde{f}(s,\ell+k) \widetilde{f}(s,m) \, \nu_1(\ell,m) 
  \,\mathrm{d}\ell \mathrm{d}m \,\tau_S(s)\mathrm{d}s
\\
b = (\ell |\ell| / c(\ell,m)) a_L(\ell,m),
\end{align*}
and estimate
\begin{align*}
{\| A_L^2 \|}_{L^2_k} & \lesssim 
	\int_0^t {\| T_1[b]\big(e^{is|k|^2}\partial_k \wt{f}(s), \Ftil u(s) \big) \|}_{L^2} \, \tau_S(s) \mathrm{d}s.
\end{align*}
Note that we have made a little abuse of notation converting $\partial_{|\ell|}$ into $\partial_\ell$;
this can be done without loss of generality by slightly redefining the symbol $b$.
Using that $b$ satisfies the hypotheses \eqref{theomu1asb1}-\eqref{theomu1asb2},
and applying \eqref{theomu1conc} together with the apriori bound \eqref{aprioriL>6}, 
we obtain
\begin{align*}
\begin{split}
{\| A_L^2 \|}_{L^2_k} & \lesssim 
\int_0^t {\| \Fhat e^{is|k|^2}\partial_k \wt{f}(s) \|}_{L^2} {\| \Fhat \wt{u}(s) \|}_{L^{\infty-}}
  \cdot 2^{L} \cdot 2^{C_0A} 
  \, \tau_S(s) \mathrm{d}s
\\ 
& \lesssim \int_0^t \e \cdot \e 2^{-(5/4 - \delta)S} \cdot 2^{(C_0+1)A} \tau_S(s) \mathrm{d}s 
\lesssim \e^2 2^{-2\delta'S},
\end{split}
\end{align*}
see \eqref{condition}, and note that we have now disregarded the fast decaying remainder term from $2^{-D}\mathcal{D}$.

\smallskip
\paragraph{{\it Estimate of \eqref{M_113}}}
Using the notation \eqref{theomu1TX} we see that
\begin{align*}
A_L^3 & = \int_0^t e^{-is|k|^2} \mathcal{F}_{x\mapsto k} \big(T_{\mathbf{X}} [b](\wt{u},\wt{u})\big)
	\,\tau_S(s)\mathrm{d}s
\end{align*}
where $b = (\ell |\ell| / c(\ell,m)) a_L(\ell,m)$.
Using the bounds \eqref{theomu1Xconc} from Theorem \ref{theomu1} 
we see that \eqref{M_113} enjoys a similar bound (up a different power of $2^{A}$) 
to that of \eqref{M_111}. We skip the details.

\smallskip
\paragraph{{\it Estimate of \eqref{M_121}}}
With the notation of Theorem \ref{theomu1} we have
\begin{align}\label{M_121est}
\begin{split}
{\| B_L^1 \|}_{L^2_k} \lesssim t \, 2^{-L} {\| T_1[b]\big(\Ftil u(s), \Ftil u(s) \big) \|}_{L^2},
	\qquad b(\ell,m) = \frac{\ell 2^L a_L(\ell,m)}{c(\ell,m)}
\end{split}
\end{align}
Similarly to before we can apply \eqref{theomu1conc} (again disregarding the remainder term on the
right-hand side of the inequality)
\begin{align*}
{\| B_L^1 \|}_{L^2_k} & \lesssim 
t \, 2^{-L}  {\| \Fhat \wt{u}(t) \|}_{L^3} {\| \Fhat \wt{u}(t) \|}_{L^{6-}}
  \cdot 2^{L} \cdot 2^{C_0A} \tau_S(t)
\\ 
& \lesssim 2^S \cdot \e 2^{-S/2} \cdot \e 2^{-(1-)S} \cdot 2^{C_0A}
\\
& \lesssim \e^2 2^{-S/2} 2^{(C_0+1)\delta_N}
\end{align*}
which is more than sufficient. 

\smallskip
\paragraph{{\it Estimate of \eqref{M_122} and \eqref{M_123}}}
The terms \eqref{M_122} and \eqref{M_123} are similar to each other, 
so it suffices to show how to estimate the first one.
With the usual notation we have
\begin{align*}
& {\| B_L^2 \|}_{L^2_k} \lesssim \int_0^t 2^{-L} 
	{\| T_1[b]\big(e^{is|k|^2}\partial_s \wt{f}(s), \Ftil u(s) \big) \|}_{L^2} \, \tau_S(s) \mathrm{d}s,
\\
& b = 2^L \ell a_L(\ell,m) / c(\ell,m).
\end{align*}
Recall that $e^{is|k|^2} \partial_s \widetilde{f} = \Ftil (u^2)$, see \eqref{prHFdsf}, and therefore
\begin{align}\label{estdsf}
{\| \Fhat^{-1}e^{is|k|^2} \partial_s \wt{f} \|}_{L^p} 
	\lesssim {\| u \|}_{L^{2p}}^2. 
\end{align}
Applying \eqref{theomu1conc}, \eqref{estdsf} with $p=3$, the apriori decay estimates \eqref{aprioriL<6},
we obtain
\begin{align*}
\begin{split}
{\| B_L^2 \|}_{L^2_k} & \lesssim 
\int_0^t 2^{-L} {\| \Fhat e^{is|k|^2} \partial_s \wt{f} \|}_{L^3} {\| \Fhat \wt{u}(s) \|}_{L^{6-}}
  \cdot 2^{L} \cdot 2^{C_0A} 
  \, 2^S \tau_S(s) \mathrm{d}s
\\ 
& \lesssim \int_0^t (\e  2^{-S})^2 \cdot \e 2^{-(1-)S} \cdot 2^{C_0A} \cdot 2^S \tau_S(s) \mathrm{d}s 
\\
& \lesssim \e^2 2^{-2S/3}
\end{split}
\end{align*}
provided $\delta_N$ is small enough as usual. 

\smallskip
\paragraph{{\it Estimate of \eqref{M_124}}}
This term is almost identical to \eqref{M_121} by noticing that $\partial_s (s\tau_S(s)) \approx \tau_S(s)$,
and that the time integration is equivalent to the factor of $t$ in front of the expression in \eqref{M_121}.
We can then skip the details.
This concludes the estimate of $B_L$, hence of $\mathcal{M}_L$, see \eqref{M_2split},
and gives us \eqref{dkest1a}.

\medskip
\subsubsection{Proof of \eqref{dkest1b}}\label{lnotm}
We now look at the term \eqref{dkest12b}. 
In this case, the integrals are supported on a region where $||\ell|-|m||\gtrsim |m|+|\ell|$ 
so that $\nu_1$ is not really singular.
In particular, we can differentiate it in other directions besides the direction of $X$
and use the bilinear bounds from Theorem \ref{theomu1'}(i).
This makes the estimates more straightforward.

%
%

More precisely, we have the identity
\begin{align}\label{lnotm1}
\sqrt{|\ell|^2+|m|^2} X_+ \Phi = \Phi + |\ell|^2 + |m|^2,
\qquad X_+ := \frac{(\ell,m)}{\sqrt{|\ell|^2+|m|^2}}\cdot \nabla_{(\ell,m)}, 
\end{align}
and, consequently,
\begin{align}\label{lnotmIBP}
\frac{1}{|\ell|^2+|m|^2} \Big( \frac{1}{is} \sqrt{|\ell|^2+|m|^2} X_+  
  + i \partial_s \Big) e^{is\Phi(k,\ell,m)} = e^{is\Phi(k,\ell,m)}.
\end{align}
Note how (a) $\sqrt{|\ell|^2+|m|^2} X_+$ plays the role of $|\ell|\X$ in \eqref{PhiXPhi},
(b) the identity \eqref{lnotmIBP} is analogous to the identity \eqref{PhiXPhiIBP},
and (c) integration by parts in the $X_+$ direction is possible thanks to the bilinear estimates 
\eqref{theomu1'conc1}, which are analogous to the estimate \eqref{theomu1Xconc} 
that we have used before when we integrated by parts in the $\X$ direction.
Then, the terms $\mathcal{M}_{2}^+$ in \eqref{dkest12b}
have the same structure of the terms $\mathcal{M}_{2}^-$ in \eqref{dkest1}. 
We can then skip the details for the proof of \eqref{dkest1b}.


\medskip
\subsection{Proof of \eqref{dkest2}}\label{ssecdkest2}
We now estimate the highest weighted norm.
Recall the notation \eqref{N_1L} and the formulas \eqref{dk_N1}-\eqref{dkN_12} for the first weight.
Taking an extra derivative gives: 
\begin{align}
\label{dk2N_10}
{\| \partial_k^2 \mathcal{N}_L(f,f) \|} & \lesssim \sum_{j=1}^3 {\| \mathcal{A}_j(f,f) \|}_{L^2}
\end{align}
where 
\begin{align}
\label{dk^2N_11}
\mathcal{A}_{1}(f,f) & = 
	\int_0^t \iint e^{is \Phi(k,\ell,m)} \, \partial_k^2 \widetilde{f}(s,\ell+k) \widetilde{f}(s,m)
 	\,\varphi_L(\ell)  \nu_1(\ell,m)\, \mathrm{d}\ell \mathrm{d}m \,\tau_S(s) \mathrm{d}s,
\\
\label{dk^2N_12}
\mathcal{A}_{2}(f,f) & = \int_0^t \iint s^2 |\ell|^2 \, e^{is \Phi(k,\ell,m)} 
	\, \widetilde{f}(s,\ell+k) \widetilde{f}(s,m)
	\, \varphi_{\sim L}(\ell) \nu_1(\ell,m) \, \mathrm{d}\ell \mathrm{d}m \,\tau_S(s) \mathrm{d}s,
\\
\label{dk^2N_13}
\mathcal{A}_{3}(f,f) & = \int_0^t \iint i s \ell \, e^{is \Phi(k,\ell,m)} 
	\, \partial_k \widetilde{f}(s,\ell+k) \widetilde{f}(s,m)
	\, \varphi_L(\ell) \nu_1(\ell,m) \, \mathrm{d}\ell \mathrm{d}m \,\tau_S(s) \mathrm{d}s.
\end{align}
Note that in \eqref{dk^2N_12} we have used the shorthand notation $|\ell|^2$ in place of the expressions 
$\ell_i\ell_j=\partial_{k_i} (k \cdot \ell) \partial_{k_j} (k \cdot \ell)$.

\medskip
\subsubsection{Estimate of \eqref{dk^2N_11}}
This term can be treated with a direct application of Theorem \ref{theomu1}
with an $L^2 \times L^{\infty-}$ estimate, and using the decay estimate from \eqref{aprioriL>6},
in the same way that we estimated \eqref{dkN_11} in \ref{dkN_11sec}.

\medskip
\subsubsection{Estimate of \eqref{dk^2N_12}}\label{ssecdk^2N_12}
This is the most complicated term due to the presence of an $s^2$ factor.
Similarly to \eqref{dkest12}-\eqref{dkest12b}, and using the 
notation  $$a_L(\ell,m):=a(\ell,m)=\varphi_{\sim L}(m)\varphi_L(\ell) \chi_-(\ell,m)$$ 
from \eqref{dkest12'} and \eqref{ssecM_2cut}, 
we split the integral in two pieces by defining
\begin{align}
\label{dk^2N_20}
\mathcal{A}_{2}^- & := \int_0^t \iint s^2 |\ell|^2 \, e^{is \Phi} 
	\, \widetilde{f}(s,\ell+k) \widetilde{f}(s,m)
	\, a(\ell,m)\nu_1(\ell,m) \, \mathrm{d}\ell \mathrm{d}m \,\tau_S(s) \mathrm{d}s.
\end{align} 
and
\begin{align}
\label{dk^2N_20+}
\mathcal{A}_{2}^+ & = \int_0^t \iint s^2 |\ell|^2 \, e^{is \Phi} 
	\, \widetilde{f}(s,\ell+k) \widetilde{f}(s,m)
	\, \varphi_{L}(\ell) \chi_+(\ell,m)  
	\nu_1(\ell,m) \, \mathrm{d}\ell \mathrm{d}m \,\tau_S(s) \mathrm{d}s.
\end{align} 
As before, the most difficult term will be \eqref{dk^2N_20}.

To start, note that we may again assume $L\geq L_0:= -S/3$ by an $L^6\times L^{3-}$ estimate similar
to the one that led to \eqref{dkest1alow}.
Next, recall the identity \eqref{PhiXPhiIBP}
\begin{align}
\label{PhiXPhiIBP'}
\frac{1}{c(\ell,m)} \Big( \frac{1}{is} |\ell|\X + i \partial_s \Big) e^{is\Phi} = e^{is\Phi},
\qquad |\nabla^\alpha_\ell \nabla^\beta_m c(\ell,m) |\lesssim |\ell|^{-2-|\alpha|-|\beta|},
\end{align}
where the estimates hold on the support of $\mathcal{A}_{2}^-$ where $2^L \approx |\ell| \approx |m| \gg ||\ell|-|m||$.
The general idea is similar to the one used before, based on integration by parts using \eqref{PhiXPhiIBP'}.
This procedure will generate many terms. 
We will give full details for the ones that need to be analyzed more carefully, and skip some details for the easier ones.

Using \eqref{PhiXPhiIBP'} in \eqref{dk^2N_20} and doing some algebra one sees that
\begin{align}
\label{dk^2N_20'}
{\| \mathcal{A}_{2}^- \|}_{L^2_k} \lesssim {\| B_1 \|}_{L^2} + {\| B_2 \|}_{L^2} 
  + {\| C_1 \|}_{L^2} + {\| C_2 \|}_{L^2} + {\| D\|}_{L^2} + \cdots
\end{align}
where the first terms are those obtained integrating by parts in $s$:
\begin{align}
\label{dk^2N_21}
B_1 & = t^2 \iint e^{it \Phi} \frac{|\ell|^2}{c(\ell,m)}
	\, \widetilde{f}(t,\ell+k) \widetilde{f}(t,m)
	\, a(\ell,m) \nu_1(\ell,m) \, \mathrm{d}\ell \mathrm{d}m \,\tau_S(t),
\\
\label{dk^2N_22}
B_2 & = \int_0^t s^2 \iint \frac{|\ell|^2}{c(\ell,m)} \, e^{is \Phi} 
	\, \big( \partial_s \widetilde{f}(s,\ell+k) \big) \, \widetilde{f}(s,m)
	\, a(\ell,m)
	\nu_1(\ell,m) \, \mathrm{d}\ell \mathrm{d}m \,\tau_S(s) \mathrm{d}s;
\end{align}
other terms arise when integrating by parts in the $X$ direction without hitting one of the profiles
\begin{align}
\label{dk^2N_23}
C_1 & = \int_0^t s \iint  \, e^{is \Phi} \X\Big(\frac{|\ell|^3}{c(\ell,m)} a(\ell,m) \Big)
	\, \widetilde{f}(s,\ell+k) \, \widetilde{f}(s,m)
	\, \nu_1(\ell,m) \, \mathrm{d}\ell \mathrm{d}m \,\tau_S(s) \mathrm{d}s,
\\
\label{dk^2N_24}
C_2 & = \int_0^t s \iint  \, e^{is \Phi} \frac{|\ell|^3}{c(\ell,m)} a(\ell,m)
	\, \widetilde{f}(s,\ell+k) \, \widetilde{f}(s,m)
	\, \big( \X\nu_1(\ell,m) \big) \, \mathrm{d}\ell \mathrm{d}m \,\tau_S(s) \mathrm{d}s;
\end{align}
and
\begin{align}
\label{dk^2N_25.0}
D & := \int_0^t s \iint  \, e^{is \Phi} \frac{|\ell|^3}{c(\ell,m)} a(\ell,m)
	\, \big( \partial_{|\ell|}\widetilde{f}(s,\ell+k) \big) \, \widetilde{f}(s,m)
	\, \nu_1(\ell,m) \, \mathrm{d}\ell \mathrm{d}m \,\tau_S(s) \mathrm{d}s;
\end{align} 
in \eqref{dk^2N_20'} the ``$\cdots$'' denote all the other 
terms that are similar or easier to treat than those in \eqref{dk^2N_21}--\eqref{dk^2N_25.0}.
%
%
The term \eqref{dk^2N_25.0} requires a further use of \eqref{PhiXPhiIBP'}, which gives
\begin{align}
& {\| D \|}_{L^2} \lesssim {\| D_1 \|}_{L^2} + {\| D_2 \|}_{L^2} + {\| D_3 \|}_{L^2} + {\| D_4 \|}_{L^2} + \cdots
\end{align}
where
\begin{align}
\label{dk^2N_25}
D_1 &:= \int_0^t \iint  \, e^{is \Phi} \frac{|\ell|^4}{c^2(\ell,m)} a(\ell,m)
	\, \big( \partial_{|\ell|}^2\widetilde{f}(s,\ell+k) \big) \, \widetilde{f}(s,m)
	\, \nu_1(\ell,m) \, \mathrm{d}\ell \mathrm{d}m \,\tau_S(s) \mathrm{d}s,
\\
\label{dk^2N_26}
D_2 &:= \int_0^t \iint  \, e^{is \Phi} \frac{|\ell|^4}{c^2(\ell,m)} a(\ell,m)
	\, \big( \partial_{|\ell|}\widetilde{f}(s,\ell+k) \big) \, \big( \partial_{|m|}\widetilde{f}(s,m) \big)
	\, \nu_1(\ell,m) \, \mathrm{d}\ell \mathrm{d}m \,\tau_S(s) \mathrm{d}s,
\\
\label{dk^2N_27}
D_3 &:= \int_0^t s \iint  \, e^{is \Phi} \frac{|\ell|^3}{c^2(\ell,m)} a(\ell,m)
	\, \big( \partial_{|\ell|}\widetilde{f}(s,\ell+k) \big) \, \big( \partial_s\widetilde{f}(s,m) \big)
	\, \nu_1(\ell,m) \, \mathrm{d}\ell \mathrm{d}m \,\tau_S(s) \mathrm{d}s,
\\
\label{dk^2N_28}
D_4 &:= \int_0^t s \iint  \, e^{is \Phi} \frac{|\ell|^3}{c^2(\ell,m)} a(\ell,m)
	\, \big( \partial_s \partial_{|\ell|} \widetilde{f}(s,\ell+k) \big) \, \widetilde{f}(s,m)
	\, \nu_1(\ell,m) \, \mathrm{d}\ell \mathrm{d}m \,\tau_S(s) \mathrm{d}s.
\end{align} 
and ``$\cdots$'' denote terms that are easier to estimate.

\smallskip
\paragraph{{\it Estimate of \eqref{dk^2N_21}--\eqref{dk^2N_22}}}
The boundary integral $B_1$ is the term that causes the growth in time of the weighted norm \eqref{dkest2}.
With the notation of Theorem \ref{theomu1} we have
\begin{align*}
\begin{split}
{\| B_1 \|}_{L^2_k} \lesssim t^2 \cdot {\| T_1[b]\big(\Ftil u(t), \Ftil u(t) \big) \|}_{L^2},
	\qquad b(\ell,m) = \frac{|\ell|^2 a(\ell,m)}{c(\ell,m)}.
\end{split}
\end{align*}
Applying \eqref{theomu1conc} (disregarding the remainder term on the right-hand side as usual),
and the decay estimates \eqref{aprioriL<6}, we get, for $t \approx 2^S$, 
\begin{align*}
{\| B_1 \|}_{L^2_k} & \lesssim 
t^2 \cdot \big[ {\| \Fhat \wt{u}(t) \|}_{L^3} {\| \Fhat \wt{u}(t) \|}_{L^{6-}}
  \cdot 2^{L} \cdot 2^{C_0A} 
\\ 
& \lesssim 2^{2S} \cdot \e 2^{-S/2} \cdot \e 2^{-(1-)S} \cdot 2^{(C_0+1)A}
\\
& \lesssim \e^2 2^{S/2-} 2^{(C_0+1)\delta_N}.
\end{align*}
This is bounded as in \eqref{dkest2} provided we choose $\delta$ depending on $N$ 
so that $(C_0+1)\delta_N < \delta$, see \eqref{d_N}.

The term \eqref{dk^2N_22} can be seen to satisfy an even stronger bound 
with a similar application of \eqref{theomu1conc} and using \eqref{estdsf}.

\smallskip
\paragraph{{\it Estimate of \eqref{dk^2N_23}--\eqref{dk^2N_24}}}
We have
\begin{align*}
\begin{split}
{\| C_1 \|}_{L^2_k} \lesssim \int_0^t s \, {\| T_1[b]\big(\Ftil u(s), \Ftil u(s) \big) \|}_{L^2} 
\, \tau_S(s) \mathrm{d}s, \qquad b(\ell,m) = X\Big(\frac{|\ell|^3}{c(\ell,m)} a(\ell,m) \Big)
\end{split}
\end{align*}
so that \eqref{theomu1conc} and \eqref{aprioriL<6} give us
\begin{align*}
{\| C_1 \|}_{L^2_k} 
& \lesssim \int_0^t  2^S \cdot {\| \Fhat \wt{u}(s) \|}_{L^3} {\| \Fhat \wt{u}(s) \|}_{L^{6-}}
  \cdot 2^{L} \cdot 2^{C_0A} \, \tau_S(s) \mathrm{d}s
\\ 
& \lesssim \int_0^t  2^S \cdot \e 2^{-S/2} \cdot \e 2^{-(1-)S}
	\cdot 2^{(C_0+1)A} \tau_S(s) \mathrm{d}s
\\
& \lesssim \e^2 2^{S/2} 2^{(C_0+2)\delta_N S}
\end{align*}
which suffices. 
The term \eqref{dk^2N_24} can be estimated similarly using the bilinear estimate \eqref{theomu1Xconc}.

\smallskip
\paragraph{{\it Estimate of \eqref{dk^2N_25}--\eqref{dk^2N_27}}}
The first term \eqref{dk^2N_25} can be handled directly with an $L^2\times L^{\infty-}$ estimate,
using that the symbol satisfies the hypothesis \eqref{theomu1asb2} of Theorem \ref{theomu1},
and the $L^{\infty-}$ decay from \eqref{aprioriL>6}.

For the second term we have 
\begin{align*}
\begin{split}
{\| D_2 \|}_{L^2_k} \lesssim \int_0^t {\| T_1[b]\big(e^{is|k|^2}\partial_{|k|} \wt{f}(s), 
   e^{is|k|^2}\partial_{|k|} \wt{f}(s) \big) \|}_{L^2} 
\, \tau_S(s) \mathrm{d}s, \qquad b(\ell,m) = \frac{|\ell|^4a(\ell,m)}{c^2(\ell,m)}.
\end{split}
\end{align*}
We then use \eqref{theomu1conc} with an $L^{6-}\times L^3$ estimate, and interpolation, to get
\begin{align*}
\begin{split}
{\| D_2 \|}_{L^2_k} & \lesssim \int_0^t {\|\Fhat e^{is|k|^2}\partial_{|k|} \wt{f}(s)\|}_{L^6}
   {\| \Fhat e^{is|k|^2}\partial_{|k|} \wt{f}(s) \|}_{L^{3-}} \,
   \cdot 2^{L} \cdot 2^{C_0A} \tau_S(s) \mathrm{d}s
   \\
   & \lesssim \int_0^t 2^{-(1-(C_0+1)\delta_N)S} {\|\partial_{k}^2 \wt{f}(s)\|}_{L^2}
   \cdot {\| \Fhat e^{is|k|^2}\partial_{|k|}  \wt{f}(s)  \|}_{L^6}^{1/2-} {\| \partial_{|k|} \wt{f}(s) \|}_{L^2}^{1/2+}
   \, \tau_S(s) \mathrm{d}s
   \\
   & \lesssim \int_0^t 2^{-(1-(C_0+1)\delta_N)S} {\|\partial_{k}^2 \wt{f}(s)\|}_{L^2}
   \cdot 2^{-(1/2-)S} {\| \partial_{k}^2 \wt{f}(s) \|}_{L^2}^{1/2-} {\| \partial_{k} \wt{f}(s) \|}_{L^2}^{1/2+}
   \, \tau_S(s) \mathrm{d}s
   \\
   & \lesssim \int_0^t 2^{-(1-(C_0+1)\delta_N)S} \e 2^{(1/2+\delta)S} \cdot 2^{-(1/2-)S} \e 2^{(1/2-)(1/2+\delta)S}
   \, \tau_S(s) \mathrm{d}s
   \\
   & \lesssim \e^2 2^{S/2},
\end{split}
\end{align*}
having used, see \eqref{LemLinearL6}, that
${\|\Fhat e^{is|k|^2}\wt{g}(s)\|}_{L^6} \lesssim 2^{-S} {\| \partial_{k} \wt{g}(s)\|}_{L^2}$ in the second and 
third inequalities. 

The term \eqref{dk^2N_27} can be treated similarly with an $L^6 \times L^{3-}$ estimate and using \eqref{estdsf}.
We skip the details.

\smallskip
\paragraph{{\it Estimate of \eqref{dk^2N_28}}}
This is the last term arising from the double integration by parts argument in the main term \eqref{dk^2N_20}.
To bound it we need an additional bound for $\partial_{|k|}\partial_s\wt{f}$.
In particular, let us assume for the moment that 
\begin{align}\label{estdsXf}
{\| \varphi_{\leq A}(k) \partial_k \partial_t\wt{f}(t) \|}_{L^2_k} \lesssim 2^{-S/2} 2^{2\delta_NS}, \qquad t \approx 2^S.
\end{align}
Then we can bound
\begin{align*}
\begin{split}
{\| D_4 \|}_{L^2_k} \lesssim \int_0^t s \, 2^{-L} {\| T_1[b]\big(e^{is|k|^2}\partial_{|k|} \partial_s \wt{f}(s), 
   \wt{u}(s) \big) \|}_{L^2} 
\, \tau_S(s) \mathrm{d}s, \qquad b(\ell,m) = 2^L \frac{|\ell|^3 a(\ell,m)}{c^2(\ell,m)},
\end{split}
\end{align*}
and using  \eqref{theomu1conc} for an $L^2 \times L^{\infty-}$ estimate, with the decay bound \eqref{aprioriL>6}
and \eqref{estdsXf}, we get:
\begin{align*}
{\| D_4 \|}_{L^2_k}
   & \lesssim \int_0^t 2^S \cdot {\|\Fhat e^{is|k|^2}\partial_{|k|} \partial_s \wt{f}(s)\|}_{L^2}
   {\| \Fhat \wt{u}(s) \|}_{L^{\infty-}} \cdot 2^{C_0A} \tau_S(s) \mathrm{d}s
   \\
   & \lesssim \int_0^t 2^S \cdot  2^{-S/2 + 2\delta_N S}
   \cdot \e 2^{-(5/4-\delta)S} \cdot 2^{C_0\delta_N S} \tau_S(s) \mathrm{d}s
   \\
   & \lesssim \e^2 2^{S/2}
\end{align*}
since we have $(C_0+2)\delta_N + \delta \leq 1/4$.
The desired estimate \eqref{dkest1b} for \eqref{dk^2N_20} is concluded once we verify \eqref{estdsXf}.


To prove \eqref{estdsXf} we start from \eqref{Duhamel0} in the form
\begin{align*}
\partial_t \wt{f}(t,k) = \iint e^{it (-|k|^2 + |\ell|^2 + |m|^2 )} \widetilde{f}(t,\ell) \widetilde{f}(t,m)
  \, \mu(k,\ell,m) \, \mathrm{d}\ell \mathrm{d}m,
\end{align*}
and look back at the ``commutation formula'' \eqref{proGHW2} 
for $\partial_k$ and the operator \eqref{proGHW0} in Proposition \ref{proGHW}.
Using this and the same notation in that proposition, 
we obtain (compare with the similar terms obtained in \eqref{prHF1}):
\begin{align}
\nonumber
\partial_k \partial_t \wt{f} & = E_1 + E_2 + E_3,
\\
\label{dkdsf1}
E_1(f,f)(t,k) & = \iint e^{it \Phi(k,\ell,m)} \widetilde{f}(t,\ell) \widetilde{f}(t,m)
  \, \big( -it k \mu + \mu'  \big) \, \mathrm{d}\ell\mathrm{d}m,
\\
\label{dkdsf2}
E_2(f,f)(t,k) & = \iint e^{it \Phi(k,\ell,m)} \widetilde{f}(t,\ell) \widetilde{f}(t,m)
  \, i t \ell \, \mu'(k,\ell,m) \, \mathrm{d}\ell\mathrm{d}m,
\\
\label{dkdsf3}
E_3(f,f)(t,k) &= \iint e^{it \Phi(k,\ell,m)} 
  \partial_\ell \widetilde{f}(t,\ell) \widetilde{f}(t,m) \, \mu'(k,\ell,m) \, \mathrm{d}\ell\mathrm{d}m.
\end{align}
Notice that since $E_3(f,f)=\mathcal{B}^\prime(\partial_kf,f)$ this term
can be easily bounded using an $L^2 \times L^{\infty-}$ H\"older estimate
(which holds for the bilinear operator $\mathcal{B}^\prime$).

For the term \eqref{dkdsf1}
we have, for $t \approx 2^S$,
\begin{align*}
{\| \varphi_{\leq A}(\cdot) E_1(t,\cdot) \|}_{L^2_k}
   & \lesssim 2^A \cdot 2^S \cdot {\|\Fhat \wt{u}(t)\|}_{L^3}
   {\| \Fhat \wt{u}(s) \|}_{L^{6-}} \lesssim \e^2 2^{2\delta_N S} 2^{-S/2}
\end{align*}
which is more than sufficient.
For the remaining term we can use a similar estimate when $|\ell| \leq 2^A$.
If instead $|\ell| \geq 2^A$ we can use an $L^2 \times L^{\infty-}$ bound and the 
control on the high Sobolev norm:
\begin{align*}
{\| E_2(t) \|}_{L^2}
   & \lesssim 2^S \cdot {\|\Fhat \varphi_{\geq A} \wt{u}(t)\|}_{H^1}
   {\| \Fhat \wt{u}(s) \|}_{L^{\infty-}}
   \\
   & \lesssim 2^S \cdot 2^{-A(N-1)} {\|\Fhat \wt{u}(t)\|}_{H^N} \cdot \e 2^{-S}
   \lesssim \e^2 2^{-S}.
\end{align*}

\smallskip
\paragraph{{\it Estimate of \eqref{dk^2N_20+}}}
This term is easier to treat than the previous one \eqref{dk^2N_20}.
We can proceed similarly to the case of the corresponding term \eqref{dkest12b} 
in the estimate of the first weight, see \ref{lnotm}. 
In particular, we can use the identities \eqref{lnotm1}-\eqref{lnotmIBP} as explained before, 
to integrate by parts (once or twice) as done for the term \eqref{dk^2N_20} just above.

\medskip
\subsubsection{Estimate of \eqref{dk^2N_13}}
By using the same splitting depending on the size of $|\ell|-|m|$ relative to $|\ell|$ as before,
and integrating by parts once using the same formula \eqref{PhiXPhiIBP'} (and \eqref{lnotm1}) above, 
we can see that \eqref{dk^2N_13} gives rise to terms of the same form as those treated before. 
We can therefore skip the details.
The proof of \eqref{dkest2} and Proposition \ref{prodkL2} is concluded.

\medskip
\subsection{Estimates for $\mathcal{N}_{2,S}$}\label{ssecdkN_2}
We now prove the weighted bounds \eqref{dkest1}-\eqref{dkest2} for 
the term $\mathcal{N}_{2,S}$, see \eqref{dkL20}. 
We can use a similar strategy to the one used for $\mathcal{N}_{1,S}$.
To implement such a strategy we will need:
(1) a similar structure including
a ``good vectorfield'' and algebraic relations like \eqref{PhiXPhiIBP},
and (2) proper multiplier estimates, like those of Theorem \ref{theomu1}(i)
and Theorem \ref{theomu1'}(ii)-(iii).

As before we will sometimes drop some of the indexes (for example the index $S$ in $\mathcal{N}_{2,S}$)
when this causes no confusion.
For convenience let us recall the definition from \eqref{dkL20} here:
\begin{align}
\label{N_2}
\begin{split}
\mathcal{N}_{2,S}(t)(f,f) & = \int_0^t \iint e^{is \Psi(k,\ell,m)} 
  \widetilde{f}(s,-\ell-m) \widetilde{f}(s,m)
  \, \bar{\nu_1(\ell,k)}  \, \mathrm{d}\ell \mathrm{d}m \, \tau_S(s) \mathrm{d}s,
\\
& \Psi(k,\ell,m) := -|k|^2 + |\ell|^2 + 2\ell \cdot m + 2|m|^2.
\end{split}
\end{align}
We want to estimate $\partial_k \mathcal{N}_2$ and $\partial_k^2 \mathcal{N}_2$ as in Proposition \ref{prodkL2}
and show that, under the apriori assumptions \eqref{apriori}, we have
\begin{align}
\label{dkN_21}
& {\| \partial_k \mathcal{N}_{2,S}(t)(f,f) \|}_{L^2} \lesssim \e^2 2^{-\delta' S}, \quad \delta'>0
\\
\label{dkN_22}
& {\| \partial_k^2 \mathcal{N}_{2,S}(t)(f,f) \|}_{L^2} \lesssim 2^{(1/2+\delta)S} \e^2.
\end{align}

\subsubsection{Proof of \eqref{dkN_21}}\label{ssecd_kN21}
We concentrate only on the singular region where $||\ell|-|k||\ll |\ell|\approx |k|$.
The non-singular region can be treated more easily as in the case of $\nu_1(\ell,m)$ in Subsection \ref{lnotm},
and using part (ii) of Theorem \ref{theomu1'} to absorb derivatives in $k$ and integrate by parts in $\ell$;
also notice that $\partial_m$ is also always a ``good direction'' 
for integration since $\nu_1(\ell,k)$ is independent of $m$.

We restrict close to the singularity of $\nu_1(\ell,k)$
by inserting localization in $|\ell| \approx 2^L$, $|k| \approx 2^K$ 
and a cutoff $\chi_+(\ell,k)$ as in \eqref{theomu1'3b}.
For lighter notation, we do not display the cutoff and assume that $\nu_1 = \nu_1\chi_+$
(plus we disregard terms where derivatives hit the cutoff).

We recall the definition of ${\bf Y}$ from \eqref{theomu1'2b},
\begin{align}\label{dkN_2Y}
{\bf Y} = \partial_k + \frac{k}{|k|} \big( \frac{\ell}{|\ell|} \cdot \partial_{\ell} \big), 
\end{align}
and use this to compute 
\begin{align}\label{dkN_2}
\begin{split}
\partial_k \mathcal{N}_2(f,f) & = \int_0^t \iint (is \partial_k \Psi) \, e^{is\Psi(k,\ell,m)}
  \widetilde{f}(s,-\ell-m) \widetilde{f}(s,m)
  \, \bar{\nu_1(\ell,k)}  \, \mathrm{d}\ell \mathrm{d}m \, \tau_S(s) \mathrm{d}s
\\
& + \int_0^t \iint e^{is\Psi(k,\ell,m)}
  \widetilde{f}(s,-\ell-m) \widetilde{f}(s,m)
  \, {\bf Y} \bar{\nu_1(\ell,k)}  \, \mathrm{d}\ell \mathrm{d}m \, \tau_S(s) \mathrm{d}s
\\
& + \int_0^t \iint  \frac{k}{|k|} \, \mathrm{div}_\ell \Big( \frac{\ell}{|\ell|}  e^{is\Psi(k,\ell,m)}
  \widetilde{f}(s,-\ell-m) \widetilde{f}(s,m) \Big) 
  \, \bar{\nu_1(\ell,k)}\, \mathrm{d}\ell \mathrm{d}m \, \tau_S(s) \mathrm{d}s
\\
& = \mathcal{M}_3(f,f) + \mathcal{M}_4(f,f) + \mathcal{M}_5(f,f)
\end{split}
\end{align}
where
\begin{align}\label{dkN_23}
\mathcal{M}_3(f,f) & := \int_0^t \iint \frac{k}{|k|} e^{is \Psi(k,\ell,m)} 
  \partial_{|\ell|}\widetilde{f}(s,-\ell-m) \widetilde{f}(s,m)
  \, \bar{\nu_1(\ell,k)}\, \mathrm{d}\ell \mathrm{d}m \, \tau_S(s) \,\mathrm{d}s,
\end{align}
\begin{align}\label{dkN_24}
\begin{split}
\mathcal{M}_4(f,f) & := \int_0^t \iint is \, Z\Psi(k,\ell,m) \, 
  e^{is \Psi(k,\ell,m)} 
  \widetilde{f}(s,-\ell-m) \widetilde{f}(s,m)
  \,\bar{\nu_1(\ell,k)}\, \mathrm{d}\ell \mathrm{d}m \, \tau_S(s) \,\mathrm{d}s,
\\
& 
Z\Psi := \big( \partial_k + \frac{k}{|k|} \frac{\ell}{|\ell|} \cdot \partial_\ell \Big) \Psi 
  = 2\Big[-k + \frac{k}{|k|}(|\ell| + \frac{\ell}{|\ell|} \cdot m)\Big],
\end{split}
\end{align}
and
\begin{align}\label{dkN_25}
\begin{split}
\mathcal{M}_5(f,f) & := \int_0^t \iint e^{is \Psi(k,\ell,m)} 
  \widetilde{f}(s,-\ell-m) \widetilde{f}(s,m)
  \, {\bf Y}' \bar{\nu_1(\ell,k)}
  \, \mathrm{d}\ell \mathrm{d}m \, \tau_S(s) \,\mathrm{d}s.
\\
& {\bf Y}' := \partial_k + \frac{k}{|k|} \mathrm{div}_\ell \frac{\ell}{|\ell|}.
\end{split}
\end{align}
We discuss briefly the \eqref{dkN_23} and \eqref{dkN_25} and 
give full details for the treatment of the harder term \eqref{dkN_24}.

Notice that \eqref{dkN_23} is essentially the same as \eqref{dkN_11},
and therefore can be estimated in the same way as we did in \ref{dkN_11sec}, 
with a direct application of the bilinear bound for the $T_2$-type operator in Theorem \ref{theomu1}.

In \eqref{dkN_25} 
note that ${\bf Y}' = {\bf Y} + (k/|k|)\mathrm{div}_\ell (\ell/|\ell|)$.
The piece with ${\bf Y} \bar{\nu_1(\ell,k)}$ is an operator of the form \eqref{theomu1'2a} with $a=1$,
and applying \eqref{theomu1'conc2} (after localization) 
for an $L^{6-}\times L^3$ estimate is more than sufficient.
Let us call $\mathcal{M}_5'$ the piece in \eqref{dkN_25} 
with symbol $(k/|k|)\mathrm{div}_\ell (\ell/|\ell|) \bar{\nu_1(\ell,k)}$.
After being localized to $|\ell| \approx 2^L$ and $|k|\approx 2^K$, 
we see that, as a  bilinear operator, it satisfies 
the same estimates of the bilinear operator with symbol $2^{-L}b(k,\ell,m)\bar{\nu_1(\ell,k)}$,
for a standard $b$ as in Theorem \ref{theomu1}. 
Then we can bound it using \eqref{theomu1concT2}:
\begin{align*}
{\| P_K \Fhat^{-1} \mathcal{M}_5'(f,f) \|}_{L^2}
  & \lesssim 2^S \cdot 2^{-L} \cdot {\| \varphi_K T_2[b](f,f) \|}_{L^2}
  \\
  & \lesssim 2^S \cdot 2^{C_0A} \cdot {\|\Fhat \wt{u}(t)\|}_{L^{6-}} {\| \Fhat \wt{u}(s) \|}_{L^3}
  \lesssim \e^2 \cdot 2^{-S/4}.
\end{align*}



\medskip
\subsubsection*{Estimate of \eqref{dkN_24}} 
We now show
\begin{align}\label{dkN_24.1}
{\| \mathcal{M}_4(t)(f,f) \|}_{L^2} \lesssim \e^2 2^{-\delta' S}, \quad \delta'>0.
\end{align}
This term is similar to \eqref{dkN_12} (it has a growing factor of $s$
and no differentiation of the profiles $\wt{f}$).
We will need to distinguish different cases depending on whether 
we are close to the singularity of $\nu_1(\ell,k)$ or not 
(see \ref{ssecM_2} for the similar splitting
in the case of \eqref{dkN_12}) and use a good vectorfield to integrate by parts 
(see the relations at the beginning of \ref{l=m}).


First of all, notice that the singular kernel $\nu_1(\ell,k)$ does not depend on $m$, 
so that one can use $X:=\partial_m$ as a ``good direction'' here.
We then calculate
\begin{align}
\label{PsiXPsi} 
\begin{split}
& X \Psi := 2 \ell + 4 m, \qquad X := \partial_{m},
\\
& \Psi(k,\ell,m) -  m \cdot X\Psi 
  = (|\ell|-|k|)(|\ell| + |k|) - 2|m|^2
\end{split}
\end{align}
This last identity is the analogue of \eqref{PhiXPhi}, and an identity similar to \eqref{PhiXPhiIBP},
see \eqref{PsiXPsiIBP} below, follows from it.

To obtain \eqref{dkN_24.1} it suffices to show that
\begin{align}\label{dkN_24.1'}
{\| \mathcal{M}_{K,L,M}(t)(f,f) \|}_{L^2} \lesssim \e^2 2^{-2\delta' S}, \quad \delta'>0
\end{align}
where
\begin{align}\label{dkN_24'}
\begin{split}
\mathcal{M}_{K,L,M}(f,f) := \varphi_{K}(k) & \int_0^t \iint s \, Z\Psi \,
  e^{is \Psi} \varphi_{L}(\ell) \varphi_{M}(m) \,
  \\ & \times \widetilde{f}(s,-\ell-m) \widetilde{f}(s,m)
  \,\bar{\nu_1(\ell,k)}\, \mathrm{d}\ell \mathrm{d}m \, \tau_S(s) \,\mathrm{d}s,
\end{split}
\end{align}
is a localized version of \eqref{dkN_24} at scales $|\ell|\approx 2^L$, $|m|\approx 2^M$ and $|k|\approx 2^K$.
The reduction to \eqref{dkN_24.1'} can be done without loss of generality by estimating
the contribution of very small frequencies first, say for example $\min(K,L,M) \leq -10S$,
using Bernstein and Theorem \ref{theomu1}(i);
summing over the remaining dyadic scales can be done at the expense of a small loss of $O(S^3)+(A^3)$.

In order to use efficiently \eqref{PsiXPsi} we need to further split (we omit the dependence on the indexes $K,L,M$)
\begin{align*}
& \mathcal{M}_{K,L,M} = I_1+I_2+I_3
\end{align*}
where the following conditions on the support are imposed by inserting smooth cutoffs:
\begin{align}\label{dkN_24split}
\begin{split}
& I_1 \quad \mbox{is supported on} \quad  |\ell+2m| \geq 2^{\max(L,M)-10}
\\
& I_2 \quad \mbox{is supported on} \quad  |\ell+2m| \leq 2^{\max(L,M)-9} \quad 
  \mbox{and} \quad ||\ell|-|k|| \leq 2^{\max(L,K)-9}
\\
& I_3 \quad \mbox{is supported on} \quad  |\ell+2m| \leq 2^{\max(L,M)-9} \quad 
  \mbox{and} \quad ||\ell|-|k|| \geq 2^{\max(L,K)-10}
\end{split}
\end{align}

\subsubsection*{Estimate of $I_1$} 
Here we have $|\ell+2m| \geq 2^{\max(L,M)-10}$ and therefore $|X\Psi| \gtrsim 2^{\max(L,M)-10}$. 
In particular we can use the identity
\begin{align}\label{XPsi}
\frac{1}{is} \frac{X\Psi}{|X\Psi|^2} \cdot X e^{is\Psi} = e^{is\Psi}
\end{align}
to recover the factor of $s$ in \eqref{dkN_24'}.
Also notice that $X\Psi/|X\Psi|^2$ behaves like the symbol $1/|\ell+2m|$ which, on the support of $I_1$, gives
\begin{align}\label{XPsisym}
\big| \nabla^\alpha_\ell \nabla^\beta_m \frac{X\Psi}{|X\Psi|^2} \big|
  \lesssim \frac{1}{\sqrt{|\ell|^2+|m|^2}} |\ell|^{-|\alpha|} |m|^{-|\beta|}.
\end{align}
Then, when multiplied by $2^{\max(M,L)}$ this is a symbol like those allowed 
by \eqref{theomu1asb2} in the assumptions of Theorem \ref{theomu1}.
An $L^{6-}\times L^3$ application of \eqref{theomu1concT2} to \eqref{dkN_24'} 
gives a bound of $\e^2 2^{-S/2}$ up to powers of $2^A$. 


\subsubsection*{Estimate of $I_2$}
In this case we have 
\begin{align*}
(19/20)|m| \leq |\ell| \leq (21/20)|m|, \qquad (19/20)|k| \leq |\ell| \leq (21/20)|k|
\end{align*}
and we can use efficiently the identity \eqref{PsiXPsi}.
We write
\begin{align}
\label{PsiXPsiIBP}
\begin{split}
& \frac{1}{d(k,\ell,m)} \Big( \frac{1}{is} m\cdot X + i \partial_s \Big) e^{is\Psi(k,\ell,m)} = e^{is\Psi(k,\ell,m)}, 
\\
& d(k,\ell,m) := -(|\ell|-|k|)(|\ell| + |k|) + 2|m|^2,
\end{split}
\end{align}
and notice that, thanks to the current frequency restrictions, 
we have 
\begin{align}
\label{PsiXPsisym}
\big| \nabla_k^a \nabla_\ell^\alpha \nabla_m^\beta \frac{1}{d(k,\ell,m)} \big| \lesssim \frac{1}{|\ell|^2+|m|^2+|k|^2} 
  |k|^{-|a|} |\ell|^{-|\alpha|} |m|^{-|\beta|},
\end{align}
consistently with the assumption \eqref{theomu1asb2}.
One can then use \eqref{PsiXPsiIBP} to integrate by parts in \eqref{dkN_24},
and arrive at the estimate \eqref{dkN_24.1'} through applications of the bilinear estimate \eqref{theomu1concT2}.
Since this arugment is similar to what was done before for the term \eqref{dkN_12} in \ref{ssecM_2}-\ref{l=m},
we can skip the details.

\subsubsection*{Estimate of $I_3$}
In this last case $\nu_1(\ell,k)$ is not singular and we can integrate by parts also in $\ell$
through the identity
\begin{align}\label{dlPsi}
\frac{1}{is} \frac{\nabla_\ell \Psi}{|\nabla_\ell\Psi|^2} \cdot \nabla_\ell e^{is\Psi} = e^{is\Psi} 
\end{align}
and then make use of Theorem \ref{theomu1'}(ii). 
Notice in particular that since $\partial_\ell \Psi = 2(\ell+m)$,
then $\nabla_\ell \Psi/|\nabla_\ell\Psi|^2$ has the behavior of the symbol $1/|m+\ell|$
which is the same as $1/\sqrt{|\ell|^2+|m|^2}$ in the region under consideration: 
\begin{align}
\label{dlPsisym}
\big| \nabla_\ell^\alpha \nabla_m^\beta \frac{\nabla_\ell \Psi}{|\nabla_\ell\Psi|^2} \big| 
  \lesssim \frac{1}{\sqrt{|\ell|^2+|m|^2}} 
  |\ell|^{-|\alpha|} |m|^{-|\beta|}.
\end{align}

\subsubsection{Proof of \eqref{dkN_22}}\label{ssecdkN_22}
To complete the bounds on $\mathcal{D}_1$, see \eqref{dkL20}, we are left with showing
that the second derivative of $\mathcal{N}_{2,S}$, see \eqref{N_2}, can be estimated as in \eqref{dkN_22}.
Once again we restrict our analysis to the singular region $||k|-|\ell||\ll |\ell|\approx |k|$.
Applying $\partial_k$ to \eqref{dkN_2} we obtain
\begin{align*}
\partial_k^2 \mathcal{N}_2(f,f) = \partial_k \mathcal{M}_3(f,f) + 
  \partial_k \mathcal{M}_4(f,f) + \partial_k \mathcal{M}_5(f,f)
\end{align*}
see \eqref{dkN_23}--\eqref{dkN_25}.
As in the analysis of $\partial_k^2 \mathcal{N}_{1}$ in Subsection \ref{ssecdkest2},
this generates a large number of terms. 
It is not hard to see that many of them will be similar 
to those treated previously and can be handled by similar arguments, 
through the apposite bilinear estimates for $\nu_1(\ell,k)$ in \eqref{theomu1concT2}
and in Theorem \ref{theomu1'}(ii)-(iii), and the identities \eqref{XPsi} with \eqref{XPsisym}, 
and \eqref{PsiXPsiIBP} with \eqref{PsiXPsisym},
and \eqref{dlPsi} with \eqref{dlPsisym}.
Notice that the factors of $k/|k|$ in \eqref{dkN_23} and \eqref{dkN_25} do not constitute any additional difficulty 
since $\| \partial_k (k/|k|) F\|_{L^2} \lesssim \| \partial_k F\|_{L^2}$ by Hardy's inequality.

For completeness we discuss the details for the hardest terms which are coming from
$\partial_k \mathcal{M}_4(f,f)$.
Applying $\partial_k$ to the formula \eqref{dkN_24},
and the same algebra used to obtain \eqref{dkN_2}, we get
\begin{align}\label{dkN_22.0}
{\| \partial_k \mathcal{M}_4(f,f) \|} & \lesssim \sum_{j=1}^4 {\| \mathcal{M}_{4,j}(f,f) \|}_{L^2}
\end{align}
where 
\begin{align}
\label{dkN_22.1}
\mathcal{M}_{4,1} & := \int_0^t \iint s \, Z\Psi \,
  e^{is \Psi} 
  \widetilde{f}(s,\ell-m) \widetilde{f}(s,m)
  \, {\bf Y}' \bar{\nu_1(\ell,k)}\, \mathrm{d}\ell \mathrm{d}m \, \tau_S(s) \,\mathrm{d}s,
\\
\label{dkN_22.2}
\mathcal{M}_{4,2} & := \int_0^t \iint s \, Z^2\Psi \,
  e^{is \Psi} 
  \widetilde{f}(s,\ell-m) \widetilde{f}(s,m)
  \,\bar{\nu_1(\ell,k)}\, \mathrm{d}\ell \mathrm{d}m \, \tau_S(s) \,\mathrm{d}s,
\\
\label{dkN_22.3}
\mathcal{M}_{4,3} & := \int_0^t \iint s \, Z\Psi \, 
  e^{is \Psi} 
  \, \partial_{|\ell|} \widetilde{f}(s,\ell-m) \widetilde{f}(s,m)
  \,\bar{\nu_1(\ell,k)}\, \mathrm{d}\ell \mathrm{d}m \, \tau_S(s) \,\mathrm{d}s,
\\
\label{dkN_22.4}
\mathcal{M}_{4,4} & := \int_0^t \iint s^2 \, (Z\Psi)^2 \, 
  e^{is \Psi} 
  \widetilde{f}(s,\ell-m) \widetilde{f}(s,m)
  \,\bar{\nu_1(\ell,k)}\, \mathrm{d}\ell \mathrm{d}m \, \tau_S(s) \,\mathrm{d}s.
\end{align}
Recall that the goal is to bound the $L^2$-norms of these terms by $\e^2 s^{(1/2+\delta)S}$.

The first two terms 
are relatively easy to handle. \eqref{dkN_22.1} can be bounded with an $L^{6-} \times L^3$ estimates using
\eqref{theomu1concT2} and \eqref{theomu1'conc2} with $a=1$.
For \eqref{dkN_22.2} some attention needs to be paid to the fact that $Z^2\Psi$ has a $1/|k|$ type singularity.
More precisely, from  \eqref{dkN_24} we see that $Z^2\Psi$ is made of harmless terms plus the
symbol
\begin{align}
z(k,\ell,m) := \partial_k (k/|k|) (\ell/|\ell|) \cdot (\ell+m).
\end{align}
The $1/|k|$ factor is only problematic away from the singularity of $\nu_1(\ell,k)$ when $|k|\ll |\ell|$. 
But in this case we can integrate by parts in $\ell$ using that
$z(k,\ell,m) = (1/2)\partial_k (k/|k|) (\ell/|\ell|) \cdot \partial_\ell \Psi$.
This gains back a factor of $s^{-1}$.
With Hardy's inequality $\||k|^{-1} F\|_{L^2} \lesssim \|F\|_{L^{6/5}}$
and an  $L^2 \times L^{3-}$ application of \eqref{theomu1concT2} (when $\partial_\ell$ hits the profile)
or \eqref{theomu1'conc3}  (when $\partial_\ell$ hits $\bar{\nu_1(\ell,k)}$), we get the desired bound.

The remaining two terms \eqref{dkN_22.3}--\eqref{dkN_22.4} are similar to \eqref{dk^2N_12}--\eqref{dk^2N_13}
and require integration by parts arguments using a splitting similar to \eqref{dkN_24split} 
and the formulas \eqref{XPsi}-\eqref{dlPsisym}.
We concentrate on the hardest contribution, which is \eqref{dkN_22.4}.

\subsubsection*{Estimate of \eqref{dkN_22.4}}\label{secdkN_22.4}
As before, without loss of generality we may assume that the integral \eqref{dkN_22.4}
is localized to $|\ell|\approx 2^L$, $|m|\approx 2^M$, and $|k|\approx 2^K$.
Moreover, we may reduce to the case $K \leq \max(L,M)+10$. Indeed, if $|k| \gg |\ell|,|m|$ 
we have that $|\Psi| \gtrsim |k|^2 \approx (|k|+|\ell|+|m|)^2$ and an integration by parts in $s$
will give us the desired bound; a similar argument was already used in the proof of Proposition \ref{proHF}, 
see \eqref{prHF2} and the arguments that follow.

We then split
\begin{align*}
& \mathcal{M}_{4,4} = J_1+J_2+J_3
\end{align*}
where we may assume that
\begin{align}\label{dkN_22.4split}
\begin{split}
& J_1 \quad \mbox{is supported on} \quad  |\ell+2m| \geq 2^{\max(L,M)-10}
\\
& J_2 \quad \mbox{is supported on} \quad  |\ell+2m| \leq 2^{\max(L,M)-9} \quad 
  \mbox{and} \quad ||\ell|-|k|| \leq 2^{\max(L,K)-9}
\\
& J_3 \quad \mbox{is supported on} \quad  |\ell+2m| \leq 2^{\max(L,M)-9} \quad 
  \mbox{and} \quad ||\ell|-|k|| \geq 2^{\max(L,K)-10}
\end{split}
\end{align}

To estimate $J_1$ we integrates by parts twice in $m$ using \eqref{XPsi}-\eqref{XPsisym}. 
Notice how the factor $Z\Psi$ helps canceling the mild singularity introduced by the symbol $X\Psi/|X\Psi|^2$.

The term $J_2$  in \eqref{dkN_22.4split} is the one which requires the most 
algebraic manipulations to integrate by parts using \eqref{PsiXPsiIBP}-\eqref{PsiXPsisym}.
However this term is almost identical to \eqref{dk^2N_20}, so the estimates can be done as in \ref{ssecdk^2N_12}.
Notice that since $|\ell|\approx|m|\approx|k|$ on the support of $J_2$, then the term $(Z\Psi)^2$
plays a role analogous to the factor $|\ell|^2$ in \eqref{dk^2N_20}, and helps to cancel 
the mild singularities introduced by the division by $d$, see \eqref{PsiXPsisym}.

Finally, for $J_3$ we can use integration in $\ell$  through \eqref{dlPsi}-\eqref{dlPsisym},
and applications of Theorem \ref{theomu1'}(ii).
This concludes the proof of \eqref{dkN_22} and the weighted estimates of $\mathcal{D}_1$.

%
%


\medskip
\section{Analysis of the NSD II: Lower order terms} 
\label{SecOther}
In this section we analyze all remaining terms in the nonlinear equations.
We begin by studying the lower order terms $\mu_2$ and $\mu_3$ 
from the expansion of the distribution $\mu$, defined in \eqref{mu2}-\eqref{mu3}.
We will make use of the ``building block'' Lemma \ref{Lemmanu0}, and establish:

(1) structural propositions for $\mu_2$ and $\mu_3$ analogous to Proposition \ref{Propnu+} for $\mu_1$
(in fact $\nu_1$, see \eqref{mu1}), and 

(2) bilinear multiplier estimates for $\mu_2$ and $\mu_3$ which are the analogues of Theorem \ref{theomu1}.
We will then show how to use these in Subsection \ref{ssecmu23Est} to establish
the desired a priori bounds on the nonlinear terms from \eqref{Duhameldec} corresponding to $\mu_2$ and $\mu_3$.
We conclude with a discussion of the nonlinear estimates for the ``flat'' nonlinear part $\mathcal{D}_0$, 
see \eqref{Duhameldec}.

\medskip
Recall from \eqref{mu2}--\eqref{Duhameldec} the nonlinear contribution to Duhamel's formula \eqref{Duhamel0}:
\begin{align}\label{SODuhamel}
\begin{split}
& \mathcal{D}(t)(f,f) = \mathcal{D}_0(t)(f,f) + \mathcal{D}_1(t)(f,f) + \mathcal{D}_{2}(t)(f,f) + \mathcal{D}_{3}(t)(f,f),
\\
& \mathcal{D}_0(t)(f,f) := \int_0^t \iint e^{is (-|k|^2 + |\ell|^2 + |k-\ell|^2 )} \widetilde{f}(s,\ell) 
  \widetilde{f}(s,k-\ell) \, \mathrm{d}\ell \,\mathrm{d}s,
\\
& \mathcal{D}_\ast(t)(f,f) := \int_0^t \iint e^{is (-|k|^2 + |\ell|^2 + |m|^2 )} \widetilde{f}(s,\ell) \widetilde{f}(s,m)
  \, \mu_\ast(k,\ell,m) \, \mathrm{d}\ell \mathrm{d}m \,\mathrm{d}s,
\end{split}
\end{align}
where $\mu_1$ is given in \eqref{mu1}-\eqref{nu_1},
and, as in \eqref{mu2}-\eqref{mu3},
\begin{align}\label{SOmu2}
\begin{split}
\mu_2(k,\ell,m) & := \nu_2^1(k,\ell,m) + \nu_2^2(k,\ell,m) + \nu_2^2(k,m,\ell)
\\
\nu_2^1(k,\ell,m) & := \int e^{-ix \cdot k} \frac{e^{i(|\ell|+|m|)|x|}}{|x|^2} \psi_1(x,\ell) \psi_1(x,m)\, \mathrm{d}x,
\\
\nu_2^2(k,a,b) & := \int e^{ix\cdot a} \frac{e^{i|x|(-|k|+|b|)}}{|x|^2} \bar{\psi_1(x,k)} \psi_1(x,b) \, \mathrm{d}x,
\end{split}
\end{align}
and
\begin{align}\label{SOmu3}
\begin{split}
\mu_3(k,\ell,m) := \int \frac{e^{i(-|k|+|\ell|+|m|)|x|}}{|x|^3} \bar{\psi_1(x,k)} \psi_1(x,\ell) \psi_1(x,m) \, \mathrm{d}x.
\end{split}
\end{align}

\medskip
\subsection{Analysis of $\mu_2$: Structure}\label{ssecmu2}
Let us begin by looking at $\nu_2^1(k,\ell,m)$. 
We have the following analogue of Proposition \ref{Propnu+}:

\medskip
\begin{proposition}[Structure of $\nu_2^1$]\label{Propnu21}
Let $\nu_2^1$ be the measure defined in \eqref{SOmu2}, with 
$\psi_1$ defined by \eqref{psipsi1}. 
Fix $N_2 \in [5,N_1/4] \cap \Z$.
Let $k,\ell,m \in\R^3$ with $|k|\approx 2^K, |\ell|\approx 2^L$ and $|m|\approx 2^M$, 
and assume that $K,L,M \leq A$ for some $A>0$.
Then we can write
\begin{align}\label{Propnu210}
\nu_2^1(k,\ell,m) = 
  \nu_{2,0}^1(k,\ell,m) + \nu_{2,R}^1(k,\ell,m),
\end{align}
where:

\begin{itemize}


\medskip
\item[(1)] 
$\nu_{2,0}^1(k,\ell,m)$ can be written as
\begin{align}\label{Propnu211}
\nu_{2,0}^1(k,\ell,m) = \frac{1}{|k|} 
  \sum_{i=1}^{N_2} \sum_{J\in\Z} b_{i,J}(k,\ell,m) \cdot K_i\big(2^J(|k|-|\ell|-|m|)\big) 
\end{align}
with 
$K_i \in \mathcal{S}$ and the symbols $b_{i,J}$ satisfy
\begin{align}\label{Propnu212}
\begin{split}
  & \big| \varphi_K(k) \varphi_L(\ell) \varphi_M(m) 
  \nabla_k^a \nabla_\ell^\alpha \nabla_m^\beta  b_{i,J}(k,\ell,m) \big| 
  \\ 
  & \lesssim 2^{-|a|K} \cdot \big( 2^{|a|\max(L,M)} + 2^{(1-|\alpha|)L} 2^{(1-|\beta|)M} \big)
  \mathbf{1}_{\{ |K-\max(L,M)|<5 \}},
\end{split}
\end{align}
for all $K,L,M \leq A$, and $|a|+|\a|+|\b| \leq N_2$. 

\medskip
\item[(2)] For $M\leq L$, the remainder term $\nu_{2,R}^1$ satisfies 
\begin{align}\label{Propnu213}
\begin{split}
\big| 
  \nabla_k^a \nabla_\ell^\alpha \nabla_m^\beta \nu_{2,R}^1(k,\ell,m) \big| 
  & \lesssim 2^{-2\max(K,L)} \cdot 2^{-|a|\max(K,L)}
  \\ & \times 2^{-|\a|\max(K,L)} \max(1,2^{-(|\b|-1)M}) \cdot 2^{(|a|+|\a|+|\b|+2)5A}
\end{split}
\end{align}
for all $K,L,M \leq A$ and $|a|+|\a|+|\b| \leq N_2/2 -2$.
A similar statement holds when $L \leq M$ exchanging the roles of $L$ and $M$ (and $\alpha$ and $\beta$).


\end{itemize}

\end{proposition}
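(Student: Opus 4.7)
The plan is to follow the same scheme used for Proposition \ref{Propnu+}, adapting it to the fact that the integrand in $\nu_2^1$ carries $|x|^{-2}$ (rather than $|x|^{-1}$) and \emph{two} copies of $\psi_1$. First I would substitute the asymptotic expansion \eqref{lemmapsi1exp} for each $\psi_1$, obtaining
\[
\psi_1(x,\ell)\psi_1(x,m) = \sum_{j_1+j_2 < N_2} g_{j_1}(\omega,\ell)\,g_{j_2}(\omega,m)\, r^{-j_1-j_2}\,\jl^{j_1}\jm^{j_2} + \text{(remainder)},
\]
and insert a dyadic partition in $r=|x|$ via cutoffs $\varphi(x/2^J)$ with $J \in \mathcal{J}:=\{J \geq 4A-\min(K,L,M,0)\}$; the complementary small-$|x|$ part will be handled by direct estimation or integration by parts in $x$, paralleling the treatment of $\nu^-$ in \ref{ssecprnu+}.

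The key step is a ``building block'' lemma analogous to Lemma \ref{Lemmanu0} for
\[
\mathcal{K}_J(k,\ell,m)[G] := \int_{\R^3} e^{-ix\cdot k}\,\frac{e^{i|x|(|\ell|+|m|)}}{|x|^2}\,G(\omega,\ell,m)\,\varphi(x/2^J)\,\mathrm{d}x,
\]
where $G$ is a product of two $\mathcal{G}^N$ symbols. Writing in polar coordinates, the factor $r^2$ from the volume element cancels $|x|^{-2}$, so the radial integral is
\[
\int_0^\infty I(r|k|;\ell,m)\, e^{ir(|\ell|+|m|)}\, \varphi(r/2^J)\,\mathrm{d}r,\qquad I(X;\ell,m)=\int_{\mathbb{S}^2} e^{-iX\omega\cdot\hat k}G(\omega,\ell,m)\,\mathrm{d}\omega.
\]
Applying the angular stationary-phase expansion \eqref{prLemma1Isplit}--\eqref{prL1asymain2}, the $e^{-iX}$ branch of the leading order $I_0\sim X^{-1}[e^{iX}G(\hat k,\ell,m)-e^{-iX}G(-\hat k,\ell,m)]$ contributes, after combining with $e^{ir(|\ell|+|m|)}$,
\[
\frac{1}{|k|}\,G(-\hat k,\ell,m)\int_0^\infty r^{-1}\varphi(r/2^J)\,e^{ir(|\ell|+|m|-|k|)}\,\mathrm{d}r = \frac{1}{|k|}\,G(-\hat k,\ell,m)\,\widehat{\chi}\!\left(2^J(|k|-|\ell|-|m|)\right),
\]
with $\chi(s):=s^{-1}\varphi(s)$ Schwartz on the support of $\varphi$ (which is bounded away from $0$ once $J\in\mathcal J$). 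Crucially, the \emph{absence} of the extra factor of $r$ in the radial integrand (compared to \eqref{prLemma1pol}) is precisely what eliminates the $\delta + \mathrm{pv}$ leading term present in $\nu_1$ and leaves only the Schwartz kernel $K_i(2^J(|k|-|\ell|-|m|))$ with no $2^J$ prefactor, as claimed in \eqref{Propnu211}. The $e^{+iX}$ branch produces $\widehat{\chi}(2^J(|k|+|\ell|+|m|))$, which is negligible since $2^J(|k|+|\ell|+|m|) \gg 1$ on $\mathcal J$, and therefore gets absorbed into $\nu_{2,R}^1$; the higher-order stationary-phase corrections of order $X^{-\ell/2}$ produce terms of the same structural form with additional gains in $2^{-J}$, and likewise the $R_{N_2}$-remainders from \eqref{lemmapsi1exp} contribute to $\nu_{2,R}^1$ in view of their $r^{-N_2}$ decay.

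Third, the bounds \eqref{Propnu212} for the $b_{i,J}$ come from tracking derivatives of $G(-\hat k,\ell,m)$. Since $G$ is a product $g_{j_1}(-\hat k,\ell)\,g_{j_2}(-\hat k,m)$ (times a slowly varying amplitude coming from the angular and stationary-phase coefficients), $\ell$-derivatives act only on the first factor and inherit the $(|\ell|/\jl)^{1-|\alpha|}$ behavior of the class $\mathcal{G}^N$, yielding the factor $2^{(1-|\alpha|)L}$ in the low-frequency regime and $2^{|\alpha|A}$ losses in the high-frequency regime, and similarly for $m$; $k$-derivatives act through $\hat k=k/|k|$, each producing $2^{-K}$ times an angular derivative of $G$, accounting for $2^{-|a|K}(2^{|a|\max(L,M)})$. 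The indicator $\mathbf{1}_{|K-\max(L,M)|<5}$ comes for free from the Schwartz decay of $K_i$: outside this range, $|k|-|\ell|-|m|$ is comparable to $\max(|k|,|\ell|,|m|)$, so $K_i(2^J(\cdot))$ decays rapidly, and the contribution gets absorbed into $\nu_{2,R}^1$ with the remainder bound \eqref{Propnu213}.

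The main obstacle will be the careful bookkeeping of the mixed $\ell,m$-derivatives in \eqref{Propnu212}: the estimate must factorize as $2^{(1-|\alpha|)L}2^{(1-|\beta|)M}$ in the low-frequency regime, which requires fully exploiting the separation-of-variables structure $g_{j_1}(\omega,\ell)\,g_{j_2}(\omega,m)$ inherited from the product $\psi_1(x,\ell)\psi_1(x,m)$ — one must avoid generating spurious cross terms that would give $(1-|\alpha|-|\beta|)$-type exponents instead. A secondary difficulty is the treatment of the non-singular region $|K-\max(L,M)|\geq 5$ in the remainder estimate \eqref{Propnu213}: there the phase $-x\cdot k + (|\ell|+|m|)|x|$ is non-stationary and one integrates by parts in $x$ along the direction $(k-(|\ell|+|m|)\hat x)/|k-(|\ell|+|m|)\hat x|^2$, whose size is $\gtrsim 2^{-\max(K,L,M)}$, mirroring the argument \eqref{prnu+50}--\eqref{prnu+52} used for $\nu_1$.
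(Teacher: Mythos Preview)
Your proposal is correct and follows essentially the same route as the paper's own proof: a building-block lemma in the spirit of Lemma~\ref{Lemmanu0} applied to the product $g_{j_1}(\omega,\ell)g_{j_2}(\omega,m)$, polar coordinates plus angular stationary phase, and the key observation that the missing factor of $r$ (from $|x|^{-2}$ rather than $|x|^{-1}$) removes the $2^J$ prefactor and hence the $\delta+\mathrm{pv}$ singularity. Two small remarks: the paper takes $\mathcal{J}=\{J+\min(K,0)\geq 4A\}$ (conditioning only on $K$, since only $X=r|k|$ needs to be large for the angular expansion), which is slightly simpler than your $\min(K,L,M,0)$; and your claim that $\chi(s)=s^{-1}\varphi(s)$ is Schwartz should be phrased more carefully---it is the dyadic bump $\varphi_J$ (supported away from the origin) that makes $s^{-1}\varphi(s)$ smooth compactly supported and hence $\widehat{\chi}$ Schwartz, not the condition $J\in\mathcal{J}$.
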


\medskip
\begin{proof}
We proceed in two main steps. In the first step we analyze a building block analogous to the one in
\eqref{Lemmanu0def} of Lemma \ref{Lemmanu0};
the second step uses the first step, the expansion of $\psi$ from Lemma \ref{lemmapsi1}, 
and arguments similar to those in the proof of Proposition \ref{Propnu+}
to obtain the final statement.

With $|k|\approx 2^K, |\ell|\approx 2^L$ and $|m|\approx 2^M$, 
we assume without loss of generality $L\geq M$ and write
\begin{align}\label{nu21pr1}
\begin{split}
\nu_2^1(k,\ell,m) & = \big( \nu^+(k,\ell,m) + \nu^-(k,\ell,m) \big) \mathbf{1}_{\{|K-L|<5\}},
  + \nu_2^1(k,\ell,m) \mathbf{1}_{\{|K-L|\geq 5\}}
\\
\nu^+(k,\ell,m) & := \sum_{J \in \mathcal{J}} \nu^J(k,\ell,m)
\\
\nu^J(k,\ell,m) & := \int_{\R^3} e^{-ix \cdot k} 
  \frac{e^{i(|\ell|+|m|)|x|}}{|x|^2} \psi_1(x,\ell) \psi_1(x,m) \, \varphi_J(x) \, \mathrm{d}x,
\end{split}
\end{align}
where
\begin{align}\label{nu21prJ}
\mathcal{J} & := 
  \{ J + \min(K,0) \geq 4A\}.
\end{align}

\medskip
{\it The ``building block''}.
Similarly to \eqref{Lemmanu0def} we can define
\begin{align}\label{Lemmanu0def'}
\begin{split}
\K_J(k,\ell,m) := \int_{\R^3} e^{-ix \cdot k} \frac{e^{i|x|(|\ell|+|m|)}}{|x|^2} g_1(\omega,\ell)g_2(\omega,m) 
  \, \varphi(x 2^{-J}) \, \mathrm{d}x, \\ J\in \mathcal{J}, \quad g_i \in \mathcal{G}^{s_i}
\end{split}
\end{align}
as our building block, for some $N_2 \leq s_i \leq N_1$, and a generic compactly supported function $\varphi$.
We then write the integrand in polar coordinates $x=r\omega$
\begin{align}\label{nu21pr5}
\begin{split}
\K_J(k,\ell,m) & = \int_0^\infty e^{ir(|\ell|+|m|)} \Big( \int_{\mathbb{S}^2}
  e^{-ir \omega \cdot k}  g_1(\omega,\ell)g_2(\omega,m) \,\mathrm{d}\omega \Big) 
  \, \varphi(r2^{-J}) \, \mathrm{d}r,
\\
& = \int_0^\infty e^{ir(|\ell|+|m|)} \big(I_0 + II
  \big) \, \varphi(r2^{-J}) \, \mathrm{d}r,
\end{split}
\end{align}
where,
with $X := r|k|$, we define
\begin{align}\label{nu21pr7}
\begin{split}
I_0 & := \frac{2\pi}{iX} \big[ e^{-iX} g_1(k/|k|,\ell) g_2(k/|k|,m)- 
  e^{iX} g_1(-k/|k|,\ell) g_2(-k/|k|,m) \big]
\end{split}
\end{align}
and
\begin{align}\label{nu21pr8}
\begin{split}
II & := \frac{1}{iX} \int_0^{2\pi} \int_0^\pi e^{-iX \cos\phi} 
  \partial_\phi \big( g_1(\omega,\ell) g_2(\omega,m) \big) \,\mathrm{d}\phi \mathrm{d}\theta.
\end{split}
\end{align}
Compare with \eqref{prLemma1pol}--\eqref{prLemma1Isplit}.
The contribution to \eqref{nu21pr5} from $I_0$ is, up to omitting irrelevant constants, a sum of the terms
\begin{align}\label{nu21pr9}
\begin{split}
I_0^\pm := & \int_0^\infty  \frac{1}{r|k|} e^{ir(|\ell|+|m| \pm |k|))} g_1(\mp k/|k|,\ell) g_2(\mp k/|k|,m)
  \, \varphi(r 2^{-J}) \, \mathrm{d}r
  \\
  = & \frac{1}{|k|}\, g_1(\mp k/|k|,\ell) g_2(\mp k/|k|,m) \chi\big(2^J(|\ell|+|m| \pm |k|)\big)
\end{split}
\end{align}
where $\chi := \what{\varphi/r}$.

For the term \eqref{nu21pr8}
we want apply the stationary phase estimates \eqref{ST1}-\eqref{ST2} 
to the integral in $\mathrm{d}\phi$ (recall $2^J|k| \gg 1$)
as we did for \eqref{prL15} in the proof of Lemma \ref{Lemmanu0}.
We can use the same arguments there and obtain analogous formulas by replacing the quantities in \eqref{prL15}
as follows:
\begin{align}\label{nu21pr10}
g(\omega,q) \mapsto g_1(\omega,\ell) g_2(\omega,m), \qquad
e^{ir|q|} \mapsto e^{ir(|\ell|+|m|)}, \qquad |p| \mapsto |k|, \qquad 
\varphi \mapsto 2^{-J} (\varphi/r).
\end{align}
This gives an expansion with properties similar to \eqref{prL17.1}-\eqref{prL18.2} as follows:
\begin{align}
\nonumber
& \int_0^\infty e^{ir(|\ell|+|m|)} II(X;\ell,m) \, \varphi(r2^{-J}) \, \mathrm{d}r 
\\
\label{nu21pr10.1}
& =\sum_{j=0}^{M'-1} \frac{
  b_j^-(k,\ell,m)}{
  |k|^{3/2 + j/2}} \,
  \int_0^\infty e^{ir(-|k|+|\ell|+|m|)} r^{-(j+1)/2} \, 2^{-J} (\varphi/r)(r2^{-J}) \,\mathrm{d}r
\\
\label{nu21pr10.2}
& + \sum_{j=0}^{M'-1} \frac{
  b_j^+(k,\ell,m)}{
  |k|^{3/2 + j/2}} \,
  \int_0^\infty e^{ir(|k| + |\ell| + |m|)} \, r^{-(j+1)/2} 2^{-J} (\varphi/r)(r2^{-J}) \,\mathrm{d}r
\\
\label{nu21pr10.3}
& + \frac{1}{|k|}R_{J,M'}^-(k,\ell,m) + \frac{1}{|k|}R_{J,M'}^+(k,\ell,m),
\end{align}
where
\begin{align}\label{nu21pr10.4}
b_j^\pm(k,\ell,m) := c_j \partial_\phi^{j+1} \big( g_1(\omega,\ell) g_2(\omega,m) 
  \big)\big|_{\omega=\pm k/|k|},
\end{align}
for some constants $c_\ell$
(we are using here the same convention adopted in the proof of Lemma \ref{Lemmanu0} as per the discussion
following \eqref{ST2})
and
\begin{align}\label{nu21pr11.1}
& R_{J,M'}^\pm(k,\ell,m) := \int_0^\infty e^{\pm ir|k|} \, e^{ir(|\ell|+|m|)} 
  \, 2^{-J}(\varphi/r)(r2^{-J}) B^\pm(r|k|;\ell,m) \,\mathrm{d}r,
\end{align}
where $B^\pm(X;\ell,m)$ satisfy
\begin{align}\label{nu21pr11.2}
\begin{split}
\Big| \Big(\frac{d}{dX}\Big)^a \partial_\ell^\alpha  \partial_m^\beta B^\pm(X;\ell,m) \Big| 
  & \lesssim X^{-M'/2-a} \sup_{\omega\in\mathbb{S}^2} 
  \big|\partial_\phi^{M'+1} \big( \partial_\ell^\alpha g_1(\omega,\ell) \partial_m^\beta g_2(\omega,m) \big) \big|.
\end{split}
\end{align}

\medskip
{\it Expansion of $\nu_2^1$}.
From the definition \eqref{nu21pr1} and using the expansion of $\psi_1$ from Lemma \ref{lemmapsi1}, we can write
(omitting the dependence on some of the indexes)
\begin{align}\label{nu21pr20}
\nu^J(k,\ell,m) & = \sum_{j_1,j_2=0}^{N_2-1} \nu^J_{j_1j_2}(k,\ell,m) + R_1(k,\ell,m) + R_2(k,\ell,m), 
\end{align}
where
\begin{align}\label{nu21pr20.1}
\begin{split}
\nu^J_{j_1j_2}(k,\ell,m) := \int_{\R^3} e^{-ix \cdot k} 
  \frac{e^{i(|\ell|+|m|)|x|}}{|x|^2} g_{j_1}(x,\ell) g_{j_2}(x,m) \, |x|^{-j_1-j_2} \langle \ell \rangle^{j_1} 
  \langle m \rangle^{j_2} \, \varphi_J(x) \, \mathrm{d}x,
  \\  
  g_{j_i} \in \mathcal{G}^{N_1-j_i}, 
\end{split}
\end{align}
and
\begin{align}
\label{nu21pr20.2}
R_1(k,\ell,m) & := \int_{\R^3} e^{-ix \cdot k} 
  \frac{e^{i(|\ell|+|m|)|x|}}{|x|^2} \big(\psi_1(x,\ell) - R_{N_2}(x,\ell) \big) R_{N_2}(x,m) \,
  \varphi_J(x) \, \mathrm{d}x,
\\
\label{nu21pr20.3}
R_2(k,\ell,m) & := \int_{\R^3} e^{-ix \cdot k} 
  \frac{e^{i(|\ell|+|m|)|x|}}{|x|^2} R_{N_2}(x,\ell) \psi_1(x,m) \,
  \varphi_J(x) \, \mathrm{d}x.
\end{align}

For each of the terms in \eqref{nu21pr20.1} we use the result in the first step above.
Indeed we can write
\begin{align}\label{nu21pr21}
\begin{split}
\nu^J_{j_1j_2}(k,\ell,m) = \frac{\langle \ell \rangle^{j_1}\langle m \rangle^{j_2}}{2^{J(j_1+j_2)}} 
  \mathcal{K}_J(k,\ell,m)
\end{split}
\end{align}
where $\mathcal{K}_J(k,\ell,m)$ is a building block as in \eqref{Lemmanu0def'} 
with $g_{j_i} \in \mathcal{G}^{N_1-j_i}$.
Then, each one of the terms $\nu^J_{j_1j_2}$ admits an expansion as in \eqref{nu21pr5}--\eqref{nu21pr11.2},
up to a multiplication by the factor $\langle \ell \rangle^{j_1}\langle m \rangle^{j_2} 2^{-J(j_1+j_2)}$.
We now analyze the terms in such expansions.

\medskip
{\it The leading order terms \eqref{Propnu211}-\eqref{Propnu212}}.
According to \eqref{nu21pr9}, the first term in the expansion of $\nu^J_{j_1j_2}(k,\ell,m)$ has the form 
\begin{align}
\frac{\langle \ell \rangle^{j_1}\langle m \rangle^{j_2}}{2^{J(j_1+j_2)}}
  \frac{1}{|k|} g_{j_1}(\mp k/|k|,\ell) g_{j_2}(\mp k/|k|,m) \chi\big(2^J(|\ell|+|m| \pm |k|)\big) 
\end{align}
With the choice of the $-$ in the argument of $\chi$ this is a term as in \eqref{Propnu211}-\eqref{Propnu212}.
When the argument of $\chi$ is $2^J(|k|+|\ell|+|m|)$ we obtain a remainder term that can be absorbed in 
$\nu_{2,R}^1$ consistently with \eqref{Propnu213}.

The next terms in the expansion, corresponding to \eqref{nu21pr10.1}-\eqref{nu21pr10.2}
have the form
\begin{align}
\sum_{j=0}^{M'-1} \frac{b_j^\pm(k,\ell,m)}{
  |k|^{3/2 + j/2}} \, 2^{-J(j+3/2)} \what{\chi_j} \big(2^J(\pm |k|+|\ell|+|m|)\big)
\end{align}
for some Schwartz functions $\chi_j$, and with coefficients of the form, see \eqref{nu21pr10.4},
\begin{align*}
b_j^\pm(k,\ell,m) = \frac{\langle \ell \rangle^{j_1}\langle m \rangle^{j_2}}{2^{J(j_1+j_2)}}
 c_j \partial_\phi^{j+1} \big( g_{j_1}(\omega,\ell) g_{j_2}(\omega,m) \big)\big|_{\omega=\pm k/|k|}.
\end{align*}
When the choice of the sign is `$-$' in the argument of $\what{\chi_j}$, these terms belong to 
the sum in \eqref{Propnu211}-\eqref{Propnu212},
provided we impose $|a|,|\alpha|,|\beta| \leq N_1-N_2-M'$.
With the opposite choice of the sign we obtain smooth remainder terms satisfying the bounds \eqref{Propnu213}.


\medskip
{\it Remainder terms}.
We have several types of remainders: those coming from the ``building block'' expansion of \eqref{nu21pr20.1},
the remainders \eqref{nu21pr20.2}--\eqref{nu21pr20.3}, the measure $\nu^-$ in \eqref{nu21pr1},
and the full $\nu_2^1$ when $|K-L|\geq 5$.

\medskip
According to \eqref{nu21pr10.3} and \eqref{nu21pr11.1}-\eqref{nu21pr11.2} 
in the expansion of the building block,
each of the terms $\nu^J_{j_1j_2}(k,\ell,m)$ in \eqref{nu21pr20.1} has a remainder of the form
\begin{align}
\label{nu21pr30.1}
& R^\pm_{J,M',j_1,j_2}(k,\ell,m) := \frac{1}{|k|} \int_0^\infty e^{\pm ir|k|} \, e^{ir(|\ell|+|m|)} 
  \, 2^{-J}(\varphi/r)(r2^{-J}) B^\pm(r|k|;\ell,m) \,\mathrm{d}r,
\\
\label{nu21pr30.2}
& \Big| \Big( \frac{d}{dX}\Big)^a \partial_\ell^\alpha \partial_m^\beta B^\pm(X;\ell,m) \Big| 
  \lesssim X^{-M'/2-a} \sup_{\omega\in\mathbb{S}^2} 
  \big|\partial_\phi^{M'+1} \big( \partial_\ell^\alpha g_{j_1}(\omega,\ell) 
  \partial_m^\beta g_{j_2}(\omega,m) \big) \big|.
\end{align}
To see how these terms satisfy the desired estimate \eqref{Propnu213},
we can use arguments similar to those given at the end of the proof of Lemma \ref{Lemmanu0}, see \eqref{prL22},
so we will only give some of the details.

Using \eqref{nu21pr30.1}-\eqref{nu21pr30.2} we see that
\begin{align}\label{nu21pr30.3}
\Big| R^\pm_{J,M',j_1,j_2}(k,\ell,m) \Big| \lesssim 2^{-K} \cdot 2^{(J+K)(-M'/2)} \cdot 2^{(M'+1)A}
  \lesssim 2^{-K} \cdot 2^{(J+K)(-M'/4)},
\end{align}
which gives \eqref{Propnu213} with $a=\alpha=\beta=0$ since $K \geq \max(K,L,M)-10$ and $J+K \geq 4A$.
For the general estimates we apply derivatives and notice that,
compared to the right-hand side of \eqref{nu21pr30.1},
each $\nabla_{(k,\ell)}$-derivative is going to cost an additional factor of $\approx 2^{-K} + 2^J \lesssim 2^J$,
and each $\nabla_{m}$-derivative is going to cost $2^J + 2^{-M}$. 
Then, for $|\beta|\geq 1$, we get
\begin{align}\label{nu21pr30.4}
\begin{split}
& \Big| \nabla_k^a \nabla^\alpha_\ell \nabla^\beta_m R^\pm_{J,M',j_1,j_2}(k,\ell,m) \Big| 
  \\
  & \lesssim 2^{-K} \cdot 2^{(J+K)(-M'/2)} \cdot 2^{(M'+1)A} \cdot 2^{J(|a|+|\alpha|)}
  \cdot 2^J(2^{(|\beta|-1)J} + 2^{(1-|\beta|)M}).
\end{split}
\end{align}
When $J\leq -M$ we get \eqref{Propnu213} since $J+K \geq 4A$, $|K - L| < 5$,
and we can choose $M'$ large enough.
For $J\geq -M$, and $|a|+|\alpha|+|\beta|\leq M'/4-2$ we get an even better bound
\begin{align*}
& \Big| \nabla_k^a \nabla^\alpha_\ell \nabla^\beta_m R^\pm_{J,M',j_1,j_2}(k,\ell,m) \Big| 
  \lesssim 2^{-K} \cdot 2^{-K(|a|+|\alpha|+|\beta|)}.
\end{align*}

\medskip
Next, we analyze the remainders \eqref{nu21pr20.2}-\eqref{nu21pr20.3}.
These are similar to the remainder \eqref{prnu+R} in the proof of Proposition \ref{Propnu+}.
The argument are similar to those given after \eqref{prnu+30}.
We only analyze \eqref{nu21pr20.3}, because \eqref{nu21pr20.2} can be handled in the same way since each
term in the expansion of $\psi_1-R_{N_2}$ has properties similar to $\psi_1$.
We calculate
\begin{align}\label{nu21pr40}
\begin{split}
\nabla_k^a \nabla_\ell^\alpha \nabla_m^\beta R_2(k,\ell,m)
  = \int_{\R^3} \frac{(-ix)^a}{|x|^2} e^{-ix\cdot k} 
   & \nabla_\ell^\alpha \big( e^{i|x||\ell|} R_{N_2}(x,\ell) \big) 
   \\ \times  & \nabla_m^\beta \big( e^{i|x||m|} \psi_1(x,m)\big) \, \varphi_J(x) \, \mathrm{d}x,
\end{split}
\end{align}
and use the estimate \eqref{psi10} for $\psi_1$ and \eqref{lemmapsi1R} for $R_{N_2}$ to see that
\begin{align}\label{nu21pr41}
\begin{split}
|\nabla_k^a \nabla_\ell^\alpha \nabla_m^\beta R_2(k,\ell,m)|
  \lesssim 2^{J} \cdot 2^{J|a|} \cdot 2^{|\alpha|J} 
  \cdot 2^{-N_2J} 2^{N_2L_+} 
  \cdot 2^J \big( 2^{(|\beta|-1)J} + 2^{(1-|\beta|)M} \big).
\end{split}
\end{align}
In the case $J\leq -M$, $|\beta|\geq 1$, this gives
\begin{align*}
\begin{split}
|\nabla_k^a \nabla_\ell^\alpha \nabla_m^\beta R_2(k,\ell,m)|
  \lesssim 2^{J} \cdot 2^{J(|a|+|\alpha|+1-N_2)} \cdot 2^{N_2 A} \cdot  2^{(1-|\beta|)M}
\end{split}
\end{align*}
which is better than \eqref{Propnu213} since we are considering $J \geq 4A$, $\max(K,L,M)\leq A$,
and $|a|+|\alpha|+|\beta| \leq N_2/2 - 2$.

\medskip
Let us now look at $\nu^-$. From \eqref{nu21pr1}-\eqref{nu21prJ},
and looking at $K\leq 0$ (the other case is similar), we write
\begin{align*}
\begin{split}
& \nu^-(k,\ell,m) = \sum_{J\in\mathcal{J}^c}\nu^J(k,\ell,m), \qquad
\mathcal{J}^c = \{ J < -K + 4A\} 
\end{split}
\end{align*}
The arguments here are similar to the ones that follow \eqref{prnu+41}.
Using the estimates \eqref{psi10} we can see that, for $J\in\mathcal{J}^c$
\begin{align}
\nonumber
\Big| \nabla_k^a \nabla_\ell^\alpha \nabla_m^\beta \nu^J(k,\ell,m) \Big| 
  & = \Big| \int_{\R^3} x^a e^{-ix \cdot k} \frac{1}{|x|^2} 
  \nabla_\ell^\alpha\big(e^{i|\ell||x|}\psi_1(x,\ell)\big)
  \nabla_m^\beta \big(e^{i|m||x|}\psi_1(x,m)\big) \, \varphi_J(x) \, \mathrm{d}x \Big|
  \\
\label{nu21pr45}
  & \lesssim  2^J \cdot 2^{|a|J} \cdot (2^{|\alpha|J} + 2^J2^{(1-|\alpha|)L}) 
  \cdot (2^{|\beta|J} + 2^J2^{(1-|\beta|)M})
  \\ 
  & \lesssim 2^{-K} 2^{4A} \cdot 2^{-|a|K} \cdot 2^{-|\alpha|L}
  \cdot 2^{-K} \cdot (1 + 2^{(1-|\beta|)M}) \cdot 2^{4A(|a|+|\alpha|+|\beta|+2)},
\nonumber
\end{align}
having used $J \leq -K + 4A \leq -M + 4A + 5$; 
this is consistent with \eqref{Propnu213} since $|K-L|\leq 5$.

\medskip
Finally, we analyze the case $|K-L| \geq 5$. 
Similarly to \eqref{prnu+50}, we can use integration by parts through the identity
\begin{align*} 
e^{i(-x \cdot k + |x|(|\ell|+|m|))} 
  = T^\rho e^{i(-x \cdot k + |x|(|\ell|+|m|))}, \qquad
  T := \frac{k-(x/|x|)(|\ell|+|m|)}{|k-(x/|x|)(|\ell|+|m|)|^2} \cdot i\nabla_x,
\end{align*}
since $|k+(x/|x|)(|\ell|+|m|)| \gtrsim 2^{\max(K,L)}$.
Using also the estimates
$|\nabla_x^\gamma (k+(x/|x|)(|\ell|+|m|))| \lesssim 2^{-|\gamma| J} 2^{L}$ for $|\gamma|\geq 1$,
and, see \eqref{psi10},
\begin{align*}
& \big| \nabla_x^\gamma (\psi_1(x,\ell)\psi_1(x,m)) \big| \lesssim 2^{-|\gamma|J} 2^{|\gamma|L_+},
\end{align*}
(with the proper version of the estimates for the derivatives in $\ell$ and $m$),
we see that each integration by parts through $T$ gives a gain of $2^{-J} \cdot 2^{-\max(K,L)} 2^A$.
Applying derivatives as in \eqref{nu21pr45} and then integrating by parts leads to the bound
\begin{align}\label{nu21pr46}
\begin{split}
\Big| \nabla_k^a \nabla_\ell^\alpha \nabla_m^\beta \nu^J(k,\ell,m) \Big| 
  & \lesssim 2^J \cdot 2^{|a|J} \cdot (2^{|\alpha|J} + 2^J2^{(1-|\alpha|)L}) 
  \\
  & \times (2^{|\beta|J} + 2^J2^{(1-|\beta|)M}) \cdot 2^{-\rho J} 2^{-\rho \max(K,L)} 2^{A\rho}.
\end{split}
\end{align}
In the case $J\leq -L$ we can use \eqref{nu21pr46} with $\rho=|a|+3$ to obtain \eqref{Propnu213}.
If $-L< J \leq - M$ we take instead $\rho = |a|+|\alpha|+2$.
When $J > -M$ is suffices to let $\rho = |a|+|\alpha|+|\beta|+1$. This concludes the proof of the Proposition.
\end{proof}

\medskip
We also have the following analogue of Proposition \ref{Propnu21} for the measure $\nu_2^2$.

\begin{proposition}[Structure of $\nu_2^2$]\label{Propnu22}
Let $\nu_2^2$ be the measure defined in \eqref{SOmu2}, with
$\psi_1$ defined by \eqref{psipsi1}. 
Fix $N_2 \in (5,N_1/4]\cap \Z$
and let $k,\ell,m \in\R^3$ with $|k|\approx 2^K, |\ell|\approx 2^L$ and $|m|\approx 2^M$, 
and $K,L,M \leq A$ for some $A>0$.
Then we can write
\begin{align}\label{Propnu220}
\nu_2^2(k,\ell,m) = 
  \nu_{2,+}^2(k,\ell,m) + \nu_{2,-}^2(k,\ell,m) + \nu_{2,R}^2(k,\ell,m),
\end{align}
where:


\begin{itemize}
 
\item The leading order is
\begin{align}\label{Propnu221}
\nu_{2,\pm}^2(k,\ell,m) = \frac{1}{|\ell|} 
  \sum_{i=1}^{N_2} \sum_{J\in\Z} b_{i,J}^\pm(k,\ell,m) \cdot K_i\big(2^J(|k|\pm|\ell|-|m|)\big) 
\end{align}
with
\begin{align}\label{Propnu222}  
\begin{split}
  & \big| \varphi_K(k) \varphi_L(\ell) \varphi_M(m) 
  \nabla_k^a \nabla_\ell^\alpha \nabla_m^\beta  b_{i,J}^\pm(k,\ell,m) \big| 
  \\ 
  & \lesssim 2^{-|\alpha|L} \cdot \big( 2^{|\a|\max(K,M)} + 2^{(1-|a|)K} 2^{(1-|\beta|)M} \big)
  \mathbf{1}_{\{\max(K,L,M)-\med(K,L,M)<5\}},
\end{split}
\end{align}
for all $K,L,M \leq A$, and $|a| + |\a|+|\b| \leq N_2$.

\medskip
\item The remainder term satisfies, for $K \leq M$,
\begin{align}\label{Propnu223}
\begin{split}
\big| \nabla_k^a \nabla_\ell^\alpha \nabla_m^\beta \nu_{2,R}^2(k,\ell,m) \big| 
  & \lesssim 2^{-2\max(L,M,K)} \cdot 2^{-(|\a|+|\b|)\max(L,M,K)}
  \\ & \times \max(1,2^{-(|a|-1)K}) \cdot 2^{(|a|+|\a|+|\b|+2)5A}
\end{split}
\end{align}
for all $K,L,M \leq A$ and $|a|+|\a|+|\b| \leq N_2/2 -2$. 
A similar estimate holds when $M \leq K$ by exchanging the roles of $M$ and $K$ (and $a,\beta$).
\end{itemize}


\medskip
A similar statement holds for $\nu_2^2(k,m,\ell)$ by exchanging the roles of $\ell$ and $m$.
Notice how this leaves unchanged the structure of the singularities in \eqref{Propnu221}.
\end{proposition}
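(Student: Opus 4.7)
The plan is to follow the same architecture developed in the proof of Proposition \ref{Propnu21}: localize the $x$-integration to dyadic shells $|x|\approx 2^J$, isolate a building block analogous to \eqref{Lemmanu0def'}, apply the spherical stationary phase together with the radial integration to produce the singular kernels, and then feed in the expansion of $\psi_1$ from Lemma \ref{lemmapsi1}. Concretely, after the decomposition $\nu_2^2 = \sum_{J\in\mathcal{J}}\nu^J + \sum_{J\in\mathcal{J}^c}\nu^J$ with $\mathcal{J}=\{J+\min(L,0)\geq 4A\}$ (the natural analogue of \eqref{nu21prJ} but now with $L$ in place of $K$, since the plane wave in the integrand carries $\ell$), the building block of interest is
\[
\mathcal{K}_J(k,\ell,m) := \int_{\R^3} e^{ix\cdot\ell}\,\frac{e^{i|x|(-|k|+|m|)}}{|x|^2}\,\overline{h_1(\omega,k)}\,h_2(\omega,m)\,\varphi(x\,2^{-J})\,\mathrm{d}x,\qquad h_i\in\mathcal{G}^{s_i}.
\]

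The key difference from the $\nu_2^1$ case is that the spherical oscillation is $e^{ir\omega\cdot\ell}$, so with $X=r|\ell|$ the stationary phase identity \eqref{ST1}--\eqref{ST2} is applied at the antipodal critical points $\omega=\pm\ell/|\ell|$. This produces the leading term $\frac{1}{r|\ell|}\bigl[\overline{h_1(-\ell/|\ell|,k)}h_2(-\ell/|\ell|,m)\cdot e^{-iX}-\overline{h_1(\ell/|\ell|,k)}h_2(\ell/|\ell|,m)\cdot e^{iX}\bigr]$ plus a tail of lower order in $(r|\ell|)^{-1/2}$ with coefficients $\partial_\phi^{j+1}\bigl(\overline{h_1(\omega,k)}h_2(\omega,m)\bigr)\big|_{\omega=\pm\ell/|\ell|}$, in exact parallel with \eqref{nu21pr10.1}--\eqref{nu21pr11.2}. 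The radial phase then becomes $e^{ir(-|k|+|m|\pm|\ell|)}$, and integrating against $\varphi(r/2^J)/r$ yields kernels of the form $\chi(2^J(|k|\pm|\ell|-|m|))$ with Schwartz $\chi$. This produces the two leading pieces $\nu_{2,\pm}^2$ in \eqref{Propnu221}: the $1/|\ell|$ prefactor in \eqref{Propnu222} is precisely the loss from spherical stationary phase in the direction of $\ell$, and the bounds on the coefficients $b_{i,J}^\pm$ follow from \eqref{prLemma1g1} applied to $h_i=g_{j_i}$ given by Lemma \ref{lemmapsi1}. The indicator $\mathbf{1}_{\{\max-\med<5\}}$ appears because otherwise $|\pm|k|+|\ell|-|m||\gtrsim 2^{\max(K,L,M)}$, forcing the Schwartz factor $\chi(2^J\cdot)$ to be negligible so the term can be absorbed into $\nu_{2,R}^2$.

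Plugging the expansion \eqref{lemmapsi1exp} for $\psi_1(x,k)$ and $\psi_1(x,m)$ into $\nu^J$ produces a finite double sum of building blocks $\mathcal{K}_J$ modulated by $|x|^{-j_1-j_2}\jk^{j_1}\jm^{j_2}$, together with two Taylor-type remainders $R_1,R_2$ corresponding to $R_{N_2}(x,k)$ and $R_{N_2}(x,m)$ respectively (compare \eqref{nu21pr20.2}--\eqref{nu21pr20.3}). Each building block contribution is expanded as above, with the leading terms fitting \eqref{Propnu221}--\eqref{Propnu222}. The remainder bounds \eqref{Propnu223} then come from four sources, mirroring the proof of Proposition \ref{Propnu21}: (i) the stationary-phase tails, controlled by $(2^{J+L})^{-M'/2}$ together with admissible losses after differentiation, using $J+\min(L,0)\geq 4A$; (ii) the Taylor remainders $R_1,R_2$, where the $|x|^{-N_2}2^{N_2\cdot\max(\cdot)_+}$ decay from \eqref{lemmapsi1R}, combined with $|a|+|\alpha|+|\beta|\leq N_2/2-2$, absorbs all differentiation losses; (iii) the sum over $J\in\mathcal{J}^c$ bounded directly using \eqref{psi10}, which is exactly where the asymmetric factor $\max(1,2^{-(|a|-1)K})$ in \eqref{Propnu223} comes from, since $\partial_k^a$ derivatives of $\overline{\psi_1(x,k)}$ produce $2^{(1-|a|)K_-}$ by \eqref{psi10}; and (iv) the non-singular region $|K-L|\geq 5$ or $|L-M|\geq 5$, handled by integration by parts in $x$ using $T=\frac{\ell+(x/|x|)(|k|-|m|)}{|\ell+(x/|x|)(|k|-|m|)|^2}\cdot i\nabla_x$, which gains $2^{-J}2^{-\max(K,L,M)}2^A$ per application since $|\ell+(x/|x|)(|k|-|m|)|\gtrsim 2^{\max(K,L,M)}$ away from the triangle configuration.

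The statement for $\nu_2^2(k,m,\ell)$ is then immediate by exchanging the roles of $\ell$ and $m$. The main bookkeeping obstacle I expect is item (iii): one must track how derivatives in $k$ (which hits $\overline{\psi_1(x,k)}$) behave differently from derivatives in $\ell$ (which only hits the plane wave $e^{ix\cdot\ell}$) and $m$ (which hits $\psi_1(x,m)$). This asymmetry explains why \eqref{Propnu223} is stated for $K\leq M$ and separately for $M\leq K$, just as \eqref{Propnu213} was stated for $M\leq L$ and separately for $L\leq M$: depending on which of $k,m$ has smaller modulus, the admissible differentiation range and the blow-up factor change. All other estimates in the non-singular region should be essentially a rewrite of the corresponding steps in Proposition \ref{Propnu21} with the obvious substitution of $\ell$ for $k$ in the role of the ``transverse'' frequency.
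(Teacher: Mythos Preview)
Your plan is correct and matches what the paper intends: the paper itself omits the proof, saying only ``Since the proof is similar to the ones above we can skip the details,'' and your adaptation of the $\nu_2^1$ argument---spherical stationary phase now in the $\ell$-direction, producing the two kernels $K_i(2^J(|k|\pm|\ell|-|m|))$, followed by the $\psi_1$-expansion and the four remainder mechanisms---is exactly the right translation.

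There is one genuine error in item (iv). Your stated ``non-singular region'' $|K-L|\geq 5$ or $|L-M|\geq 5$ is \emph{not} the complement of the indicator $\mathbf{1}_{\{\max-\mathrm{med}<5\}}$ you correctly identified earlier, and your gradient lower bound is false on it. Take $K=M=10$, $L=0$ (which lies in your region since $|K-L|=10\geq 5$): then $|m|-|k|$ can be arbitrarily small, so $|\ell+(x/|x|)(|m|-|k|)|\approx|\ell|\approx 1$, not $\gtrsim 2^{\max}=2^{10}$, and the integration by parts fails. This configuration is in fact singular for $\nu_{2,+}^2$ (since $|k|+|\ell|-|m|$ can vanish) and must be handled by the building-block analysis. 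The correct region for item (iv) is $\max(K,L,M)-\mathrm{med}(K,L,M)\geq 5$; there one frequency strictly dominates the other two, and then $\big||\ell|-||m|-|k||\big|\gtrsim 2^{\max}$ genuinely holds, so the integration-by-parts argument goes through. With this correction your decomposition is consistent and the proof closes.

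A trivial typo: the vector in your $T$ should be $-\ell+(x/|x|)(|k|-|m|)$ (i.e., minus the $x$-gradient of the phase), not $\ell+(x/|x|)(|k|-|m|)$; this does not affect the magnitude bound.
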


Since the proof is similar to the ones above we can skip the details.

 


\medskip
\subsection{Analysis of $\mu_2$: Bilinear estimates}\label{ssecmu2BE}
Let us define, for a general measure $\nu$ and symbol $b$,
\begin{align}\label{theomu2T}
\begin{split}
T_{\nu}[b](g,h)(x) = & \, T[\nu;b](g,h)(x) := \, \mathcal{F}^{-1}_{k\rightarrow x} 
  \iint_{\R^3\times\R^3} g(\ell) h(m) \,b(k,\ell,m)\, \nu(k,\ell,m) \, \mathrm{d}\ell \mathrm{d}m.
\end{split}
\end{align}
Using Proposition \ref{Propnu22} and Lemma \ref{lemBEan2}
we can establish H\"oder-type bilinear bounds for pseudo-products involving the measure $\mu_2$ in \eqref{SOmu2}
(Theorem \ref{theomu2})
and vectorfields acting on its components (Theorem \ref{theomu2vf}).

\medskip
\begin{theorem}[Bilinear bounds 2]\label{theomu2}
Consider the operator $T_{\mu_2}[b]$, defined according to \eqref{theomu2T} and \eqref{SOmu2}, and assume that:

\setlength{\leftmargini}{2em}
\begin{itemize}

\medskip
\item The symbol $b$ is such that
\begin{align}\label{theomu2asb1}
\supp(b) \subseteq \big\{ (k,\ell,m) \in \R^9\,:\, |k|+|\ell|+|m| \leq 2^A, \, |\ell| \approx 2^L, \, |m|\approx 2^M \big\},
\end{align}
for some $A\geq1$.

\medskip
\item For all $|k| \approx 2^{K}$, $|\ell|\approx 2^L$ and $|m| \approx 2^M$
\begin{align}\label{theomu2asb2}
| \nabla_k^a \nabla^\alpha_\ell \nabla^\beta_m b(k,\ell,m)| 
	\lesssim 2^{-K|a|} 2^{-|\alpha|L}2^{-|\beta|M} \cdot 2^{(|a|+|\alpha|+|\beta|)A}, \qquad |a|,|\a|,|\b|\leq 5.
\end{align}

\end{itemize}

\medskip
\noindent
Then, for any $p,q \in [2,\infty)$ 
\begin{align}
\frac{1}{p} + \frac{1}{q} > \frac{1}{2}, \qquad 
\end{align}
the following estimate holds:
\begin{align}
\label{theomu2conc}
\begin{split}
{\big\| P_K T_{\mu_2}[b]\big(g,h\big) \big\|}_{L^2}
  & \lesssim {\big\| \what{g} \big\|}_{L^p} {\big\| \what{h} \big\|}_{L^q} \cdot 2^{\max(K,L,M)} 
  \cdot 2^{C_0A}
\end{split}
\end{align}
for some sufficiently large $C_0$.
\end{theorem}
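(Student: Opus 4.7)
The plan is to follow the architecture of the proof of Theorem~\ref{theomu1}, but with the simplification that the decompositions of $\nu_2^1$ and $\nu_2^2$ in Propositions~\ref{Propnu21} and~\ref{Propnu22} involve only Schwartz kernels $K_i$ (there is no principal-value component, unlike for $\nu_1$). Accordingly the delicate estimates for $T_{\epsilon,\low}, T_{\epsilon,\med}, T_{\epsilon,\high}$ that were needed to absorb the $\pv$ in the proof of Theorem~\ref{theomu1} can be avoided entirely. The estimates come solely from the annular Lemmas together with the regular-symbol Lemma~\ref{lemBER}.

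First I would reduce \eqref{theomu2conc} to a frequency-localized version exactly as in the passage from \eqref{theomu1loc1} to \eqref{theomu1conc}: for each $K\leq A$, prove
\[ {\| P_K T_{\mu_2}[b](g,h)\|}_{L^{r'}} \lesssim {\| \widehat{g}\|}_{L^p}{\| \widehat{h}\|}_{L^q}\cdot 2^{\max(K,L,M)}\cdot 2^{(C_0-1)A} \]
for $p,q\in[2,\infty)$ with $1/p+1/q=1/r'$ slightly larger than $1/2$; an application of Bernstein's inequality for $P_K$, summed over $K\leq A$, then promotes this to the desired $L^2$ bound with a $2^{\delta A}$-loss absorbed into the constant $C_0$. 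Thus the task reduces to the localized estimate at fixed dyadic frequencies $(K,L,M)$.

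Next, by Proposition~\ref{Propnu21} I would decompose $\nu_2^1 = \nu_{2,0}^1 + \nu_{2,R}^1$. The leading term $\nu_{2,0}^1$ is supported where $|K-\max(L,M)|<5$ and consists of pieces $|k|^{-1} b_{i,J}(k,\ell,m)\, K_i(2^J(|k|-|\ell|-|m|))$ living in thin slabs of thickness $\sim 2^{-J}$ around the surface $\{|k|=|\ell|+|m|\}$. For each fixed $J$, Lemma~\ref{lemBEan2} (the analogue of Lemma~\ref{lemBEan} for this class of annuli) applies to the composite symbol $b\cdot|k|^{-1}\,b_{i,J}$, which satisfies the required Mihlin-type bounds up to powers of $2^A$ thanks to \eqref{theomu2asb2} and \eqref{Propnu212}. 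The resulting bound carries a factor $2^{-J}$ summable against the $J$-sum in \eqref{Propnu212}, while the prefactor $2^{-K}\cdot(\text{size of slab})$ reproduces the claimed $2^{\max(K,L,M)}$ since $|K-\max(L,M)|<5$. The contributions from $\nu_2^2(k,\ell,m)$ and $\nu_2^2(k,m,\ell)$ are treated identically via Proposition~\ref{Propnu22}, on the annuli $\{|k|\pm|\ell|-|m|=0\}$, with the support restriction $\max(K,L,M)-\mathrm{med}(K,L,M)<5$ again yielding the correct prefactor. The remainders $\nu_{2,R}^1, \nu_{2,R}^2$ have the regularity estimates \eqref{Propnu213} and \eqref{Propnu223}: after multiplication by $2^{2\max(K,L,M)}$ they define standard symbols in the class of Lemma~\ref{lemBER}, which then produces the $L^{r'}$ bound directly, with a better prefactor than required.

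The main technical obstacle will be the bookkeeping associated with the case distinctions in Propositions~\ref{Propnu21}--\ref{Propnu22}: the structural bounds are stated separately in the regimes $M\leq L$ vs.\ $L\leq M$ (and analogously for $\nu_2^2$ regarding $K$ vs.\ $M$), and the asymmetry in the derivative losses — for instance the factor $2^{|a|\max(L,M)}+2^{(1-|\alpha|)L}2^{(1-|\beta|)M}$ in \eqref{Propnu212} — has to be tracked carefully when verifying that the composite symbols fit the hypotheses of Lemma~\ref{lemBEan2} with only $2^{C_0 A}$ losses and uniform constants independent of $J$. Once this verification is done case by case and the $J$-sums are performed using the absolute convergence afforded by the second lines of \eqref{Propnu212} and \eqref{Propnu222}, the estimate \eqref{theomu2conc} follows with a $C_0$ that is a fixed multiple of the one in Theorem~\ref{theomu1}.
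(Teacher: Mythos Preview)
Your approach is essentially the same as the paper's: the authors also derive Theorem~\ref{theomu2} from the structural Propositions~\ref{Propnu21}--\ref{Propnu22} together with Lemma~\ref{lemBEan2}, following the template of Subsection~\ref{prtheomu1} and noting (exactly as you do) that the situation is simpler than for $\mu_1$ because there is no $\delta$ or $\pv$ component and the coefficients $b_{i,J}$ are uniformly bounded in $J$ rather than carrying a $2^J$ factor as in \eqref{Propnu+2}. Two small remarks: since the conclusion \eqref{theomu2conc} is already stated for $P_K T_{\mu_2}[b]$, no summation over $K$ is needed---the Bernstein step only serves to pass from $L^{r'}$ to $L^2$ at fixed $K$; and for general H\"older pairs the relevant bound is \eqref{lemBEan2conc'} with gain $2^{-J/2}$ rather than $2^{-J}$, which is still ample for summing over $J\in\mathcal{J}$.
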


\def\factor{2^{\min(K,L,M)}}

\medskip
\begin{theorem}[Bilinear bounds with vectorfields 2]\label{theomu2vf}
Let $\nu_2^1$ 
be given as in Proposition \ref{Propnu21} and
$\nu_{2\pm}^2$, $\nu_{2,R}^2$ as in Proposition \ref{Propnu22}.
With the same notation and assumptions on $b$ and $(p,q)$ as in Theorem \ref{theomu2}, the following hold:


\begin{itemize}

\medskip
\item[(i)] Let
\begin{align}\label{theomu2X}
{\bf X}_\pm = \pm \partial_{|\ell|} + \partial_{|m|},
\end{align}
Then, for $a = 1,2$,
\begin{align}\label{theomu2Xconc1}
{\big\| P_KT[{\bf X}_-^a \nu_2^1; b](g,h) \big\|}_{L^2} 
  & \lesssim {\big\| \what{g} \big\|}_{L^p} {\big\| \what{h} \big\|}_{L^q}
    \cdot 2^{(1-a)\max(K,L,M)}  
    \cdot 2^{(C_0+12)A}, 
\end{align}
and
\begin{align}\label{theomu2Xconc2}
\begin{split}
& {\big\| P_KT[{\bf X}_\pm^a \nu_{2,\pm}^2; b] (g,h) \big\|}_{L^2}
\\
& + {\big\| P_KT[{\bf X}_\pm^a \nu_{2,R}^2; b] (g,h) \big\|}_{L^2} 
   \lesssim {\big\| \what{g} \big\|}_{L^p} {\big\| \what{h} \big\|}_{L^q} 
   2^{-a\min(L,M)} 2^{\med(K,L,M)} \cdot 2^{(C_0+12)A}. 
\end{split}
\end{align}


\medskip
\item[(ii)] Let
\begin{align}\label{theomu2Y}
{\bf Y}_\pm = \partial_{k} \pm \frac{k}{|k|} \big( \frac{\ell}{|\ell|} \cdot \partial_{\ell} \big).
\end{align}
Then, for $a=1,2$, we have
\begin{align}\label{theomu2Yconc1}
{\big\| T[{\bf Y}_+^a \nu_2^1; b](g,h) \big\|}_{L^2} 
  & \lesssim {\big\| \what{g} \big\|}_{L^p} {\big\| \what{h} \big\|}_{L^q}
    \cdot 2^{(1-a)\max(K,L,M)} \cdot 2^{(C_0+12)A};
\end{align}
The same estimate 
holds for $T[{\bf Y}_+^a\nu^1_{2,R}; b]$.

Moreover, we have
\begin{align}\label{theomu2Yconc2}
\begin{split}
{\big\| T[{\bf Y}_\mp^a \nu_{2,\pm}^2; b] (g,h) \big\|}_{L^2} 
  & \lesssim {\big\| \what{g} \big\|}_{L^p} {\big\| \what{h} \big\|}_{L^q}
    \cdot 2^{-aL} 2^{\max(K,L,M)} \cdot  2^{(C_0+12)A};
\end{split}
\end{align}
the same estimate 
holds for $T[{\bf Y}_\mp^a\nu^2_{2,R}; b]$.

\medskip
\item[(iii)] 
Define the cutoffs
\begin{align}\label{theomu2dlcut}
\begin{split}
\chi^\pm(k,\ell,m) := \varphi_{\geq - 10}\Big( \frac{|k| \pm |\ell| - |m|}{ |k| + |\ell| + |m|} \Big). 
\end{split}
\end{align}
Then, for $a=(a_1,a_2)$ with $1\leq |a| \leq 2$, we have 
\begin{align}\label{theomu2dlmconc}
\begin{split}
{\big\| T\big[\nabla_{(\ell,m)}^a (\nu \chi^-); b\big](g,h) \big\|}_{L^2} 
  \lesssim {\big\| \what{g} \big\|}_{L^p} {\big\| \what{h} \big\|}_{L^q}
    \cdot 2^{-|a_1|L} 2^{-|a_2|M} \cdot 2^{\max(K,L,M)} 2^{(C_0+12)A},
    \\ \qquad \mbox{for} \,\, \nu \in \{ \nu^1_2,\nu^2_2\},
\end{split}
\end{align}
\end{itemize}

%

\end{theorem}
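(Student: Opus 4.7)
The proof will mirror the strategy used for Theorem \ref{theomu1'} in Subsection \ref{sectheomu1'}, using the structural decompositions of Propositions \ref{Propnu21} and \ref{Propnu22} in place of Proposition \ref{Propnu+}, and Theorem \ref{theomu2} in place of Theorem \ref{theomu1}. The guiding principle for parts (i) and (ii) is that each vectorfield $\mathbf{X}_\pm$, $\mathbf{Y}_\pm$ is tangent to the singular surface of the measure it is being applied to, so the vectorfield annihilates the singular kernel in the leading order and only differentiates coefficients; consequently the output has the same singular structure as the input, with coefficients satisfying bounds of the same class times a controlled loss. For part (iii), the cutoffs $\chi^\pm$ restrict the support away from the singularity, so the measure is effectively smooth and standard $\nabla_{(\ell,m)}$ derivatives can be applied freely.

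\textbf{Plan for part (i).} Observe the tangency
\[
\mathbf{X}_-\bigl(|k|-|\ell|-|m|\bigr)=0,\qquad \mathbf{X}_\pm\bigl(|k|\pm|\ell|-|m|\bigr)=0,
\]
so that $\mathbf{X}_-^a K_i\bigl(2^J(|k|-|\ell|-|m|)\bigr)=0$ and $\mathbf{X}_\pm^a K_i\bigl(2^J(|k|\pm|\ell|-|m|)\bigr)=0$. Plugging the decompositions \eqref{Propnu210}--\eqref{Propnu213} and \eqref{Propnu220}--\eqref{Propnu223} into $\mathbf{X}_-^a\nu_2^1$ and $\mathbf{X}_\pm^a\nu_{2,\pm}^2$ reduces to Leibniz terms $(\mathbf{X}_-^{a_1} b_{i,J})(\mathbf{X}_-^{a_2}K_i)$ with $a_2<a$, plus the application of $\mathbf{X}$ to the remainders. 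Using the coefficient bounds \eqref{Propnu212} and \eqref{Propnu222} one verifies that each such $\mathbf{X}$-differentiated coefficient still satisfies the symbol estimates of Proposition \ref{Propnu21}/\ref{Propnu22}, scaled by a factor consistent with the right-hand side of \eqref{theomu2Xconc1}/\eqref{theomu2Xconc2}. One then applies Theorem \ref{theomu2} (or Lemma \ref{lemBER} for the pieces where $\mathbf{X}$ hits $\nu_{2,R}^j$, whose bounds \eqref{Propnu213}/\eqref{Propnu223} force a regular-symbol treatment) to the resulting pseudo-product.

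\textbf{Plan for part (ii).} One again checks tangency:
\[
\mathbf{Y}_+\bigl(|k|-|\ell|-|m|\bigr)=0,\qquad \mathbf{Y}_\mp\bigl(|k|\pm|\ell|-|m|\bigr)=0,
\]
so the same mechanism kills the singular kernel in the leading order. The new technical point is that $\mathbf{Y}$ mixes $\partial_k$ and $\partial_\ell$, so one must absorb the $\partial_k$ into the localization $\varphi_K(k)$ and verify that, after treating $\mathbf{Y}^a$ as a generic $\nabla_{(k,\ell)}^a$ on the smooth parts, the coefficients $\mathbf{Y}_+^a b_{i,J}$ (for $\nu_2^1$) and $\mathbf{Y}_\mp^a b_{i,J}^\pm$ (for $\nu_{2,\pm}^2$) retain symbol estimates of the type in \eqref{Propnu212}/\eqref{Propnu222}, multiplied by the loss $2^{(1-a)\max(K,L,M)}$ or $2^{-aL}2^{\max(K,L,M)}$ advertised in \eqref{theomu2Yconc1}/\eqref{theomu2Yconc2}. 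This is done by exactly the same bookkeeping used in the last part of Subsection \ref{sectheomu1'} for the vectorfield $\mathbf{Y}$ of \eqref{theomu1'2b}. The remainders $\nu_{2,R}^j$ are handled via \eqref{Propnu213}/\eqref{Propnu223} and Lemma \ref{lemBER}, distinguishing the balanced case $|K-\max(L,M)|<5$ (where $\nabla$ derivatives give standard symbol bounds) from the unbalanced case (where one integrates by parts to get the improved factor $\max(1,2^{-(|a|-1)K_-})$).

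\textbf{Plan for part (iii).} On the support of $\chi^\mp$ one has $\bigl||k|\pm|\ell|-|m|\bigr|\gtrsim |k|+|\ell|+|m|$, so in Proposition \ref{Propnu21}/\ref{Propnu22} the singular kernels $K_i$ are evaluated at arguments $\gtrsim 2^J(|k|+|\ell|+|m|)\gg 1$ and can be treated as Schwartz tails, producing a factor $2^{-2\max(K,L,M)}$ in analogy with \eqref{theomu1'pr3}. Splitting $\nabla_{(\ell,m)}^a(\nu\chi^\mp)$ by Leibniz and bounding each factor by its natural size, the symbol becomes a regular one satisfying \eqref{lemBERb2} (up to $2^{C_0A}$ losses and an overall factor $2^{-2\max(K,L,M)}2^{-|a_1|L}2^{-|a_2|M}$), and Lemma \ref{lemBER} yields \eqref{theomu2dlmconc}.

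\textbf{Main obstacle.} The principal difficulty lies not in any single estimate but in the combinatorial bookkeeping of symbol bounds under the vectorfields, especially the verification that after applying $\mathbf{X}_\pm^a$ or $\mathbf{Y}_\pm^a$ the resulting coefficients satisfy symbol bounds of the same type as in \eqref{Propnu212}/\eqref{Propnu222} multiplied by the advertised loss factor, across all regimes of $(K,L,M)$ compatible with the indicator in those estimates. The treatment of the remainder terms $\nu_{2,R}^j$ in part (ii), which requires distinguishing balanced from unbalanced frequency configurations (analogously to the dichotomy between \eqref{Propnu+3} and \eqref{Propnu+3imp} in the proof of Theorem \ref{theomu1'}(iii)), is the subtlest point; once this is carried out, the application of Theorem \ref{theomu2} and Lemma \ref{lemBER} to the rescaled symbols closes all estimates routinely.
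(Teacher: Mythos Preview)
Your proposal is correct and follows essentially the same approach as the paper. The paper's own proof is in fact even terser than yours: it simply refers back to the arguments of Subsections \ref{prtheomu1} and \ref{sectheomu1'}, invoking Propositions \ref{Propnu21}--\ref{Propnu22} and Lemma \ref{lemBEan2} in place of Proposition \ref{Propnu+} and Lemma \ref{lemBEan}, and remarking that the situation is actually easier because the decompositions \eqref{Propnu210} and \eqref{Propnu220} contain no $\nu_0$-type singular contribution and the coefficients in \eqref{Propnu211}, \eqref{Propnu221} are uniformly bounded in $J$ (unlike \eqref{Propnu+2}). Your tangency verifications for $\mathbf{X}_\pm$ and $\mathbf{Y}_\pm$, the Leibniz bookkeeping on the coefficients, and the treatment of remainders via Lemma \ref{lemBER} all align with this route.
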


\medskip
Theorem \ref{theomu2} gives bilinear estimates for operators involving $\mu_2$
which are analogous to the estimates of Theorems \ref{theomu1} and \ref{theomu1'} for $\mu_1$.



To prove Theorem \ref{theomu2} the key ingredient is the following:

\medskip
\begin{lemma}[Bilinear operators restricted to small annuli 2]\label{lemBEan2}
Let $j\geq 1$, $\sigma_1,\sigma_2 \in \{+,-\}$ and consider the bilinear operator
\begin{align}
\label{lemBE21}
\begin{split}
B_{j}^{\sigma_1\sigma_2}[b](g,h)(x) 
  = \mathcal{F}^{-1}_{k\mapsto x} \iint_{\R^3\times\R^3} \what{g}(\ell) \what{h}(m) 
  \, b(k,\ell,m) \, \chi\big(2^j(|k| +\sigma_1 |\ell| +\sigma_2 |m|)\big) \, \mathrm{d}\ell \mathrm{d}m,
\end{split}
\end{align}
where $\chi$ is a Schwartz function and

\setlength{\leftmargini}{2em}
\begin{itemize}

\medskip
\item 
The support of $b$ satisfies
\begin{align}\label{lemBEan2b1}
\supp(b) \subseteq \big\{ (k,\ell,m) \in \R^9\,:\, 
  |k|\approx 2^K, \,|\ell| \approx 2^L, \,|m| \approx 2^M \big\},
\end{align}
with $-j \ll \max(K,L,M) \leq A$ for some $A\geq 1$; 

\medskip 
\item The following estimates hold
\begin{align}\label{lemBEan2b2}
| \nabla_k^a \nabla^\alpha_\ell \nabla^\beta_m b(k,\ell,m)| 
	\lesssim 2^{-K|a|} 2^{-|\alpha|L} 2^{-|\beta|M} \cdot 2^{(|a|+|\alpha|+|\beta|)A},
	\qquad |a|, |\alpha|, |\beta|\leq 4.
\end{align}
\end{itemize}

\medskip
\noindent
Then
\begin{align}
\label{lemBEan2conc}
{\|B_{j}^{\sigma_1\sigma_2}[b](g, h) \|}_{L^2} \lesssim 2^{-j} \cdot 2^{\min(K,L,M) + \med(K,L,M)} 
  \cdot 2^{16 A} \cdot {\| g \|}_{L^2} {\| h \|}_{L^\infty}, 
\end{align}
where $\widehat{f} = \widehat{\mathcal{F}}(f)$ denotes the (flat) Fourier transform of $f$.
Moreover, for all $p,q\in[2,\infty]$ with $1/p+1/q=1/2$, 
\begin{align}
\label{lemBEan2conc'}
\begin{split}
{\|B_{j}^{\sigma_1\sigma_2}[b](g, h) \|}_{L^2} \lesssim 2^{-j/2} \cdot 2^{\min(K,L,M) + (3/2)\med(K,L,M)} 
  \cdot 2^{16 A} \cdot {\| g \|}_{L^p} {\| h \|}_{L^q}.
\end{split}
\end{align}
\end{lemma}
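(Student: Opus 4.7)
My plan is to closely parallel the proof of Lemma \ref{lemBEan}, with the main new difficulty being that the thin annulus $\{||k|+\sigma_1|\ell|+\sigma_2|m||\lesssim 2^{-j}\}$ now couples all three frequency variables. By relabeling which of $k,\ell,m$ plays the role of the distinguished variable, it suffices to treat $(\sigma_1,\sigma_2)=(-,-)$; in this case the annulus condition forces $|k|\approx|\ell|+|m|$, so WLOG $K=\max(L,M)+O(1)$, giving $\med(K,L,M)=\max(L,M)$ and $\min(K,L,M)=\min(L,M)$. I would then convert to the mixed form
\[ B_j^{\sigma_1\sigma_2}[b](g,h)(x) = \iint g(y)\,h(z)\,\mathcal{A}(x,y,z)\,dy\,dz, \qquad \mathcal{A} := \iiint e^{i(x\cdot k - y\cdot\ell - z\cdot m)}\,b\chi\,dk\,d\ell\,dm,\]
and reduce the first estimate \eqref{lemBEan2conc} to a Schur-type operator bound on the $L^2_y\to L^2_x$ kernel $Q(x,y) := \int|\mathcal{A}(x,y,z)|\,dz$, after pulling out the $\|h\|_{L^\infty}$ factor.

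To control $\mathcal{A}$, I would adapt the IBP-and-covering strategy of Lemma \ref{lemBEan}: for fixed $k,\ell$ the $m$-support is a thin spherical shell of radius $r_\ast(k,\ell):=|k|-|\ell|\approx 2^M$ and thickness $2^{-j}$, which I would cover by $\approx 2^{2(j+M)}$ balls of radius $2^{-j}$ centered at $p_r(k,\ell)=r_\ast\omega_r$ for $\omega_r\in\mathbb{S}^2$. On each piece, the change of variables $m=p_r+m'$ moves the ball cutoff $\eta_r(2^j m')$ away from $(k,\ell)$-dependence and localizes $m'$ to scale $2^{-j}$, thereby yielding spatial decay in $z$ at scale $2^j$ after $m'$-integration. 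The key structural identity---mirroring the $(\ell+m)/|\ell+m|\approx m/|m|$ computation in Lemma \ref{lemBEan}---is $|k|-|\ell|-|p_r+m'|=-m'\cdot\omega_r+O(|m'|^2/r_\ast)$ on the support $|m'|\lesssim 2^{-j}$, whence both $\partial_k$ and $\partial_\ell$ acting on $\chi$ contribute only $O(2^{-M})$ each rather than $2^j$. Integration by parts in $k$ and $\ell$ (in angular or tangent-to-annulus directions, using weights $(1+2^{2K}|\cdots|^2)^{-2}$ and $(1+2^{2L}|\cdots|^2)^{-2}$ respectively) then yields the spatial decay in $x$ and $y$ needed for the Schur estimate; summing over the $\approx 2^{2(j+M)}$ rotations reproduces the factor $2^{-j}\cdot 2^{M+K}=2^{-j}\cdot 2^{\min+\med}$, with the residual symbol-type losses absorbed into $2^{16A}$ via the assumption $\max(K,L,M)\leq A$.

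The main obstacle I anticipate is the bookkeeping for the $(k,\ell)$-dependence of $p_r$: the modulation phase $e^{-iz\cdot p_r(k,\ell)}=e^{-iz\cdot r_\ast\omega_r}$ produces, under $\partial_k$ and $\partial_\ell$, additional terms like $z\cdot\omega_r$ that shift the effective duals of $k$ and $\ell$ from $x$ and $y$ to $x-(z\cdot\omega_r)k/|k|$ and $y+(z\cdot\omega_r)\ell/|\ell|$ respectively. Once these shifts are tracked, the structure of the decay becomes fully analogous to that in Lemma \ref{lemBEan} (where $R_r$-rotated duals appeared), and summing over $r$ reconstructs the stated constant. Finally, the H\"older-type bound \eqref{lemBEan2conc'} would be obtained by interpolating \eqref{lemBEan2conc} against the straightforward $L^2\times L^2\to L^2$ estimate arising from Cauchy-Schwarz in $(\ell,m)$ applied to $\what{B}(k)=\iint\what{g}(\ell)\what{h}(m)b\chi\,d\ell\,dm$ combined with Plancherel in $k$ and the support-volume bound $\|b\chi(k,\cdot,\cdot)\|_{L^2_{\ell,m}}^2\lesssim 2^{-j+3\min+2\med}$ at fixed $k$; real bilinear interpolation along the $L^p$-scale (with $1/p+1/q=1/2$) then yields the claimed $2^{-j/2}\cdot 2^{\min+(3/2)\med}$ dependence.
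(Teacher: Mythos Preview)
Your plan follows the template of Lemma \ref{lemBEan}, but the paper proves this lemma by a genuinely different mechanism, and the differences matter. In the paper's proof one does \emph{not} cover a thin shell by balls of radius $2^{-j}$; instead one performs an angular decomposition of each of $k,\ell,m$ into caps of aperture $\lambda_B = 2^{-j/2-B/2}$ (so that a sphere of radius $2^B$ deviates from its tangent plane by $\sim 2^B\lambda_B^2 = 2^{-j}$ inside each cap). The point is that on the annulus $||k|-|\ell|-|m||\lesssim 2^{-j}$ restricted to such caps, the three vectors are essentially aligned; this lets one integrate by parts using the \emph{scaling} vectorfields $\ell\cdot\nabla_\ell$ and $m\cdot\nabla_m$, which annihilate the angular cutoffs and act with $O(1)$ bounds on $\chi(2^j(|k|-|\ell|-|m|))$ thanks to the alignment. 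The sharp $L^2\times L^\infty$ bound then comes from $L^2$-orthogonality of the angular pieces of $f$ and $g$, and the Hölder bound \eqref{lemBEan2conc'} follows from the \emph{same} decomposition by replacing that orthogonality with the crude bound $\sum_e\|Q_e g\|_{L^p}^2\leq |E_{\lambda_L}|\,\|g\|_{L^p}^2$, picking up exactly the factor $|E_{\lambda_L}|^{1/2}=2^{(j+L)/2}$.

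Your ball-covering strategy runs into a real obstacle at the step you flag yourself. After $m=p_r(k,\ell)+m'$ with $p_r=(|k|-|\ell|)\omega_r$, the phase acquires the factor $e^{-i(z\cdot\omega_r)(|k|-|\ell|)}$, so $\nabla_k$ of the full phase is $x-(z\cdot\omega_r)k/|k|$: the ``shifted dual'' depends on the integration variable $k/|k|$. If you try $(1-2^{2K}\Delta_k)^2$ directly, the cross term $2^{2K}(z\cdot\omega_r)/|k|\approx 2^K|z|$ is as large as $2^{K+j}$ on the relevant $z$-support and gives no decay. Fixing this forces you to freeze $k/|k|$ (and likewise $\ell/|\ell|$), i.e.\ to introduce angular localization in $k$ and $\ell$ --- at which point you have essentially reconstructed the paper's argument, not the Lemma \ref{lemBEan} argument. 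So as written, the ``track the shifts and sum over $r$'' step is a gap.

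Separately, your route to \eqref{lemBEan2conc'} via interpolation with an $L^2\times L^2$ endpoint is incorrect: the pair $(p,q)=(2,2)$ lies off the line $1/p+1/q=1/2$, so bilinear interpolation between $(2,\infty)$ and $(2,2)$ only reaches $(2,q)$, not general $(p,q)$ on the Hölder line; and even on that segment the resulting constant carries a factor $2^{3K/2}$ that has no place in \eqref{lemBEan2conc'}. If you want an interpolation argument, you would need the symmetric $L^\infty\times L^2$ endpoint (same constant as \eqref{lemBEan2conc}), which in fact gives a bound \emph{stronger} than \eqref{lemBEan2conc'}; but the paper obtains \eqref{lemBEan2conc'} directly from the angular decomposition without any interpolation.
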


\smallskip
Lemma \ref{lemBEan2} is the analogue of Lemma \ref{lemBEan} which was used to prove the bilinear estimates
for $\mu_1$ in Theorems \ref{theomu1} and \ref{theomu1'}.
The proof is in a similar spirit as the one of Subsection \ref{proofBEpre}
but it is more involved, so we give the details below.

Note how, exactly as in \eqref{lemBE12}, we gain a factor of $2^{-j}$ in \eqref{lemBEan2conc}.
However, in this case, such a gain is not necessary to obtain the boundedness
of the bilinear operator associated to $\mu_2$, see \eqref{SODuhamel}-\eqref{SOmu2}. 
Indeed, from \eqref{Propnu211} in Proposition \ref{Propnu21} (and \eqref{Propnu221} in Proposition \ref{Propnu22})
we see that $\mu_2$ is a linear combination of operators like those in Lemma \ref{lemBEan2}
with coefficients that are uniformly controlled independently of $j$,
in contrast with the case of the more singular distribution $\mu_1$, 
see \eqref{mu1} and \eqref{Propnu+2}. 

The estimate \eqref{lemBEan2conc} is sharp relative to the dependence on $j$ but 
we only allow the pair of H\"older exponents $(2,\infty)$ there.
Since in the nonlinear estimates for the evolution equation we also need different H\"older pairs $(p,q)$,
we will end up using only \eqref{lemBEan2conc'} in the proof of Theorems \ref{theomu2}, \ref{theomu2vf},
and \ref{theomu3}.
The smaller gain of $2^{-j/2}$ is still more than sufficient to deal with $\mu_2$ and $\mu_3$.

\medskip
\begin{proof}[Proof of Lemma \ref{lemBEan2}]
We only consider the case $\sigma_1=\sigma_2=-$, since the cases $\sigma_1\sigma_2 = -$ are similar,
and the case $\sigma_1=\sigma_2=+$ is empty.
We denote $B_{j}[b](g,h)(x) = B_{j}^{--}[b](g,h)(x)$, insert cutoffs according to the support restriction
\eqref{lemBEan2b1} and write
\begin{align}
\label{prBEan21}
\begin{split}
\langle B_{j}[b](g,h)(x),f\rangle 
  & = \iiint_{\R^3_x \times \R^3_y \times \R^3_z} A(x,y,z) \, f(x) g(y) h(z) 
  \, \mathrm{d}x\mathrm{d}y\mathrm{d}z
  \\
A(x,y,z) &:= \iiint_{\R^3_k \times \R^3_\ell \times\R^3_m} 
  e^{ix\cdot k} e^{-iy\cdot \ell} e^{-iz\cdot m}  \varphi_K(k) \varphi_{L}(\ell) \varphi_{M}(m) b(k,\ell,m) 
  \\ \, & \qquad \times \, \chi(2^j(|k|-|\ell|-|m|)) \, \mathrm{d}\ell \mathrm{d}m \mathrm{d}k.
\end{split}
\end{align}
Without loss of generality, by symmetry we may assume $|\ell|\geq |m|$ (or $L\geq M$).
In view of $-j \leq \max(K,L) - 10$ and the fact that $\chi$ is Schwartz, we may also assume $|K-L| \leq 5$,
for otherwise the kernel is a regular one and the desired bound follows more easily. 
We also set $b\equiv 1$ for convenience; it will be clear to the reader what minor modification
in the arguments are needed for a general $b$ satisfying the assumptions in the statement.

Given a parameter $\lambda$, we introduce an angular partition of unity of $\mathbb{S}^2$
adapted to polar caps of aperture $\lambda$.
We consider a family $E_\lambda$ of unit vectors $\{e_i\}_{i=1,\dots N}$, $N \approx \lambda^{-2}$,
uniformly spaced, and associated cutoffs
\begin{align*}
q_{e,\lambda}(k):=\frac{1}{\sum_{e'} \varphi_{\leq 0}\big(\big| \frac{k}{|k|} - e' \big|^2 \frac{1}{\lambda^2} \big)} 
  \varphi_{\leq 0}\big( \big| \dfrac{k}{|k|} - e \big|^2 \frac{1}{\lambda^2}\big), \qquad e \in E_\lambda
\end{align*}
and projections given by
\begin{align*}
\what{Q_{e,\lambda} f}(k) := q_{e,\lambda}(k) \what{f}(k).
\end{align*}

Let
\begin{align}\label{l_B}
\lambda_B := 2^{-j/2}2^{-B/2} 
\end{align}
for $B=L,M$ or $K$, and define
$E_{\lambda,L,M,K} := E_{\lambda_K} \times E_{\lambda_L} \times E_{\lambda_M}$.
We write
\begin{align}\label{prBEan22}
\begin{split}
\langle B_{j}[1](g,h)(x),f\rangle = \sum_{(e_0,e_1,e_2) \in E_{\lambda,L,M,K}}
  \langle B_{j,e_0,e_1,e_2} 
  \big(Q_{e_1,5\lambda_L} g, Q_{e_2,5\lambda_M}h \big)(x), Q_{e_0,5\lambda_K}f\rangle
\end{split}
\end{align}
where
\begin{align}\label{prBEan23}
\begin{split}
\langle B_{j,e_0,e_1,e_2} (f_1,f_2),f_0\rangle
  & = \iiint_{\R^3_x \times \R^3_y \times \R^3_z} A_{e_0,e_1,e_2}(x,y,z) \, f_0(x) 
    f_1(y) f_2(z) \, \mathrm{d}x\mathrm{d}y\mathrm{d}z,
  \\
A_{e_0,e_1,e_2}(x,y,z) &:= \iiint_{\R^3_k \times \R^3_\ell \times\R^3_m} 
  e^{ix\cdot k} e^{-iy\cdot \ell} e^{-iz\cdot m}  \varphi_K(k) \varphi_{L}(\ell) \varphi_{M}(m) 
  \\ \, & \times  q_{e_0,\lambda_K}(k)q_{e_1,\lambda_L}(\ell)q_{e_2,\lambda_M}(m) 
  \, \chi(2^j(|k|-|\ell|-|m|)) \, \mathrm{d}\ell \mathrm{d}m \mathrm{d}k.
\end{split}
\end{align}
Note that there are $\approx \lambda_M^{-2} \cdot \lambda_K^{-2}$ 
elements in the sum over the parameters $e_0,e_2$,
and therefore
\begin{align}\label{prBEan24}
\begin{split}
\big| \langle B_{j}[1](g,h)(x),f\rangle \big| & \lesssim 2^{2j} 2^{K+M}
\\ 
  &\times \sup_{e_0 \in E_{\lambda_K}, \, e_2 \in E_{\lambda_M}} 
  \sum_{e_1\in E_{\lambda_L}}
  \big| \langle B_{j,e_0,e_1,e_2} \big(Q_{e_1,5\lambda_L} g, Q_{e_2,5\lambda_M}h \big)(x), Q_{e_0,5\lambda_K}f\rangle \big|
\end{split}
\end{align}
We thus fix $e_0,e_2$ and claim that it suffices to prove that the kernel satisfies
\begin{align}\label{prBEan2main}
\begin{split}
\big| A_{e_0,e_1,e_2}(x,y,z) \big| \lesssim \, 
	& \frac{2^{-j+L}}{(1+2^{-j+L}|x-y|^2)^2} \frac{2^{-j+M}}{(1+2^{-j+M}|x-z|^2)^2}
  \\ \cdot \, & \frac{2^L}{1+(2^Le_1\cdot (x-y))^2}
  \frac{2^M}{1+(2^Me_2\cdot (x-z))^2} \cdot 2^{-3j}.
\end{split}
\end{align}

Let us assume \eqref{prBEan2main} and show how it implies the desired conclusions. 
Up to rotating the variables $k$ 
(and $x$) 
in \eqref{prBEan23} we may assume that $e_0 = e_1$. 
Then we estimate
\begin{align*}
& \big| \langle B_{j,e_1,e_1,e_2} \big(Q_{e_1,5\lambda} g, Q_{e_2,5\lambda}h \big)(x), Q_{e_1,5\lambda}f\rangle \big| 
  \\
  & \lesssim 2^{-3j}
  \iiint_{\R^3_x \times \R^3_y \times \R^3_z}  \big| Q_{e_1,5\lambda}f(x) \big|
  \big|Q_{e_1,5\lambda} g(y+x)\big| \big|Q_{e_2,5\lambda}h(z+x)\big| 
  \\
  & \times \frac{2^{-j+L}}{(1+2^{-j+L}|y|^2)^2}
    \frac{2^{-j+M}}{(1+2^{-j+M}|z|^2)^2}
    \frac{2^L}{1+(2^Le_1\cdot y)^2} \frac{2^M}{1+(2^Me_2\cdot z)^2}\, \mathrm{d}x\mathrm{d}y\mathrm{d}z
  \\
  & \lesssim 2^{-3j}
  {\big\| Q_{e_1,5\lambda}f \big\|}_{L^2} {\big\| Q_{e_1,5\lambda}g \big\|}_{L^p} 
  {\big\| Q_{e_2,5\lambda}h \big\|}_{L^q},
\end{align*}
for $1/p+1/q=1/2$.
Summing over $e_1 \in E_\lambda$, using Cauchy-Schwartz and orthogonality, we have
\begin{align*}
& \sum_{e_1\in E_\lambda}
  \big| \langle B_{j,e_1,e_1,e_2} \big(Q_{e_1,5\lambda}g, Q_{e_2,5\lambda}h \big)(x), 
  Q_{e_1,5\lambda}f\rangle \big| 
  \\
  & \lesssim 2^{-3j}
  \Big[ \sum_{e\in E_\lambda}{\big\| Q_{e,5\lambda}f \big\|}_{L^2}^2 \Big]^{1/2}
  \Big[ \sum_{e\in E_\lambda}{\big\| Q_{e,5\lambda}g\big\|}_{L^2}^2 \Big]^{1/2} {\|h\|}_{L^\infty}
  \\
  & \lesssim 2^{-3j} \cdot {\|f\|}_{L^2} {\|g\|}_{L^2} {\|h\|}_{L^\infty}.
\end{align*}
Together with \eqref{prBEan24} which gives a $2^{2j} 2^{L+M}$ factor we obtain \eqref{lemBEan2conc}.
For \eqref{lemBEan2conc'} we only need to slightly modify the last estimate above using
\begin{align*}
& \sum_{e_1\in E_\lambda} 
{\big\| Q_{e_1,5\lambda}f \big\|}_{L^2} {\big\| Q_{e_1,5\lambda}g \big\|}_{L^p} 
  {\big\| Q_{e_2,5\lambda}h \big\|}_{L^q}
  \\
  & \lesssim 
  \Big[ \sum_{e\in E_\lambda}{\big\| Q_{e,5\lambda_K}f \big\|}_{L^2}^2 \Big]^{1/2}
  \Big[ \sum_{e\in E_\lambda}{\big\| Q_{e,5\lambda_L}g \big\|}_{L^p}^2 \Big]^{1/2} {\|h\|}_{L^q}
  \\
  & \lesssim 
  {\| f \|}_{L^2} {\big\|g\big\|}_{L^p} \big| E_{\lambda_L} \big|^{1/2} {\|h\|}_{L^q}
  \lesssim 
  2^{j/2} 2^{L/2} \cdot {\| f \|}_{L^2} {\|g\|}_{L^p} {\|h\|}_{L^q}.
\end{align*}

\medskip
{\it Proof of \eqref{prBEan2main}}. Rotating $k$ and $m$, we may assume $e_0=e_1=e_2$.
Changing variables $k\mapsto k+\ell+m$, it suffices to look at
\begin{align}
\begin{split}
A_{e_1,e_1,e_1}(x,Y,Z) &= \iiint_{\R^3_k \times \R^3_\ell \times\R^3_m} 
  e^{ix\cdot k} e^{-iY\cdot \ell} e^{-iZ\cdot m}  \varphi_K(k+\ell+m) 
  \varphi_{L}(\ell) \varphi_{M}(m) 
  \\ \, & \times  q_{e_1,\lambda_K}(k+\ell+m)q_{e_1,\lambda_L}(\ell)q_{e_1,\lambda_M}(m) 
  \, \chi(2^j(|k+\ell+m|-|\ell|-|m|)) \, \mathrm{d}\ell \mathrm{d}m \mathrm{d}k
\end{split}
\end{align}
and prove
\begin{align}\label{prBEan2main'}
\begin{split}
\big| A_{e_1,e_1,e_1}(x,Y,Z) \big| & \lesssim \frac{2^{-j+L}}{(1+2^{-j+L}Y^2)^2} \frac{2^{-j+M}}{(1+2^{-j+M}Z^2)^2}
  \\ & \times \frac{2^L}{1+(2^Le_1\cdot Y)^2} \frac{2^M}{1+(2^Me_1\cdot Z)^2} 
  \cdot 2^{-3j} .
\end{split}
\end{align}
The idea is to integrate by parts using the vectorfields $2^{-j}\Delta_\ell$ and $2^{-j}\Delta_m$
as well as the scaling vectorfields $\ell \cdot \nabla_\ell$ and $m \cdot \nabla_m$,
taking advantage of the fact that the three frequencies $\ell,m$ and $k+\ell+m$ are essentially aligned.
Observe that
\begin{align}\label{prBEan2d1}
\begin{split}
\Big| 
2^{-j}\Delta_\ell \big[ q_{e_1,\lambda_K}(k+\ell+m) q_{e_1,\lambda_L}(\ell) \chi(2^j(|k+\ell+m|-|\ell|-|m|)) \big] \Big|
  \\ \lesssim 
  2^{-j} 2^{-2K}\lambda_K^{-2} + 2^{-j} 2^{-2L}\lambda_L^{-2} + 2^j(\lambda_K^2 + \lambda_L^2)
  \\
  \approx 2^{-L},
\end{split}
\end{align}
having used that, on the support of the integral, 
\[|(k+\ell+m)_i/|k+\ell+m| - \ell_i/|\ell| |\lesssim \lambda_K+\lambda_L\]
and that $-j/2 \leq K,L$.
A similar estimate holds replacing $\ell$ by $m$ and $L$ by $M$.

We also have
\begin{align}\label{prBEan2d2}
\begin{split}
& \ell \cdot \nabla_\ell \, q_{e_1,\lambda_L}(\ell) = 0,
\\
& \Big| \ell \cdot \nabla_\ell \big[ q_{e_1,\lambda_K}(k+\ell+m) \chi(2^j(|k+\ell+m|-|\ell|-|m|)) \big] \Big| 
  \lesssim 1, 
\end{split}
\end{align}
with similar estimates for $m\cdot \nabla_m$.
To see this last inequality, recall the definition $q_{e,\lambda}$ 
and calculate
\begin{align*}
\ell \cdot \nabla_\ell \frac{(k+\ell+m)_a}{|k+\ell+m|} 
  & = \ell^i \frac{\delta_{ia}}{|k+\ell+m|} - \ell^i \frac{(k+\ell+m)_a(k+\ell+m)_i}{|k+\ell+m|^3}
  \\
  & = \frac{\ell_a}{|k+\ell+m|} - \frac{\ell^i\ell_a\ell_i}{|k+\ell+m||\ell|^2} + 
  O\big((\lambda_K + \lambda_L)\cdot 2^{L-K}\big),
\end{align*}
Therefore
\begin{align*}
\big| \ell \cdot \nabla_\ell q_{e_1,\lambda_K}(k+\ell+m) \big| \lesssim 1 + \lambda_L \lambda_K^{-1} \approx 1.
\end{align*}
Similarly, we calculate
\begin{align*}
\ell \cdot \nabla_\ell ( |k+\ell+m|-|\ell|-|m| )
  & = |\ell| \Big[ \frac{\ell}{|\ell|} \cdot \frac{k+\ell+m}{|k+\ell+m|} - 1 \Big]= 
  \\
  & = O(2^L) \sin^2 \angle(k+\ell+m,\ell) = O\big(2^L(\lambda_K + \lambda_L)^2\big) = O(\lambda_K),
\end{align*}
and see that
\begin{align*}
| \ell \cdot \nabla_\ell \chi(2^j(|k+\ell+m|-|\ell|-|m|) | \lesssim 2^j \cdot 2^L (\lambda_K + \lambda_L)^2
  \approx 1.
\end{align*}

Using integration by parts through the identities 
$(1-2^{-j}2^L\Delta_\ell) e^{iY\cdot \ell} = (1+ 2^{-j+L}|Y|^2) e^{iY\cdot \ell} $ 
and $[1 - (2^L e_1 \cdot \nabla_\ell)^2]e^{iY\cdot \ell} = [1 + (2^L e_1\cdot Y)^2]e^{iY\cdot \ell}$
(and similarly for $e^{iZ\cdot m}$)
together with 
\eqref{prBEan2d1}-\eqref{prBEan2d2},
we obtain
\begin{align}
\begin{split}
\big| A_{e_1,e_1,e_1}(x,Y,Z) \big| \lesssim 
  \frac{1}{(1+2^{-j+L}|Y|^2)^2} \frac{1}{(1+2^{-j+M}|Z|^2)^2} \frac{1}{1+(2^Le_1\cdot Y)^2} 
  \frac{1}{1+(2^Me_1\cdot Z)^2} 
  \\ \times \iiint_{\R^3_k \times \R^3_\ell \times\R^3_m} 
  \varphi_K(k+\ell+m) \varphi_{L}(\ell) \varphi_{M}(m)
  q_{e_1,\lambda}(k+\ell+m)q_{e_1,\lambda}(\ell)q_{e_1,\lambda}(m) 
  \\ \times \, \chi(2^j(|k+\ell+m|-|\ell|-|m|)) \, \mathrm{d}\ell \mathrm{d}m \mathrm{d}k.
\end{split}
\end{align}

We then notice that, at fixed $\ell$ and $m$, the integral over $k$ is taken over a region 
where $|k| \lesssim 2^{-j}$, since 
\begin{align*}
& \big| |\ell|+|m| -|\ell+m| \big| = O((\lambda_M+\lambda_L)^2 2^M) \lesssim 2^{-j}, 
\\
& \big| |k+\ell+m|-|\ell+m|-|k| \big| = O((\lambda_L+\lambda_K)^2) 2^K) \lesssim 2^{-j}.
\end{align*}
Finally, since the integrals in $\ell$ and $m$ contribute the volumes of integration
$\lambda_L^2 2^{3L} = 2^{-j}2^{2L}$ and $\lambda_M^2 2^{3M} = 2^{-j}2^{2M}$ respectively,
we arrive at \eqref{prBEan2main'}.
\end{proof}


\medskip
\begin{proof}[Proof of Theorems \ref{theomu2} and \ref{theomu2vf}]
The proof of Theorem \ref{theomu2} follows from the structural Propositions \ref{Propnu21} and \ref{Propnu22}
and Lemma \ref{lemBEan2}. 
We can use the same arguments as those in Subsections \ref{prtheomu1} and \ref{sectheomu1'};
note that, actually, the situation here is substantially better 
since \eqref{Propnu210} and \eqref{Propnu220} 
do not contain any singular contribution like $\nu_0$ in \eqref{Propnu+1}-\eqref{Propnu+1.1};
moreover, the summations in \eqref{Propnu211} and \eqref{Propnu221} have coefficients uniformly bounded in $J$,
unlike \eqref{Propnu+2}.
%
\end{proof}

\medskip
\subsection{Analysis of $\mu_3$: structure and bilinear estimates}\label{ssecmu3}
Recall the definition
\begin{align}\label{SOmu3'}
\begin{split}
\mu_3(k,\ell,m) = \int \frac{e^{i(-|k|+|\ell|+|m|)|x|}}{|x|^3} 
   \, \bar{\psi_1(x,k)} \psi_1(x,\ell) \psi_1(x,m) \, \mathrm{d}x.
\end{split}
\end{align}
We have the following analogues  of Propositions \ref{Propnu+} and \ref{Propnu21}:

\medskip
\begin{proposition}[Structure of $\mu_3$]\label{Propmu3}
Let $\mu_3$ be defined as in \eqref{SOmu3'} with $\psi_1$ defined by \eqref{psipsi1}. 
Fix an integer $N_2 \in[10,N_1/4]$. 
Let $k,\ell,m \in\R^3$ with $|k|\approx 2^K, |\ell|\approx 2^L$ and $|m|\approx 2^M$, 
and assume that $K,L,M \leq A$ for some $A>0$.
Then we can write
\begin{align}\label{Propmu30}
\mu_3(k,\ell,m) = \mu_{3,0}(k,\ell,m) + \mu_{3,R}(k,\ell,m),
\end{align}
where:

\begin{itemize}

\medskip
\item[(1)] 
The leading order has the form
\begin{align}\label{Propmu31}
\mu_{3,0}(k,\ell,m) =
  \sum_{i=0}^{N_2} \sum_{J\in\Z} b_{i,J}(k,\ell,m) \cdot K_i\big(2^J(|k|-|\ell|-|m|)\big) 
\end{align}
with $K_i \in \mathcal{S}$, and
\begin{align}\label{Propmu31'}  
\begin{split}
& \big| \varphi_K(k) \varphi_L(\ell) \varphi_M(m) 
  \nabla_k^a \nabla_\ell^\alpha \nabla_m^\beta  b_{i,J}(k,\ell,m) \big| 
  \\ & \lesssim \max(1,2^{-(|a|-1)K})  \max(1,2^{-(|\a|-1)L}) \max(1,2^{-(|\b|-1)M})
  \mathbf{1}_{\{|K-\max(L,M)| < 5\}}
\end{split}
\end{align}
for all $K,L,M \leq A$, and $|a| + |\a|+|\b| \leq N_2/2$.

\medskip
\item[(2)] The remainder term satisfies
\begin{align}\label{Propmu3R}
\begin{split}
\big| 
  \nabla_k^a \nabla_\ell^\alpha \nabla_m^\beta \mu_{3,R}(k,\ell,m) \big| 
  \lesssim 
  \max(1,2^{-(|a|-1)K})  \max(1,2^{-(|\a|-1)L}) \\ \cdot \max(1,2^{-(|\b|-1)M}) \cdot 2^{4A(|a|+|\a|+|\b|+1)}
\end{split}
\end{align}
for all $K,L,M \leq A$ and $|a|+|\a|+|\b| \leq N_2/2$.
\end{itemize}
\end{proposition}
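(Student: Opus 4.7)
The plan is to follow the strategy used for Propositions \ref{Propnu+}, \ref{Propnu21}, \ref{Propnu22}, while taking advantage of the fact that the integrand in $\mu_3$ already carries the full $|x|^{-3}$ decay. First I would dyadically decompose $\mu_3$ according to $|x|\approx 2^J$, writing $\mu_3 = \sum_{J\in\mathcal{J}} \mu_3^J + \mu_3^{\mathrm{small}}$ with $\mathcal{J} := \{J\geq 4A - \min(K,L,M,0)\}$, where $\mu_3^J$ carries the cutoff $\varphi(x 2^{-J})$ and $\mu_3^{\mathrm{small}}$ is the complementary small-$|x|$ piece. Into each $\mu_3^J$ I substitute the expansion of $\psi_1$ from Lemma \ref{lemmapsi1} for each of the three $\psi_1$ factors.

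The crucial simplification over Lemma \ref{Lemmanu0} (and its reuses in Propositions \ref{Propnu21}, \ref{Propnu22}) is that the phase $(-|k|+|\ell|+|m|)|x|$ is linear in $r := |x|$ and independent of $\omega = x/|x|$, so the radial and angular integrations decouple cleanly and no stationary phase on $\mathbb{S}^2$ is needed. In polar coordinates the generic building block becomes
\[
\mathcal{K}_J^{j_1j_2j_3}(k,\ell,m) = \langle k\rangle^{j_1}\langle\ell\rangle^{j_2}\langle m\rangle^{j_3}\cdot A_{j_1j_2j_3}(k,\ell,m)\cdot 2^{-J(j_1+j_2+j_3)} K_{j_1+j_2+j_3}\bigl(2^J(|k|-|\ell|-|m|)\bigr),
\]
where $A_{j_1j_2j_3}(k,\ell,m) := \int_{\mathbb{S}^2} \overline{g_{j_1}(\omega,k)}\, g_{j_2}(\omega,\ell)\, g_{j_3}(\omega,m)\,d\omega$ and each $K_n$ is Schwartz (because $\varphi$ is supported away from $r=0$). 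The term $j_1+j_2+j_3=0$, summed over $J\in\mathcal{J}$, is directly an element of the sum in \eqref{Propmu31}; higher-order terms give the $i\geq 1$ contributions, with the factor $2^{-J(j_1+j_2+j_3)}\langle k\rangle^{j_1}\langle\ell\rangle^{j_2}\langle m\rangle^{j_3}\lesssim 2^{-(J-A)(j_1+j_2+j_3)}$ guaranteeing summability over $J\geq 4A$.

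The main technical check is then the derivative bounds \eqref{Propmu31'}. Because the three $g_j$ factors in $A_{j_1j_2j_3}$ are multiplicatively decoupled on $\mathbb{S}^2$, the pointwise estimates $|\partial_k^a g_j(\omega,k)|\lesssim 1+(|k|/\langle k\rangle)^{1-|a|}$ from Definition \ref{Gclass} feed \emph{independently} through each of the three variables $k,\ell,m$, yielding exactly the claimed product structure in \eqref{Propmu31'}. This separability is the key simplification over Propositions \ref{Propnu+}, \ref{Propnu21}, \ref{Propnu22}, where the stationary-phase evaluation forced couplings of the form $g(\pm p/|p|, q)$. Derivatives that fall on $K_n(2^J(|k|-|\ell|-|m|))$ each cost a factor $2^J$, but are absorbed by Schwartz decay: outside the window $|K-\max(L,M)|<5$ the argument has size $\gtrsim 2^{J+\max(K,L,M)}$, so $K_n$ contributes $2^{-\rho(J+\max(K,L,M))}$ for arbitrarily large $\rho$, both producing the indicator $\mathbf{1}_{\{|K-\max(L,M)|<5\}}$ in \eqref{Propmu31'} and sending the excluded configurations into the remainder. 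The hard part will be the bookkeeping required to juggle simultaneously the low-frequency losses $2^{-(|a|-1)K}$ from differentiating $g_j$, the factors $\langle k\rangle^{j_1}$ from the $\psi_1$ expansion, and the gain $2^{-J(j_1+j_2+j_3)}$.

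Finally, $\mu_{3,R}$ collects: (a) the contributions involving $R_{N_2}$ in place of one or more of the $g_j$ factors, controlled by \eqref{lemmapsi1R} together with the $r^{-3}$ weight and $J\geq 4A$; (b) the small-$|x|$ piece $\mu_3^{\mathrm{small}}$, on which the direct bounds of Lemma \ref{Lempsi1} apply and for which integration by parts in $r$ against $e^{ir(-|k|+|\ell|+|m|)}$ provides decay in $\max(K,L,M)$ when needed, in the style of \eqref{prnu+40}--\eqref{prnu+46}; (c) the configurations with $|K-\max(L,M)|\geq 5$ already excluded from $\mu_{3,0}$; and (d) the tail of the expansion with $j_1+j_2+j_3 > N_2/2$. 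Each of these is expected to satisfy \eqref{Propmu3R}, which is notably more permissive than \eqref{Propnu213} or \eqref{Propnu223} (no extra $2^{-2\max}$ factor is required), reflecting that $\mu_3$ is the least singular component of $\mu$.
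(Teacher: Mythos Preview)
Your proposal is correct and follows essentially the same approach as the paper: dyadic decomposition in $|x|$, expansion of each $\psi_1$ via Lemma \ref{lemmapsi1}, and the key observation that the purely radial phase $(-|k|+|\ell|+|m|)r$ decouples the angular and radial integrations, eliminating any stationary-phase step on $\mathbb{S}^2$. The paper's version is slightly leaner in places---it takes $\mathcal{J}=\{J\geq 4A\}$, sends the tail $j_1+j_2+j_3>N_2$ (rather than $>N_2/2$) to the remainder, and handles the small-$|x|$ piece by direct pointwise bounds without integration by parts in $r$, since the remainder estimate \eqref{Propmu3R} is permissive enough---but the substance is the same.
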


\medskip
\begin{proof}
Notice how \eqref{SOmu3'} is easier to treat than 
$\mu_1$ and $\mu_2$ since the exponential factor is just a radial function 
and there is no need to apply stationary phase arguments on the sphere.
We let  $\mu_3 = \mu_3^+ + \mu_3^-$,
where
\begin{align*}
\begin{split}
\mu_3^+(k,\ell,m) & := \sum_{J \in [4A,\infty)\cap \mathbb{Z}} \mu_{3,J}(k,\ell,m), \qquad
  \mu_3^-(k,\ell,m) := \sum_{J \in [0,4A) \cap \mathbb{Z}} \mu_{3,J}(k,\ell,m),
\\  
\mu_{3,J}(k,\ell,m) & := \int_0^\infty e^{i(-|k|+|\ell|+|m|)r} 
  \Big( \int_{\mathbb{S}^2} \bar{\psi_1(r\omega,k)} \psi_1(r\omega,\ell) \psi_1(r\omega,m) \, \mathrm{d}\omega \Big)
    \, \varphi_J^{(0)}(r) r^{-1} \, \mathrm{d}r.
\end{split}
\end{align*}

To isolate the leading order $\mu_{3,0}(k,\ell,m)$ within $\mu_3^+$, 
we expand the three $\psi_1$ functions in negative powers of $r$ through Lemma \ref{lemmapsi1}, 
writing, according to \eqref{lemmapsi1exp},
\begin{align*}
\psi_1 = \psi_{N_2} + R_{N_2}.
\end{align*}
All the contributions that do not contain a reminder term $R_{N_2}$, 
give rise to linear combinations of terms of the form
\begin{align}\label{prmu31}
\begin{split}
& \jk^{j_1} \jell^{j_2} \jm^{j_3}
  \int_0^\infty e^{i(-|k|+|\ell|+|m|)r} 
  \Big( \int_{\mathbb{S}^2} \bar{g_{j_1}(\omega,k)} g_{j_2}(\omega,\ell) g_{j_3}(\omega,m) 
    \, \mathrm{d}\omega \Big) \, 
    r^{-j_1-j_2-j_3-1} \varphi(r2^{-J})\, \mathrm{d}r \, 
    \\
    & = \Big( \int_{\mathbb{S}^2} \bar{g_{j_1}(\omega,k)} g_{j_2}(\omega,\ell) g_{j_3}(\omega,m) 
    \, \mathrm{d}\omega \Big) 
    \frac{\jk^{j_1} \jell^{j_2} \jm^{j_3}}{2^{J(j_1+j_2+j_3)}}
    \cdot \chi\big(2^J(|k|-|\ell|-|m|)\big)
\end{split}
\end{align}
where $g_{j_i} \in \mathcal{G}^{N_1-j_i}$ and $\chi=\chi_{j_1+j_2+j_3} =  \mathcal{F}(r^{-j_1-j_2-j_3-1}\varphi)$
is a Schwartz function. 
We then let 
\begin{align}
b_i (k,\ell,m) = \sum_{j_1+j_2+j_3 = i} 
    \frac{\jk^{j_1} \jell^{j_2} \jm^{j_3}}{2^{J(j_1+j_2+j_3)}}
    \int_{\mathbb{S}^2} \bar{g_{j_1}(\omega,k)} g_{j_2}(\omega,\ell) g_{j_3}(\omega,m) 
    \, \mathrm{d}\omega
\end{align}
for $i=0,\dots,N_2$,
recall that $K,L,M \leq A \leq J/4$, and use the estimate \eqref{lemmapsi1gj} for the $g_{j}$ factors,
to obtain \eqref{Propmu31} and \eqref{Propmu31'}.
All the terms of the form \eqref{prmu31} with $j_1+j_2+j_3 > N_2$ can be absorbed into the remainder terms,
since the losses of $2^J$ factors coming from differentiating $\chi$ can 
be compensated by the factor $2^{-J(j_1+j_2+j_3)}$.

The other terms remaining in $\mu_3^+$ are those containing at least an $R_{N_2}(x,\cdot)$ function,
such as 
\begin{align}\label{prmu33}
\int \frac{e^{i(-|k|+|\ell|+|m|)|x|}}{|x|^3} \, \bar{R_{N_2}(x,k)} \psi_1(x,\ell) \psi_1(x,m) 
  \varphi(2^{-J}x)\, \mathrm{d}x,
\end{align}
and similar terms obtained by exchanging the role of the frequencies, 
or putting $\psi_{N_2}$ instead of $\psi_1$.
Using the estimate for $R_{N_2}$ in \eqref{lemmapsi1R}, and the estimates for $\psi_1$ in \eqref{psi10},
we can directly see that \eqref{prmu33} satisfies the bounds \eqref{Propmu3R}.

Finally, we look at $\mu_3^-$. The estimate for $a=\alpha=\beta=0$ is obvious since $|\mu_{3,J}| \lesssim 1$.
The estimates with derivatives follow directly from the estimates \eqref{psi10} for $\psi_1$
and the bound
$| \nabla^\rho_{q} e^{ir|q|} | \lesssim 2^{4A}(2^{4A (|\rho|-1)} + |q|^{-|\rho|+1})$,
which holds for $|r| \lesssim 2^{4A}$.
\end{proof}

\medskip
Using Proposition \ref{Propmu3} and Lemma \ref{lemBEan2} we can obtain:

\smallskip
\begin{theorem}[Bilinear bounds for $\mu_3$]\label{theomu3}
Let $T[\mu_3;b]$ be the operator defined according to the notation \eqref{theomu2T},
with $b$ satisfying the same assumptions in Theorem \ref{theomu2}.
Then, with the same assumptions and notation of Theorem \ref{theomu2}, we have
\begin{align}\label{theomu3conc}
\begin{split}
{\big\| T[\mu_3;b]\big(g,h\big) \big\|}_{L^r}
  & \lesssim {\big\| \what{g} \big\|}_{L^p} {\big\| \what{h} \big\|}_{L^q} 
  \cdot 2^{C_0A}.
\end{split}
\end{align}

Moreover, $T[\mu_3;b]$ satisfies the same estimates satisfied by $\nu^1_{2}$ in Theorem \ref{theomu2vf},
namely, the bound \eqref{theomu2Xconc1} for $T[{\bf X}^a_-\mu_3;b]$, 
the bound \eqref{theomu2Yconc1} for $T[{\bf Y}^a_+\mu_3;b]$, 
and the bound \eqref{theomu2dlmconc} for $T[\nabla^a_{\ell,m} (\chi_-\mu_3);b]$.

%
%
%

\end{theorem}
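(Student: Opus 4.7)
The plan is to mirror the proof of Theorem~\ref{theomu2}, using Proposition~\ref{Propmu3} as the structural input and Lemma~\ref{lemBEan2}, in particular the estimate \eqref{lemBEan2conc'}, as the analytic workhorse. The situation for $\mu_3$ is strictly easier than for $\mu_2$: the integrand carries $|x|^{-3}$ rather than $|x|^{-2}$, which is why no $2^{\max(K,L,M)}$ prefactor is needed in \eqref{theomu3conc}; moreover $\mu_3$ has only the mild singularity on $\{|k|-|\ell|-|m|=0\}$, described by smooth convolution against Schwartz kernels $K_i$, with coefficients that are uniformly bounded in the annulus parameter $J$ (unlike the $\pv$ singularity of $\nu_1$, whose coefficients required a summability condition).

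First I would reduce \eqref{theomu3conc} to a frequency-localized version on dyadic shells $|k|\approx 2^K$, $|\ell|\approx 2^L$, $|m|\approx 2^M$ with $K,L,M\le A$, using the Bernstein reduction that produced \eqref{theomu1loc1} at the beginning of the proof of Theorem~\ref{theomu1}; summing over the $O(A^3)$ dyadic choices costs only a factor absorbable into $2^{C_0 A}$. Next I decompose $\mu_3=\mu_{3,0}+\mu_{3,R}$ according to Proposition~\ref{Propmu3}. The remainder $\mu_{3,R}$ is essentially a regular symbol: \eqref{Propmu3R} together with the support hypothesis $|k|+|\ell|+|m|\lesssim 2^A$ places it in the scope of Lemma~\ref{lemBER}, whose conclusion gives the desired control at the cost of a large but harmless power of $2^A$. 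For $\mu_{3,0}$ I plug in the expansion \eqref{Propmu31}; each summand is a thin-annulus operator of exactly the type handled in Lemma~\ref{lemBEan2} (with $\sigma_1=\sigma_2=-$), the composite symbol $b\cdot b_{i,J}$ satisfying the hypotheses \eqref{lemBEan2b1}--\eqref{lemBEan2b2} with a loss of $2^{O(A)}$ by virtue of \eqref{Propmu31'} and \eqref{theomu2asb2}. Applying \eqref{lemBEan2conc'} yields a factor $2^{-J/2}$ together with a frequency contribution bounded by $2^{O(A)}$, and the geometric sum over $J\ge 4A$ converges.

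For the three vectorfield variants the key algebraic observation is that both ${\bf X}_-=-\partial_{|\ell|}+\partial_{|m|}$ and ${\bf Y}_+=\partial_k+(k/|k|)((\ell/|\ell|)\cdot\partial_\ell)$ annihilate every function of the combination $|k|-|\ell|-|m|$. Hence, when ${\bf X}_-^a$ or ${\bf Y}_+^a$ hits a summand $b_{i,J}(k,\ell,m)\,K_i(2^J(|k|-|\ell|-|m|))$, the Schwartz factor $K_i$ is untouched and only the smooth coefficient is differentiated. Using \eqref{Propmu31'} to estimate these derivatives, I verify that the structural form is preserved, the new coefficients obeying the same symbol bounds times $2^{-a\min(L,M)}\cdot 2^{O(A)}$ in the ${\bf X}_-$ case and $2^{-(a-1)\max(K,L,M)}\cdot 2^{O(A)}$ in the ${\bf Y}_+$ case; the bookkeeping is parallel to that carried out for ${\bf Y}$ at the end of the proof of Theorem~\ref{theomu1'}. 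The remainder $\mu_{3,R}$ is handled by differentiating \eqref{Propmu3R} directly. The third variant $\nabla^a_{(\ell,m)}(\chi_-\mu_3)$ is even simpler: on $\supp(\chi_-)$ one has $\bigl||k|-|\ell|-|m|\bigr|\approx\max(K,L,M)$ so that $\mu_3$ is effectively a regular symbol there, and an argument in the spirit of Theorem~\ref{theomu1'}(i) applies.

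The main point requiring care will be the ${\bf Y}_+$ estimate, since \eqref{Propmu31'} endows $k$-derivatives with the mildly singular factor $\max(1,2^{-(|a|-1)K})$. This is, however, precisely the behavior expected when a $\partial_k$ falls on the coefficient of a function tangent to the singular set; the cancellation mechanism that made Theorem~\ref{theomu1'}(iii) work applies verbatim because $|K-\max(L,M)|<5$ on the support of $b_{i,J}$, so $\max(1,2^{-(|a|-1)K})\lesssim 2^{-(a-1)\max(K,L,M)}\cdot 2^{O(A)}$ as needed. No new analytic idea beyond those already used for Theorems~\ref{theomu2} and~\ref{theomu2vf} is required.
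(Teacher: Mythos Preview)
Your proposal is correct and follows exactly the approach the paper indicates: the paper's own proof of this theorem is a single sentence stating that it is essentially the same as the proof of Theorem~\ref{theomu2}, which in turn invokes the structural proposition (here Proposition~\ref{Propmu3}) together with Lemma~\ref{lemBEan2}. Your write-up supplies the details the paper omits, and the key algebraic observations---that ${\bf X}_-$ and ${\bf Y}_+$ annihilate functions of $|k|-|\ell|-|m|$, and that $\chi^-$ localizes away from the singularity so that $\mu_3$ becomes a regular symbol there---are exactly the right ones.
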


The proof Theorem \ref{theomu3} is essentially the same of Theorem \ref{theomu2}, so we can skip it.



\medskip
\section{Weighted estimates for lower order terms}\label{ssecmu23Est} 
In this last section we establish a priori nonlinear estimate for the components of Duhamel's formula involving 
$\mu_2$ and $\mu_3$, that is $\mathcal{D}_2$ and $\mathcal{D}_3$ as defined by \eqref{SODuhamel}-\eqref{SOmu3};
at the end we also discuss the estimates for $\mathcal{D}_0$. 
We want to show that, under the a priori assumptions \eqref{apriori},
\begin{align}\label{SOest1} 
{\| \partial_k \mathcal{D}_i(t) \|}_{L^2_k} 
  \lesssim \e_1^2, \qquad i=0,2,3,
\end{align}
and
\begin{align}\label{SOest2} 
{\| \partial_k^2 \mathcal{D}_i(t) \|}_{L^2_k} 
  \lesssim \e_1^2 \jt^{1/2 + \delta}, \qquad i=0,2,3.
\end{align}
In view of the bilinear bounds from Theorems \eqref{theomu2} and \eqref{theomu3},
to treat $\mathcal{D}_2$ and $\mathcal{D}_3$ 
we can use arguments similar to those we have used in Section \ref{secdkL2} for the leading order term $\mathcal{D}_1$.
It also suffices to highlight the key steps for $\mathcal{D}_2$.

\medskip
\subsection{Estimates for $\mathcal{D}_2$} 
We write 
\begin{align}
\nonumber
& \mathcal{D}_2(t)(f,f) = \mathcal{D}_{2,1}(t)(f,f) + \mathcal{D}_{2,2}(t)(f,f) 
\\
\label{SOest11}
& \mathcal{D}_{2}^1(t)(f,f) :=\int_0^t \iint e^{is\Phi(k,\ell,m)}
  \widetilde{f}(s,\ell) \widetilde{f}(s,m)
  \, \nu_2^1(k,\ell,m) \, \mathrm{d}\ell \mathrm{d}m \,\mathrm{d}s,
\\
\label{SOest12}
& \mathcal{D}_{2}^2(t)(f,f) := 2\int_0^t \iint e^{is\Phi(k,\ell,m)}
  \widetilde{f}(s,\ell) \widetilde{f}(s,m)
  \, \nu_2^2(k,\ell,m) \, \mathrm{d}\ell \mathrm{d}m \,\mathrm{d}s, 
\\
& \Phi(k,\ell,m) := -|k|^2 + |\ell|^2 + |m|^2.
\nonumber
\end{align}


\smallskip
\noindent
\subsubsection{Estimates for \eqref{SOest11}}
Proposition \ref{Propnu21} and Theorem \ref{theomu2vf} show that $\nu_2^1(k,\ell,m)$
is regular in the direction $\partial_{|\ell|} - \partial_{|m|}$.
We then let $X_1 := (1/2)(\partial_{|\ell|} - \partial_{|m|})$ and calculate
\begin{align}\label{SOest13}
\begin{split}
(|\ell| - |m|) X_1 \Phi & = (|\ell| - |m|)^2 = \Phi + |k|^2 - 2|m||\ell|
  \\ & = \Phi + |\ell|^2 + |m|^2 + (|k|-|\ell|-|m|)(|k|+|\ell|+|m|)
\end{split}
\end{align}
This identity allows us to integrate by parts in \eqref{SOest12} close to the singularity of the kernel
when $||k|-|\ell|-|m|| \ll |\ell|+|m| \approx |k|$ using
\begin{align}\label{SOest14}
\begin{split}
\frac{1}{c_1(k,\ell,m)} \Big( \frac{1}{is} (|\ell|-|m|)X_1 + i \partial_s \Big) e^{is\Phi(k,\ell,m)} = e^{is\Phi(k,\ell,m)}, 
\\
c_1(k,\ell,m) := |k|^2 - 2|\ell||m|, 
\end{split}
\end{align}
which is analogous to \eqref{PhiXPhiIBP} with the estimates \eqref{PhiXPhisym} replaced by
\begin{align}\label{SOest14'}
\big| \nabla^a_k \nabla_\ell^\alpha \nabla_\ell^\alpha \nabla_m^\beta \frac{1}{c_1(\ell,m)} \big| 
  \lesssim \frac{1}{|\ell|^2+ |m|^2+|k|^2} 
  |k|^{-|a|} |\ell|^{-|\alpha|} |m|^{-|\beta|}.
\end{align}
We can then proceed similarly to in \ref{l=m} 
after observing that: (1) the factor $|\nabla_k \Phi| = |2k| \approx |\ell|+|m|$
can be used to cancel part of the mild singularities introduced by the $1/c_1$ factors,
and
(2) $\partial_k \nu_2^1$ can be handled through the estimates of part (ii) in Theorem \ref{theomu2vf}
using the vectorfield $\mathbf{Y}_+$,
similarly to how this was handled in \ref{ssecd_kN21} using $\mathbf{Y}^\prime$.

Next, we look at the region away from the singularity,
that is, we analyze \eqref{SOest11} after inserting the cutoff $\chi^+$ defined in \eqref{theomu2dlcut}. 
Recall that, without loss of generality, we may assume $|\ell|\geq |m|$,
and note that if $|k| \not\approx |\ell|$ then $|\Phi|\gtrsim \max(|k|^2,|\ell|^2)$, 
with $1/\Phi$ a nice symbol; then integration by parts in time suffices.
For $|k| \approx |\ell| \gtrsim \js^{-1/2+}$, instead we can integrate by parts in $\nabla_\ell$ using
\begin{align}\label{SOest15}
e^{is\Phi} = \frac{1}{2is} \frac{\ell}{|\ell|^2} \cdot \nabla_\ell e^{is\Phi}.
\end{align}
With the H\"older estimates in part (iii) of Theorem \ref{theomu2vf},
and using that $\partial_{k_i}\Phi \, \ell/|\ell|^2$ is a bounded admissible symbol,
we can close our estimates.
When instead $|k| \approx |\ell| \lesssim \js^{-1/2+}$ we can use an $L^{6-} \times L^3$ bound,
and the smallness of $|\partial_{k_i}\Phi| \lesssim |k|$ to obtain the bound \eqref{SOest2} for $\mathcal{D}_2^1$.

\subsubsection{Estimates for \eqref{SOest12}}

Recall that we split $\nu_2^2 = \nu^2_{2,+} + \nu^2_{2,-} + \nu_{2,R}$; see Proposition \ref{Propnu21}.
It suffices to treat the leading orders $\nu^2_{2,+}$ and $\nu^2_{2,-}$.

\medskip
\noindent
{\it The case of $\nu^2_{2,-}$.}
When we consider $\nu^2_{2,-}$ 
the singularity is relative to $|\ell| + |m| -|k|$, see \eqref{Propnu221},
and we would like proceed as in the case of $\nu^1_2$ above.
We need however to be more careful to treat some of the terms coming from
the integration by parts using \eqref{SOest14}
because of the less effective bilinear estimates that involve derivatives of $\nu^2_{2,-}$,
see \eqref{theomu2Xconc2} and \eqref{theomu2Yconc2}, compared to those for derivatives of $\nu_2^1$,
see \eqref{theomu1Xconc} and \eqref{theomu1'conc2}.

More precisely, let us look at $\mathcal{D}^2_{2,-}$, defined in the natural way similarly to \eqref{SOest12}.
Applying $\partial_k^2$ to it, using \eqref{SOest14} to integrate by parts,
and $X_1 = -(1/2)X_-$, see \eqref{theomu2X},
gives
\begin{align}\label{SOest20}
{\| \mathcal{D}^2_{2,-}(t) \|}_{L^2_k} \lesssim  
  {\| A_1(t) \|}_{L^2_k} + {\| A_2(t) \|}_{L^2_k} + {\| A_3(t) \|}_{L^2_k} + \cdots
\end{align}
with
\begin{align}
\label{SOest21}
& A_1(t)(f,f) := \int_0^t \iint e^{is\Phi(k,\ell,m)} \, b(k,\ell,m)
  \, \partial_{|\ell|}\widetilde{f}(s,\ell) \widetilde{f}(s,m)
  \, X_- \nu_{2,-}^2(k,\ell,m) \, \mathrm{d}\ell \mathrm{d}m \,\mathrm{d}s, 
\\
\label{SOest22}
& A_2(t)(f,f) := \int_0^t \iint e^{is\Phi(k,\ell,m)} \, b(k,\ell,m)
  \, \widetilde{f}(s,\ell) \, \partial_{|m|} \widetilde{f}(s,m)
  \, X_- \nu_{2,-}^2(k,\ell,m) \, \mathrm{d}\ell \mathrm{d}m \,\mathrm{d}s, 
\\
\label{SOest23}
& A_3(t)(f,f) := \int_0^t \iint e^{is\Phi(k,\ell,m)} \, b(k,\ell,m)
  \, \widetilde{f}(s,\ell) \widetilde{f}(s,m)
  \, X_-^2 \nu_{2,-}^2(k,\ell,m) \, \mathrm{d}\ell \mathrm{d}m \,\mathrm{d}s,
\end{align}
where 
\begin{align*}
b(k,\ell,m) := \frac{|k|^2(|\ell|-|m|)^2}{c_1^2(k,\ell,m)}.
\end{align*}
The `$\cdots$' in \eqref{SOest20} are terms easier to bound or similar to the ones obtained when dealing with
$\mathcal{D}_2^1$ and $\mathcal{D}_1$ before.
Note how we have included in the `$\cdots$' also bilinear terms with kernel measure $Y_+^2 \nu_{2,-}^2$,
that are coming from $\partial_k$ hitting $\nu_{2,-}^2$; comparing the estimates \eqref{theomu2Yconc2}
and \eqref{theomu2Xconc2} we see that these are not worse than \eqref{SOest21}-\eqref{SOest23}.
Moreover, note that `$\cdots$' do not contain terms with a measure $Y_+X_-\nu_{2,-}^2$.

Since $b$ is an admissible symbol, we may disregard it in what follows and just assume $b\equiv 1$ for convenience.
We may also assume that the integrals \eqref{SOest21}-\eqref{SOest23} are localized as usual
to $|k|\approx 2^K$, $|\ell|\approx 2^L$, $|m|\approx 2^M$ and $s\approx 2^S$
(omitting some of these localization for lighter notation) 
and show a slightly stronger bound than \eqref{SOest2} with a $\delta-$ factor instead of $\delta$.
Let us also assume that $L\leq M$, the other case being similar.

With the same notation of Theorem \ref{theomu2} we write
\begin{align*}
{\| A_1(t)(f,f) \|}_{L^2_k} & \lesssim 
  \int_0^t {\big\| P_K T[X_-\nu^2_{2,-},b]
  \big(e^{is|k|^2}\partial_{|k|} \wt{f}(s), \Ftil u(s) \big) \big\|}_{L^2} \, \tau_S(s) \mathrm{d}s.
\end{align*}
Applying \eqref{theomu2Xconc2}, followed by Bernstein,
together with the a priori decay bound \eqref{aprioriL<6},  we obtain
\begin{align*}
\begin{split}
{\| A_1(t)(f,f) \|}_{L^2_k} & \lesssim 
\int_0^t {\| P_{\sim L}\Fhat e^{is|k|^2}\partial_k \wt{f}(s) \|}_{L^6} {\| \Fhat \wt{u}(s) \|}_{L^{3-}}
  \cdot 2^{-L} \cdot 2^{(C_0+13)A} 
  \, \tau_S(s) \mathrm{d}s
\\
& \lesssim \int_0^t {\| P_{\sim L}\Fhat e^{is|k|^2}\partial_k \wt{f}(s) \|}_{L^2} \cdot \e 2^{-S/2}
  \cdot 2^{(C_0+14)A}
  \, \tau_S(s) \mathrm{d}s
\\
& \lesssim \e^2 2^{S/2 + \delta -}
\end{split}
\end{align*}
having used $2^{A} \leq 2^{\delta_NS}$ with 
$(C_0+14)\delta_N < \delta$.

The term \eqref{SOest22} is estimated using again \eqref{theomu2Xconc2}, 
and then Bernstein followed by \eqref{aprioriL<6} on the same input function:
\begin{align*}
\begin{split}
{\| A_2(t)(f,f) \|}_{L^2_k} & \lesssim 
  \int_0^t  {\| P_{\sim L} \Fhat \wt{u}(s) \|}_{L^{\infty-}} 
  {\| \Fhat e^{is|k|^2}\partial_k \wt{f}(s) \|}_{L^{2}}
  \cdot 2^{-L} \cdot 2^{(C_0+13)A} \, \tau_S(s) \mathrm{d}s
\\
& \lesssim 
\int_0^t {\| P_{\sim L} \Fhat \wt{u}(s) \|}_{L^{3-}} \cdot \e \cdot 2^{(C_0+13)A}
  \, \tau_S(s) \mathrm{d}s
\\
& \lesssim 
\int_0^t \e 2^{-S/2} \cdot \e \cdot 2^{(C_0+14)A} \, \tau_S(s) \mathrm{d}s
  \lesssim \e^2 2^{S/2 + \delta-}.
\end{split}
\end{align*}

The last term can be bounded using \eqref{theomu2Xconc2},
Hardy's inequality $2^{-L} {\| P_{\sim L} g \|}_{L^2} \lesssim {\| \partial_\ell \Fhat^{-1} g\|}_{L^2}$
and the usual a priori bounds:
\begin{align*}
\begin{split}
{\| A_3(t)(f,f) \|}_{L^2_k} & \lesssim 
\int_0^t  {\| P_{\sim L}\Fhat \wt{u}(s) \|}_{L^{6}} {\| \Fhat \wt{u}(s) \|}_{L^{3-}}
  \cdot 2^{-2L} \cdot 2^{(C_0+13)A}
  \, \tau_S(s) \mathrm{d}s
\\
& \lesssim \int_0^t 2^{-L} {\| P_{\sim L}\Fhat \wt{u}(s) \|}_{L^2} \cdot \e 2^{-S/2}
  \cdot 2^{(C_0+14)A}
  \, \tau_S(s) \mathrm{d}s
\\
& 
  \lesssim \e^2 2^{S/2 + \delta-}.
\end{split}
\end{align*}

\medskip
\noindent
{\it The case of $\nu^2_{2,+}$.}
Finally, we consider $\nu^2_{2,+}$ whose kernel is singular relative to $-|\ell| + |m| -|k|$.
We observe that the following analogue of \eqref{SOest13} holds:
let the ``good direction'' be $X_2 := (1/2)(\partial_{|\ell|} + \partial_{|m|})$, then
\begin{align}\label{SOest13'}
\begin{split}
\frac{1}{2}(|\ell|+|m|) X_2 \Phi & = \Phi + |k|^2 + 2|\ell||m|
  \\ & = \Phi + |\ell|^2 + |m|^2 + (|k|+|\ell|-|m|)\big(|k|-(|\ell|-|m|)\big).
\end{split}
\end{align}
This gives us an identity as in \eqref{SOest14} with the slightly different coefficient 
$c_2(k,\ell,m):= |k|^2 + 2|\ell||m|$, which still satisfies proper symbol estimates as in \eqref{SOest14'}
in the region $| |k| + |\ell| - |m| | \ll |m| \approx |k|+|\ell|$.
We may again assume $|k| \lesssim \max(|\ell|,|m|)$ for otherwise integration by parts in $s$ suffices;
then, the factor $\nabla_k \Phi = 2k$ which appears 
when we differentiate in $k$ the exponential $e^{is\Phi}$,
can again be used to cancel part of the singularities introduced by factors of $1/c_2 \approx \max(|\ell|,|m|)^{-2}$.
This takes care of the singular region.

In the region away from the singularity of $\nu_{2,+}^2$, 
when $||k| + |\ell| - |m|| \gtrsim \max(|m|,|k|+|\ell|)$, we can proceed exactly as in the case of 
$\nu^1_2$ above,
relying on \eqref{SOest15} with $m$ instead of $\ell$, 
and the bilinear estimate \eqref{theomu2dlmconc} of Theorem \ref{theomu2vf}. 



\medskip
\subsection{Nonlinear Estimates for $\mathcal{D}_0$}\label{ssecD0}
At last, we discuss how to estimate the nonlinear term $\mathcal{D}_0$ 
corresponding to the $\delta$ interaction in the expansion \eqref{mudec} of $\mu$.
This is the nonlinearity in the ``flat'' equation, that is, \eqref{NLSV} with $V=0$,
which has been treated 
in the works \cite{HNNLS3d} and \cite{GMS2}.
We want to show that, under the a a priori assumptions \eqref{apriori},
\begin{align}\label{SO0est} 
{\| \partial_k \mathcal{D}_0(t) \|}_{L^2_k} 
   + \jt^{-1/2 - \delta} {\| \partial_k^2 \mathcal{D}_2(t) \|}_{L^2_k} \lesssim \e_1^2.
\end{align}

These bounds are obviously easier than those for the other terms we have analyzed.
However, we cannot directly utilize the bounds of \cite{HNNLS3d} and \cite{GMS2} 
because of the different functional spaces we are considering here, and, more precisely,
the different time decay rate of our solution (which is weaker) and time growth rates of our norms.
Nevertheless, bounds for $\mathcal{D}_0$ follow from simple adaptations of the arguments in Section \ref{secdkL2},
so we just give a sketch of the proof.

Let us look at the bounds for the highest weighted norm. Applying $\Delta_k^2$ to the expression 
for $\mathcal{D}_0$ in \eqref{Duhameldec} gives the term
\begin{align}\label{SO01} 
\begin{split}
\int_0^t \iint 4s^2 |\ell|^2\, e^{is \Phi(k,\ell)} \widetilde{f}(s,\ell) 
  \widetilde{f}(s,k-\ell) \, \mathrm{d}\ell \,\mathrm{d}s,
  \\
  \Phi(k,\ell) := -|k|^2 + |\ell|^2 + |k-\ell|^2 = - 2k\cdot \ell + 2|\ell|^2,
\end{split} 
\end{align}
up to simpler terms where the derivatives hit the profile $\wt{f}$.
We notice that 
$(3\ell - k) \cdot \nabla_\ell \Phi = 2(|\ell|^2+|k|^2) + 5\Phi$, and therefore
\begin{align}\label{SO0id}
e^{is\Phi(k,\ell)} 
  = \frac{1}{2(|\ell|^2+|k|^2)}\Big( \frac{1}{is}(3\ell - k) \cdot \nabla_\ell +5i \partial_s \Big) e^{is\Phi(k,\ell)}.
\end{align}
This identity plays the same role as the identity \eqref{PhiXPhiIBP} or \eqref{SOest14}
and allows us to integrate by parts in $\ell$ or $s$.
With this, the proof can proceed as for the term \eqref{dk^2N_20} in \ref{ssecdk^2N_12},
with the distribution $\nu_1$ substituted by a delta measure.
We first integrate by parts to obtain terms similar to \eqref{dk^2N_20'}-\eqref{dk^2N_28} 
and then estimate all these through standard product estimate of Coifman-Meyer type 
(see for example Proposition \eqref{proGHW}) in place of the bilinear estimates for $\nu_1$.

This concludes the proof of the a priori bounds \eqref{SOest1}-\eqref{SOest2} and gives us the main Proposition
\ref{proBoot} which implies Theorem \ref{maintheo}.

\medskip

\end{document}